\DeclareMathOperator{\Aut}{Aut}
\newcommand{\integral}{\int}
\newcommand{\XP}{\xphi}
\DeclareMathOperator{\Ind}{Ind}
\DeclareMathOperator{\boldgsp}{\mathbf{GSp}}
\DeclareMathOperator{\Hodge}{Hdg}
\DeclareMathOperator{\diagonal}{diag}
\DeclareMathOperator{\pigs}{PGSp}
\DeclareMathOperator{\SP}{Sp}
\DeclareMathOperator{\GSO}{GSO}
\DeclareMathOperator{\PGL}{PGL}
\DeclareMathOperator{\boldgl}{\mathbf{GL}}
\DeclareMathOperator{\SO}{SO}
\renewcommand{\check}{\vee}
\DeclareMathOperator{\vol}{Vol}
\DeclareMathOperator{\debt}{det}
\newcommand{\fee}{\phi}
\DeclareMathOperator{\GL}{GL}
\DeclareMathOperator{\SL}{SL}
\DeclareMathOperator{\PGSO}{PGSO}
\DeclareMathOperator{\Sym}{Sym}
\DeclareMathOperator{\cl}{cl}
 \DeclareMathOperator{\CL}{cl}
\DeclareMathOperator{\Res}{Res}
\newcommand{\Sye}{\psi}
\DeclareMathOperator{\Val}{Val}
\renewcommand*\d{\mathop{}\!\mathrm{d}}
\DeclareMathOperator{\GSp}{GSp}
\renewcommand{\et}{\rm{\acute{e}t}}
\theoremstyle{plain}
\DeclareMathOperator{\Spin}{Spin}
\newtheorem*{claim}{Claim}
\newenvironment{claimproof}[1]{\par\noindent\emph{Proof of claim:}\space#1}{}
\newtheorem{theo}{Theorem}
\theoremstyle{definition}
\newtheorem*{rmk*}{Remark}
\title{Tate classes and endoscopy for $\operatorname{GSp}_4$ over totally real fields}
\author{Naomi Sweeting}
\email{naomisweeting@math.harvard.edu}
\address{Department of Mathematics, Harvard University}
\date{April 2022}
\begin{document}
\maketitle
\begin{abstract}
    The theory of endoscopy predicts the existence of large families of Tate classes on certain products of Shimura varieties, and it is natural to ask in what cases one can construct algebraic cycles giving rise to these Tate classes. This paper takes up the case of Tate classes arising from the Yoshida lift: these are Tate cycles in middle degree on the Shimura variety for the group $\Res_{F/\Q} (\operatorname{GL}_2 \times \operatorname{GSp}_4)$, where $F$ is a totally real field. A special case is the family of Tate classes which reflect the appearance of two-dimensional Galois representations in the middle cohomology of both a modular curve and a Siegel modular threefold. We show that a natural algebraic cycle generates exactly the Tate classes which are associated to \emph{generic} members of the endoscopic $L$-packets on $\operatorname{GSp}_{4,F}$. In the non-generic case, we give an alternate construction, which shows that the predicted Tate classes arise from Hodge cycles.
\end{abstract}
\tableofcontents

\section{Introduction}

Let $F $ be a totally real number field of degree $d$, and let $G =\GSp_{4, F} $. The unique elliptic endoscopic group for $G $ is $M=(\GL_2\times \GL_2/\mathbb G_m)_F$, where  $\mathbb G_m $ is embedded anti-diagonally and the $L$-embedding is induced by\begin {equation}\widehat M =\GL_2 (\C)\times_{\C ^\times}\GL_2 (\C)\hookrightarrow\GSp_4 (\C) =\widehat G.\end {equation} 
The functorial transfer of cuspidal automorphic forms from $M $ to $G $ has been studied by Roberts \cite{roberts2001global} and Weissauer \cite{weissauer2009endoscopy}. For any (unordered) pair of distinct cuspidal automorphic representations $\pi_1,\pi_2 $ of $\GL_2 (\A_F) $ with the same central character, one obtains an $L$-packet $\Pi(\pi_1,\pi_2)$ of cuspidal automorphic representations of $\GSp_4(\A_F)$. The members $\Pi_S(\pi_1,\pi_2)$ of this $L$-packet are indexed by finite sets $S $ of places of $F$ at which both $\pi_i$ are discrete series, such that $| S | $ is even. The unique generic member of the $L $-packet $\Pi (\pi_1,\pi_2)$ is $\Pi_\emptyset (\pi_1,\pi_2)$.

Let $\boldgsp_4 =\Res_{F/\Q} G $
be the restriction of scalars, with the Shimura datum induced from that of $\GSp_{4,\Q}$. Let $$S(\boldgsp_4)\coloneqq \varprojlim_K S_K(\boldgsp_4) $$
be the resulting pro-algebraic Shimura variety over $\Q$, where $K $ ranges over compact open subgroups of $\boldgsp_4 (\mathbb A_{f})$. (For the rest of the introduction, the same notation will apply when $\GSp_4$ is replaced by any $\Q$-group $H$ with a Shimura datum.) If $\pi_1 $ and $\pi_2$ are unitary and of parallel weights 4 and 2, respectively, then the automorphic representations $\Pi_S (\pi_1,\pi_2)$ contribute to the cohomology of $S$, and Kottwitz's conjecture \cite{kottwitz1990shimura} predicts the associated Galois representations in \'etale cohomology. 

To state the expected formula, choose  a  finite extension $E/\Q $ over which $\pi_1,$ $\pi_2,$ and $\Pi_S(\pi_1,\pi_2)$ are all defined. For  a finite place $\lambda $ of $E$, set $\rho_\Pi= \rho_{\pi_1} \oplus \rho_{\pi_2}(-1)$, where $\rho_{\pi_i}$ are the usual $\lambda$-adic Galois representations associated to  Hilbert modular forms, cf. \cite{blasius1989galois}. Then let $(\widetilde\rho_\Pi, V)$ and $(\widetilde \rho_2, V_0)$ be the tensor induction of $\rho_\Pi$ and $\rho_{\pi_2}$, respectively, from $\Gal(\overline \Q/F)$ to $\Gal(\overline \Q/\Q)$. There is a natural Galois-equivariant inclusion $V_0(-d)\hookrightarrow V$.
Consider the involution $s\in \End(\widetilde \rho_\Pi, V) $ such that, in each factor of the decomposition  \begin{equation}
    V = \otimes_{v|\infty} \rho_\Pi,\end{equation}
    $s$ acts as $-1$ on $\rho_{\pi_1}$ and $1$ on  $\rho_{\pi_2}(-1)$. Taking $s$-eigenspaces induces a decomposition \begin{equation}V= V^+\oplus V^-,\end{equation}
and $V_0(-d)$ lies inside $V ^ + $.

 Kottwitz's conjectures imply that the $\Pi_{S}(\pi_1,\pi_2)_f$-isotypic part of \'etale cohomology should be:
\begin {equation}\label{eq:gsp etale}
H _{\et,c}^{3d} (S(\boldgsp_4)_{\overline \Q}, E_\lambda)_{\Pi_{S}(\pi_1,\pi_2)_f}=\begin{cases} V ^ +, & | S _f|\text { even},\\V ^ -, & | S_f |\text { odd},\end{cases}\end{equation} 
where $S_f\subset S$ is the subset of finite places.
This expectation has been fully verified in the case $F = \Q$ \cite{weissauer2009endoscopy}.
Meanwhile, $V_0$ is also expected to appear in the \'etale cohomology of the  Shimura variety $S(\boldgl_2)$: 
\begin{equation}\label{eq:gl2 etale}
     H^{d}_{\et} (S(\boldgl_2)_{\overline{\Q}}, E_\lambda)_{\pi_{2,f}} = V_0.
\end{equation}
The Tate conjecture then suggests that, for all $S$ with $|S_f|$ even, there should be a $2d$-dimensional algebraic cycle on $S(\boldgsp_4)\times S(\boldgl_2)$ whose \'etale realization induces a nontrivial map \begin{equation}\label{eq: expected intro}H^{3d}_{\et,c}\left(S(\boldgsp_4)_{\overline \Q}, E_\lambda(d)\right)\big[\Pi_{S}(\pi_1,\pi_2)_f\big] \to H^d_{\et}\left(S(\boldgl_2)_{\overline \Q},E_\lambda\right)[\pi_{2,f}].\end{equation}

The natural candidate for this algebraic cycle is the sub-Shimura variety $ S(\boldsymbol H) \subset S(\boldgsp_4) \times S(\boldgl_2)$, where:
\begin{equation}\label{eq: candidate cycle}
    H\coloneqq \GL_2 \times_{\mathbb G_m}\GL_2 \xhookrightarrow{\iota,p} \GSp_4 \times \GL_2.
\end{equation}
Here $\iota: H \hookrightarrow\GSp_4$ is the standard inclusion and $p: H \to \GL_2$ is the first projection. 

Our first main theorem characterizes when the correspondence $S(\boldsymbol H)$  does indeed induce a nontrivial map (\ref{eq: expected intro}).
\begin{theo}[Theorem \ref{biggie thm}]\label{thm: intro global mean thm}
Let $\pi_1$ and $\pi_2$ be cuspidal automorphic representations of $\GL_2(\A_F)$ of parallel weights 4 and 2, respectively, and with the same unitary central character. Then  the composite map \begin{equation*}
\begin{split} H^{3d}_c(S(\boldgsp_4),E(d))[\Pi_{S}(\pi_1,\pi_2)_f] \xrightarrow{[S(\boldsymbol H)]}  H^d(S(\boldgl_2),E)\twoheadrightarrow  H^d(S(\boldgl_2),E)[{\pi_{2,f}}]\end{split}\end{equation*}
is nontrivial if and only if   $S_f= \emptyset$. In this case its image generates the $\GL_2(\A_{F,f})$-module $H^d(S(\boldgl_2),E)[{\pi_{2,f}}]$.
\end{theo}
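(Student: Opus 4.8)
The plan is to reduce the cohomological statement to an automorphic period computation. By Matsushima's formula (and its adelic/étale analogue for Shimura varieties), the relevant cohomology groups decompose as direct sums over automorphic representations, and the cycle class map of $S(\boldsymbol H)$ can be unwound into an integral: pairing a class in $H^{3d}_c(S(\boldgsp_4))[\Pi_S(\pi_1,\pi_2)_f]$ against the fundamental class of $S(\boldsymbol H)$ and projecting to the $\pi_{2,f}$-part amounts, up to nonzero constants and choices of test vectors, to computing the global period integral
\begin{equation*}
    \mathcal{P}(\varphi_\Pi, \varphi_2) = \integral_{[H]} \varphi_\Pi(\iota(h))\, \overline{\varphi_2(p(h))}\, \d h,
\end{equation*}
where $\varphi_\Pi$ runs over vectors in a member $\Pi_S(\pi_1,\pi_2)$ and $\varphi_2$ over vectors in $\pi_2$. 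So the first step is to make this reduction precise: identify the de Rham or Betti realization of the cycle class with the automorphic pairing, keeping careful track that the archimedean components match (the vectors contributing to coherent cohomology at the relevant Hodge bidegree are the lowest weight vectors in the discrete series $L$-packet members).

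**The automorphic period.** The heart of the matter is then to show that this $H$-period on $\GSp_4 \times \GL_2$ is nonzero exactly when $S_f = \emptyset$, i.e.\ exactly on the generic member $\Pi_\emptyset(\pi_1,\pi_2)$ of the $L$-packet. The key input here is the theory of the Yoshida lift together with a seesaw argument: $H = \GL_2 \times_{\mathbb G_m} \GL_2$ sits inside $\GSp_4$ as (essentially) the dual pair partner in the endoscopic construction, and the period $\mathcal{P}$ should be re-expressed, via the seesaw
\begin{equation*}
    \begin{tikzcd}
    \GSp_4 \arrow[d, dash] & \GL_2 \times \GL_2 \arrow[d, dash] \\
    H \arrow[ru, dash] & \GL_2
    \end{tikzcd}
\end{equation*}
as a product of local and global quantities attached to $\pi_1$ and $\pi_2$ individually. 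In particular, I expect the global non-vanishing to come down to (a) non-vanishing of a central or near-central $L$-value — here one anticipates $L(1/2, \pi_1 \times \pi_2)$ or a Rankin--Selberg value which is automatically nonzero in the relevant weight/analytic-rank range — and (b) non-vanishing of all the local factors. The local factors are exactly where the index $S$ enters: at a place $v \in S$, the member $\Pi_{S,v}$ is the "non-generic" (holomorphic/small) constituent of the local $L$-packet, and one must show the corresponding local $H_v$-functional vanishes there, while for $v \notin S$ it is nonzero. This local branching computation — comparing the local theta/endoscopic description of $\Pi_{S,v}$ with the existence of an $H_v$-invariant functional — is the step I expect to be the main obstacle, since it requires precise knowledge of the local packets at both finite places (where one can use explicit models or the local Langlands correspondence for $\GSp_4$) and at the archimedean places (where one needs the branching of the relevant discrete series of $\GSp_4(\R)$ to $H(\R)$, and must check compatibility with the cohomological normalization from the first step).

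**Surjectivity onto the $\pi_{2,f}$-isotypic part.** Finally, assuming $S_f = \emptyset$, I would upgrade non-vanishing to the stated surjectivity. Since both sides of the map in the theorem are modules for the Hecke algebra $\GL_2(\A_{F,f})$ and $H^d(S(\boldgl_2),E)[\pi_{2,f}]$ is an irreducible (admissible) $\GL_2(\A_{F,f})$-module, any nonzero Hecke-equivariant map into it is automatically surjective. The only thing to check is that the composite is genuinely Hecke-equivariant for the $\GL_2(\A_{F,f})$-action via the second factor $p$ — which follows because the correspondence $S(\boldsymbol H)$ is defined functorially and commutes with the prime-to-level Hecke operators on $S(\boldgl_2)$ — together with the fact that the image is nonzero, established in the previous step for a single well-chosen finite test vector. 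Running the Hecke action over that vector then sweeps out the whole isotypic component, giving the claim.
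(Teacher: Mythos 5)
Your overall architecture (reduce the cycle-class pairing to an $H$-period, evaluate the period by a seesaw, then get surjectivity from $\GL_2(\A_{F,f})$-equivariance plus irreducibility of $\pi_{2,f}$) matches the paper, and your step 3 is essentially the argument used there. But the middle step, which you correctly identify as the heart of the matter, has two genuine gaps. First, the seesaw you draw is not a dual-pair seesaw and does not compute anything: the correct diagram exploits the realization of $\Pi_S(\pi_1,\pi_2)$ as a theta lift from $\GSO(V_B)$ (with $B$ the quaternion algebra ramified at $S$), pairing $\GSp_4 \supset H$ against $\GSO(V_B)\times_{\mathbb G_m}\GSO(V_B)\supset\GSO(V_B)$ inside $\GSp_{16}$. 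Running it requires the theta lift of the \emph{constant function} from $\GL_2$ to $\GSO(V_B)$, which diverges; one needs the Kudla--Rallis regularization and a second-term Siegel--Weil identity before the unfolding can begin. Second, your anticipated $L$-value is wrong in a way that matters: the period unfolds to $L^S(1,\pi_1\times\pi_2^\vee)\,L^S(1,\Ad\pi_2)/\zeta_F^S(2)$, i.e.\ noncentral values at the edge of the critical strip, nonzero by Shahidi. If the answer had been a central value $L(1/2,\pi_1\times\pi_2)$ as you suggest, it could vanish and the unconditional nonvanishing in the theorem would fail; there is no ``weight/analytic-rank range'' in which central Rankin--Selberg values are automatically nonzero.

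Your proposed mechanism for the vanishing when $S_f\neq\emptyset$ --- a local branching statement that the $H(F_v)$-invariant functional on $\Pi^-_v\otimes(\pi_{2,v}^\vee\boxtimes\mathbbm 1)$ vanishes --- is a substantially harder claim than what is needed (the pair here is a \emph{non-tempered} GGP pair, since the second factor carries the trivial representation, so the usual local multiplicity formulas do not apply off the shelf), and you do not prove it. The paper avoids local branching entirely: since every vector of $\Pi_S$ is a global theta lift $\theta_\xphi(f_1\otimes f_2)$ from $\GSO(V_B)$, one restricts the theta kernel to $H$ and observes that at any finite place $v$ where $B$ ramifies, the local Schwartz space $\mathcal S_{F_v}(B\oplus B)$ has \emph{no} nonzero quotient on which $\{1\}\times\SL_2(F_v)\subset H(F_v)$ acts trivially (anisotropy of the norm form forces support on $b_2=0$, and the torus action then kills everything). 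This makes the period vanish identically for $S\neq\emptyset$ by a two-line local computation. Finally, you flag but do not resolve the archimedean test-vector problem: to conclude anything about cohomology one must express the minimal-$K$-type vectors of $\Pi_\emptyset$ as theta lifts with an explicit archimedean Schwartz function (determined via the Fock model and local Howe duality) and then check that the resulting archimedean zeta integrals are nonzero; this explicit computation is an essential, not optional, part of the proof.
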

One could instead project to the $\pi_f$-isotypic component for any cuspidal automorphic representation $\pi$ of $\GL_2(\A_F)$; however, if $\pi \neq \pi_2$, we prove that the resulting map is always trivial.
In the non-generic case $S_f\neq\emptyset $, we are not able to produce an algebraic cycle which induces a nontrivial map (\ref{eq: expected intro}). However, we are able to give an alternative realization of (\ref{eq: expected intro}) as the map induced by a nontrivial Hodge cycle:
\begin{theo}[Theorem \ref{main theorem  Hodge classes}]\label{Intro theorem Hodge classes}
Let $\pi_1 $ and $\pi_2 $
be cuspidal automorphic representations of $\GL_2 (\A_F) $
of parallel weights 4 and 2, respectively, with the same unitary central character. Let $S$ be a set
of places of $F $
at which both $\pi_i $
are discrete series, such that $|S_f|\geq 2$ is even. Then there exists a Hodge class $$\xi\in H ^ {4d} (S (\boldgsp_4)\times S (\boldgl_2), E (2d)) $$
such that:
\begin{enumerate}
    \item For all finite places $\lambda $ of $E $, the image of $\xi$
    in $\lambda $-adic \'etale cohomology is $\Gal (\overline\Q/F ^ c) $-invariant.
    \item The composite map \begin{equation*} H^{3d}_c(S(\boldgsp_4),E(d))[\Pi_{S}(\pi_1,\pi_2)_f] \xrightarrow{\xi_\ast}  H^d(S(\boldgl_2),E)\twoheadrightarrow H^d(S(\boldgl_2),E)[{\pi_{2,f}}] \end{equation*}
is  nontrivial, and its image generates the $\GL_2(\A_{F,f})$-module $H^d(S(\boldgl_2),E)[{\pi_{2,f}}]$.
\end{enumerate}
\end{theo}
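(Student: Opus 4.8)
The plan is to deduce Theorem~\ref{Intro theorem Hodge classes} from the generic case, Theorem~\ref{thm: intro global mean thm}, applied not to $\GSp_4$ but to a suitable inner form. The key point is that the obstruction to nonvanishing in Theorem~\ref{thm: intro global mean thm} is local and concentrated at the finite places of $S$: after unfolding, the cycle class map $[S(\boldsymbol H)]_\ast$ computes a product of local period integrals, and at a place $v\in S_f$ --- where $\pi_{1,v}$ and $\pi_{2,v}$ are both discrete series --- the local factor on the split group $\GSp_4(F_v)$ vanishes for $\epsilon$-factor reasons, whereas it is nonzero at every $v\notin S_f$. This is the familiar dichotomy controlling nonvanishing of the Yoshida lift, and it is repaired by passing to the inner form of $\GSp_4$ on which the local sign at each place of $S_f$ is the other one.

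Concretely: since $|S_f|$ is even, let $D$ be the quaternion algebra over $F$ ramified exactly at the places of $S_f$, and let $G'=\operatorname{GU}_2(D)$ be the corresponding inner form of $\GSp_4$. As $D$ is split at every archimedean place, $G'$ inherits a Shimura datum of PEL type, the same as that of $\GSp_4$, so $S(\boldsymbol{G'})$ has dimension $3d$; likewise $H_D=D^\times\times_{\mathbb G_m}D^\times$ is the inner form of $H$ attached to $D$, with an embedding $\iota'\colon H_D\hookrightarrow G'$ and projection $p'\colon H_D\to D^\times$, and $S(\boldsymbol{D^\times})$ is a quaternionic Shimura variety of dimension $d$. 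By Jacquet--Langlands, $\Pi_S(\pi_1,\pi_2)$ transfers to the member $\Pi'$ of the corresponding packet on $G'$ indexed by the archimedean places of $S$, and $\pi_2$ transfers to $\pi_2^D$. The analogue of the Kottwitz formula (\ref{eq:gsp etale}) for $G'$, resp.\ of (\ref{eq:gl2 etale}) for $D^\times$ --- available in these abelian-type cases --- then identifies the $\Pi'_f$-isotypic part of $H^{3d}_c(S(\boldsymbol{G'})_{\overline\Q})$ again with $V^+$, and the $\pi^D_{2,f}$-isotypic part of $H^d(S(\boldsymbol{D^\times})_{\overline\Q})$ again with $V_0$ --- the very same Galois representations.

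I would then build $\xi$ as a composite of three correspondences. First, a comparison class $c_1$ on $S(\boldgsp_4)\times S(\boldsymbol{G'})$ inducing an isomorphism $H^{3d}_c(S(\boldgsp_4))[\Pi_{S,f}]\xrightarrow{\ \sim\ }H^{3d}_c(S(\boldsymbol{G'}))[\Pi'_f]$ of Hodge structures and Galois modules; second, the algebraic cycle $[S(\boldsymbol{H_D})]$ on $S(\boldsymbol{G'})\times S(\boldsymbol{D^\times})$, which is the inner form of the candidate cycle (\ref{eq: candidate cycle}); third, a comparison class $c_2$ on $S(\boldsymbol{D^\times})\times S(\boldgl_2)$ inducing $H^d(S(\boldsymbol{D^\times}))[\pi^D_{2,f}]\xrightarrow{\ \sim\ }H^d(S(\boldgl_2))[\pi_{2,f}]$. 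Set $\xi:=c_2\circ[S(\boldsymbol{H_D})]\circ c_1\in H^{4d}(S(\boldgsp_4)\times S(\boldgl_2),E(2d))$. Nonvanishing of $\xi_\ast$ on the $\Pi_{S,f}$-part is then exactly the inner-form analogue of Theorem~\ref{thm: intro global mean thm}, proved by the same period computation: at each $v\in S_f$ the local integral is now over the ramified group $D^\times(F_v)$ and contributes the opposite, nonzero factor, so the global period does not vanish. That the image generates all of $H^d(S(\boldgl_2),E)[\pi_{2,f}]$ follows as in the generic case, from irreducibility of $\pi_{2,f}$ together with the Hecke-equivariance established there. For part (1): a composite of absolute Hodge classes is absolute Hodge, and such a class has $\Gal(\overline\Q/L)$-equivariant $\lambda$-adic realization once it is defined over $L$; here $c_1$, $[S(\boldsymbol{H_D})]$ and $c_2$ are each defined over a subfield of $F^c$ --- the algebraic cycle over the reflex field of $S(\boldsymbol{H_D})$, and the two comparison classes over $F^c$, since the quaternion algebra $D$ and the identifications of PEL data producing $c_1,c_2$ become canonical only after base change along $F\hookrightarrow F^c$ --- so $\xi$ is defined over $F^c$, giving the stated $\Gal(\overline\Q/F^c)$-invariance.

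The heart of the argument is the construction of the comparison classes $c_1$ and $c_2$: one must show that the cohomology of each of these Shimura varieties and of its inner form are related not merely as compatible systems of Galois representations, but by an \emph{absolute Hodge} cycle whose field of definition lies in $F^c$. For $c_2$, comparing modular curves with quaternionic Shimura curves, this is close to classical and comes down to identifying both $\pi_2$-isotypic pieces with (a twist of) the motive of the Hilbert modular form $\pi_2$, via Blasius--Rogawski and the corresponding constructions for quaternionic Shimura varieties. For $c_1$ I would proceed in parallel: $S(\boldgsp_4)$ and $S(\boldsymbol{G'})$ are of PEL type, so Deligne's theorem that Hodge cycles on abelian varieties are absolute Hodge applies to the associated Kuga--Sato families; one then produces the class from the isomorphism of rational Hodge structures between the two $\Pi_S$-isotypic pieces --- both equal to $V^+$ with the same Hodge filtration --- and checks that it intertwines the Hecke actions away from $S_f$ while tracking its field of definition. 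The remaining points --- reconciling the local sign computations on the split group and on $D$ to conclude nonvanishing, and verifying that the comparison classes respect the Hecke-module structures so that ``generates'' is preserved --- are of the same character as the computations carried out for Theorem~\ref{thm: intro global mean thm}, but I expect the bookkeeping around the field of definition and the abelian-type comparisons to be the main obstacle.
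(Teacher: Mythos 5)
Your route is genuinely different from the paper's (which constructs $\xi$ by restricting cohomology classes of nontempered theta lifts on $\GSp_6$ to $S(\boldgsp_4)\times S(\boldgl_2)$ and proves nonvanishing by a triple-product seesaw), but it has a gap that I do not see how to close: the comparison classes $c_1$ and $c_2$. For $c_1$ you need a Hodge class on $S(\boldgsp_4)\times S(\boldsymbol{G'})$ realizing the Jacquet--Langlands correspondence between a Siegel threefold and its $\GU_2(D)$ inner form, with $\Gal(\overline\Q/F^c)$-invariant $\lambda$-adic realization. An isomorphism of rational Hodge structures between the two $\Pi$-isotypic pieces does give, via K\"unneth and Poincar\'e duality, a Hodge class on the product, and Deligne's theorem makes it absolute Hodge; but neither step gives you Galois-invariance of its \'etale realization. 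To get that you must identify the two $\lambda$-adic realizations as Galois modules \emph{and} show your Betti class lands on the invariant line under the comparison isomorphism --- the first requires the Kottwitz conjecture for $\GSp_{4,F}$ and for $\GU_2(D)$, which is not known unconditionally for $F\neq\Q$ (the paper's remark after the theorem explicitly stresses that its proof does not rely on (\ref{eq:gsp etale})), and the second is essentially a Tate-conjecture statement, not a formal consequence. Even for $c_2$, the Hilbert-modular-variety versus quaternionic case is not ``close to classical'': for $d>1$ it is the main theorem of Ichino--Prasanna, proved by an explicit theta/period construction rather than by abstract comparison. In short, you are assuming as an ingredient a Jacquet--Langlands Hodge-class statement that is at least as hard as the theorem you are proving.

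A secondary gap: the nonvanishing of the period of $\Pi'$ against $\pi_2^D$ along $H_D\subset\GU_2(D)$ is asserted (``the local integral contributes the opposite, nonzero factor'') but not established. The paper's vanishing in the split, nongeneric case (Theorem \ref{thm: non-generic vanishing}) comes from the vanishing of a regularized theta lift of the constant function when $B$ is nonsplit; the inner-form analogue would require redoing the entire Siegel--Weil/seesaw/Rankin--Selberg computation of \S\ref{Section:. See Yoshida} for quaternionic dual pairs $(\GU_2(D),\GO(V_B))$, including the archimedean minimal-$K$-type analysis of \S\ref{subsec: Archimedean calculations}. That is plausible by local GGP dichotomy heuristics but is a substantial computation, not a corollary of the split case. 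By contrast, the paper's actual construction sidesteps both issues: the Hodge type of the $\GSp_6$ classes is computed unconditionally from Adams--Johnson packets and Vogan--Zuckerman, their Galois invariance comes from the Eichler--Shimura relation of Lee rather than from Kottwitz, and nonvanishing reduces to a convergent triple-product period over the anisotropic group $\GSO(V_B)$.
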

\begin {rmk*}
\begin{enumerate}
\item
Assuming Kottwitz's  conjectures, one could show that $\xi $
is $\Gal (\overline\Q/\Q) $-invariant. However, our result is unconditional, and in particular does not rely on (\ref{eq:gsp etale}) and (\ref{eq:gl2 etale}).
\item 
In the text, we also prove higher-weight analogues of Theorems \ref{thm: intro global mean thm} and \ref{Intro theorem Hodge classes}; the results are described later in the introduction.
\end{enumerate}
\end{rmk*}
\subsection*{Overview of the proofs}
Both Theorem \ref{thm: intro global mean thm} and Theorem \ref{Intro theorem Hodge classes}
rely on the explicit realization of $\Pi_S (\pi_1,\pi_2) $
as a theta lift from a four-dimensional orthogonal group, cf. \cite{roberts2001global,weissauer2009endoscopy}.
Indeed, if $| S | $
is even, then there is a quaternion algebra $B $
over $F $
ramified exactly at the places in $S $, and the orthogonal group $\GSO (B)\simeq B ^\times\times B ^\times/\mathbb G_m $
is an inner form of $M $. The automorphic representation $\Pi_S (\pi_1,\pi_2) $ is the theta lift of $\pi_1 ^ B\boxtimes\pi_2 ^ B $
from $\GSO (B) $
to $\GSp_{4, F} $, where $\pi^ B $
is the Jacquet-Langlands transfer of $\pi_i $
to $B ^\times $.
This is crucial because it allows for the calculation of period integrals involving  $\Pi_S(\pi_1,\pi_2)$. 
\subsubsection*{Proof of Theorem \ref{thm: intro global mean thm}}
  Since the non-vanishing of $[S (\boldsymbol H)] $
  may be detected in $L^2$ cohomology, the theorem is essentially a statement about periods of $\Pi_S (\pi_1,\pi_2)\boxtimes\pi_2 ^\check $
  along the subgroup $H\subset\GSp_4\times\GL_2. $
  That is, we must compute integrals of the form
  \begin{equation}\label{equation: intro period definition}
     \mathcal P_S (\gamma,\beta)\coloneqq\integral_{Z_H (\A_F)H(F)\backslash H (\A_F)}\gamma (\iota(h))\beta (p (h))\d h,\;\;\gamma\in\Pi_S (\pi_1,\pi_2),\;\beta\in\pi.
  \end{equation}
  Because $\Pi_S (\pi_1,\pi_2) $
  is a theta lift from $\GSO (B)$, we can compute (\ref{equation: intro period definition}) using the seesaw diagram:
  \begin{center}
    \begin{tikzcd}
    \GSp_4 \arrow[d,dash]\arrow[dr,dash] & \GSO(B)\times_{\mathbb G_m} \GSO(B) \arrow[d,dash]\arrow[dl,dash] \\ \GL_2\times_{\mathbb G_m}\GL_2 & \GSO(B)
    \end{tikzcd}
\end{center}
Here the vertical lines are inclusions and the diagonals are dual reductive pairs inside $\GSp_{1 6} $.  Formally, the seesaw identity would read:
\begin {equation}\label{Intro formal D saw}
\mathcal P_S (\theta (\alpha),\beta) = \integral_{[\operatorname{PGSO}(B)]}\theta(\beta) (g)\theta (\mathbbm 1) (g)\alpha(g)\d g,\;\;\alpha\in \pi_1\otimes \pi_2,\;\;\beta\in \pi_2^\check,\end {equation}
where the theta lifts on the right are from $\GL_2 $
to $\GSO (B)$, and the  theta lifts on both sides depend on choices of Schwartz functions which must be made compatibly.  The integral defining $\theta (\mathbbm 1) $ is divergent,
so a regularization step is necessary to interpret (\ref{Intro formal D saw}). However, after regularization, $\theta(\mathbbm 1)$ can be recognized as 0 if $B $
is not split (i.e. if $S \neq\emptyset $), and as a certain Eisenstein series on $\GSO (B) $ if $B$ is split. The integral (\ref{Intro formal D saw}) then unfolds to an Euler product
which allows us to evaluate it explicitly. The result of the calculation is:
\begin{theo}[Theorems \ref{thm: non-generic vanishing}, \ref{theorem: new global. Pairing}]\label{thm:intro periods}
Let $\pi_1, $ $\pi_2, $ and $\pi$ be cuspidal automorphic representations of $\GL_2 (\A_F) $ such that $\pi_i $  and $\pi^\vee$ have the same central character, and let $S$ be a finite set of (possibly archimedean) places of $F$ at which both $\pi_i $ are discrete series, such that $| S | $ is even. Consider the period pairing \begin{equation}
    \mathcal P_S (\gamma,\beta)\coloneqq\integral_{Z_H (\A_F)H(F)\backslash H (\A_F)}\gamma (\iota(h))\beta (p (h))\d h,\;\;\gamma\in\Pi_S (\pi_1,\pi_2),\;\beta\in\pi,
\end{equation}
where $\d h$ is normalized as in (\ref{subsubsec:measure on H}).
\begin{enumerate}
\item If $\mathcal P_S (\gamma,\beta)\neq 0, $ then $S =\emptyset$, i.e. $\Pi_S (\pi_1, \pi_2)$ is generic, and $\pi\cong \pi_2^\vee$.
\item Suppose given factorizable Schwartz functions $$\xphi_i =\otimes_v\xphi_{i, v}\in\mathcal S (M_2(\A_F)),\;\; i = 1, 2 $$
and factorizable vectors $$\alpha =\otimes_v\alpha_v\in\pi_1\otimes\pi_2,\;\;\beta=\otimes_v\beta_v\in\pi_2^\vee.$$ Then the theta lift $\theta_{\xphi_1\otimes\xphi_2}(\alpha)$ lies in $\Pi_\emptyset (\pi_1,\pi_2) $ and, for a sufficiently large finite set $S$ of places of $F $, $$\mathcal P_\emptyset (\theta_{\xphi_1\otimes\xphi_2} (\alpha),\beta) = 2 | D_F | ^ {1/2}\cdot\pi ^ {- d}\frac {L ^ S(1,\pi_1\times\pi_2 ^\vee) L ^ S (1,\Ad \pi_2)} {\zeta_F ^ S (2)}\prod_{v\in S}\frac {\mathcal Z_v (\xphi_{1,v},\xphi_{2, v},\alpha_v,\beta_v)} {1 - q_v ^ {-1}}. $$
Here $\mathcal Z_v (\xphi_{1, v},\xphi_{2, v},\alpha_v,\beta_v)$ is an explicit local zeta integral which is nonzero for appropriate choices of test data; $\xphi_1\otimes\xphi_2$ is the tensor product Schwartz function in $\mathcal S(M_2(\A_F)^2)$; the theta lift $\theta_{\xphi_1\otimes \xphi_2}(\alpha)$ is defined in  \S\ref{section: Similitude theta lifting}; and the other notations are introduced in (\ref{places etc}).
\end{enumerate}
\end{theo}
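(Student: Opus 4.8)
The plan is to deduce everything from the realization of $\Pi_S(\pi_1,\pi_2)$ as a theta lift from $\GSO(B)$, as sketched in the overview. Write $\gamma = \theta_\Phi(\alpha)$ for a factorizable Schwartz function $\Phi$ and a factorizable vector $\alpha\in\pi_1^B\boxtimes\pi_2^B$, where $\pi_i^B$ denotes the Jacquet--Langlands transfer of $\pi_i$ to $B^\times$. Substituting into $\mathcal P_S$, unfolding the definition of the theta lift $\theta_\Phi(\alpha)$, interchanging the integral over $[\GSO(B)]$ with the integral over $H(\A_F)$, and recognizing the resulting inner $H$-integral via the seesaw displayed in the overview, one arrives at the formal identity (\ref{Intro formal D saw}): an integral over $[\PGSO(B)]$ of $\theta(\beta)(g)\,\theta(\mathbbm 1)(g)\,\alpha(g)$, where $\theta(\beta)$ and $\theta(\mathbbm 1)$ are theta lifts from $\GL_2$ to $\GSO(B)$ assembled from the pieces into which $\Phi$ decomposes under the seesaw. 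The integral defining $\theta(\mathbbm 1)$ diverges, so the first genuine task is to make sense of all of this: I would replace it by a regularized theta integral, in the style of the Kudla--Rallis and Gan--Qiu--Takeda regularizations, and verify that the seesaw manipulation and everything downstream stay valid for the regularized kernel.

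The crucial structural input is then the identification of the regularized $\theta(\mathbbm 1)$: it vanishes identically when $B$ is a division algebra (equivalently, when $S\neq\emptyset$), and equals an explicit Eisenstein series on $\GSO(B) = \GL_2\times_{\mathbb G_m}\GL_2$ when $B = M_2(F)$ is split. This is a Siegel--Weil statement for the trivial lift, and it is the origin of the dichotomy in part (1): when $S\neq\emptyset$ the integrand of (\ref{Intro formal D saw}) is identically zero, so $\mathcal P_S$ vanishes on all of $\Pi_S(\pi_1,\pi_2)\boxtimes\pi$, forcing $S=\emptyset$. That the first-projection factor must moreover be $\pi_2$ --- i.e.\ $\pi\cong\pi_2^\vee$ --- is read off from the shape of $\theta(\beta)$: theta lifting from $\GL_2$ to $\GSO(B)\simeq B^\times\times B^\times/\mathbb G_m$ carries a cuspidal $\sigma$ to a representation of the form $\sigma^{B,\vee}\boxtimes\sigma^B$, so $\theta(\beta)$ can pair non-trivially with the cusp form $\alpha\in\pi_1^B\boxtimes\pi_2^B$ over $[\PGSO(B)]$ only when $\pi$ matches one of $\pi_1,\pi_2$; that it must be $\pi_2$ rather than $\pi_1$, together with the precise non-vanishing, comes out of the explicit computation below and reflects the non-symmetric embedding $\iota$ and the parallel weights $4$ and $2$.

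In the generic case $S=\emptyset$, I would substitute the Eisenstein series for $\theta(\mathbbm 1)$ into (\ref{Intro formal D saw}) and unfold it against the two cusp forms, exhibiting $\mathcal P_\emptyset$ as an Euler product. At almost every place the local factor is a standard unramified Rankin--Selberg computation, and their product is the ratio $L^S(1,\pi_1\times\pi_2^\vee)\,L^S(1,\Ad\pi_2)/\zeta_F^S(2)$, the two numerator $L$-values coming from the two $\GL_2$ Rankin--Selberg integrals contained in the $B^\times\times B^\times$ quotient and $\zeta_F^S(2)$ coming from the normalizing factor of the Eisenstein series. At the places $v\in S$ the spherical computation is replaced by the local zeta integral $\mathcal Z_v(\xphi_{1,v},\xphi_{2,v},\alpha_v,\beta_v)$, and one checks, place by place, that it is not identically zero in the test data. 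Collecting the archimedean normalizations and the global volume constants then produces the prefactor $2|D_F|^{1/2}\pi^{-d}$.

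The main obstacle is the regularization step. One must choose a regularization of the divergent $\theta(\mathbbm 1)$-integral for which the seesaw identity can be justified rigorously, for which the regularized $\theta(\mathbbm 1)$ can still be identified (zero when $B$ is division, the expected Eisenstein series when $B$ is split), and which is compatible with the unfolding in the split case after the auxiliary parameter is specialized. By comparison, absolute convergence of $\mathcal P_S$ itself (the quotient $Z_H(\A_F)H(F)\backslash H(\A_F)$ is non-compact) is mild, since $\Pi_S(\pi_1,\pi_2)$ is cuspidal and the restriction of a cusp form to the reductive subgroup $H$ remains rapidly decreasing; and the unramified local computations, together with the non-vanishing of the $\mathcal Z_v$ for suitable test data, are technical but standard.
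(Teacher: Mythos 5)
Your treatment of the split case $S=\emptyset$ follows the paper's actual route: rewrite the period via the seesaw, replace the divergent $\theta(\mathbbm 1)$ by the Kudla--Rallis regularized theta integral, use the Gan--Qiu--Takeda second-term identity to identify the relevant Laurent coefficient with the value at $s=\tfrac12$ of the Eisenstein series attached to $[\xphi_2]$ (the extra constant term dying against the cusp form), and unfold to an Euler product whose unramified factors are $L_v(s+\tfrac12,\pi_1\times\pi_2^\vee)L_v(s+\tfrac12,\pi_2\times\pi_2^\vee)/(1-q_v^{-2})$; the pole of $\zeta_F$ inside the second factor survives the residue and, compared with the constant $\kappa$ of Proposition \ref{prop:constanttermcomputed}, yields $2|D_F|^{1/2}\pi^{-d}$. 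Your explanation of the constraint $\pi\cong\pi_2^\vee$ (the Eisenstein series depends on only one $\GL_2$ factor of $\GSO(V)$, so the other factor forces $\theta(\beta)$ to pair against the $\pi_2$-component of $\alpha$) is also the paper's argument.

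The gap is in part (1), the vanishing for $S\neq\emptyset$. You rest the entire dichotomy on the assertion that the regularized $\theta(\mathbbm 1)$ vanishes when $B$ is a division algebra, call this ``a Siegel--Weil statement for the trivial lift,'' and locate the difficulty in the regularization. Both points are off. When $B$ is division, $V_B$ is anisotropic, so by Weil's convergence criterion the theta integral $\int_{[\SL_2]}\theta(h_1,g;\xphi)\,\d h_1$ converges absolutely --- there is nothing to regularize --- and what you actually need is that this convergent lift of the trivial representation to the anisotropic four-dimensional orthogonal group vanishes. That is a first-occurrence/dichotomy statement, not a consequence of any Siegel--Weil formula you invoke, and you give no argument for it; as written it is essentially the claim being proved. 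The paper's Theorem \ref{thm: non-generic vanishing} avoids the global analysis entirely with a short local argument: at a place $v$ where $B$ ramifies, the period functional, viewed as a function of the local Schwartz datum $\xphi_v$, is invariant under $\{1\}\times\SL_2(F_v)\subset H(F_v)$ and hence factors through the maximal quotient of $\mathcal S_{F_v}(B\oplus B)$ on which that $\SL_2(F_v)$ acts trivially; the explicit Weil-representation formulas (the unipotent forces support on $N(b_2)=0$, hence $b_2=0$ by anisotropy, after which the torus acts by the nontrivial character $|a|^2$) show this quotient is zero. You should supply this local computation, or an equivalent proof that $\Theta(\mathbbm 1)$ vanishes on the anisotropic tower; without it part (1) is not established.
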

\begin{rmk*}
The $L $-values appearing in Theorem \ref{thm:intro periods} are nonzero by the classical  result of Shahidi \cite{shahidi1981amer}.
\end{rmk*}
In fact, Theorem \ref{thm:intro periods} amounts to a special case of the non-tempered Gan-Gross-Prasad conjectures in \cite{gan2020branching}: if $\pi_1 $
and $\pi_2 $
have trivial central character, then $\Pi_S (\pi_1,\pi_2) $
descends to $\pigs_4 =\SO_5, $
and the period (\ref{equation: intro period definition}) reduces to a period for the split GGP pair $\SO_4\subset\SO_5. $
Although $\Pi_S(\pi_1,\pi_2)$ is tempered, the  automorphic representation of $\SO_4 $
corresponding to the forms $\beta(p(h))$ on $H$ is not, and so this period falls outside the scope of the usual GGP conjecture. 

To deduce Theorem \ref{thm: intro global mean thm} from Theorem \ref{thm:intro periods}, one additional ingredient is needed. In the period integrals (\ref{equation: intro period definition}), one really wants to consider only vectors $\gamma$ and $\beta$ that contribute to cohomology, which in our case is equivalent to generating a minimal $K$-type at archimedean places. The most delicate part is to write such a vector $\gamma$ as a theta lift $\theta_\xphi(\alpha)$, which requires a particular choice of archimedean component for the Schwartz function $\xphi$; the correct choice is calculated using local Howe duality. Once we know which $\xphi$ to consider, we can  evaluate the relevant archimedean zeta integrals  to show that the periods (\ref{equation: intro period definition}) are nontrivial.
\subsubsection*{Proof of Theorem \ref{Intro theorem Hodge classes}}
The main difficulty in the proof of Theorem \ref{Intro theorem Hodge classes} is to find a nontrivial family of Hodge classes on $S(\boldgsp_4)\times S(\boldgl_2)$ (besides the ones coming from the algebraic cycle $S(\boldsymbol H)$). Once we have a good supply of Hodge classes, the proof that they induce nontrivial maps (\ref{eq: expected intro}) uses similar methods to the proof of Theorem \ref{thm: intro global mean thm}.

This family of Hodge classes is constructed using  nontempered, cohomological automorphic representations of $\GSp_6 (\A_F) $
which contribute to cohomology in degree $4d $, and whose contribution
consists entirely of Hodge cycles. More precisely, let $S= S_f\cup S_\infty$ with $|S_f|$ even, and let $B$ be the quaternion algebra over $F$ which is ramified exactly at $S_f $. Assume $S_f\neq\emptyset$, i.e. $B$ is nonsplit. Then for any auxiliary automorphic representation $\pi $ of $PB(\A_F)^\times$ of parallel weight 6, we consider $\Theta (\pi\boxtimes\mathbbm1) $, the theta lift from $\GSO (B) $
to $\GSp_6 $ of the automorphic representation $\pi\boxtimes\mathbbm 1 $
of $\GSO (B)\simeq B ^\times\times B ^\times/\mathbb G_m. $ 
We do not prove that $\Theta (\pi\boxtimes\mathbbm1)$ is irreducible, but for any   constituent $\widetilde \Pi$ of $\Theta (\pi\boxtimes\mathbbm1)$, we have:
\begin{equation}\label{Equation: intro Hodge type}
    H ^ {4d}_{(2)} (S (\boldgsp_6),\C) [\widetilde\Pi_f] = H ^ {2d, 2d}_{(2)} (S (\boldgsp_6),\C) [\widetilde\Pi_f]
\end{equation}
and
\begin{equation}\label{Equation: intro Tate}
\Gal(\overline \Q/F^c)  \text{ acts trivially on }  IH ^ {4d} (S(\boldgsp_6), \overline{\Q}_\l(2d)) [\widetilde\Pi_f],
\end{equation}
where we identify $\overline{\Q}_\l \simeq \C$ to make sense of (\ref{Equation: intro Tate}).
In fact, (\ref{Equation: intro Hodge type}) and (\ref{Equation: intro Tate}) remain true for any $\widetilde \Pi$ which is only \emph{nearly equivalent} to a  constituent of $\Theta(\pi\boxtimes \mathbbm 1)$. 

To prove (\ref{Equation: intro Hodge type}), it suffices (by Matsushima's formula) to understand the Lie algebra cohomology of any $\widetilde\Pi_\infty$ such that $\widetilde\Pi_f\otimes \widetilde\Pi_\infty$ is automorphic. For this it is better to restrict to $\SP_{6}$, where we may use the endoscopic classication of Arthur \cite{arthur2013endoscopic}. 
In particular, any irreducible constituent $\widetilde\Pi'$ of $\widetilde\Pi_f\otimes \widetilde\Pi_\infty|_{\SP_6(\A_F)}$ has a global Arthur parameter which depends only on $\pi$, and each component $\widetilde\Pi'_v$ of $\widetilde\Pi'$ lies in the corresponding local Arthur packet for $\SP_6(F_v)$. Since Lie algebra cohomology is issentially insensitive to restriction to $\SP_6$, it suffices to understand the cohomology of all $\widetilde\Pi'_v$ in the local Arthur packet, for each $v|\infty$. This is accomplished using the construction of archimedean  packets in \cite{adams1987endoscopic} and the classification of unitary representations with nonzero cohomology \cite{vogan1984unitary}.

To prove (\ref{Equation: intro Tate}), suppose that $p\neq\l $ splits completely in $F $ and that $\widetilde\Pi_v $ is  spherical  for all $v|p$. Then the generalized Eichler-Shimura relation proven by Lee \cite{lee2020eichler,lee2022semisimplicity} provides a polynomial $P (X) $ such that $P (\Frob_p) = 0$ on $IH ^\ast(S(\boldgsp_6), \overline{\Q}_\l) [\widetilde\Pi_f].$
The coefficients of $P (X)$ depend on the Satake parameters of $\widetilde\Pi_v $
for $v|p$, which in turn are determined by those of $\pi_v $ via the spherical theta correspondence for orthogonal-symplectic similitude pairs (Proposition \ref{prop:spherical}).  It turns out that $P (X) $
has a unique root of weight $4d$, which is $p^{-2d}$. (The other roots correspond to appearances of $\widetilde\Pi_f$ in higher cohomological degrees.) Thus $\Frob_p = p^{-2d}$ on  $IH ^{4d}(S(\boldgsp_6), \overline{\Q}_\l) [\widetilde\Pi_f]$ for all such $p$, which shows (\ref{Equation: intro Tate}) by the Chebotarev density theorem.

Let us now return to the main construction. For $\widetilde H\coloneqq\GSp_4\times_{\mathbb G_m}\GL_2\subset \GSp_{6},$
we have inclusions of Shimura varieties \begin{equation*}
S(\boldgsp_6) \xleftarrow{\iota_1}S(\widetilde {\boldsymbol H})\xrightarrow{\iota_2}S(\boldgsp_4)\times S(\boldgl_2)
\end{equation*}
such that $\iota_2$ is open and closed. Thus we obtain a well-defined map \begin{equation}
 \label{eq: intro induced} \iota_{2,\ast}\circ \iota^\ast_1: IH^\ast(S(\boldgsp_6), E(d)) \to IH^\ast(S(\boldgsp_4)\times S(\boldgl_2), E(d)).
\end{equation}
If $\widetilde\Pi$ is defined over $E$, then the subspace $IH^\ast(S(\boldgsp_6), E(d))[\widetilde\Pi_f]$ makes sense, and we obtain from (\ref{Equation: intro Hodge type}), (\ref{Equation: intro Tate}), and (\ref{eq: intro induced}) a space of Galois-invariant Hodge classes
$$\Hodge(\widetilde \Pi)\subset 
IH^{4d}(S(\boldgsp_4)\times S(\boldgl_2), E(d)).$$
However, we will see in a moment that, to obtain the nonvanishing of a Hodge class constructed in this way, we will have to allow $\pi$ -- hence also $\widetilde\Pi$ -- to be defined over an arbitrary number field.  
In general, let $S(\pi)$ denote the set of automorphic representations of $\GSp_6(\A_F)$ which are nearly equivalent to a constituent of $\Theta(\pi\boxtimes \mathbbm 1)$. Then the subspace 
$$\sum_{\sigma\in \Aut(\C/\Q)}\sum_{\widetilde\Pi\in S(\pi^\sigma)} IH^{4d} (S(\boldgsp_6), \C)[\widetilde\Pi_f] \subset IH^{4d}(S(\boldgsp_6),\C)$$
is stable under the $\Aut(\C/\Q)$ action on the coefficients. After descending to $E$, its image under (\ref{eq: intro induced}) defines a subspace
$$\Hodge(\pi) \subset IH^{4d}(S(\boldgsp_4)\times S(\boldgl_2), E(d))$$
which again consists of Galois-invariant Hodge classes.

  It remains to show that some element
 $\xi\in \Hodge(\pi)$  induces a nonzero map as claimed in Theorem  \ref{Intro theorem Hodge classes}. Similarly to the proof of Theorem \ref{thm: intro global mean thm}, we reduce this question to showing that the triple product period integral
\begin{equation}\label{eq: triple period}\int_{[Z_{\widetilde H}\backslash \widetilde H]} \theta(\alpha)(h, h') \beta(h)\gamma(h')\d (h, h'),\;\; \alpha\in \pi\boxtimes\mathbbm 1,\; \beta\in \Pi_S(\pi_1,\pi_2),\;\gamma\in \pi_2^\vee\end{equation}
is nonzero for some choice of $\pi$ and some choice of test vectors $\alpha$, $\beta$, and $\gamma$. Here $\widetilde H$ is parametrized by pairs $(h, h')\in \GSp_4\times \GL_2$, and the theta lift, which again depends on a choice of Schwartz function, is from $\GSO(B)$ to $\GSp_6$. 
The relevant seesaw diagram for this period is: 
\begin{center}
    \begin{tikzcd}
    \GSp_6 \arrow[d,dash]\arrow[dr,dash] & \GSO(B)\times_{\mathbb G_m} \GSO(B) \arrow[d,dash]\arrow[dl,dash] \\ \GSp_4\times_{\mathbb G_m} \GL_2 & \GSO(B)
    \end{tikzcd}
\end{center}
The seesaw identity reduces (\ref{eq: triple period}) to
\begin{equation}\label{eq:triple 2}
\int_{[\operatorname{PGSO}(B)]} \alpha (g)\theta (\beta) (g)\theta (\gamma) (g)\d g,
\end{equation}
where the theta lifts are now from $\GSp_4 $
and $\GL_2 $
to $\GSO (B) $. (Under the assumption that $B $
is nonsplit, all the integrals involved in the seesaw identity converge absolutely.)
The theta lift $\theta (\gamma) $
runs over $(\pi_2 ^ B)^\vee\boxtimes(\pi_2 ^ B)^\vee $
as $\gamma$ varies, and the image of the  theta lift $\theta (\beta) $
includes 
 $\pi_1^B\boxtimes \pi_2^B$ 
as $\beta$ varies. We choose $\alpha$ to be a Hilbert modular eigenform on $PB^\times(\A_F)$ such that  $\langle f_1^B\cdot f_2^B, \alpha\rangle _{\text{Pet}}\neq 0$, where $f_1^B\in \pi_1^B$ and $f_2^B\in (\pi_2^B)^\vee$ are holomorphic newforms, and let $\pi$ be the automorphic representation generated by $\alpha$. (Note that $\alpha$ may be defined over a larger number field than $f_1^B\cdot f_2^B$.)  Having made this choice of $\pi$ and $\alpha$, it  follows that (\ref{eq:triple 2}) is nonzero for  appropriate choices of $\beta$ and $\gamma$. 
\subsection*{Overview of the higher-weight case}
To simplify the notation, assume for now that $F =\Q $. The representations $\Pi_S(\pi_1,\pi_2)$ are cohomological whenever $\pi_1 $
and $\pi_2 $
have  weights $m_1\geq m_2 + 2 \geq 4$. In this situation one again finds nontrivial  Galois-invariant  classes in the \'etale cohomology of $S (\GSp_4)\times S (\GL_2), $
where now we take cohomology with coefficients in a local system  depending on $m_1 $ and $m_2. $
However, it is only for special choices of weights that we are able to formulate an analogue of Theorem \ref{thm: intro global mean thm}. 

So suppose that $\pi_1,\pi_2$ have weights $m+2$ and $m$ respectively, for an integer $m \geq 2.$ 
Let $V_{(m-2,0)}$ be the representation of $\GSp_4(\Q)$ with highest weight $(m-2,0)$ and central character $t\mapsto t^{m-2}$, and let $V_{m-2}$ be the representation $\Sym^{m-2} V_{\text{std}}$ of $\GL_2(\Q)$. We write $\mathcal V_{(m-2,0)}$ and $\mathcal V_{m-2}$ for  the corresponding local systems of $E$-vector spaces on $S(\GSp_4)$ and $S(\GL_2)$.  
(The normalization of these local systems is different in the text.)
With the corresponding normalization of the central characters of $\pi_i$, we have \cite{weissauer2009endoscopy}: 
\begin{equation}
   H ^ 3_{\et, c} (S (\GSp_4)_{\overline \Q}, \mathcal V_{(m-2,0),\lambda})_{\Pi_S (\pi_1,\pi_2)_f} =\begin {cases}\rho_{\pi_2}(1), &\infty\not\in S\\\rho_{\pi_1}, &\infty\in S\end {cases}
\end{equation}
In particular, when $\infty\not\in S$ one finds a nontrivial Galois-invariant map
\begin{equation}
     H ^ 3_{\et, c} (S (\GSp_4)_{\overline \Q}, \mathcal V_{(m-2,0),\lambda})[{\Pi_S (\pi_1,\pi_2)_f}] \to H^1_{\et}(S(\GL_2)_{\overline{\Q}}, \mathcal V_{m-2,\lambda})[\pi_{2,f}].
\end{equation}
The significance of our special choice of weights is that the local system
$$\iota ^\ast\mathcal V_{(m -2, 0)}^\check\otimes p ^\ast\mathcal V_{m -2}$$
on $S(H) $ has the constant local system $\underline\Q $ as a direct factor with multiplicity one. We may therefore define \begin {equation}[S (H)] \in H^4 \left( S (\GSp_4)\times S (\GL_2), \mathcal V_{(m-2,0)}^\check\boxtimes \mathcal V_{m -2}(2) \right)\end {equation}
using the pushforward of the fundamental class on $S (H) $.
\begin{theo}\label{Intro theorem higher weight cycle}
Let $\pi_1$ and $\pi_2$ be cuspidal automorphic representations of $\GL_2 (\A_\Q) $
of weights $m +2 $ and $m $, respectively, and the same central character (suitably normalized). Then the composite map $$
H ^ 3_c (S (\GSp_4), \mathcal V_{(m-2,0)} (1)) [\Pi_S (\pi_1,\pi_2)_f]\xrightarrow{[S (H)]_\ast}H ^ 1 (S (\GL_2), \mathcal V_{m -2})\twoheadrightarrow H ^ 1 (S (\GL_2), \mathcal V_{m -2}) [\pi_{2, f}]$$
is nonzero if and only if  $S =\emptyset $. In this case its image generates the $\GL_2 (\A_{\Q, f}) $-module $H ^ 1 (S (\GL_2), \mathcal V_{m -2}) [\pi_{2, f}].$
\end{theo}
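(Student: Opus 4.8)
The plan is to mimic the proof of Theorem \ref{thm: intro global mean thm}, now keeping track of the local systems $\mathcal V_{(m-2,0)}$ and $\mathcal V_{m-2}$. First I would reduce the non-vanishing of the composite to a period computation. Since $\pi_2$ is cuspidal, the $\pi_{2,f}$-part of $H^1(S(\GL_2),\mathcal V_{m-2})$ lies in interior cohomology, and since $\Pi_S(\pi_1,\pi_2)$ is tempered cuspidal, its contribution to $H^3_c(S(\GSp_4),\mathcal V_{(m-2,0)})$ agrees with its image in interior $=L^2=$ intersection cohomology; the cycle class $[S(H)]$ is compatible with these identifications. By Zucker's conjecture and Matsushima's formula, the composite is then computed by pairing the relevant relative Lie algebra cohomology classes, built from cusp forms on $\GSp_4$ and $\GL_2$, against the cycle $S(H)$, and the multiplicity-one branching $\iota^*\mathcal V_{(m-2,0)}^\vee\otimes p^*\mathcal V_{m-2}\supset\underline\Q$ recalled above lets one rewrite this as the period
\[
\mathcal P_S(\gamma,\beta)=\integral_{Z_H(\A)H(\Q)\backslash H(\A)}\gamma(\iota(h))\,\beta(p(h))\,\d h,
\]
which converges because $\gamma\in\Pi_S(\pi_1,\pi_2)$ and $\beta\in\pi$ are cuspidal, and where $\gamma,\beta$ are further required to generate the archimedean $K$-types contributing to cohomology with coefficients in $V_{(m-2,0)}$ and $V_{m-2}$. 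So everything comes down to deciding when $\mathcal P_S$ is nonzero on such cohomological vectors.

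The ``only if'' direction is immediate from Theorem \ref{thm:intro periods}(1): if the composite is nonzero then $\mathcal P_S(\gamma,\beta)\neq0$ for some cohomological $\gamma,\beta$, which forces $S=\emptyset$ and $\pi\cong\pi_2^\vee$ -- the latter being exactly why one must project to the $\pi_{2,f}$-isotypic part.

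For the ``if'' direction, assume $S=\emptyset$ and apply Theorem \ref{thm:intro periods}(2). It is enough to (i) realize a cohomology-generating vector $\gamma\in\Pi_\emptyset(\pi_1,\pi_2)$ as a theta lift $\theta_{\xphi_1\otimes\xphi_2}(\alpha)$ from $\GSO(B)=M$ for an explicit factorizable $\xphi_i$, and (ii) show the archimedean local zeta integral $\mathcal Z_\infty$ is nonzero on the resulting test data, the finite-place data being chosen so that the remaining $\mathcal Z_v$ are nonzero. Given (i) and (ii), Theorem \ref{thm:intro periods}(2) evaluates $\mathcal P_\emptyset$ as a nonzero scalar times $L^S(1,\pi_1\times\pi_2^\vee)\,L^S(1,\Ad\pi_2)/\zeta_F^S(2)$, which is nonzero by Shahidi's theorem. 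The step I expect to be the main obstacle is the archimedean analysis behind (i) and (ii): one must use the real Howe correspondence to identify which Schwartz function $\xphi_\infty$ lifts the cohomological vectors on $\GSO(B)(\R)$ attached to the weight $(m+2,m)$ discrete series of $\GL_2(\R)^2$ to the minimal $K_\infty$-type of the generic discrete series $\Pi_\emptyset(\pi_1,\pi_2)_\infty$, and then compute the associated archimedean zeta integral over $H(\R)$ (equivalently over $\operatorname{PGSO}(B)(\R)$) and check it does not vanish; the weight $m$ makes this $K_\infty$-type bookkeeping heavier than in the trivial-coefficient case.

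Finally, for the generation statement I would observe that the composite is $\GL_2(\A_f)$-equivariant, the source acquiring the $\GL_2(\A_f)$-action through $h\mapsto\iota(h)$ for any $h\in H(\A_f)$ mapping to the given element under $p$ -- such $h$ exists since $p$ is surjective, and $H^3_c[\Pi_\emptyset(\pi_1,\pi_2)_f]$ is $\GSp_4(\A_f)$-stable. Hence the image is a nonzero $\GL_2(\A_f)$-submodule of $H^1(S(\GL_2),\mathcal V_{m-2})[\pi_{2,f}]$. As the cycle class map is a morphism of Hodge structures and the latter space is a sum of two irreducible copies of $\pi_{2,f}$ distinguished by Hodge type, running the period non-vanishing for vectors $\gamma$ of each Hodge type at $\infty$ shows the image meets both copies, hence equals $H^1(S(\GL_2),\mathcal V_{m-2})[\pi_{2,f}]$.
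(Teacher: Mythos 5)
Your proposal is correct and follows essentially the same route as the paper's proof of Theorem \ref{biggie thm}: reduction of the cycle pairing to the period $\mathcal P_S$ via Matsushima/Zucker and the multiplicity-one branching of the local systems, vanishing for $S\neq\emptyset$ and $\pi\not\cong\pi_2^\vee$ from Theorems \ref{thm: non-generic vanishing} and \ref{theorem: new global. Pairing}, nonvanishing for $S=\emptyset$ from the Euler product plus Shahidi, and generation by checking both Hodge types. The one step you defer -- pinning down the archimedean Schwartz function via Howe duality and evaluating the archimedean zeta integral -- is exactly what Propositions \ref{prop: one-dimensional harmonic}, \ref{prop:Calculating theWheat Dr. Schwartz function}, and \ref{prop: local Zeta archimedean} supply, and you correctly identify it as the crux.
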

It is likely that Theorem \ref{Intro theorem Hodge classes} could be generalized to the weights $m_1\geq m_2+2\geq 4$; however, we have restricted our attention to the context of Theorem \ref{Intro theorem higher weight cycle}.
\begin{theo}\label{Intro theorem higher weight Hodge classes}
Let $\pi_1 $ and $\pi_2 $
be cuspidal automorphic representations of $\GL_2 (\A_\Q) $
of  weights $m+2$ and $m$, respectively, with the same central character (suitably normalized). Let $S$ be a set
of places of $\Q $
at which both $\pi_i $
are discrete series, such that $|S|\geq 2$ is even and $\infty\not\in S$. Then there exists a Hodge class $$\xi\in H^4 \left( S (\GSp_4)\times S (\GL_2), \mathcal V_{(m-2,0)}^\check\boxtimes \mathcal V_{m -2}(2) \right) $$
such that:
\begin{enumerate}
    \item For all finite places $\lambda $ of $E $, the image of $\xi$
    in $\lambda $-adic \'etale cohomology is $\Gal (\overline\Q/\Q) $-invariant.
    \item The composite map \begin{equation*} H ^ 3_{ c} (S (\GSp_4), \mathcal V_{(m-2,0)})[{\Pi_S (\pi_1,\pi_2)_f}] \to H^1(S(\GL_2), \mathcal V_{m-2})\twoheadrightarrow H^1(S(\GL_2), \mathcal V_{m-2})[\pi_{2,f}] \end{equation*}
is  nontrivial, and its image generates the $\GL_2(\A_{F,f})$-module $H^d(S(\boldgl_2),\mathcal V_{m-2,\lambda})[{\pi_{2,f}}]$.
\end{enumerate}
\end{theo}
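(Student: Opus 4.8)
The plan is to adapt the proof of Theorem~\ref{Intro theorem Hodge classes} to the present setting, where the coefficients are nontrivial, the base field is $\Q$, and---crucially for the numerology---$\pi_1,\pi_2$ carry the special weights $m+2,m$ and $\infty\notin S$. Let $B$ be the quaternion algebra over $\Q$ ramified exactly at $S$; since $|S|\geq 2$ is even and $\infty\notin S$, $B$ is nonsplit and indefinite. For an auxiliary cuspidal automorphic representation $\pi$ of $PB^\times(\A_\Q)$ of weight $2m+2$, I would form $\Theta(\pi\boxtimes\mathbbm 1)$, the theta lift to $\GSp_6$ of the automorphic representation $\pi\boxtimes\mathbbm 1$ of $\GSO(B)\simeq B^\times\times B^\times/\mathbb G_m$, and let $\widetilde\Pi$ denote any automorphic representation of $\GSp_6(\A_\Q)$ nearly equivalent to a constituent of $\Theta(\pi\boxtimes\mathbbm 1)$. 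One checks that each such $\widetilde\Pi$ is cohomological for a local system $\mathcal W_m$ on $S(\GSp_6)$ whose highest weight is determined by $m$ (and which is trivial when $m=2$), namely the one singled out by the archimedean component of the theta correspondence.

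The first pair of ingredients is the analogue of (\ref{Equation: intro Hodge type}) and (\ref{Equation: intro Tate}) with $\mathcal W_m$-coefficients: the $\widetilde\Pi_f$-isotypic part of $H^4_{(2)}(S(\GSp_6),\mathcal W_{m,\C})$ is, after a Tate twist, of pure Hodge type $(2,2)$, and $\Gal(\overline\Q/\Q)$ acts trivially on the correspondingly twisted space $IH^4(S(\GSp_6),\overline\Q_\l)[\widetilde\Pi_f]$. For the Hodge-type statement I would, as in the trivial-coefficient case, use Matsushima's formula to reduce to a $(\mathfrak g,K_\infty)$-cohomology computation, restrict to $\SP_6$ to apply Arthur's endoscopic classification \cite{arthur2013endoscopic}, describe the archimedean members of the resulting packet via \cite{adams1987endoscopic}, and read off their cohomology from Vogan's classification of cohomological unitary representations \cite{vogan1984unitary}; the decisive point is that every member $\widetilde\Pi_\infty$ is an $A_{\mathfrak q}(\lambda)$ whose cohomology is concentrated in bidegree $(2,2)$, which is exactly the numerical coincidence produced by the weights $m+2,m$ (equivalently, by the weight $2m+2$ of $\pi$). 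For the Galois statement, I would take a prime $p\neq\l$ with $\widetilde\Pi_v$ spherical at the places $v\mid p$; Lee's generalized Eichler--Shimura relation \cite{lee2020eichler,lee2022semisimplicity} produces a polynomial $P(X)$ annihilating geometric Frobenius on $IH^\ast(S(\GSp_6),\overline\Q_\l)[\widetilde\Pi_f]$, whose coefficients are expressed through the Satake parameters of $\widetilde\Pi_v$, which are in turn computed from those of $\pi_v$ by the spherical similitude theta correspondence (Proposition~\ref{prop:spherical}); one finds that $P(X)$ has a unique root of weight $4$, namely $p^{-2}$, so Frobenius at $p$ acts by $p^{-2}$ on $IH^4(S(\GSp_6),\overline\Q_\l)[\widetilde\Pi_f]$, and the Chebotarev density theorem finishes the argument. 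The hypothesis $\infty\notin S$ is convenient here, since it keeps $B$ indefinite and $\widetilde\Pi$ holomorphic at $\infty$.

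Next I would manufacture the Hodge classes. With $\widetilde H=\GSp_4\times_{\mathbb G_m}\GL_2\subset\GSp_6$, there are morphisms of Shimura varieties $S(\GSp_6)\xleftarrow{\iota_1}S(\widetilde H)\xrightarrow{\iota_2}S(\GSp_4)\times S(\GL_2)$ with $\iota_2$ open and closed. The key point---the analogue of the branching statement preceding Theorem~\ref{Intro theorem higher weight cycle}---is that, for the special weights $m+2,m$, the restriction $\iota_1^\ast\mathcal W_m$ is, up to a Tate twist, a direct summand of multiplicity one of $\iota_2^\ast(\mathcal V_{(m-2,0)}^\vee\boxtimes\mathcal V_{m-2})=\mathcal V_{(m-2,0)}^\vee\boxtimes\mathcal V_{m-2}$ as a local system on $S(\widetilde H)$; this is a finite computation with the branching rule for $\GSp_4\times_{\mathbb G_m}\GL_2\hookrightarrow\GSp_6$. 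Granting it, $\iota_{2,\ast}\circ\iota_1^\ast$, composed with the inclusion of this summand, yields a map from the suitably twisted $IH^4(S(\GSp_6),\mathcal W_m)$ to $IH^4(S(\GSp_4)\times S(\GL_2),\mathcal V_{(m-2,0)}^\vee\boxtimes\mathcal V_{m-2}(2))$. Applying it to the Galois-invariant, Hodge-type-$(2,2)$ classes of the previous step, and then averaging over the $\Aut(\C/\Q)$-conjugates $\pi^\sigma$ and over the automorphic representations nearly equivalent to a constituent of $\Theta(\pi^\sigma\boxtimes\mathbbm 1)$---exactly as in the proof of Theorem~\ref{Intro theorem Hodge classes}, the averaging being needed because the representation $\pi$ that realizes the period below may be defined over a larger number field than $f_1^Bf_2^B$---one obtains, after descending over $E$, a space $\Hodge(\pi)\subset IH^4(S(\GSp_4)\times S(\GL_2),\mathcal V_{(m-2,0)}^\vee\boxtimes\mathcal V_{m-2}(2))$ of Galois-invariant Hodge classes; as in the trivial case these may be transferred to ordinary cohomology $H^4$ on the relevant isotypic part, producing classes $\xi$ satisfying condition~(1).

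It remains to show that some $\xi\in\Hodge(\pi)$ induces a nonzero map as in condition~(2). As in the proof of Theorem~\ref{Intro theorem Hodge classes}, this reduces to the nonvanishing---for a suitable $\pi$ and suitable test vectors---of the triple product period
\[
\int_{[Z_{\widetilde H}\backslash\widetilde H]}\theta(\alpha)(h,h')\,\beta(h)\,\gamma(h')\,\d(h,h'),\qquad \alpha\in\pi\boxtimes\mathbbm 1,\ \beta\in\Pi_S(\pi_1,\pi_2),\ \gamma\in\pi_2^\vee,
\]
which, by the seesaw for $\GSp_6\leftrightarrow\GSO(B)\times_{\mathbb G_m}\GSO(B)$ (all integrals converging absolutely since $B$ is nonsplit), equals
\[
\int_{[\PGSO(B)]}\alpha(g)\,\theta(\beta)(g)\,\theta(\gamma)(g)\,\d g,
\]
the theta lifts now going from $\GSp_4$ and $\GL_2$ to $\GSO(B)$; here $\theta(\gamma)$ runs over $(\pi_2^B)^\vee\boxtimes(\pi_2^B)^\vee$ and the image of $\theta(\beta)$ contains $\pi_1^B\boxtimes\pi_2^B$. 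One then takes $\alpha$ to be a holomorphic eigenform of weight $2m+2$ on $PB^\times(\A_\Q)$ with $\langle f_1^B\cdot f_2^B,\alpha\rangle_{\text{Pet}}\neq0$, where $f_1^B\in\pi_1^B$ and $f_2^B\in(\pi_2^B)^\vee$ are holomorphic newforms---such an $\alpha$ exists because $f_1^Bf_2^B$ is a nonzero holomorphic form of weight $2m+2$ on $PB^\times$---lets $\pi$ be the representation it generates, and checks that the integral is then nonzero for appropriate $\beta$ and $\gamma$. The only genuinely new local input is at $\infty$: one computes, via local Howe duality, the archimedean theta lifts of the cohomological (holomorphic) vectors in $\Pi_S(\pi_1,\pi_2)_\infty$ and $\pi_{2,\infty}^\vee$---obtaining holomorphic discrete series of weights $m+2$ and $m$ on $B^\times(\R)$---and checks that the ensuing archimedean zeta integral against $\alpha_\infty$ of weight $2m+2$ is nonzero, which holds because these weights lie in the unbalanced range. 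Varying the finite components of $\beta$ and $\gamma$ by Hecke translates then upgrades nonvanishing to the assertion that the image generates $H^1(S(\GL_2),\mathcal V_{m-2})[\pi_{2,f}]$ as a $\GL_2(\A_{\Q,f})$-module. I expect the main obstacle to be the joint verification, at the archimedean place, that a single choice of $\mathcal W_m$ makes the cohomology of $\widetilde\Pi_\infty$ concentrated in bidegree $(2,2)$ while at the same time making $\iota_1^\ast\mathcal W_m$ of multiplicity one in $\mathcal V_{(m-2,0)}^\vee\boxtimes\mathcal V_{m-2}|_{\widetilde H}$---it is precisely the special weights (and $\infty\notin S$) that reconcile these two requirements---together with the careful bookkeeping of all the Tate twists.
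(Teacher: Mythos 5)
Your proposal is correct and follows essentially the same route as the paper's actual proof (which treats the general vector-valued case over a totally real field in \S\ref{sec:nontempered}--\S\ref{sec: proof Hodge}, specializing to Theorem~\ref{main theorem  Hodge classes}): the same auxiliary $\pi$ of weight $2m+2$ on $PB^\times$, the same Arthur-packet and Eichler--Shimura arguments for the local system $\mathcal V_{(m-2,m-2,0)}$ on $S(\GSp_6)$, the same multiplicity-one branching through $S(\widetilde H)$, and the same seesaw reduction of the nonvanishing to a triple product period against $f_1^B f_2^{B,\vee}$. The only differences are presentational (the paper packages the $\Aut(\C/\Q)$-averaging as localization at an unramified Hecke ideal, and handles the archimedean nonvanishing via a vector-valued seesaw identity and Howe-duality harmonics rather than a direct zeta-integral computation).
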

The proofs of Theorems \ref{Intro theorem higher weight cycle} and \ref{Intro theorem higher weight Hodge classes} follow the same lines as the overview given above, with only minor modifications. 
When $F\neq\Q $, we also have similar results assuming $\pi_1$ and $\pi_2$ have vector-valued weights $(m_v+2)_{v|\infty}$ and $(m_v)_{v|\infty}$. However, in this case it is more cumbersome to write down the definitions of the appropriate local systems. The precise results are included in Theorems \ref{biggie thm} and \ref{main theorem  Hodge classes} below.

One could also ask for an analogue of Theorem \ref{Intro theorem higher weight Hodge classes} that uses $\pi_1$, the higher-weight representation, in the place of $\pi_2.$ Unfortunately, our construction does not appear to yield any results in this direction.
\subsection*{Comparison with previous work}
The simplest case of  Langlands functoriality giving rise to Tate classes is the Jacquet-Langlands correspondence for cohomological representations of inner forms of $\GL_{2, F} $. 
For the transfer between quaternion algebras $B_1 $ and $B_2 $ which are split at exactly one archimedean place, the Shimura varieties associated to $B_1 ^\times $
and $B_2 ^\times $
are curves. The resulting Tate classes are known to arise from cycles by Faltings's isogeny theorem \cite{faltings1983endlichkeitssatze}, but no more explicit construction of these algebraic cycles is known.
When the relevant Shimura varieties have higher dimension, Ichino and Prasanna \cite{ichino2018hodge} have shown that the Jacquet-Langlands transfers (for general weights) are induced by Hodge cycles. Their construction is similar to the one used to prove Theorem \ref{Intro theorem Hodge classes}. However, in the Jacquet-Langlands setting there is no natural algebraic cycle such as $S(\boldsymbol H)$, so there is no analogue of Theorem \ref{thm: intro global mean thm}. 

In the context of Yoshida lifts, the case $S=\emptyset$ and $F = \Q$ of Theorem \ref{thm: intro global mean thm} was proven by Lemma in \cite{lemma2020algebraic}, using different methods that apply only to generic representations. The qualitatively different behavior of generic and non-generic members of the $L$-packet in Theorem \ref{thm: intro global mean thm} is a new phenomenon that had not previously appeared in the literature.
\subsection*{Arithmetic implications}
This work was originally motivated by a question of Weissauer in \cite{weissauer2005four}, which can be paraphrased as follows: if $F= \Q$ and $\pi_2$ is the automorphic representation associated to an elliptic curve $E/\Q$, then the motive associated to $E$ appears attached to members of  the $L$-packet $\Pi(\pi_1,\pi_2)$ in the cohomology of $S(\GSp_4)$. Can we then use Shimura curves on $S(\GSp_4)$ to construct interesting Selmer classes for $E$ in the spirit of Heegner points? Theorem \ref{thm: intro global mean thm} implies that, when applied to quaternionic Shimura curves and a generic representation $\Pi_\emptyset(\pi_1,\pi_2)$,  this construction would simply recover the Heegner points on $E$.
Indeed,  all appearances of the motive of $E$ attached to generic representations $\Pi_\emptyset(\pi_1,\pi_2)$
are fully accounted for by Hecke translates of the correspondence from $S(\GSp_4)$ to the modular curve $S(\GL_2)$ induced by (\ref{eq: candidate cycle}), and nonsplit quaternionic Shimura curves on $S(\GSp_4)$ are necessarily sent to CM divisors on $S(\GL_2)$ under this correspondence.  It is an intriguing question whether Weissauer's construction yields new Selmer classes when applied to quaternionic Shimura curves and the  non-generic members of the $L$-packets $\Pi(\pi_1,\pi_2)$. 

\subsection*{Organization of the paper}
In  \S\ref{sec:prelim}, we give some basic notations and conventions. In \S\ref{sec: cohomology}, we recall the plectic version of Matsushima's formula and its relation to vector-valued automorphic forms. In \S\ref{section: Similitude theta lifting}, we give notations and conventions for similitude theta lifts. This section also contains a proof of the $L$-functoriality for similitude theta lifts of spherical representations from orthogonal to symplectic groups (Proposition \ref{prop:spherical}); this is presumably well-known to experts. In \S\ref{sec: yoshida basics}, we recall the construction of the Yoshida lift $L$-packets via theta lifts, and compute the plectic Hodge structures associated to $\Pi_S(\pi_1,\pi_2)_f$. The material up to this point is necessary for all the main results. However, the proofs of Theorems \ref{thm:intro periods} and \ref{thm: intro global mean thm}, which are given in \S\ref{Section:. See Yoshida} and \S\ref{sec:coh generic proof}, respectively, are logically independent of the proof of Theorem \ref{Intro theorem Hodge classes}. The only exceptions are some results on the archimedean theta correspondence in \S\ref{subsec: Archimedean calculations}. In \S\ref{sec:nontempered}, we study the nontempered representations used for the construction of Hodge classes. In \S\ref{sec: triple product periods}, we compute the vector-valued triple product periods that are necessary for the nonvanishing of the Hodge classes. The proof of Theorem \ref{Intro theorem Hodge classes} is completed in \S\ref{sec: proof Hodge}. 

\subsection*{Acknowledgements}
The author is grateful to Mark Kisin, for his consistently valuable advice and encouragement; Wei Zhang, for pointing out the relation of Theorem \ref{thm:intro periods} to the nontempered GGP conjecture; and Si Ying Lee,  Siyan Daniel Li-Huerta, Aaron Pollack, Kartik Prasanna,  Alexander Petrov, Matteo Tamiozzo, and Salim Tayou, for a variety of helpful conversations and correspondence.  This work was supported by NSF Grant \#DGE1745303.
\section {Preliminaries}\label{sec:prelim}
\subsection{Basic notations}
\subsubsection{}\label{places etc}
Throughout this article, $F $ is a fixed totally real number field of degree $d$ and discriminant $D_F$, $\O_F$ is its ring of integers, and $\A_F $ is its ring of adeles. For each place $v $ of $F$, denote by $F_v$ the completion; if $v$ is non-archimedean, $\O_{v}$ is the valuation ring of $F_v $, $\varpi_v\in \O_{v}$ is the uniformizer, and $q_v = \#\O_{v}/\varpi_v$. For archimedean $v $, $q_v = 1. $ 
The Haar measure  on the additive group $\A_F$ is the product measure $\d a = \d a_f \prod_{v|\infty} \d a_v,$ where $\d a_f$ is the Haar measure on $\A_{F,f}$ such that $\widehat{\O}_F$ has volume 1 and $\d a_v$ is the standard measure on $F_v\cong \R$.
\subsubsection{}
If $G $ is an algebraic group over $F $, $[G] $ denotes the adelic quotient $G (F)\backslash G (\A_F) $. If $\d g$ denotes a Haar measure on $G(\A_F)$, then we write $\d g$ as well for the quotient Haar measure on $[G]$ (where $G(F)$ is given the counting measure). 
\subsubsection{}
We fix the additive character $\psi =\psi_0\circ\tr $ of $F\backslash\A_F$, where $\psi_0:\Q\backslash\A\to\C$ is the unique unramified character such that $\psi_0 (x) = e ^ {2\pi ix} $ for $x\in \R$.
\subsubsection{}\label{Subsubsection: Omega}
For any $m$, let $\omega_m:\R^\times \to \R^\times$ be the character $$t\mapsto t^{m - 2\cdot\lfloor\frac{m}{2}\rfloor}.$$ If $\boldsymbol m = (m_v)_{v|\infty}$, let $\omega_{\boldsymbol m}:(F\otimes \R)^\times \to\R^\times$ be the character $\otimes_{v|\infty}\omega_{m_v}.$
These characters will be used as the central characters for ``nearly unitary'' normalizations of automorphic forms appearing in cohomology.
\subsubsection{}
If $V$ is a vector space over a local field $k$ (either Archimedean or non-Archimedean), then $\mathcal S_{k}(V)$ is the Schwartz space of functions on $V$. If $V $
is a vector space over $F $ and $v$ is a place of $F $, then $\mathcal S_{F_v} (V)$
denotes the space of Schwartz functions on $V\otimes_F F_v$. 
Likewise, we write $\mathcal S_{F\otimes \R}(V)$ for the tensor product of the Schwartz spaces $\mathcal S_{F_v} (V) $ as $v$ ranges over archimedean places of $F $. 
\subsection{Conventions for $\GL_2$ and $\SL_2$}
\subsubsection{}
The standard Borel and unipotent subgroups of $\GL_2 $ are denoted $B $ and $N $, respectively; $\overline B$ denotes the image of $B $ in $\PGL_2. $ We shall abbreviate by $c\mapsto h_c$ the section of $\debt:\GL_2\to \mathbb G_m$ given by $h_c = \begin{pmatrix} 1 & 0 \\ 0 & c \end{pmatrix}.$ 
\subsubsection{}\label{subsubsec:measures}
For each non-archimedean place $v $ of $F $, we normalize the Haar measure $\d g_v $ on $\PGL_2 (F_v) $ to assign volume 1 to $\PGL_2 (\O_v) $, and likewise for $\SL_2 (F_v) $. For non-archimedean $v $, we choose the Haar measure $\d g_v $ on $\PGL_2(F_v)\cong \PGL_2(\R)$
given by:\begin{equation}
    \d g_v = \frac {\d a \d t \d \theta} {\pi t ^ 2},\;\; g_v =\begin {pmatrix} 1 & a\\0 & 1\end {pmatrix}\begin {pmatrix}  t & 0\\0 & 1\end {pmatrix}\begin {pmatrix}\cos\theta &\sin\theta\\ -\sin\theta &\cos\theta\end{pmatrix},\;\; a\in \R,t\in \R^\times, \theta\in [0,\pi).
\end{equation}
On $\SL_2 (F_v)\cong \SL_2(\R) $, we choose the Haar measure $\d g_v$ given by:
\begin{equation}
       \d g_v = \frac {\d a \d t \d \theta} {2\pi t ^ 2},\;\; g_v =\begin {pmatrix} 1 & a\\0 & 1\end {pmatrix}\begin {pmatrix}  t^{1/2} & 0\\0 & t^{-1/2}\end {pmatrix}\begin {pmatrix}\cos\theta &\sin\theta\\ -\sin\theta &\cos\theta\end{pmatrix},\;\; a\in \R,t\in \R_{>0}, \theta\in [0,2\pi).
\end{equation}
\subsubsection{}
For the standard compact subgroup $\SO (2) $
of $\SL_2(\R)$, we denote by $\chi_m: \SO(2)\mapsto \C^\times$ the character $$\begin{pmatrix}
 \cos\theta &\sin\theta\\-\sin\theta &\cos \theta\end{pmatrix} \mapsto (\cos\theta + i \sin \theta)^m.$$ 
\subsection{Conventions for symplectic groups}
\subsubsection{}\label{subsubsec: standard symp space}
Let $J$ be the matrix $\begin{pmatrix} 0 & 1 \\ -1 & 0 \end{pmatrix}$. Then, for any field $k$, the block-diagonal matrix $\begin {pmatrix} J & &\\&\ddots&\\& & J\end {pmatrix} $ defines a symplectic pairing on the $k$-space $W_{2n,k} =\langle e_1,\cdots, e_{2n}\rangle $
such that $$W_{2n,k} =\langle e_1, e_3,\cdots, e_{2n -1}\rangle\oplus\langle e_2, e_4,\cdots, e_{2n}\rangle $$
is a decomposition into maximal isotropic subspaces; we refer to $W_{2n,k} $ as the standard symplectic space of dimension $2n $. 
The symplectic group $\SP_{2n,k}$ and the general symplectic group $\GSp_{2n,k}$ are the isometry and similitude groups, respectively, of $W_{2n,k} $. When not otherwise specified, $k = F$.
\subsubsection{}\label{Sub sub: weights for unitary group}The maximal compact-modulo-center subgroup of the symplectic group $\GSp_{2n,\R}$ is $K_{n}\simeq (U(n) \times \R^\times) / \set{\pm 1}$, consisting of the matrices whose $2\times 2 $ blocks commute with $J $. When $K_n $ is viewed as a subgroup of $\GSp_{2n} (F_v) $
we write it $K_{n, v} $.
There is a maximal compact torus $T\subset U(n) $
such that
$$\mathfrak t = \begin{pmatrix}
   \alpha_1J & &
\\&\ddots&\\& &\alpha_nJ
\end{pmatrix},\;\; \alpha_i\in \R.$$
We parameterize the weights of $U(n)$ by tuples of integers $(m_1,\cdots, m_n) $,
 corresponding to the character
$$\begin{pmatrix}
 \alpha_1J & &\\&\ddots &\\& &\alpha_nJ
\end {pmatrix}\mapsto m_1\alpha_1+\cdots + m_n\alpha_n. $$
When $n = 1, $
the character $\chi_m\boxtimes\omega_m ^ {-1} $
on $U (1)\times Z_{\GL_2} $
descends to a character of $K_1, $
which we will also denote by $\chi_m $; we hope that this will cause no confusion.

\section{Cohomology of Shimura varieties}\label{sec: cohomology}
\subsection{Plectic Hodge structures}
\subsubsection{}\label{subsubsec:Plectic notation}
Let $G $ be a reductive group over $F $, and let $\boldsymbol G =\Res_{F/\Q} G_F $. Since\begin{equation}
    \boldsymbol G (\R)=\prod_{v |\infty} G(F_v)= \prod_{v|\infty}G_v(\R),
\end{equation}
a Shimura datum  $(\boldsymbol G, \boldsymbol X) $ is necessarily a product $\boldsymbol X\simeq\prod_{v |\infty} X_v $. If $K_v\subset G_v (\R) $
denotes the stabilizer of a distinguished point $h_v\in X_v $, then the stabilizer of the corresponding point $\boldsymbol h\in\boldsymbol X $
is
$$\boldsymbol K_\infty=\prod_{v|\infty} K_v.$$ Given a neat compact open subgroup $$K\subset\boldsymbol G(\A_{\Q,f})=G (\A_{F,f}), $$
one has a smooth algebraic Shimura variety $S_K ({\boldsymbol G},\boldsymbol X) $ such that $$S_K(\boldsymbol G, \boldsymbol X) (\C) = G(F)\backslash G(\A_{F,f})\times \boldsymbol X/ K;$$ the inverse limit over $K $
defines the pro-algebraic Shimura variety $ S(\boldsymbol G) $. (We usually drop $\boldsymbol X$ since it will be clear from context.)
Finally, given an algebraic representation $\rho $ of $\boldsymbol G$ 
on an $E$-vector space $V$, we have for each level subgroup $K$ the local system $\mathcal V_K$ on $ S_K(\boldsymbol G) $ whose total space over $S_K(\boldsymbol G)(\C)$ is \begin{equation}
    G(F) \backslash G (\A_{F,f})\times\boldsymbol X\times V/K.
\end{equation}
The local systems $\mathcal V_K$ are compatible as $K$ varies, and we write $\mathcal V$ for this compatible collection of local systems on $S(\boldsymbol G)$.

\subsubsection{}
Assuming $E\subset \C$,
Matsushima's formula for the  $L^2$ cohomology of $ S(\boldsymbol G) $ is:
\begin{equation}\label{eqn:matsushima}
    H ^\ast_{(2)} ( S(\boldsymbol G), \mathcal V_\C)\cong\bigoplus_{\pi =\pi_f\otimes\pi_\infty}m_{\text{disc}}(\pi)\cdot \pi_f\otimes H^\ast(\text{Lie }\boldsymbol {G}; \boldsymbol K_\infty, \pi_\infty^{\rm{sm}}\otimes V_\C)).
\end{equation}
Here $\pi $ runs over cuspidal automorphic representations of $G(\A)$, $m_{\text{disc}}(\pi)$ refers to the multiplicity in the discrete spectrum, and $\pi_\infty ^ {\rm {sm}} $
is the dense subspace of smooth vectors. Moreover (\ref{eqn:matsushima}) is equivariant for the natural actions of $G (\A_{F,f}) $ on both sides. Suppose  $V_\C=\otimes_{v} V_v $, where $V_v$ are $\C$-vector spaces equipped with algebraic representations $\rho_v$ of $G_v(\R)$, such that
 $\rho$ factors as \begin{equation}\label{eq:rho decomposes}\rho: G (F)\hookrightarrow G (F\otimes \R)\simeq\prod_v G _v(\R)\xrightarrow{\otimes \rho_v} \prod_v \Aut(V_v).\end{equation}
Since the Lie algebra of $\boldsymbol G $
is  $\prod_{v |\infty}\mathfrak g_v $, the right hand side of (\ref{eqn:matsushima})  has a decomposition (cf. \cite{nekovar2016introduction}):\begin{equation}\label{eqn:matsushima plectic}
    \bigoplus_{\boldsymbol p,\boldsymbol q} \left(\bigoplus_{\pi_f\otimes\pi_\infty}m_{\text{disc}}(\pi) \cdot \pi_f\otimes \bigotimes_{v|\infty} H^{p_v,q_v}(\mathfrak g_v,  K_v, \pi_v^{\rm{sm}}\otimes V_{v})\right).
\end{equation}
Here $\boldsymbol p $ and $\boldsymbol q $
are plectic Hodge types, i.e. tuples of positive integers $(p_v)_{v |\infty} $ and $(q_v)_{v |\infty} $.  Then (\ref{eqn:matsushima}) induces a plectic Hodge decomposition on $ H ^\ast_{(2)} ( S(\boldsymbol G), \mathcal V_\C) $, written:
\begin{equation}\label{EQ: plectic conch types}
    H ^ {\ast}_{(2)} ( S(\boldsymbol G),\mathcal V_\C) =\bigoplus_{\boldsymbol p,\boldsymbol q}H ^ {\boldsymbol p,\boldsymbol q}_{(2)} ( S(\boldsymbol G), \mathcal V_\C).
\end{equation}
\begin{rmk}\label{remark: Hodge theory disclaimer}
Because this decomposition does not take into account the variation of Hodge structures on $\mathcal V_\C $, it does not compare directly with the canonical mixed Hodge structure on $H ^\ast (S (\boldsymbol G),\mathcal V_\C) $.
For this reason, (\ref{EQ: plectic conch types})
should be viewed more as a computational tool then as a suitable definition of ``the'' plectic Hodge structure on $H ^\ast_{(2)} (S (\boldsymbol G),\mathcal V_\C) $.
\end{rmk}
\subsection{Realizing automorphic forms in cohomology}
\subsubsection{}
The complex structure on $X_v $ induces a decomposition $$\mathfrak g_{v,\C} =\mathfrak k_\infty \oplus\mathfrak p _ {v,+}\oplus\mathfrak p _{v, -}. $$
We define
\begin{equation}
    \wedge ^ {\boldsymbol p,\boldsymbol q}\mathfrak p_G ^ {\ast}\coloneqq\otimes_{v |\infty} (\wedge ^ {p_v}\mathfrak p_{v,+} ^ {\ast}\otimes\wedge ^ {q_v}\mathfrak p_{v,-} ^ {\ast}),
\end{equation}
and let $(\sigma ^ {\boldsymbol p,\boldsymbol q}, \wedge ^ {\boldsymbol p,\boldsymbol q} )$
be the corresponding natural representation of $\boldsymbol K_\infty $.
The vector bundle  $\Omega ^ {\ast} $
of differential forms on $ S(\boldsymbol G) $
has a decomposition $$\Omega ^\ast =\oplus_{\boldsymbol p,\boldsymbol q}\Omega ^ {\boldsymbol p,\boldsymbol q}, $$
where
the vector bundle $\Omega ^ {\boldsymbol p,\boldsymbol q} $ of $(\boldsymbol p,\boldsymbol q) $-forms on $ S(\boldsymbol G) $
corresponds to the local system whose complex points are:
$$G (F)\backslash G (\A_{F,f})\times\boldsymbol G (\R)\times\wedge ^ {\boldsymbol p,\boldsymbol q}\mathfrak p_G^\ast/\boldsymbol K_\infty. $$
In particular, the space $\Gamma_{(2)}(\Omega^{\boldsymbol p,\boldsymbol q}\otimes \mathcal V_\C)$ of $L^2$ global $( {\boldsymbol p,\boldsymbol q}) $-forms with coefficients in $\mathcal V_\C$ is identified with:
\begin{equation}
    \set {f\in C_{(2)} ^\infty (G (\A_F))\otimes V_\C\otimes\wedge ^ {\boldsymbol p,\boldsymbol q}\mathfrak p_G^\ast\,:\, f (\gamma gk) =\rho (\gamma)\sigma^ {\boldsymbol p,\boldsymbol q} (k ^ {-1})f (g),\,\forall\gamma\in G (F),\, k\in\boldsymbol K_\infty}.
\end{equation}
Here $C_{(2)} ^\infty(G(\A_F))$ is the space of smooth $L^2$ functions on $G(\A_F) $;  by definition, we have:
\begin{equation}\label{eqn:pqforms global to coh}\Gamma _{(2)}(\Omega ^ {\boldsymbol p,\boldsymbol q}\otimes \mathcal V_\C) \twoheadrightarrow H ^ {\boldsymbol p,\boldsymbol q}_{(2)} ( S(\boldsymbol G), \mathcal V_\C). \end{equation}
Finally, we remark that there is a canonical isomorphism:
\begin{equation}
\begin{split}
    (\mathcal{A}_{(2)}(G(\A_F))\otimes V_\C|_{\boldsymbol K_\infty}\otimes\wedge ^ {\boldsymbol p,\boldsymbol q}\mathfrak p_G^\ast) ^ {\boldsymbol K_\infty}&\xrightarrow{\sim}\Gamma_{(2)} (\Omega ^ {\boldsymbol p,\boldsymbol q}\otimes \mathcal V_\C)\\
    \xphi &\mapsto f_\xphi,\;\;
    f_\xphi (g)  =\rho(g_\infty)\xphi (g).
\end{split}
\end{equation}
Here $\rho(g_\infty)$ is defined via the decomposition (\ref{eq:rho decomposes}).
By composing with
(\ref{eqn:pqforms global to coh}), we obtain a realization of vector-valued automorphic forms in cohomology:
\begin{equation}\label{eqn:Realization cohomology definition}
     (\mathcal{A}_{(2)}(G(\A_F))\otimes V_\C|_{\boldsymbol K_\infty}\otimes\wedge ^ {\boldsymbol p,\boldsymbol q}\mathfrak p_G^\ast) ^ {\boldsymbol K_\infty}\twoheadrightarrow H ^ {\boldsymbol p,\boldsymbol q}_{\text {(2)}} ( S(\boldsymbol G),\mathcal V_\C).
\end{equation}
\subsection{Comparison with Betti cohomology}
 \subsubsection{}\label{subsubsec: diagram section}
 For any local system $\mathcal V$ associated to a \emph{complex} algebraic representation $V$ of $G(F)$, recall the canonical commutative diagram of $G(\A_{F,f})$-modules (cf. \cite[p. 293]{taylor1993thel}):
 \begin{center}
     \begin{tikzcd}
     H_{\operatorname{cusp}}^\ast(S(\boldsymbol G), \mathcal V) \arrow[r]\arrow[d] &H_{(2)}^\ast (S(\boldsymbol G), \mathcal V) \arrow[d,"\sim"] \arrow[rd] & \\
     H_c^\ast(S(\boldsymbol G),\mathcal{V}) \arrow[r] & IH^\ast(S(\boldsymbol G)^\ast,\mathcal V) \arrow[r] & H^\ast(S(\boldsymbol G), \mathcal V)
     \end{tikzcd}
 \end{center}
 Here $IH^\ast(S(\boldsymbol G)^\ast, \mathcal V)$ is the intersection cohomology of the minimal compactification, and the indicated map is an isomorphism by the proof of Zucker's conjecture \cite{looijenga19882,saper1990l2}. Moreover it is known that the composite $H^\ast_{\operatorname{cusp}}(S(\boldsymbol G),\mathcal V) \to H^\ast(S(\boldsymbol G), \mathcal V)$ is injective. Thus $L^2$ cohomology is related to inner cohomology by the inclusions:
 \begin{equation}\label{eq:inner coh sandwich}
     H^\ast_{\operatorname{cusp}} (S(\boldsymbol G), \mathcal V) \subset H^\ast_!(S(\boldsymbol G),\mathcal{V}) \subset \im H^\ast_{(2)} (S(\boldsymbol G),\mathcal V).
 \end{equation}
 \subsubsection{}\label{subsubsection: coefficient field definition etc.}
If $\Pi_f$ is a $\C[G(\A_{F,f})]$-module,  $\Pi_f $
is \emph{defined} over a subfield $E\subset\C $
if there exists a $E [G (\A_{F, f})]$-module $\Pi_f ^ E $
such that $\Pi_f ^ E\otimes_E\C\simeq\Pi_f $. In this case, if $\mathcal V $ is the $E $-local system associated to an $E $-linear representation $V $
of $G (F) $, then we write:
\begin{equation}\label{EQ: isotypic part over number field}
\begin{split}H_? ^\ast (S({\boldsymbol G}), \mathcal V)_{\Pi_{S_f}} &\coloneqq 
\Hom_{E[G(\A_{F,f})]} (\Pi_f^E, H_? ^\ast (S(\boldsymbol G), \mathcal V),\\
H_?^\ast(S(\boldsymbol G), \mathcal V )[\Pi_{S_f}]&\coloneqq \Pi_f^E\otimes_E
H_? ^\ast (S({\boldsymbol G}), \mathcal V)_{\Pi_{S_f}},\end{split}\end{equation}
where $H ^\ast_? $
denotes compactly supported, inner, or singular cohomology as $? =c $, !, or $\emptyset$.
 \subsubsection{}
 We say an irreducible admissible complex $G(\A_{F,f})$-representation $\Pi$ is \emph{Eisenstein} if $\Pi$ is a subquotient of a parabolic induction $\Ind_{P(\A_{F,f})}^{G(\A_{F,f})} \pi_f,$ for a parabolic subgroup $P = MN$ of $G$ and a discrete automorphic representation $\pi$ of $M(\A_F)$. For any admissible $E[G(\A_{F,f})]$-module $H$, we say $H$ is  Eisenstein if all irreducible constituents of $H\otimes \C$ are.
\begin{lemma}\label{lemma: retard corner}
Let $\mathcal V$ be the automorphic local system on $S(\boldsymbol G)$ associated to a $G(F)$-representation $(V, \rho)$. Then the $E[G(\A_{F,f})]$-module  $$H^i(S(\boldsymbol G), \mathcal V)/H^i_!(S(\boldsymbol G), \mathcal V)$$ is  Eisenstein.
\end{lemma}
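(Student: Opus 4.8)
The plan is to use the theory of the Borel--Serre compactification and the long exact sequence relating $H^\ast$, $H^\ast_c$, and boundary cohomology, together with the description of the boundary cohomology via parabolic induction of cohomology of the boundary strata. Concretely, let $\overline{S(\boldsymbol G)}$ denote the Borel--Serre compactification of $S_K(\boldsymbol G)$ (for each neat level $K$, compatibly), with boundary $\partial \overline{S(\boldsymbol G)}$. Since the inclusion $S_K(\boldsymbol G)\hookrightarrow \overline{S(\boldsymbol G)}$ is a homotopy equivalence, we have $H^i(S(\boldsymbol G),\mathcal V)\cong H^i(\overline{S(\boldsymbol G)},\mathcal V)$, and the long exact sequence of the pair $(\overline{S(\boldsymbol G)},\partial\overline{S(\boldsymbol G)})$ reads
\begin{equation*}
\cdots \to H^i_c(S(\boldsymbol G),\mathcal V)\to H^i(S(\boldsymbol G),\mathcal V)\xrightarrow{\mathrm{res}} H^i(\partial\overline{S(\boldsymbol G)},\mathcal V)\to\cdots
\end{equation*}
which is $G(\A_{F,f})$-equivariant. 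By definition $H^i_!(S(\boldsymbol G),\mathcal V) = \im\big(H^i_c\to H^i\big)$, so $H^i(S(\boldsymbol G),\mathcal V)/H^i_!(S(\boldsymbol G),\mathcal V)$ injects, $G(\A_{F,f})$-equivariantly, into the boundary cohomology $H^i(\partial\overline{S(\boldsymbol G)},\mathcal V)$. It therefore suffices to show that the boundary cohomology is Eisenstein as an $E[G(\A_{F,f})]$-module; since being Eisenstein is inherited by submodules (a subquotient of a subquotient is a subquotient), this gives the claim.

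Next I would analyze the boundary. The boundary $\partial\overline{S(\boldsymbol G)}$ is stratified by the $G(F)$-conjugacy classes of proper parabolic $\Q$-subgroups of $\boldsymbol G$, equivalently (since $\boldsymbol G = \Res_{F/\Q}G$) by the $G(F)$-conjugacy classes of proper parabolic $F$-subgroups $P = M N$ of $G$. The stratum $e(P)$ associated to $P$ is a fiber bundle over an arithmetic locally symmetric space for $\boldsymbol M = \Res_{F/\Q}M$, with nilmanifold fibers, and the Leray spectral sequence for this fibration computes $H^\ast(e(P),\mathcal V)$ in terms of $H^\ast(\boldsymbol M\text{-locally symmetric space}, H^\ast(\mathfrak n, V))$, where $\mathfrak n = \mathrm{Lie}(N)$ and $M(F)$ acts on the Lie algebra cohomology $H^\ast(\mathfrak n, V)$ (Kostant's theorem gives this as a sum of irreducible $M$-representations). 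Crucially, as a $G(\A_{F,f})$-module the full boundary cohomology is built (via a spectral sequence whose $E_1$ page is a sum over the strata, and via the Leray spectral sequences above) out of subquotients of
\begin{equation*}
\mathrm{Ind}_{P(\A_{F,f})}^{G(\A_{F,f})}\; H^\ast\big(S(\boldsymbol M), \mathcal V'\big),
\end{equation*}
where $\mathcal V'$ ranges over the local systems attached to the Kostant summands of $H^\ast(\mathfrak n, V)$; this is the standard computation (cf. the work of Harder, Schwermer, and the Borel--Serre construction). Since $\mathrm{Ind}$ from $P(\A_{F,f})$ is exact and sends subquotients to subquotients, it suffices to know that $H^\ast(S(\boldsymbol M),\mathcal V')$ is, as an $M(\A_{F,f})$-module, a sum of subquotients of parabolic inductions of discrete automorphic representations of Levi subgroups of $M$.

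The last point is exactly the statement that the $M(\A_{F,f})$-module $H^\ast(S(\boldsymbol M),\mathcal V')$ is Eisenstein in the extended sense applied to $M$ in place of $G$ --- but this requires care because $M$ need not itself carry a Shimura datum, so $S(\boldsymbol M)$ is just a locally symmetric space, not a Shimura variety. Here I would invoke Franke's theorem on the automorphic description of the cohomology of arithmetic groups \cite{franke1998harmonic}: $H^\ast(S(\boldsymbol M),\mathcal V'\otimes\C)$ is computed by the cohomology of the space of automorphic forms, and decomposes along cuspidal supports, each summand being a subquotient of an induction $\mathrm{Ind}_{Q(\A_{F,f})}^{M(\A_{F,f})}\sigma_f$ for $Q = L U$ a parabolic of $M$ and $\sigma$ a cuspidal (hence discrete) automorphic representation of $L(\A_F)$. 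Transitivity of parabolic induction then lets us rewrite $\mathrm{Ind}_{P}^{G}\mathrm{Ind}_{Q}^{M}$ as $\mathrm{Ind}$ from a parabolic of $G$, so every constituent of the boundary cohomology is a subquotient of a parabolic induction of a discrete automorphic representation of a Levi of $G$, i.e.\ Eisenstein. Finally, to descend from $\C$-coefficients to the $E[G(\A_{F,f})]$-module statement: the quotient $H^i/H^i_!$ is defined over $E$, and being Eisenstein is defined via base change to $\C$, so the $\C$-coefficient computation suffices directly. The main obstacle is organizing the boundary spectral sequence and the Leray spectral sequences to make the $G(\A_{F,f})$-equivariance of the "built from parabolic inductions" conclusion precise; this is essentially bookkeeping in the Borel--Serre picture combined with citing Franke, and I do not anticipate a genuine difficulty, only some care with conventions.
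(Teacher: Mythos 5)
Your argument is correct and is precisely the standard Borel--Serre / boundary-cohomology argument that the paper itself invokes by citing Harder's exposition (Chapter 9 of \emph{Cohomology of Arithmetic Groups}) rather than writing it out. The only point worth flagging is that the paper's definition of Eisenstein should be read with $P$ a \emph{proper} parabolic (otherwise the notion is vacuous), and your construction does produce proper parabolics after applying transitivity of induction, so there is no gap.
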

\begin{proof}
This is well-known; a lucid exposition may be found in the preprint \cite[Chapter 9]{harder2019cohomology}.
\end{proof}
\subsection{Symplectic Shimura varieties}\label{subsection: symplectic Shimura}
\subsubsection{}\label{Subsubsection: local systems for symplectic ShimuraPart one}
When $G =\GSp_{2n} $, equipped with its usual Shimura datum,
the subgroup $\boldsymbol K_\infty $
is just
\begin{equation}
    \label{bold cayenne}
    \boldsymbol K_n\coloneqq\prod_{v} K_{n, v}\subset\GSp_{2n} (F\otimes\R).
\end{equation}
We establish some notation for local systems on $S (\boldgsp_{2n}) $. 
 Suppose given a tuple $\boldsymbol \lambda = (\lambda_v)_{v|\infty}$, where   $\lambda_v =(m_{1, v},\cdots, m_{n, v}) $ is a dominant weight
 of $\SP_{2n,\R} $. We define $(\rho_{\lambda_v},V_{\lambda_v}) $
 to be the unique irreducible $\C$-linear representation of $\GSp_{2n} $
 whose restriction to $\SP_{2n}$ has weight $\lambda_v$ and whose central character is $\omega^{-1}_{m_{1, v}+\cdots+ m_{n, v}}$ in the notation of (\ref{Subsubsection: Omega}). This defines a representation $(\rho_{\boldsymbol\lambda},V_{\boldsymbol\lambda})$ of $\GSp_{2n}(F)$ according to (\ref{eq:rho decomposes}), which clearly descends to $F^c$.
 \begin{prop}
 The representation $(\rho_{\boldsymbol\lambda},V_{\boldsymbol\lambda})$  descends to a $\Q (\boldsymbol\lambda) $-linear representation of $\GSp_{2n} (F)$, where $\Q (\boldsymbol\lambda) $
 is the fixed field of $$\set {\sigma\in\Aut(\C/\Q)\,:\,\lambda_{\sigma\cdot v} =\lambda_{v}\,\,\forall v|\infty}.$$
 \end{prop}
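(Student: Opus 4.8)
The plan is to exhibit an explicit model of $(\rho_{\boldsymbol\lambda}, V_{\boldsymbol\lambda})$ over $\Q(\boldsymbol\lambda)$ by taking a suitable tensor product of Galois-conjugates of the pieces $V_{\lambda_v}$, and then to argue that this tensor product carries a natural $\GSp_{2n}(F)$-action descending the given one. First I would fix, for a single dominant weight $\lambda$ of $\SP_{2n,\R}$, a model $V_\lambda^{\Q}$ of $(\rho_\lambda, V_\lambda)$ over $\Q$: this is standard, since the irreducible representation of $\GSp_{2n}$ of highest weight $\lambda$ with the specified central character is defined over $\Q$ (it can be cut out of a tensor power of the standard representation by Weyl's construction, all of which is $\Q$-rational; the central-character twist by $\omega^{-1}_{m_1+\cdots+m_n}$ is likewise $\Q$-rational). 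The subtlety, and the reason the field cannot in general be taken to be $\Q$, is that the representation $(\rho_{\boldsymbol\lambda}, V_{\boldsymbol\lambda})$ of $\GSp_{2n}(F)$ is built by composing the embedding $\GSp_{2n}(F)\hookrightarrow \prod_{v|\infty}\GSp_{2n}(\R)$ with $\otimes_v \rho_{\lambda_v}$, and this composite intertwines the natural $\Aut(\C/\Q)$-action on coefficients with the action permuting the archimedean places according to how $\sigma$ acts on the embeddings $F\hookrightarrow\R$.

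Concretely, I would proceed as follows. Label the archimedean places $v$ by the corresponding real embeddings $\tau_v: F\hookrightarrow \C$. Then $\Aut(\C/\Q)$ acts on the set of places by $\sigma\cdot \tau_v = \sigma\circ\tau_v$. Using the $\Q$-models $V^\Q_{\lambda_v}$ chosen above, form $W \coloneqq \bigotimes_{v|\infty} V^\Q_{\lambda_v}\otimes_\Q \C$, which is the representation space $V_{\boldsymbol\lambda}$; the $\GSp_{2n}(F)$-action is $\gamma\cdot(\otimes_v x_v) = \otimes_v \rho_{\lambda_v}(\tau_v(\gamma)) x_v$. For $\sigma\in\Aut(\C/\Q)$, the $\sigma$-semilinear map $1\otimes\sigma$ on $W = (\bigotimes_v V^\Q_{\lambda_v})\otimes_\Q\C$ sends the $\gamma$-action to the action $\gamma\mapsto \otimes_v \rho_{\lambda_v}(\sigma\tau_v(\gamma))$. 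Now I would use that $\lambda_{\sigma\cdot v} = \lambda_v$ whenever $\sigma$ fixes $\Q(\boldsymbol\lambda)$: this says that the tuple of weights is unchanged after permuting the factors by $\sigma$, so the permuted action $\otimes_v \rho_{\lambda_v}(\sigma\tau_v(\gamma))$ is, after relabelling factors by the permutation $v\mapsto \sigma^{-1}\cdot v$, literally equal to the original action $\otimes_v \rho_{\lambda_v}(\tau_v(\gamma))$. Composing $1\otimes\sigma$ with this relabelling permutation of tensor factors (which is a $\C$-linear isomorphism of $W$) yields a $\sigma$-semilinear automorphism $\phi_\sigma$ of $(W, \rho_{\boldsymbol\lambda})$ that is $\GSp_{2n}(F)$-equivariant.

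The final step is to check that $\sigma\mapsto \phi_\sigma$ is a (continuous) action, i.e. $\phi_\sigma\phi_{\sigma'} = \phi_{\sigma\sigma'}$ — this is a bookkeeping check that the composite of the coefficientwise $\sigma$- and $\sigma'$-actions with the two relabelling permutations agrees with the $\sigma\sigma'$-version, which follows because the permutation associated to $\sigma$ is exactly the place-permutation $v\mapsto\sigma^{-1}\cdot v$ and these compose correctly; one must be slightly careful about whether to relabel by $\sigma$ or $\sigma^{-1}$, and that the tensor-factor swap is compatible with the identifications of the $V^\Q_{\lambda_v}$ for places in the same $\Aut(\C/\Q)$-orbit (which requires choosing, once and for all, a single $\Q$-model $V^\Q_{\lambda_0}$ for each orbit representative and transporting it). Having produced a semilinear $\Aut(\C/\Q)$-action commuting with $\GSp_{2n}(F)$ and fixed by $\Aut(\C/\Q(\boldsymbol\lambda))$, Galois descent (the representation being finite-dimensional, the action is automatically continuous for the discrete topology, so descent along $\C/\Q(\boldsymbol\lambda)$ applies) gives the desired $\Q(\boldsymbol\lambda)$-model, and since $\rho_{\boldsymbol\lambda}$ already descends to $F^c$, the two descent data are compatible. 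The main obstacle is purely organizational: setting up the permutation-of-tensor-factors bookkeeping so that $\sigma\mapsto\phi_\sigma$ is genuinely a group action rather than a projective one, and making the choices of $\Q$-models within each place-orbit coherently; there is no serious representation-theoretic difficulty, since rationality of the individual $V_{\lambda_v}$ over $\Q$ is classical.
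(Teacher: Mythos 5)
Your argument is correct and is essentially the paper's own proof: the paper simply invokes \cite[Proposition I.3]{waldspurger1985quelques}, whose proof is exactly this descent argument (coefficient-wise Galois action composed with the permutation of tensor factors induced by $v\mapsto\sigma\cdot v$, using that each $V_{\lambda_v}$ has a $\Q$-model and that $\lambda_{\sigma\cdot v}=\lambda_v$ for $\sigma$ fixing $\Q(\boldsymbol\lambda)$). The bookkeeping you flag (coherent choices of $\Q$-models within each orbit and the cocycle condition for $\sigma\mapsto\phi_\sigma$) is the same routine verification carried out in \emph{loc.\ cit.}
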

 \begin{proof}
 The proof of \cite[Proposition I.3]{waldspurger1985quelques} applies unchanged.
 \end{proof}
 \subsubsection{}
  In particular, for each such $\boldsymbol\lambda$, we obtain a $\Q(\boldsymbol \lambda)$-local system $\mathcal V_{\boldsymbol \lambda}$ on $S(\boldgsp_{2n})$ such that $\mathcal V_{\boldsymbol \lambda,\C}$ arises from the tuple of representations $(\rho_{\lambda_v},V_{\lambda_v})$ of $\GSp_{2n}(F_v)$ according to (\ref{eq:rho decomposes}).
 \subsection{The case $G = \GL_2$}
\subsubsection{}\label{subsubsec: coh for gl2}
We recall some basic results on the cohomology of $S(\boldsymbol G)$ in the simplest case,  $G = GL_{2}  = \GSp_{2}.$ 
For a tuple of integers $\boldsymbol m = (m_v)_{v|\infty}$ with $m_v\geq 2$, define $\Q(\boldsymbol m)$ to be the fixed field of  \begin{equation}
    \set {\sigma\in\Aut(\C/\Q)\,:\,m_{\sigma\cdot v} = m_v\,\,\forall v|\infty}.
\end{equation}
We then obtain from \S\ref{subsection: symplectic Shimura}
a $\Q (\boldsymbol m) $-local system $\mathcal V_{\boldsymbol m- 2}$ on $S (\boldgl_2) $, where $\boldsymbol m -2 = (m_v -2)_{v |\infty}$.
 \subsubsection{}Let $(p  (+), q  (+))= (1,0) $ and $(p  (-), q  (-)) = (0,1) $, and define $(\boldsymbol p  (\boldsymbol\epsilon),\boldsymbol q  (\boldsymbol\epsilon)) $ to be the plectic Hodge type $(p_v(\epsilon_v), q_v(\epsilon_v))_{v|\infty}$, for any choice of signs $\boldsymbol\epsilon = (\epsilon_v)_{v |\infty} $. Let  $\boldsymbol\chi_{\boldsymbol\epsilon\boldsymbol m}$ be the  character of $\boldsymbol K_1$ from (\ref{Sub sub: weights for unitary group}). 
  Then we have:
\begin{equation}\label{eq: To get realization on Jill to}
\begin{split}
\dim_\C\Hom_{\boldsymbol K_1} \left(
    \boldsymbol\chi_{-\boldsymbol\epsilon\boldsymbol m},\wedge ^ {\boldsymbol p (\boldsymbol\epsilon),\boldsymbol q (\boldsymbol\epsilon)}\mathfrak p ^\ast_{\GL_2}\otimes V_{\boldsymbol m - 2,\C}\right) = 1,\\
    \dim_\C\Hom_{\boldsymbol K_1}\left (
    \boldsymbol\chi_{-\boldsymbol\epsilon\boldsymbol m}^\check,\wedge^{1-\boldsymbol p (\boldsymbol\epsilon), 1 -\boldsymbol q (\boldsymbol\epsilon)}\mathfrak p_{\GL_2}^\ast\otimes V_{\boldsymbol m - 2,\C}^\vee\right) = 1.
    \end{split}
\end{equation}
Let $\pi$ be a cuspidal automorphic representation of $\GL_2(\A_F)$ of weight $\boldsymbol m$, whose central character has infinity type $\omega_{\boldsymbol m}$. Then combining (\ref{eqn:Realization cohomology definition}) with (\ref{eq: To get realization on Jill to}) yields  maps,   well-defined up to scalars:
\begin{equation}\label{EQ: final high epsilon to cohomology map 4G prime}
\begin{split}
\CL_{\boldsymbol\epsilon} : (\pi\otimes\boldsymbol\chi_{-\boldsymbol\epsilon\boldsymbol m}) ^ {\boldsymbol K_1}&\to H_{(2)} ^ {\boldsymbol p (\boldsymbol\epsilon),\boldsymbol q (\boldsymbol\epsilon)} (S (\boldgl_2),\mathcal V_{\boldsymbol m -2,\C})[\pi_f]\\
\CL_{\boldsymbol\epsilon}': (\pi ^\vee\otimes\boldsymbol\chi_{-\boldsymbol\epsilon\boldsymbol m} ^\vee) ^ {\boldsymbol K_1}&\to H_{(2)} ^ {1 -\boldsymbol p (\boldsymbol\epsilon), 1 -\boldsymbol q (\boldsymbol\epsilon)}(S (\boldgl_2),\mathcal V_{\boldsymbol m -2,\C} ^\vee)[\pi_f ^\vee]
\end{split}
\end{equation}
Here $(1 -\boldsymbol p (\boldsymbol\epsilon), 1 -\boldsymbol q (\boldsymbol\epsilon)) = (\boldsymbol p(-\boldsymbol \epsilon), \boldsymbol q(-\boldsymbol \epsilon))$
is the plectic Hodge
type $(1 - p (\epsilon_v), 1 - q (\epsilon_v))_{v|\infty} $.
The following is well-known:
\begin{prop}\label{prop: comparison isomorphism for G'}
For each $\boldsymbol\epsilon $, the maps in (\ref{EQ: final high epsilon to cohomology map 4G prime}) are isomorphisms,
 and $$H_{(2)}^{\boldsymbol p,\boldsymbol q}(S(\boldgl_2), \mathcal V_{\boldsymbol m-2, \C})[\pi_f] =H_{(2)}^{\boldsymbol p,\boldsymbol q}(S(\boldgl_2), \mathcal V^\vee_{\boldsymbol m-2, \C})[\pi^\vee_f]= 0$$ if $(\boldsymbol p,\boldsymbol q)$ is not of the form $(\boldsymbol p(\boldsymbol \epsilon),\boldsymbol q(\boldsymbol \epsilon))$ for some $\boldsymbol\epsilon $. Moreover, if $\pi_f$ is defined over $E\supset \Q(\boldsymbol m) $, then there are $\GL_2(\A_{F,f})$-equivariant isomorphisms
$$H^\ast_{!}( S({\boldgl_2}), \mathcal V_{\boldsymbol m-2,E}) [\pi_f]\otimes_E\C \simeq H ^\ast_{(2)} (S({\boldgl_2}), \mathcal V_{\boldsymbol m,\C}) [\pi_f] $$
and $$H ^\ast_c (S({\boldgl_2}), \mathcal V_{\boldsymbol m,E}) [\pi_f] \simeq H^\ast (S(\boldgl_2), \mathcal V_{\boldsymbol m,E})[\pi_f]\simeq H^\ast_! (S(\boldsymbol G), \mathcal V_{\boldsymbol m}) [\pi_f], $$
and similarly for $\mathcal V_{\boldsymbol m, E}^\vee$ and $\pi_f^\vee$.
\end{prop}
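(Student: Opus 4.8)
The plan is to reduce every assertion to three standard inputs: the plectic Matsushima formula \eqref{eqn:matsushima plectic}, the classical computation of the relative Lie algebra cohomology of (twists of) the discrete series of $\GL_2(\R)$, and the control of boundary cohomology provided by Lemma \ref{lemma: retard corner}.

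I would begin with the representation-theoretic input. Since $\pi$ is cuspidal, $m_{\mathrm{disc}}(\pi)=1$, and by strong multiplicity one for $\GL_2$, $\pi$ is the unique discrete automorphic representation of $\GL_2(\A_F)$ with finite part $\pi_f$ — residual representations $\chi\circ\det$ being excluded because their finite parts are not cuspidal. In particular $\pi_\infty$ is the product over $v\mid\infty$ of the discrete series of $\GL_2(F_v)$ of weight $m_v$, with central character of infinity type $\omega_{\boldsymbol m}$. Plugging this into \eqref{eqn:matsushima plectic} and the definition of the $\pi_f$-isotypic part gives a $\GL_2(\A_{F,f})$-equivariant isomorphism
\[
H^{\boldsymbol p,\boldsymbol q}_{(2)}\bigl(S(\boldgl_2),\mathcal V_{\boldsymbol m-2,\C}\bigr)[\pi_f]\;\cong\;\pi_f\otimes\bigotimes_{v\mid\infty}H^{p_v,q_v}\bigl(\mathfrak g_v,K_v,\pi_v^{\mathrm{sm}}\otimes V_{m_v-2,\C}\bigr),
\]
and similarly with $\mathcal V^\vee_{\boldsymbol m-2}$ and $\pi_f^\vee$. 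Each factor on the right is one-dimensional for $(p_v,q_v)\in\{(1,0),(0,1)\}$ and zero otherwise, which is precisely the asserted vanishing of $H^{\boldsymbol p,\boldsymbol q}_{(2)}[\pi_f]$ and $H^{\boldsymbol p,\boldsymbol q}_{(2)}[\pi_f^\vee]$ unless $(\boldsymbol p,\boldsymbol q)=(\boldsymbol p(\boldsymbol\epsilon),\boldsymbol q(\boldsymbol\epsilon))$ for some sign vector $\boldsymbol\epsilon$.

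To see that $\CL_{\boldsymbol\epsilon}$ and $\CL_{\boldsymbol\epsilon}'$ are isomorphisms, I would first identify their sources. Restricting the realization map \eqref{eqn:Realization cohomology definition} to the $\pi$-isotypic part of $\mathcal A_{(2)}(\GL_2(\A_F))$ presents its source as $\pi_f$ tensored with the relative Lie algebra cochain space $\Hom_{\boldsymbol K_1}(\wedge^{\boldsymbol p(\boldsymbol\epsilon),\boldsymbol q(\boldsymbol\epsilon)}\mathfrak p_{\GL_2},\pi_\infty^{\mathrm{sm}}\otimes V_{\boldsymbol m-2,\C})$. An explicit weight computation — the only $\boldsymbol K_1$-type of $\wedge^{\boldsymbol p(\boldsymbol\epsilon),\boldsymbol q(\boldsymbol\epsilon)}\mathfrak p_{\GL_2}^\ast\otimes V_{\boldsymbol m-2,\C}$ pairing nontrivially with $\pi_\infty$ is the extremal weight $\boldsymbol\chi_{-\boldsymbol\epsilon\boldsymbol m}$, and it occurs with multiplicity one on each side — combines with \eqref{eq: To get realization on Jill to} to identify this cochain space with $(\pi_\infty\otimes\boldsymbol\chi_{-\boldsymbol\epsilon\boldsymbol m})^{\boldsymbol K_1}$, so the source of $\CL_{\boldsymbol\epsilon}$ is indeed $(\pi\otimes\boldsymbol\chi_{-\boldsymbol\epsilon\boldsymbol m})^{\boldsymbol K_1}$. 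The same computation shows that the cochain spaces in the two plectic bidegrees adjacent to $(\boldsymbol p(\boldsymbol\epsilon),\boldsymbol q(\boldsymbol\epsilon))$ vanish, so all differentials are zero there and the natural surjection from cochains to cohomology — which, under the identifications just made, is exactly $\CL_{\boldsymbol\epsilon}$ — is an isomorphism onto $H^{\boldsymbol p(\boldsymbol\epsilon),\boldsymbol q(\boldsymbol\epsilon)}_{(2)}(S(\boldgl_2),\mathcal V_{\boldsymbol m-2,\C})[\pi_f]$. The argument for $\CL_{\boldsymbol\epsilon}'$ is verbatim, with $\pi^\vee$, $V_{\boldsymbol m-2,\C}^\vee$ and the complementary type $(1-\boldsymbol p(\boldsymbol\epsilon),1-\boldsymbol q(\boldsymbol\epsilon))=(\boldsymbol p(-\boldsymbol\epsilon),\boldsymbol q(-\boldsymbol\epsilon))$.

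For the rationality statements, note first that $H^\ast_!=\im(H^\ast_c\to H^\ast)$, so its cokernel in $H^\ast$ is Eisenstein by Lemma \ref{lemma: retard corner}; Poincaré duality on $S(\boldgl_2)$ identifies the inclusion $H^\ast_!\hookrightarrow H^\ast$ for $\mathcal V^\vee_{\boldsymbol m-2}$ with the surjection $H^\ast_c\twoheadrightarrow H^\ast_!$ for $\mathcal V_{\boldsymbol m-2}$, so the kernel of $H^\ast_c\to H^\ast$ is the smooth contragredient of an Eisenstein module and hence also Eisenstein. Since $\pi$ is cuspidal, $\pi_f$ is not a constituent of — and shares no Hecke eigensystem with — any Eisenstein $E[\GL_2(\A_{F,f})]$-module (strong multiplicity one), so forming the $\pi_f$-isotypic part is exact on the two resulting short exact sequences and annihilates the Eisenstein terms; this yields $H^\ast_c(S(\boldgl_2),\mathcal V_{\boldsymbol m-2,E})[\pi_f]\simeq H^\ast(S(\boldgl_2),\mathcal V_{\boldsymbol m-2,E})[\pi_f]\simeq H^\ast_!(S(\boldgl_2),\mathcal V_{\boldsymbol m-2,E})[\pi_f]$, and likewise for $\pi_f^\vee$. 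Finally, $H^\ast_!$ commutes with the flat base change $E\hookrightarrow\C$, and taking the $\pi_f$-isotypic part of an admissible module commutes with $\otimes_E\C$ at each finite level; moreover, on the $\pi_f$-isotypic part the chain of inclusions \eqref{eq:inner coh sandwich} collapses to equalities, since at a cuspidal $\pi_f$ all of $H^\ast_{(2)}$ is cuspidal (there is no residual contribution) while $H^\ast_{(2)}\xrightarrow{\sim}IH^\ast$ by Zucker's conjecture. Combining these gives $H^\ast_!(S(\boldgl_2),\mathcal V_{\boldsymbol m-2,E})[\pi_f]\otimes_E\C\simeq H^\ast_{(2)}(S(\boldgl_2),\mathcal V_{\boldsymbol m-2,\C})[\pi_f]$, and symmetrically for $\pi_f^\vee$. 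I expect the only delicate steps to be the weight bookkeeping that pins down the source of $\CL_{\boldsymbol\epsilon}$ as $(\pi\otimes\boldsymbol\chi_{-\boldsymbol\epsilon\boldsymbol m})^{\boldsymbol K_1}$ — this is where the normalizations of $\mathfrak p^\ast$ versus $\mathfrak p$, the central twists built into $V_{\boldsymbol m-2}$, and the precise character $\boldsymbol\chi_{-\boldsymbol\epsilon\boldsymbol m}$ all have to be reconciled — and the compatibility of isotypic parts with base change; neither presents a serious obstacle, in line with the proposition being well-known.
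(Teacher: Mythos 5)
Your proof is correct, and it is exactly the standard argument: the paper itself offers no proof of this proposition (it is stated as well-known with a bare \qed), but your route — plectic Matsushima plus the $(\mathfrak g,K)$-cohomology of the weight-$m_v$ discrete series for the isomorphism and vanishing claims, and Lemma \ref{lemma: retard corner} with Poincar\'e duality and the non-Eisenstein property of a cuspidal $\pi_f$ for the comparison isomorphisms — is precisely the template the paper uses for the $\GSp_4$ analogues in Propositions \ref{prop: plectic Hodge types and class maps} and \ref{Prop: comparison isomorphism's for G}. No gaps.
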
 \qed
\section{Similitude theta lifting}\label{section: Similitude theta lifting}
\subsection{Local Weil representation and local theta lift}
\subsubsection{}\label{subsubsection:local notation}
Let $\epsilon =\pm 1, $ and let $V$, $W$ be vector spaces over  a field $k$ equipped with nondegenerate $\epsilon$-symmetric and $(-\epsilon)$-symmetric pairings, respectively. We assume $\dim W = 2n $ and $\dim V = 2m $ are even, and that $W$ is equipped with a complete polarization \begin{equation}
    W = W_1\oplus W_2,\;\; W_2 = W_1 ^\ast.
\end{equation} \emph{For simplicity, assume as well that the discriminant character of $V $ is trivial (as will be the case in our applications).}
Let  $G_1 = G_1 (V) $, $G=G (V) $ be the connected isometry and similitude groups, respectively, of $W $, and likewise $H_1 = H _1(W) $ and $H = H (W) $ . Let $P=P (W_1) \subset H(W)$ be the parabolic subgroup stabilizing $W_1, $ $P_1$ its intersection with $H_1$, and $N\subset P_1$ its unipotent radical. 
Also set
\begin{equation}
    R_0 =\set {(h, g)\in H\times G\,:\,\nu_H(h) = \nu_G(g)},
\end{equation}
where $\nu_G: G\to \mathbb G_m$ and $\nu_H: H \to \mathbb G_m$ are the similitude characters.

\subsubsection{}
Assume that $k $ is a local field. Then, for any nontrivial additive character $\psi_k $
of $k $,
 the Weil representation $\omega=\omega_{W,V,\psi_k}$ of $H_1 (k)\times G_1 (k) $ is realized on the Schwartz space $\mathcal S_{k}(W_2\otimes V) $; in this model,  the action of the parabolic $P_1\times G_1\subset H_1\times G_1 $
stabilizing $W_1\times V$ is described as follows.
\begin{equation}
    \begin{cases}
    \omega(1,g)\xphi (x) = \xphi (g^{-1} x), & g\in G_1(k), \\
    \omega(n,1)\xphi(x)=\psi\left (\frac {1} {2}\langle n(x), x\rangle\right)\cdot\xphi (x), & n\in N (k)\subset\Hom (W_2, W_1),\\
    \omega (h (a),1)\xphi(x) =|\det (a)|^m \xphi(a^t x),&a\in \GL(W_1) (k)\subset P_1 (k),\\
    \end{cases}
\end{equation}
where  $GL(W_1)$ is viewed as the Levi factor of $ P_1 $
by the standard embedding
\begin{equation}
    a\mapsto h (a)=\begin {pmatrix} a & 0\\0 & a^{-t}\end{pmatrix}\in P_1.
\end{equation} 
 Following the convention of \cite{roberts1996theta}, $\omega$ extends naturally to $R_0(k)$  by defining
 \begin{equation}\label{Extend two similar tunes}
     \omega \left(\begin{pmatrix} 1& 0 \\ 0 & \nu_G(g) \end{pmatrix}, g\right)\xphi (x) = |\nu_G(g)| ^ {-mn/2}\xphi(g^{-1} x)
 \end{equation}
 for all $g\in G(k)$.
 Note that $\omega$ is trivial on the center $\set{(\lambda,\lambda)}\subset R_0 $.
 \subsubsection{}\label{subsubsec: Fourier}
 Suppose that $V=V_1\oplus V_2 $ is also split; then the preceding construction also defines an action of $R_0(k) $ on $\mathcal S_{k} (W\otimes V_2)$ by interchanging the roles of $V$ and $W$. These two representations are isomorphic via the partial Fourier transform. 
 More precisely, consider the map $\mathcal F:\mathcal S_{k} (W_2\otimes V)\to\mathcal S_{k}(W\otimes V_2) $
 defined by
 \begin{equation}\label{eqn: Fourier_transform}\xphi\mapsto\widehat\xphi,\;\;\widehat\xphi (x_1, x_2) = \int_{(W_2\otimes V_1)(k)}\xphi (z, x_2)\psi (\langle z, x_1\rangle) \d z,
 \end{equation}
 where $x_1\in W_1\otimes V_2,$ $x_2\in W_2\otimes V_2$, and $\d z$ is the self-dual Haar measure with respect to $\psi_k$. Then it is well-known that $\mathcal F $ intertwines the actions of $H_1(k)\times G_1(k)$ on both sides, and it is immediate to check that it intertwines the actions of $$\left (\begin{pmatrix}1 & 0\\0 &\lambda\end {pmatrix},\begin{pmatrix}1 & 0\\0 &\lambda\end{pmatrix}\right)\in R_0(k)$$ according to the definition (\ref{Extend two similar tunes}); so $\mathcal F$ is equivariant for all of $R_0(k)$. 
 \subsubsection{}\label{subsubsection: local theta lift}
  If $\pi $ is an irreducible admissible representation of $H (k) $, then the local theta lift $\Theta (\pi) = \Theta_{W,V}(\pi)$
  is the largest  semisimple  representation of $G(k) $ such that there is a surjection $$\omega_{W,V, \psi_k}\twoheadrightarrow\pi^\vee\boxtimes\Theta (\pi) $$
  of admissible $R_0(k) $-representations. Symmetrically, if $\sigma $ is an irreducible admissible representation of $G (k) $, then the local theta lift $\Theta (\sigma) = \Theta_{V,W}(\sigma)$ is the largest semisimple representation of $H (k) $ admitting a surjection $\omega_{W,V, \psi_k}\twoheadrightarrow\Theta (\sigma)\boxtimes\sigma^\vee $.
  The theta lift does not depend on $\psi_k $ by \cite[Proposition 1.9]{roberts2001global}. 
  \subsubsection{}\label{subsubsec: full sim gp}
  We remark that the  Weil representation extends naturally to the full similitude groups of $V $ and $W $, not just the neutral connected components, and so a theta lift $\Theta(\pi)$ or $\Theta(\sigma)$ may be viewed as a representation of the full similitude group of $V$ or $W$. The drawback of working with neutral connected components of  similitude groups is that we no longer have Howe duality, and in particular the local theta lift may be reducible.
   However, using connected similitude groups is more convenient for our global calculations. 

 \subsection{Global Weil representation and global theta lifts}
 \subsubsection{}
 Now turning to the global situation, assume $k = F $ in (\ref{subsubsection:local notation}).
Also suppose given, for almost every place $v $, self-dual lattices $\mathcal W_v\subset W\otimes F_v$ and $\mathcal V_v\subset V\otimes F_v$, such that $\mathcal W_v$ is compatible with the polarization $W = W_1\oplus W_2$ in the sense that: $$\mathcal W_v\cap (W_1 \otimes F_v) \oplus \mathcal W_v\cap (W_2 \otimes F_v ) = \mathcal W_v.$$ The adelic Schwartz space $\mathcal S_{\A_F}(W_2\otimes V)$ is the restricted tensor product of the local Schwartz spaces $\mathcal S_{F_v}(W_2\otimes V)$ with respect to the indicator function of $(\mathcal W_v\cap (W_2\otimes F_v))\otimes \mathcal V_v.$  The global Weil representation of $R_0({\A_F})$, realized on $\mathcal S_{\A_F}(W_2\otimes V)$, is defined as the restricted tensor product of the local Weil representations (using the characters $\psi_{F_v}$ determined by the fixed global character $\psi$).  Recall the automorphic realization of $\omega$, given by the theta kernel:
 \begin{equation}
     \theta (h,g;\xphi) =\sum_{x\in W_2(F)\otimes V(F)}\omega(h,g)\xphi (x),\;\; \;(h,g)\in R_0 ({\A_F}), \;\;\xphi\in\mathcal S_{\A_F}(W_2\otimes V).  
 \end{equation}
 If $V$ is also split, then we again have the alternate model $\mathcal S_{\A_F}(W\otimes V_2)$, related to $\mathcal S_{\A_F}(W_2\otimes V)$ by the adelic partial Fourier transform.  Note that $$\theta(h,g; \xphi) = \theta(h,g; \widehat\xphi) = \sum_{x\in W\otimes V_2} \omega(h,g)\widehat\xphi (x)$$ by Poisson summation.  
 
\subsubsection{} \label{subsubsec: global theta lived}Let $f\in\mathcal A_0 (H(\A_F)) $ be an automorphic cusp form and choose any $\xphi\in\mathcal S_{\A_F} (W_2\otimes  V)$. Then, fixing a Haar measure $\d h_1 $
 on $H_1 (\A_F) $, the similitude theta lift $\theta_\xphi(f)$  to $G$ is the automorphic function
 \begin{equation}\label{eq: glopbal theta lift def}
     g\mapsto \int_{[H_1]} \theta(h_1h_0,g; \xphi) {f (h_1h_0)}\d h_1, \;\;\; g\in G({\A_F}),
 \end{equation}
 where $h_0\in H({\A_F})$ is any element such that $\nu_H(h_0) = \nu_G(g).$
 
  Likewise, if $f\in\mathcal A_0 (G (\A_F)) $
 is an automorphic cusp form and $\d g_1$ is a Haar measure on $G_1 (\A) $, then the similitude theta lift $\theta_\xphi(f)$ to $H$ is the automorphic function $$h\mapsto\integral_{[G_1]}\theta (h, g_1g_0;\xphi) f (g_1g_0)\d g_1,\;\;\; h\in H (\A_F), $$
 where $g_0\in G (\A_F) $ is any element such that $\nu_G (g_0) =\nu_H (h) $.
 
 If $\pi$ is a cuspidal automorphic representation of $H(\A_F)$, then the similitude theta lift $\Theta(\pi) =\Theta_{W, V}(\pi)$ is the subspace of $\mathcal A(G(\A_F))$ spanned by the theta lifts $\theta_\xphi(f)$ for $f\in \pi$ and $\xphi \in \mathcal S_{\A_F} (W_2\otimes V)$;  if $\pi$ is a cuspidal automorphic representation of $G(\A_F)$, we similarly define $\Theta(\pi) = \Theta_{V,W}(\pi)$ to be the subspace of $\mathcal A(H(\A_F))$ spanned by the theta lifts $\theta_\xphi(f)$ for $f\in \pi$ and $\xphi \in \mathcal S_{\A_F} (W_2\otimes V)$.
 A key property of the global theta lift is its compatibility with the local theta lift. Although this is well-known, we include a proof for the reader's convenience.
 \begin{prop}
 \label{prop: local global compatibility}
 Let $\pi $
 be a cuspidal automorphic representation of either $G (\A_F) $
 or $H (\A_F) $, and suppose that $\Theta (\pi) $
 lies in the $L ^ 2 $ subspace. Then for any automorphic representation $\sigma ={\otimes_v^\prime}\sigma_v \subset\Theta (\pi) $, $\sigma_v $ is a constituent of $\Theta (\pi_v) $
 for all $v $.
 \end{prop}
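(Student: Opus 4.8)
The plan is to reduce the claim to the elementary fact that a nonzero intertwining operator between restricted tensor products of admissible representations localizes to a nonzero operator at every place. I spell out the argument when $\pi$ is a cuspidal automorphic representation of $H(\A_F)$, so that $\Theta(\pi)\subset\mathcal A(G(\A_F))$; the case of $G(\A_F)$ is identical with the roles of $W$ and $V$ interchanged.

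\emph{The global theta lift as an $R_0(\A_F)$-operator.} Extended bilinearly, $\xphi\otimes f\mapsto\theta_\xphi(f)$ defines a map $\Phi\colon\mathcal S_{\A_F}(W_2\otimes V)\otimes\pi\to\mathcal A(G(\A_F))$ whose image is, by definition, $\Theta(\pi)$. Using the $R_0(\A_F)$-equivariance and automorphy of the theta kernel, together with the freedom in the choice of the auxiliary element $h_0$ in (\ref{eq: glopbal theta lift def}) and the normalization (\ref{Extend two similar tunes}), one checks directly that
\begin{equation*}
  g\cdot\theta_\xphi(f)=\theta_{\omega(h,g)\xphi}\bigl(\pi(h)f\bigr)\qquad\text{for all }(h,g)\in R_0(\A_F);
\end{equation*}
in other words $\Phi$ intertwines the $R_0(\A_F)$-action $(h,g)\mapsto\omega(h,g)\otimes\pi(h)$ on the source with right translation by $g$ on the target. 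Since by hypothesis $\Theta(\pi)$ lies in the $L^2$-spectrum, it is a Hilbert direct sum of irreducibles, so the given $\sigma={\otimes_v^\prime}\sigma_v\subset\Theta(\pi)$ is an orthogonal direct summand and the projection onto it is $G(\A_F)$-equivariant. Composing, we obtain a nonzero $\Phi_\sigma\in\Hom_{R_0(\A_F)}\bigl(\mathcal S_{\A_F}(W_2\otimes V)\otimes\pi,\,\sigma\bigr)$.

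\emph{Localization.} The group $R_0(\A_F)$ is the restricted direct product of the $R_0(F_v)$, and the source and target of $\Phi_\sigma$ are the corresponding restricted tensor products ${\otimes_v^\prime}(\omega_v\otimes\pi_v)$ and ${\otimes_v^\prime}\sigma_v$. Fix a place $v_0$. Since $\Phi_\sigma\neq0$ and the source is spanned by factorizable vectors, there is a factorizable vector $\xi_{v_0}\otimes\xi^{v_0}$ with $\Phi_\sigma(\xi_{v_0}\otimes\xi^{v_0})\neq0$; then $\eta\mapsto\Phi_\sigma(\eta\otimes\xi^{v_0})$ is a nonzero $R_0(F_{v_0})$-equivariant map $\omega_{v_0}\otimes\pi_{v_0}\to\sigma$, where $R_0(F_{v_0})$ acts on $\sigma$ only through the factor $\sigma_{v_0}$. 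Composing with a linear functional on the away-from-$v_0$ part of $\sigma$ that is nonzero on a suitable coordinate of an element of the image yields a nonzero element of $\Hom_{R_0(F_{v_0})}(\omega_{v_0}\otimes\pi_{v_0},\,\sigma_{v_0})$. As $\sigma_{v_0}$ is irreducible this operator is surjective, and, dualizing in the $\pi_{v_0}$-variable, it corresponds to a nonzero element of $\Hom_{R_0(F_{v_0})}(\omega_{v_0},\,\pi_{v_0}^\vee\boxtimes\sigma_{v_0})$. By the definition recalled in \S\ref{subsubsection: local theta lift} this says precisely that $\sigma_{v_0}$ is a constituent of $\Theta(\pi_{v_0})$, and since $v_0$ was arbitrary the proposition follows.

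I expect the only point requiring genuine care to be the displayed transformation law above, i.e.\ that the global similitude theta lift is equivariant for all of $R_0(\A_F)$ rather than merely for $G_1(\A_F)$: this is exactly where the normalization (\ref{Extend two similar tunes}) and the choice of $h_0$ in (\ref{eq: glopbal theta lift def}) enter. The remaining ingredients — semisimplicity of the $L^2$-spectrum and the passage from a global to a local intertwining operator — are formal.
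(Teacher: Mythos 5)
Your argument is correct and is essentially the paper's own proof: both reduce to the observation that the global theta lift furnishes a nonzero $R_0(\A_F)$-equivariant map out of $\mathcal S_{\A_F}(W_2\otimes V)\otimes\pi$ landing in $\sigma$ (you via orthogonal projection in the $L^2$-spectrum, the paper via the pairing against $\sigma^\vee$), which factors as a restricted tensor product of local intertwining maps, each of which must therefore be nonzero. The localization and local dualization steps you spell out are exactly what the paper leaves implicit in the phrase ``evidently a restricted tensor product.''
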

 \begin{proof}
 Without loss of generality, suppose $\pi $
 is a representation of $G (\A_F) $. Consider the map of $R_0(\A_F)$-representations:
 \begin{align*}\mathcal S_{\A_F} (W_2\otimes V)\otimes\pi\otimes\sigma^\vee&\to \C \\
 \xphi\otimes f\otimes f'&\mapsto \integral_{[Z_H\backslash H]} \theta_\XP (f) (h) f' (h)\d h.\end{align*}
 This map is well-defined and nontrivial by assumption. By duality, it also gives a nontrivial map $$\mathcal S_{\A_F} (W_2\otimes V)\twoheadrightarrow\pi ^\check\boxtimes\sigma, $$
 which is evidently a restricted tensor product. This implies the proposition.
 \end{proof}
 \subsubsection{}
 The theta lift defined in (\ref{subsubsec: global theta lived}) generalizes readily to vector-valued automorphic forms. Suppose $K\subset G ( F\otimes\otimes\R) $
 and $L\subset H (F\otimes\R) $
 are  subgroups which are compact modulo center, and let $$(L\times K)_0 \coloneqq (L\times K)\cap R_0(F\otimes \R).$$ Suppose given finite-dimensional representations $\sigma$ and $\tau$ of $L$ and $K$,
 and let $f \in (\mathcal A_0(H(\A_F))\otimes \sigma)^L$ be a vector-valued automorphic form. Then for a vector-valued Schwartz function
 $$\phi\in \left(\mathcal S_{F\otimes \R}(W_2\otimes V) \otimes \sigma^\vee \otimes \tau\right)^{(L\times K)_0},$$
 and a Schwartz function $$\xphi_f\in\mathcal S_{\A_{F, f}} (W_2\otimes V)\coloneqq \otimes_{v\nmid \infty}^\prime\mathcal S_{F_v} (W_2\otimes V), $$
we may define 
$$\theta_{\xphi_f\otimes\phi} (f) \in (\mathcal A (G (\A_F))\otimes\tau) ^ K $$
by the same formula (\ref{eq: glopbal theta lift def}) as for the scalar-valued theta lift. The vector-valued theta lift from $G$ to $H$ is defined in the same way.

\subsection{Spherical theta correspondence for similitudes}
 \subsubsection{}
 We shall require an explicit description of the spherical similitude theta correspondence in certain cases. Continuing the notation of (\ref{subsubsection:local notation}), assume $k$ is a nonarchimedean local field, that $\psi_k$ is unramified, and  that $V = V_1\oplus V_2 $
 is a split orthogonal space (so that $\epsilon = +$). For this subsection only, for the purposes of clearer comparison with the literature, we let $G'_1 $
 and $G' $
 denote the \textbf{full} isometry and similitude groups of $V $, so that $G'_1$ and $G'$ are disconnected;  likewise for $$R_0' \coloneqq \set{(h, g)\in H\times G'\,:\, \nu_{H} (h) =\nu_{G'}(g)}.
 $$ The Weil representation of $R_0(k)$ extends  naturally to $R_0'(k)$. We assume the additive character $\psi_k$ of $k$ used to define $\omega$ is unramified.
 
 Now choose bases $\set {e_1,\cdots, e_m} $ and $\set {f_1,\cdots, f_n} $
 of $V_1 $ and $W_1, $
 respectively, and let $\set {e_1 ^\ast,\cdots, e_m ^\ast} $ and $\set {f_1 ^\ast,\cdots, f_n ^\ast} $
 be the dual bases of $V_2 $
 and $W_2. $
 Let $T_{G_1}\subset\GL (V_1)\subset G_1 $
 and $T_{H_1}\subset\GL (W_1)\subset H_1 $
 be the standard diagonal tori; then we choose the maximal tori for $G $, $H$, and $R_0$ given (with respect to the bases $\set{e_1,\cdots, e_m, e_1^\ast, \cdots,e_m ^\ast} $
 and $\set {f_1,\cdots, f_n, f_1 ^\ast,\cdots, f_n ^\ast} $) by:
 \begin{equation}
     \begin{split}
         T_G&= T_{G_1}\times\mathbb G_m =\set {\operatorname{diag}(x_1,\cdots, x_m, \lambda x_1^{-1},\cdots, \lambda x_m^{-1})}\\
         T_H&= T_{H_1}\times\mathbb G_m =\set {\operatorname{diag}(y_1,\cdots,y_n,\kappa y_1 ^ {-1},\cdots,\kappa y_n ^ {-1})} \\
         T_{R_0}&= T_H\times_{\mathbb G_m} T_G \simeq T_{G_1}\times T_{H_1}\times\mathbb G_m 
     \end{split}
 \end{equation}
 \subsubsection{}\label{subsubsection: notation for unramified principal series}
 To fix notation, we recall the unramified principal series of $G $ and $H$. The unramified characters of $T_{G_1} (k) $ are parameterized by tuples $\chi_1 = (\alpha_1,\cdots,\alpha_m)\in (\C^\times)^m$, where $$\chi_1(\diagonal (x_1,\cdots, x_m, x_1^{-1}, \cdots, x_n ^ {-1})) =\prod_{i = 1}^m \alpha_i ^ {\ord x_i}.$$
 The unramified characters of $T_G (k)$
are parameterized by $\chi = (\alpha_1,\cdots,\alpha_m, s)\in(\C^\times) ^ { m +1} $, where $$\chi (\operatorname{diag}(x_1,\cdots, x_m,\lambda x_1 ^ {-1},\cdots,\lambda x_m ^ {-1})) =s ^ {\ord \lambda} \prod_{i = 1} ^ {m}\alpha_i ^ {\ord x_i}. $$
Similarly, the unramified characters of $T_{H_1} (k)$ (resp.  $T_H(k)$)
are parametrized by $\mu_1 = (\beta_1,\cdots, \beta_n)\in (\C^\times)^n$ (resp. $\mu = (\beta_1,\cdots, \beta_n, t)\in (\C^\times)^{ n+1}$), and the unramified characters of $T_{R_0} (k)$
are parameterized by $\eta = (\beta_1,\cdots,\beta_n,\alpha_1,\cdots,\alpha_m, u )\in (\C^\times) ^{ n+m+1} $. 
Note that the character $\mu\boxtimes\chi$ of $T_H(k)\times T_G(k)$ pulls back to the character $$\mu\cdot\chi\coloneqq (\beta_1,\cdots,\beta_n,\alpha_1,\cdots,\alpha_m, st)$$ of $T_{R_0}(k)$ under the inclusion $T_{R_0}\subset T_H\times T_G$.

For  Borel subgroups $ B_G=T_GN_G\subset G $
and $B_H = T_HN_H\subset H$, the (normalized) principal series representations $\Ind_{B_G(k)}^{G(k)} \chi$ and $\Ind_{B_H(k)}^{H(k)}\mu$ possess unique spherical subquotients denoted $\pi_\chi$ and $\sigma_\mu$, respectively; note $\pi_\chi$ and $\sigma_\mu$ depend only on the Weyl orbits of $\chi$ and $\mu$. 
Moreover $\sigma_\mu\boxtimes \pi_\chi |_{R_0}$ is the unique spherical subquotient of $\Ind_{B_G(k)\times B_H(k)\cap R_0(k)}^{R_0(k)}\mu\cdot \chi$. 


\begin{prop}\label{prop:spherical}
Suppose $m\leq n $, $\epsilon = +$, and that the residue field of $k$ has odd cardinality $q$. If $\pi_\chi$ is the spherical representation of $G(k)$ associated to $\chi = (\alpha_1,\cdots, \alpha_m, s),$ and if $\Ind_{G(k)}^{G'(k)} \pi_\chi$ is irreducible, then $\Theta(\pi_\chi)$ is the spherical representation $\sigma_\mu$ of $H(k)$ for $$\mu = (\alpha_1,\cdots, \alpha_m, q, q ^ 2,\cdots, q ^ {n - m},s q ^ {- (m^2 - m)/4 - (n^2 +n)/4 + nm/2}).$$
\end{prop}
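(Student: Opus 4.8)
The plan is to establish that $\Theta(\pi_\chi)$ is an irreducible spherical representation and then to compute its Satake parameter, the latter splitting into the (routine but substantive) determination of $(\beta_1,\dots,\beta_n)$ via a Jacquet-module computation, and the (normalization-heavy) determination of the similitude parameter $t$.

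First I would deal with sphericalness and irreducibility. Realize $\omega$ in the Schr\"odinger model $\mathcal{S}_k(W_2\otimes V)$; since $\psi_k$ is unramified and $q$ is odd, the indicator function $\phi_0$ of the standard self-dual lattice in $W_2\otimes V$ is fixed by $K_H\times K_G$, the hyperspecial maximal compact subgroups of $H(k)$ and $G(k)$. As the surjection $\omega\twoheadrightarrow\Theta(\pi_\chi)\boxtimes\pi_\chi^\vee$ of $R_0(k)$-modules from (\ref{subsubsection: local theta lift}) stays surjective on $K_H\times K_G$-invariants, the line $\C\phi_0$ maps onto $\Theta(\pi_\chi)^{K_H}\otimes(\pi_\chi^\vee)^{K_G}$; the Jacquet-module computation below exhibits this image as nonzero, so $\Theta(\pi_\chi)\neq 0$ and has a one-dimensional space of spherical vectors. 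To get \emph{irreducibility}, this is where the hypothesis on $\Ind_G^{G'}\pi_\chi$ enters: it allows passage to the full similitude group $G'$, for which Howe duality for the dual pair $(G',H)$ holds; the big theta lift of the irreducible $G'$-representation $\Ind_G^{G'}\pi_\chi$ then has a unique irreducible quotient, and a Frobenius-reciprocity argument along $R_0\subset R_0'$ identifies $\Theta(\pi_\chi)$ with the restriction of this quotient to $H$. Hence $\Theta(\pi_\chi)=\sigma_\mu$ for a unique Weyl orbit $\mu=(\beta_1,\dots,\beta_n,t)$, and it remains to compute $\mu$.

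For the $(\beta_1,\dots,\beta_n)$-part I would compute the normalized Jacquet module of $\Theta(\pi_\chi)$ along the Borel $B_H\subset H$, which determines a spherical representation. By exactness of the Jacquet functor applied to $\omega\twoheadrightarrow\Theta(\pi_\chi)\boxtimes\pi_\chi^\vee$, this reduces to the Jacquet module of the Weil representation along $N_H$, computed in stages: first along the unipotent radical of the Siegel parabolic $P(W_1)$ -- where the explicit formulas for $\omega$ recalled in \S\ref{section: Similitude theta lifting} make the computation transparent -- and then along a Borel of the Levi $\GL(W_1)$, using the Kudla--Rallis filtration of the resulting $\GL(W_1)\times G$-module indexed by the rank stratification of $\Hom(W_1,V)$; the graded pieces involve $\GL_1$-twists and Weil representations for smaller symplectic spaces, so one finishes by induction on $n$. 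Matching the resulting sum of unramified characters against $\mu\cdot\delta_{B_H}^{1/2}$ and feeding in the orthogonal Satake parameter $\{\alpha_1^{\pm1},\dots,\alpha_m^{\pm1}\}$ of $\pi_\chi$ yields $(\beta_1,\dots,\beta_n)=(\alpha_1,\dots,\alpha_m,q,q^2,\dots,q^{n-m})$; equivalently, the $\SO_{2n+1}(\C)$-parameter of $\sigma_\mu$ is that of $\pi_\chi$ augmented by $\{q^{\pm1},\dots,q^{\pm(n-m)},1\}$. (Alternatively, after reducing the similitude lift to the isometry lift $\SO(V)\times\SP(W)$ by the standard restriction procedure, one may simply quote the classical unramified theta correspondence of Kudla or Rallis.)

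\textbf{The main obstacle} is the similitude exponent $t=s\,q^{-(m^2-m)/4-(n^2+n)/4+nm/2}$, which is a matter of careful bookkeeping rather than of depth. It records how the similitude character $\nu$ -- identified on $R_0$ -- interacts with the extension (\ref{Extend two similar tunes}) of $\omega$ to $R_0$, which carries the factor $|\nu_G(g)|^{-mn/2}$, together with the two modulus shifts $\delta^{1/2}$ for the Borels of $G$ and $H$ incurred in passing between $\Ind_{B_G\times B_H\cap R_0}^{R_0}(\mu\cdot\chi)$ and the normalized principal series of $H$, under the conventions of (\ref{subsubsection: notation for unramified principal series}) identifying $\chi=(\alpha_i,s)$ and $\mu=(\beta_j,t)$ with characters of $T_G$ and $T_H$. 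I would assemble these into a single computation of the total power of $q$ accrued on the $\mathbb G_m$-factor of $T_{R_0}$, expecting $(m^2-m)/4+(n^2+n)/4-nm/2$ to emerge as the combination of the $mn/2$ from (\ref{Extend two similar tunes}) with the half-sums of roots of $G$ and $H$ restricted to the central $\mathbb G_m$. Since the answer is sensitive to every normalization convention (the Satake normalization for $\GSp_{2n}$, the precise extension of $\omega$ to the similitude groups, and the parametrization in (\ref{subsubsection: notation for unramified principal series})), I would pin down all signs by checking the small cases $m=n=1$ -- the classical lift $\GSO_2\to\GSp_2=\GL_2$, where the exponent vanishes -- and $m=1$, $n=2$ before asserting the general formula.
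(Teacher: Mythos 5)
Your strategy is sound and every ingredient you identify is the right one, but the paper computes $\mu$ by a different mechanism. The opening step agrees: nonvanishing, irreducibility and unramifiedness of $\Theta(\pi_\chi)$ are deduced from the irreducibility of $\Ind_{G(k)}^{G'(k)}\pi_\chi$ via Roberts's Howe duality for the full similitude groups, exactly as you propose. For the parameter itself, however, the paper does not pass through Jacquet modules along $B_H$; instead it extends Rallis's degenerate zeta integrals $I(\sigma,\cdot)$ to $R_0'(k)$-intertwining maps $Z_\sigma$ from $\omega$ into explicitly computed induced representations $I_\sigma$, and then invokes a support argument: the spherical Hecke module $\mathcal S_k(W_2\otimes V)^{K_{R_0'}}$ is supported in the Zariski closure of the points $z_\sigma$ of $\operatorname{Spec}\mathcal H$, so the surjection $\omega\twoheadrightarrow\pi_\chi^\vee\boxtimes\sigma_\mu$ forces $\chi^{-1}\cdot\mu$ into that closure, and only the stated $\mu$ is compatible. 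This fuses your two sub-computations into one: the isometry part of the parameter is exactly Rallis's Lemma 4.1 (your own fallback of ``just quote Kudla or Rallis''), and the similitude exponent drops out of checking how $Z_\sigma$ transforms under the central elements $r_\lambda\in R_0$, which yields precisely the two contributions you predicted --- $|\lambda|^{-nm/2}$ from the extension (\ref{Extend two similar tunes}) and $|\lambda|^{-(m^2-m)/4-(n^2+n)/4}$ from the half-sums of roots of $G$ and $H$ restricted to the central $\mathbb G_m$, i.e.\ from the dimensions of $\Hom(V_2,V_1)\cap\operatorname{Lie}N_G$ and $\Hom(W_2,W_1)\cap\operatorname{Lie}N_H$. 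Your Jacquet-module route via Kudla's filtration would also succeed and is more self-contained for the $(\beta_1,\dots,\beta_n)$-part, at the cost of an induction on $n$ and a final disambiguation among the exponents appearing in the Jacquet module of $\omega$ --- a step the paper's version also needs, in the form of the assertion that only one Weyl orbit in the closure of the $\eta(\sigma)$ is compatible with $\chi$. The only piece you leave genuinely undone is the sign-sensitive evaluation of $t$, which you locate correctly but defer to ``bookkeeping''; the paper's explicit determinant computation confirms the formula you expected.
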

\begin{proof}
Since $\Ind_{G(k)}^{G'(k)} \pi_\chi$ is irreducible, $\Theta(\pi_\chi)$ is nonzero, irreducible \cite{roberts1996theta}, and unramified \cite[Proposition 1.11]{roberts2001global}. Thus $\Theta(\pi_\chi) = \sigma_\mu$ for some $\mu$, and it remains to determine $\mu$. 

As in \cite[\S4]{rallis1982langlands}, let $\sigma = (\sigma_1,\cdots, \sigma_m) \in \C^m$, and consider for all $\Re(\sigma_i) \gg 0$ the family of integrals:
\begin{equation}
    I(\sigma, \xphi) = \int \xphi\left(\sum_{i = 1}^m a_{ii} f_i^\ast\otimes e_i  + \sum_{1 \leq i < j \leq m} z_{ij} f_i^\ast \otimes e_j\right) \prod_{i = 1}^m|a_{ii}|^{\sigma_i + i - r} \d^\times a_{ii}\prod_{i < j} \d z_{ij},
\end{equation}
where $\xphi\in \mathcal S_k(W_2\otimes V)$. 
\begin{claim}
If $\Re (\sigma_i)\gg 0 $ for all $i$, then there exist Borel subgroups $B_G = T_GN_G\subset G$ and $B_H = T_HN_H\subset H$ such that $N_G$ and $N_H$ act trivially on $V_1,$ $W_1$, respectively, and such that $$Z_\sigma(\xphi)(h, g) \coloneqq I(\sigma, \omega(h, g)\xphi)$$
defines an $R_0'(k)$-intertwining map from $\omega$ to the induced representation  \begin{equation*}\begin{split}I_\sigma&\coloneqq \Ind_{T_{R_0}(k)\cdot (N_H\times N_G)(k)}^{R_0'(k)}\eta(\sigma),\\
\eta(\sigma) &=  (q^{\sigma_1 + 1 - m}, 
\cdots, q ^ {\sigma_i + i - m},\cdots, q ^ {\sigma_m}, q, q ^ 2,\cdots, q ^ {n - m},\\
&q ^ {m -\sigma_1-1},\cdots, q ^ {m -\sigma_i -i},\cdots, q ^ {-\sigma_m}, q ^ {- (m ^ 2 - m)/4 - (n ^ 2+ n)/4+ nm/2})\in (\C ^\times) ^ {n + m +1}.
\end{split}\end{equation*}
\end{claim}
\begin{claimproof}
Note that \begin{equation*}\begin{split}I_\sigma|_{H_1\times G_1'} &\simeq \Ind_{T_{H_1}N_H(k)}^{H_1(k)} \mu_1(\sigma)\boxtimes \Ind_{T_{G_1}N_G(k)}^{G_1'(k)}
\chi_1(\sigma), \\
\mu_1 (\sigma)&= 
(q ^ {\sigma_1+1 - m},\cdots, q ^ {\sigma_i + i},\cdots, q ^ {\sigma_m}, q, q ^ 2,\cdots, q ^ {n - m}),\\
\chi_1(\sigma) &= (q ^ {m -\sigma_1-1},\cdots, q ^ {m -\sigma_i -i},\cdots, q ^ {-\sigma_m}).
\end{split}
\end{equation*}
Thus the fact that $Z_\sigma$ is an $H_1\times G_1' $-intertwining map, for choices of $N_H$ and $N_G$ as in the claim, is a restatement of \cite[Lemma 4.1]{rallis1982langlands}. (See p. 487-489 of \emph{loc. cit.} for the choices of $N_H$ and $N_G$.)
To see that $Z_\sigma$
is an $R_0' $-intertwining map, it therefore suffices to check that, if $$r_\lambda = \left(\begin {pmatrix} 1 &\\&\lambda\end {pmatrix},\begin {pmatrix} 1 &\\&\lambda\end {pmatrix}\right)\in R_0, $$
then $$I (\sigma,\omega (r_\lambda)\xphi) =|\lambda| ^{(m^2 - m)/4 + (n^2 + n)/4 - nm/2}\cdot |\operatorname{det}(r_\lambda, \operatorname{Lie} N_H \times N_G)|^{1/2} I(\sigma, \xphi).$$
By definition, $$I (\sigma, \omega(r_\lambda)\xphi) = |\lambda|^{-nm/2}.$$
On the other hand, one can calculate directly that the determinant factor on the right-hand side is $$|\lambda|^{-(m^2 - m)/4 - (n^2 + n)/4},$$ since $r_\lambda$ acts by the scalar $\lambda^{-1}$ on the rootspaces $\Hom(V_2, V_1) \cap \operatorname{Lie} N_G$ and $\Hom(W_2, W_1)\cap \operatorname{Lie} N_H$, and trivially on the rest of $\operatorname{Lie} N_G\times N_H$.
This concludes the proof of the claim. 
\end{claimproof}

Now choose a hyperspecial subgroup $K_{R_0'}$ of $R_0'(k)$ (arising from maximal self-dual lattices in $W$ and $V$), and let $\mathcal H$ be the Hecke algebra of $\C$-valued, $K_{R_0'}$-biinvariant functions on $R_0'$.
For all $\sigma$ as in the claim, the Hecke action on the unique spherical vector in $I_\sigma$  defines an algebra morphism $z_\sigma: \mathcal H \to \C$. It follows from \cite{rallis1982langlands} that the support of the $\mathcal H$-module $$\mathcal S_k(W_2\otimes V)^{K_{R_0'}}$$ is contained in the Zariski closure of the points $z_\sigma$ of $\Spec \mathcal H$. 
On the other hand, the Satake isomorphism identifies complex points of $\Spec \mathcal H$ with $R_0'$-Weyl orbits of parameters $\eta = (\beta_1, \cdots, \beta_m, \alpha_1,\cdots, \alpha_n, u)$ as above. 
By assumption, there is a surjection $$\mathcal S_k(W_2\otimes V)\twoheadrightarrow \pi_{\chi}^\vee\boxtimes\sigma_\mu,$$ and hence the character $\chi^{-1}\cdot \mu$ lies in the Zariski closure of the Weyl orbit of the parameters $\eta(\sigma)$ in the claim.
However, the $\mu$ listed in the proposition is the only one (up to $H$-Weyl action) satisfying this property.
\end{proof}
 \section{Yoshida lifts on $\GSp_4$}\label{sec: yoshida basics}
 \subsection{Some four-dimensional orthogonal spaces}
 \subsubsection{}\label{subsubsection: notation for orthogonal spaces}
 Let $B $ be a quaternion algebra, possibly split, over a field $k $. Then $B $ comes equipped with a norm $N: B \to k $
 and an involution $b\mapsto b ^\ast $
 such that $bb ^\ast = N (b) $
 for all $b\in B $.
 The $k $-orthogonal space $V_B $ associated to $B $
 is isomorphic to $B $
 as a vector space, with the inner product defined by
 \begin{equation}
     (b_1, b_2)\coloneqq\tr (b_1b_2 ^\ast) = b_1b_2 ^\ast + b_2b_1 ^\ast.
 \end{equation}
 When $B $ is split, we often drop the subscript and abbreviate $V = V_{M_2 (k)} $.
 \subsubsection{}
 One has a map of algebraic groups over $k $:
 \begin {equation}
 \boldsymbol p_Z: B ^\times\times B ^\times\to\operatorname{GO}(V_B)
 \end {equation}
 defined by $$\boldsymbol p_Z (b_1, b_2)\cdot x = b_1xb_2 ^\ast,\;\; x\in V_B. $$
 The kernel of $\boldsymbol p_Z $
 is the antidiagonally embedded $\mathbb G_m $, and $\boldsymbol p_Z $
 is a surjection onto the connected similitude group $\GSO (V_B) $.
 
 If $k $ is a local field, then irreducible admissible representations of $\GSO (V_B) (k)$
 are all of the form $\pi_1\boxtimes\pi_2, $
 where $\pi_i $
 are irreducible admissible representations of
 $B ^\times$
 of the same central character; if $k = F $
, the same is true of automorphic representations of $\GSO (V_B) (\A_F) $.

 \subsection{Elliptic endoscopic $L$-parameters}\label{subsection: elliptic endoscopic parameters}
 \subsubsection{}\label{subsubsection: notation for endoscopic lifts}
The unique elliptic endoscopic group of $\GSp_{4,F}$ is $\GSO(V)$, equipped with the $L$-embedding:
 \begin{equation}\label{eqn:L embedding}
     {}^L \GSO(V) = (\GL_2\times_{\mathbb G_m} \GL_2)(\C)\times\Gal(\overline{F}/F) \hookrightarrow \GSp_4(\C) \times\Gal(\overline{F}/F) = {}^L{ \GSp_4}.
 \end{equation}
 The Langlands functoriality principle for the map (\ref{eqn:L embedding}) then suggests that, to an automorphic representation $\pi = \pi_1\boxtimes\pi_2$ of $\GSO(V)(\A_F)$, one can associate an $L$-packet of automorphic representations $\Pi(\pi_1,\pi_2)$ of $\GSp_4(\A_F)$.
 These $L $-packets and their local analogues are constructed via similitude theta lifting in \cite{roberts2001global,weissauer2009endoscopy}.
  More precisely, for each place $v$ of $F$ and each irreducible admissible representation $\pi_{1,v} \boxtimes \pi_{2,v}$ of $\GSO(V) (F_v)$, one associates a local $L$-packet
 \begin{equation}
     \set{\Pi^+(\pi_{1,v},\pi_{2,v}), \Pi^-(\pi_{1,v}, \pi_{2,v})},
 \end{equation}
 where by convention $\Pi ^ - (\pi_{1,v},\pi_{2,v}) = 0 $
 unless both $\pi_{i, v} $ are discrete series. For all $v$, $\Pi^+(\pi_{1,v},\pi_{2,v})$ is the unique generic member of the $L$-packet, and is explicitly given by the (nonzero, irreducible) local similitude theta lift:
 \begin {equation}
 \Pi ^ + (\pi_{1, v},\pi_{2, v})\coloneqq\Theta_{V, W_4} (\pi_{1, v}\boxtimes\pi_{2, v}).
 \end {equation}
 If $\pi_{i, v} $
 are both discrete series, then they admit Jacquet-Langlands transfers $\pi_{i, v} ^ {B} $
 to $B ^\times $, where $B $
 is the non-split quaternion algebra over $F_v $. In this case, we have
 \begin{equation}
     \Pi ^ - (\pi_{1, v},\pi_{2, v})\coloneqq\Theta_{V_{B}, W_4} (\pi_{1, v} ^ B,\pi_{2, v} ^ B),
 \end{equation}
 a nonzero irreducible representation.
 We remark that the central character of $\Pi^\pm(\pi_{1,v},\pi_{2,v})$ is the common central character of $\pi_{i,v} $ (since the central character of the Weil representation is trivial). The $L $-packets associated to $\pi_v $ and $\pi'_v = \pi_{2,v} \boxtimes \pi_{1,v} $ coincide, but otherwise are all disjoint. Globally, given a cuspidal automorphic representation $ \pi_1 \boxtimes \pi_2 $ of $\GSO(V) (\A_F) $
 and a finite set $S $ of places where $\pi_i $ are both discrete series, we form the adelic representation
 \begin{equation}
     \Pi_S (\pi_1,\pi_2)\coloneqq\sideset{}{'}\bigotimes_{v\not\in S}\Pi ^ + (\pi_{1,v},\pi_{2,v})\otimes\bigotimes_{v\in S}\Pi ^ - (\pi_{1,v},\pi_{2,v}).
 \end{equation}
 \begin{thm}[Weissauer]\label{thm: global endoscopy statement}
  Let $ \pi_1 \boxtimes \pi_2$ be a cuspidal automorphic representation of $\GSO(V) (\A_F)$, where $\pi_1\not\cong \pi_2.$ Then the automorphic multiplicity of $\Pi_S (\pi_1,\pi_2) $ is given by:
  $$m_{\rm{disc}}(\Pi_S(\pi_1,\pi_2)) = m_{\rm {cusp}} (\Pi_S (\pi_1,\pi_2)) =\begin{cases}
  1, &\text {if } | S |\text { is even,}\\0, &\text {if } | S |\text { is odd.}
  \end{cases}$$
  The representations $\Pi_S (\pi_1,\pi_2) $
  constitute a full near equivalence class in the discrete spectrum of $\mathcal A_{(2)}(\GSp_4(\A_F))$, and are generic if and only if $S =\emptyset $. They are tempered and not CAP. Moreoever, if $|S|$ is even, $$\Pi_S(\pi_1,\pi_2)=\Theta_{V_B,W_4}(\pi_1^B\boxtimes \pi_2^B),$$ where $B$ is the unique $F$-quaternion algebra ramified at the set of primes $S$ and $\pi_i^B$ are the Jacquet-Langlands transfers of $\pi_i$ to $B^\times(\A_F)$.
  \end{thm}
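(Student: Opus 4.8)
The plan is to realize $\Pi_S(\pi_1,\pi_2)$, for $|S|$ even, as a global similitude theta lift and to read off every listed property from the corresponding local statements together with the endoscopic classification. First I would fix the quaternion algebra $B$ over $F$ ramified exactly at $S$ (which exists because $|S|$ is even), observe that the Jacquet--Langlands transfers $\pi_i^B$ exist since each $\pi_i$ is a discrete series at every place of $S$, and form the irreducible cuspidal automorphic representation $\pi_1^B\boxtimes\pi_2^B$ of $\GSO(V_B)(\A_F)\simeq(B^\times\times B^\times/\mathbb G_m)(\A_F)$. Running the Rallis tower of theta lifts from $\GSO(V_B)$ to $\GSp_{2n}$ ($n\ge 1$), one shows that the lift of $\pi_1^B\boxtimes\pi_2^B$ to $\GSp_2=\GL_2$ vanishes --- this is precisely where the hypothesis $\pi_1\not\cong\pi_2$ enters --- so that the first occurrence is on $\GSp_4$ and is therefore cuspidal; this is carried out in \cite{roberts2001global}, and the nonvanishing of the $\GSp_4$-lift may alternatively be confirmed via the Rallis inner product formula and the nonvanishing of the relevant $L$-value at $1$, \cite{shahidi1981amer}. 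Thus $\Theta_{V_B,W_4}(\pi_1^B\boxtimes\pi_2^B)$ is a nonzero cuspidal automorphic representation, which gives the final displayed identity of the theorem once it is matched with $\Pi_S(\pi_1,\pi_2)$.

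Next I would identify the local components: by Proposition \ref{prop: local global compatibility}, each component of $\Theta_{V_B,W_4}(\pi_1^B\boxtimes\pi_2^B)$ is a constituent of the local theta lift $\Theta_{V_B,W_4}(\pi_{1,v}^B\boxtimes\pi_{2,v}^B)$, which by the definition of the local Yoshida packets is $\Pi^+(\pi_{1,v},\pi_{2,v})$ when $v\notin S$ (there $B$ splits, $V_B\simeq V$, and the lift is the generic local Yoshida lift) and $\Pi^-(\pi_{1,v},\pi_{2,v})$ when $v\in S$. Each such local lift is irreducible by Howe duality for similitude dual pairs --- the two factors $\pi_{i,v}^B$ sharing a central character --- so $\Theta_{V_B,W_4}(\pi_1^B\boxtimes\pi_2^B)\simeq\Pi_S(\pi_1,\pi_2)$ as an abstract representation, occurring cuspidally with multiplicity at least one. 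Any two members $\Pi_S(\pi_1,\pi_2)$ and $\Pi_{S'}(\pi_1,\pi_2)$ agree at every $v\notin S\cup S'$, hence lie in a single near-equivalence class, with common Satake parameters away from $S$ attached, via (\ref{eqn:L embedding}), to the tempered parameter $\rho_{\pi_1}\oplus\rho_{\pi_2}$ (an instance of the unramified theta correspondence, cf. Proposition \ref{prop:spherical}). The remaining arithmetic content --- that the discrete multiplicity equals the cuspidal multiplicity, equals $1$ when $|S|$ is even and $0$ when $|S|$ is odd, and that no other discrete automorphic representation of $\GSp_4(\A_F)$ is nearly equivalent to $\Pi_S(\pi_1,\pi_2)$ --- I would quote from Weissauer's endoscopic classification \cite{weissauer2009endoscopy}: in Arthur's language the global $A$-parameter is $\psi=\rho_{\pi_1}\oplus\rho_{\pi_2}$, its component group is $\mathbb Z/2\mathbb Z$, the canonical sign character controlling the multiplicity is nontrivial precisely on the members indexed by odd $S$, and, the parameter being tempered, there is no residual contribution. (After handling the archimedean places one may alternatively descend to $\SP_4$ and invoke \cite{arthur2013endoscopic}.)

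It then remains to check the qualitative assertions. Temperedness is immediate from the shape of $\psi$, which is trivial on the Arthur $\SL_2$ because each $\rho_{\pi_i}$ is a tempered $\GL_2$-parameter ($\pi_i$ being unitary cuspidal); this also shows $\Pi_S(\pi_1,\pi_2)$ is not CAP, since CAP representations have non-tempered $A$-parameter. (Alternatively one verifies temperedness place by place from the explicit local Yoshida packets, and non-CAP from the fact that $\Pi_S(\pi_1,\pi_2)$ is cuspidal yet not nearly equivalent to any constituent of a proper global parabolic induction.) For genericity, $\Pi_\emptyset(\pi_1,\pi_2)=\Theta_{V,W_4}(\pi_1\boxtimes\pi_2)$ has each local component $\Pi^+(\pi_{1,v},\pi_{2,v})$ generic --- a standard property of the local theta correspondence in the first-occurrence range with generic input --- and is globally generic, whereas for $v\in S$ the component $\Pi^-(\pi_{1,v},\pi_{2,v})$ is the non-generic discrete series in its $L$-packet; hence $\Pi_S(\pi_1,\pi_2)$ is generic if and only if $S=\emptyset$.

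The step I expect to be the main obstacle is the vanishing $m_{\mathrm{disc}}(\Pi_S(\pi_1,\pi_2))=0$ for $|S|$ odd, equivalently the precise determination of the near-equivalence class inside the discrete spectrum. Each ``positive'' assertion above follows essentially mechanically from the Rallis tower and local theta dichotomy, but excluding the odd-$S$ members from the discrete spectrum admits no soft argument: it requires the full force of the endoscopic classification --- Arthur's multiplicity formula, realized for $\GSp_4$ by Weissauer through the stabilized trace formula.
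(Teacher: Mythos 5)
Your proposal is correct and follows essentially the same route as the paper: realize $\Pi_S(\pi_1,\pi_2)$ for $|S|$ even as the global theta lift $\Theta_{V_B,W_4}(\pi_1^B\boxtimes\pi_2^B)$, identify its local components via Proposition \ref{prop: local global compatibility}, and quote Weissauer's results for the nonvanishing and cuspidality of the lift and for the multiplicity formula and near-equivalence statement. The paper's proof is just the two-line citation of \cite{weissauer2009endoscopy} plus that local-global compatibility step; your additional scaffolding (Rallis tower, Arthur-parameter shape for temperedness and non-CAP) is consistent with, and an expansion of, what those references contain.
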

  \begin{proof}
   This is a combination of \cite[Theorem 5.2]{weissauer2009endoscopy} (for the multiplicity formula) and \cite[Corollary 5.5]{weissauer2009endoscopy} (for the nonvanishing of the global theta lift); note that, given $\Theta_{V_B, W_4} (\pi_1 ^ B\boxtimes\pi_2 ^ B)\neq 0$, it is cuspidal if $\pi_1^B\neq \pi_2^B$ by \cite[Theorem 4.3]{weissauer2009endoscopy}, and hence abstractly isomorphic to $\Pi_S (\pi_1,\pi_2)$ by Proposition \ref{prop: local global compatibility}.
  \end{proof}
 
\subsection{Yoshida lifts in cohomology}
\subsubsection{}\label{subsubsec: notation for endoscopic lifts in coh}
Fix a tuple $\boldsymbol m = (m_v)_{v |\infty} $
 of  integers such that $m_v\geq 2$ for all $v$. Let $\pi_1,\pi_2 $ be cuspidal automorphic representations of $\GL_2 (\A_F) $
of weights $\boldsymbol m +2 = (m_v +2)_{v |\infty} $
and $\boldsymbol m $, respectively, with equal central characters of infinity type $\omega_{\boldsymbol m}$. 
For a set $S_f $ of \emph{finite} places of $F $
at which $\pi_i $ are both discrete series, set $$\Pi_{S_f}=\bigotimes'_{\substack{v\not\in S_f\\v\nmid\infty}} \Pi^+(\pi_{1,v},\pi_{2,v})\otimes \bigotimes_{v\in S_f}\Pi ^ - (\pi_{1, v},\pi_{2, v}).$$
We consider the local system $\mathcal V_{(\boldsymbol m -2, 0)}$ of $\Q(\boldsymbol m) $-vector spaces on $S (\boldgsp_4) $ according to the conventions of \S\ref{subsection: symplectic Shimura} (the field $\Q (\boldsymbol m) $ is defined in (\ref{subsubsec: coh for gl2})).

 For each $v|\infty,$ let $\tau^+_{m_v}$, resp. $\tau^-_{m_v}$, be the unique irreducible representation of $K_{2,v}$ of central character $\omega_{m_v}^ {-1}$ whose restriction to $U(2)\subset K_{2,v}$ has highest weight $(1,-m_v-1)$, resp. $(m_v + 1,-1)$. Similarly, let $\sigma^+_{m_v}$, resp. $\sigma^-_{m_v}$, be the unique irreducible 
representation of $K_v$ of central character $\omega_{m_v} ^ {-1}$ whose restriction to $U(2)$ has highest weight $(-3,-m_v-1)$, resp. $(m_v+1, 3)$. Note that $\tau^\pm_{m_v}$ are the duals of the minimal $K_v$-types of the representation $\Pi_v^+(\pi_{1,v},\pi_{2,v})$ of (\ref{subsubsection: notation for endoscopic lifts}), and $\sigma^\pm_{m_v}$ are the duals of the minimal $K_v$-types of $\Pi_v^-(\pi_{1,v},\pi_{2,v})$. 

For a subset $S_\infty\subset\set{v|\infty}$ and a collection of signs $\boldsymbol\epsilon=\set{\epsilon_v}_{v|\infty},$ define the $\boldsymbol K_2$-representation 
\begin{equation}
    \boldsymbol\tau_{\boldsymbol m,S_\infty}^{\boldsymbol\epsilon} \coloneqq \bigotimes_{v\in S_\infty} \sigma^{\epsilon_v}_{m_v}\otimes\bigotimes_{\substack{v\not\in S_\infty\\ v|\infty}} \tau^{\epsilon_v}_{m_v}.
\end{equation}
Thus $\boldsymbol \tau^{\boldsymbol \epsilon}_{\boldsymbol m, S_\infty}$ is a minimal $\boldsymbol K_2$-type of $\Pi_S(\pi_1,\pi_2)$, if $S_\infty$ is the set of archimedean places in $S$.
\subsubsection{}\label{subsubsec: defining class map etc}
Now let $(\boldsymbol p(\boldsymbol\epsilon, S_\infty), \boldsymbol q(\boldsymbol \epsilon, S_\infty))$ be the plectic  Hodge type determined by:
\begin{equation}
    (p_v(\boldsymbol\epsilon, S_\infty),  q_v(\boldsymbol\epsilon,S_\infty)) = \begin{cases}
    (3, 0), & \epsilon_v = +, v\in S_\infty,\\
    (2,1), & \epsilon_v = +,v\not\in S_\infty,\\
    (1,2), & \epsilon_v = -,v\not\in S_\infty,\\
    (0, 3), & \epsilon_v = -,v\in S_\infty.\end{cases}
\end{equation}
Thus $(\boldsymbol p (\boldsymbol\epsilon,\emptyset),\boldsymbol q (\boldsymbol\epsilon,\emptyset) = (\boldsymbol p (\boldsymbol\epsilon) +1,\boldsymbol q (\boldsymbol\epsilon) +1 ) $
in the notation of (\ref{subsubsec: coh for gl2}).
An easy calculation shows that \begin{equation}\label{eq:hom space k infty}\dim\Hom_{\boldsymbol K_2} \left(\boldsymbol \tau_{\boldsymbol m, S_\infty}^{\boldsymbol\epsilon}, V_{(\boldsymbol m -2, 0),\C}\otimes \wedge^{\boldsymbol p(\boldsymbol\epsilon,S_\infty),\boldsymbol q(\boldsymbol \epsilon, S_\infty)}\mathfrak p ^\ast_{\GSp_4}\right) = 1.\end{equation}
Hence, if $S = S_f\sqcup S_\infty$ is a finite set of places of $F$ with $|S|$ even, combining (\ref{eqn:Realization cohomology definition}) and (\ref{eq:hom space k infty}) yields a map (well-defined up to a scalar):
\begin{equation}
    \CL^{\boldsymbol \epsilon}_S: \left(\Pi_S(\pi_1,\pi_2) \otimes \boldsymbol\tau^{\boldsymbol\epsilon}_{\boldsymbol m,S_\infty}\right)^{\boldsymbol K_2} \to H^{\boldsymbol p(\boldsymbol \epsilon, S_\infty), \boldsymbol q(\boldsymbol \epsilon, S_\infty)}_{(2)}(S(\boldgsp_4), \mathcal V_{(\boldsymbol m -2, 0),\C})[\Pi_{S_f}].
\end{equation}


 \begin{prop}\label{prop: plectic Hodge types and class maps}
The map $\CL^{\boldsymbol \epsilon}_S$ is an isomorphism of $G(\A_{F,f})$-representations, and moreover
$$H_{(2)}^{\boldsymbol p,\boldsymbol q}(S(\boldgsp_4),\mathcal V_{(\boldsymbol m -2, 0), \C})[\Pi_{S_f}] = 0$$ if $(\boldsymbol p,\boldsymbol q)$ is not of the form $(\boldsymbol p(\boldsymbol\epsilon, S_\infty), \boldsymbol q(\boldsymbol \epsilon, S_\infty))$ for some $S_\infty$ such that $|S_f\cup S_\infty|$ is even.
\end{prop}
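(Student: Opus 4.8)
The plan is to compute, place by place at infinity, the $(\mathfrak g_v,K_v)$-cohomology of the representations $\Pi_S(\pi_1,\pi_2)$ contributing to the $\Pi_{S_f}$-isotypic part and to feed the result into the plectic Matsushima formula (\ref{eqn:matsushima plectic}). By Theorem~\ref{thm: global endoscopy statement}, the cuspidal automorphic representations $\pi$ with $\pi_f\cong\Pi_{S_f}$ are precisely the $\Pi_{S'}(\pi_1,\pi_2)$ with $S'=S_f\sqcup S_\infty$ for a set $S_\infty$ of archimedean places such that $|S_f\cup S_\infty|$ is even; each occurs with multiplicity one and has archimedean component $\bigotimes_{v\in S_\infty}\Pi^-(\pi_{1,v},\pi_{2,v})\otimes\bigotimes_{v\notin S_\infty,\,v\mid\infty}\Pi^+(\pi_{1,v},\pi_{2,v})$. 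Thus (\ref{eqn:matsushima plectic}) identifies
\[H^{\boldsymbol p,\boldsymbol q}_{(2)}(S(\boldgsp_4),\mathcal V_{(\boldsymbol m-2,0),\C})[\Pi_{S_f}]\;\cong\;\bigoplus_{S_\infty}\ \Pi_{S_f}\otimes\bigotimes_{v\mid\infty}H^{p_v,q_v}\big(\mathfrak g_v,K_v;\ \Pi^{\pm}_v(\pi_{1,v},\pi_{2,v})^{\mathrm{sm}}\otimes V_{(m_v-2,0)}\big),\]
the sum running over $S_\infty$ with $|S_f\cup S_\infty|$ even and the sign at $v$ being $-$ exactly when $v\in S_\infty$; so everything reduces to the local archimedean cohomology of the two members $\Pi^{\pm}_v$ of each local Yoshida packet.

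For the local computation: because $\pi_{1,v},\pi_{2,v}$ have weights $m_v+2$ and $m_v$ with $m_v\geq 2$, the infinitesimal character of $\Pi^{\pm}_v$ is regular; since $\Pi^{\pm}_v$ is also tempered, and a tempered representation with regular infinitesimal character cannot be properly parabolically induced, each of $\Pi^+_v$, $\Pi^-_v$ is a discrete series representation of $\GSp_4(F_v)$ --- hence, on restriction to $\SP_4(F_v)$, a sum of two discrete series --- and its $(\mathfrak g_v,K_v)$-cohomology with respect to $\mathcal V_{(\boldsymbol m-2,0)}$, which by the normalization of \S\ref{subsubsec: notation for endoscopic lifts in coh} is nonzero, is concentrated in total degree $3=\tfrac12\dim\big(\SP_4(\R)/U(2)\big)$, with each of the two discrete-series constituents contributing a one-dimensional space located in a single Hodge bidegree. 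Using the structure of the archimedean theta correspondence (cf.\ \S\ref{subsec: Archimedean calculations} and \cite{roberts2001global,adams1987endoscopic}) and the minimal $K_v$-types recorded in \S\ref{subsubsec: notation for endoscopic lifts in coh}, one identifies the constituents: $\Pi^+_v$ restricts to $\SP_4(F_v)$ as the sum of the two generic discrete series (minimal $K_v$-types $(\tau^+_{m_v})^\vee$, $(\tau^-_{m_v})^\vee$), and $\Pi^-_v$ as the sum of the holomorphic and anti-holomorphic discrete series (minimal $K_v$-types $(\sigma^+_{m_v})^\vee$, $(\sigma^-_{m_v})^\vee$); and the ``easy calculation'' (\ref{eq:hom space k infty}) --- the computation of $K_v$-multiplicities in $\wedge^{p,q}\mathfrak p^\ast_{\GSp_4}\otimes V_{(m_v-2,0)}$ --- locates these one-dimensional spaces in the bidegrees prescribed by the table defining $(\boldsymbol p(\boldsymbol\epsilon,S_\infty),\boldsymbol q(\boldsymbol\epsilon,S_\infty))$: namely $(3,0)$, resp.\ $(0,3)$, for the $\sigma^+_{m_v}$- resp.\ $\sigma^-_{m_v}$-part when $v\in S_\infty$, and $(2,1)$, resp.\ $(1,2)$, for the $\tau^+_{m_v}$- resp.\ $\tau^-_{m_v}$-part when $v\notin S_\infty$, with vanishing in all other bidegrees. (The classification as discrete series, and these Hodge bidegrees, can alternatively be read off from Vogan--Zuckerman \cite{vogan1984unitary}.)

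Assembling the local answers: the constraint $p_v+q_v=3$ at each place forces $H^{\boldsymbol p,\boldsymbol q}_{(2)}(S(\boldgsp_4),\mathcal V_{(\boldsymbol m-2,0),\C})[\Pi_{S_f}]=0$ unless every $(p_v,q_v)\in\{(3,0),(2,1),(1,2),(0,3)\}$, and such a tuple $(\boldsymbol p,\boldsymbol q)$ determines a unique $S_\infty$ (the places with $(p_v,q_v)\in\{(3,0),(0,3)\}$) and a unique sign vector $\boldsymbol\epsilon$; the space is then nonzero exactly when $|S_f\cup S_\infty|$ is even, i.e.\ when $(\boldsymbol p,\boldsymbol q)=(\boldsymbol p(\boldsymbol\epsilon,S_\infty),\boldsymbol q(\boldsymbol\epsilon,S_\infty))$ and $\Pi_{S_f\sqcup S_\infty}(\pi_1,\pi_2)$ is automorphic, in which case the Matsushima sum has a single term, $\cong\Pi_{S_f}$. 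For the first assertion, I would note that, by its construction from (\ref{eqn:Realization cohomology definition}) and (\ref{eq:hom space k infty}), $\CL^{\boldsymbol\epsilon}_S$ sends a $\boldsymbol K_2$-invariant vector of $\Pi_S(\pi_1,\pi_2)\otimes\boldsymbol\tau^{\boldsymbol\epsilon}_{\boldsymbol m,S_\infty}$ to the associated square-integrable harmonic $(\boldsymbol p(\boldsymbol\epsilon,S_\infty),\boldsymbol q(\boldsymbol\epsilon,S_\infty))$-form and then to its class in $L^2$-cohomology; since $\Pi_S(\pi_1,\pi_2)$ is cuspidal and tempered, this harmonic representative is faithfully detected in cohomology (cf.\ \S\ref{subsubsec: diagram section}), so $\CL^{\boldsymbol\epsilon}_S$ is injective, and since both its source and its target are abstractly isomorphic to the irreducible $\GSp_4(\A_{F,f})$-module $\Pi_{S_f}$, Schur's lemma makes it an isomorphism of $G(\A_{F,f})$-representations.

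The main obstacle is the local archimedean input --- the identification of $\Pi^+_v$ and $\Pi^-_v$ as the specific pairs of discrete series above, and the pinning down of their Hodge bidegrees --- which is a statement about the archimedean theta correspondences for the similitude dual pairs associated with $\big(\mathrm{O}(2,2),\SP_4\big)$ and $\big(\mathrm{O}(4,0),\SP_4\big)$. I expect to handle it via the explicit construction of archimedean packets \cite{adams1987endoscopic} together with the known behaviour of theta lifts of discrete series and of finite-dimensional representations of compact orthogonal groups --- the same circle of ideas used for the archimedean calculations of \S\ref{subsec: Archimedean calculations}. Given that input, the reduction through (\ref{eqn:matsushima plectic}) and Theorem~\ref{thm: global endoscopy statement}, the $(\mathfrak g,K)$-cohomology of discrete series, and the bidegree bookkeeping of (\ref{eq:hom space k infty}) are all routine.
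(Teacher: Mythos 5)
Your proposal is correct and follows essentially the same route as the paper: plectic Matsushima together with Weissauer's multiplicity formula (Theorem \ref{thm: global endoscopy statement}) reduces everything to the local $(\mathfrak g_v,K_v)$-cohomology of $\Pi^{\pm}_v$, whose one-dimensional concentrations in bidegrees $(2,1),(1,2)$ and $(3,0),(0,3)$ the paper simply cites from Vogan--Zuckerman (and Taylor), while injectivity comes from the harmonic-form realization. The only cosmetic difference is that you rederive the local input via the discrete-series classification and the archimedean theta correspondence, where the paper quotes the known dimensions directly.
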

\begin{proof}
That $\CL^{\boldsymbol\epsilon}_S$ is an injection follows from \cite{vogan1984unitary} and the calculation of Casimir operators for $G $, cf. \cite{harris1992arithmetic}. The surjectivity and the vanishing of other plectic Hodge types follows from 
 (\ref{eqn:matsushima plectic}), Theorem \ref{thm: global endoscopy statement}, and the calculation of the nonvanishing $(\mathfrak g, K_2)$ cohomology groups:
\begin{align*}\dim H ^ {3,0} (\mathfrak g, K_2;\Pi_v^-(\pi_{1,v},\pi_{2,v}) \otimes V_{m_v,\C}) &= \dim H ^ {0,3} (\mathfrak g, K_2;\Pi_v^-(\pi_{1,v},\pi_{2,v}) \otimes V_{m_v,\C}) = 1,\\
\dim H ^ {2,1} (\mathfrak g, K_2;\Pi_v^+(\pi_{1,v},\pi_{2,v}) \otimes V_{m_v,\C}) &= \dim  H ^ {1,2} (\mathfrak g, K_2;\Pi_v^+(\pi_{1,v},\pi_{2,v}) \otimes V_{m_v,\C}) = 1.
\end{align*}
The dimensions of these cohomology groups are calculated in \cite{vogan1984unitary}; the result is also recalled in \cite{taylor1993thel}.
\end{proof}
\subsubsection{}
Finally, we relate the $\Pi_{S_f}$-isotypic parts of the $L^2$ and singular cohomology. 
\begin{prop}\label{Prop: comparison isomorphism's for G}
Assume $\Pi$ is defined over $E $, where $\Q (\boldsymbol m)\subset E\subset\C $.
Then there exist $\GSp_4(\A_{F,f})$-equivariant isomorphisms
$$H_{!}^\ast( S(\boldgsp_4), \mathcal V_{(\boldsymbol m -2, 0),E}) [\Pi_{S_f}]\otimes_E\C\simeq H_{(2)}^\ast ( S(\boldgsp_4), \mathcal V_{(\boldsymbol m -2, 0),\C}) [\Pi_{S_f}]  $$
and $$H_c^\ast( S(\boldgsp_4), \mathcal V_{(\boldsymbol m -2, 0),E}) [\Pi_{S_f}]\simeq H^\ast (S(\boldgsp_4), \mathcal V_{(\boldsymbol m -2, 0),E})[\Pi_{S_f}]\simeq H_!^\ast (S(\boldgsp_4), \mathcal V_{(\boldsymbol m -2, 0),E}) [\Pi_{S_f}]. $$
\end{prop}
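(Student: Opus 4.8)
The plan is to run the standard argument showing that, for a cuspidal and not-CAP automorphic representation, compactly-supported, interior, $L^2$, intersection and Betti cohomology all coincide after localizing at that representation; this is the $\GSp_4$-analogue of Proposition~\ref{prop: comparison isomorphism for G'}, the only input specific to the Yoshida lift being Theorem~\ref{thm: global endoscopy statement}. Write $\mathcal V=\mathcal V_{(\boldsymbol m-2,0)}$ for short.

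First I would reduce to $\C$-coefficients. The maps $H^\ast_c(S(\boldgsp_4),\mathcal V_E)\to H^\ast_!(S(\boldgsp_4),\mathcal V_E)\hookrightarrow H^\ast(S(\boldgsp_4),\mathcal V_E)$ are $\GSp_4(\A_{F,f})$-equivariant and defined over $E$, and the cohomology of a local system (compactly-supported or not), together with the formation of the $\Pi_{S_f}$-isotypic part of an admissible module, commutes with the flat base change $E\hookrightarrow\C$. Hence each of these maps is an isomorphism over $E$ if and only if it is one over $\C$, and $H^\ast_!(\mathcal V_E)[\Pi_{S_f}]\otimes_E\C\cong H^\ast_!(\mathcal V_\C)[\Pi_{S_f}]$; so it suffices to prove the $\C$-coefficient statements, the $\GSp_4(\A_{F,f})$-equivariance then being automatic.

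Next I would fix a finite set $T$ of finite places of $F$ containing every place where $\Pi_{S_f}$ ramifies, let $\mathcal H^T$ be the associated spherical Hecke algebra, and let $\mathfrak m\subset\mathcal H^T\otimes_\Q\C$ be the maximal ideal cut out by the Satake parameters of $\Pi_{S_f}$. Localization at $\mathfrak m$ is exact, hence commutes with cohomology, images, kernels and contragredients; since the $\Pi_{S_f}$-isotypic part of any module $M$ equals that of $M_{\mathfrak m}$, it is enough to prove the isomorphisms after $\mathfrak m$-localization. Theorem~\ref{thm: global endoscopy statement} supplies the two facts that drive everything. (a) Every automorphic representation of $\GSp_4(\A_F)$ nearly equivalent to $\Pi_S(\pi_1,\pi_2)$ is cuspidal, so $\mathfrak m$ does not meet the residual part of $L^2$-cohomology, and Matsushima's formula~(\ref{eqn:matsushima}) gives $H^\ast_{(2)}(\mathcal V_\C)_{\mathfrak m}=H^\ast_{\operatorname{cusp}}(\mathcal V_\C)_{\mathfrak m}$. (b) $\Pi_S(\pi_1,\pi_2)$ is not CAP, so $\Pi_{S_f}$ is not a subquotient of the finite part of any Eisenstein representation of $\GSp_4(\A_F)$; by Lemma~\ref{lemma: retard corner} this forces the $\mathfrak m$-localization of $H^\ast/H^\ast_!$ to vanish, i.e.\ $H^\ast_!(\mathcal V_\C)_{\mathfrak m}=H^\ast(\mathcal V_\C)_{\mathfrak m}$. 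Applying (b) to $\mathcal V_\C^\vee$ --- whose relevant representations are nearly equivalent to the members of the Yoshida packet $\Pi_S(\pi_1^\vee,\pi_2^\vee)$, again cuspidal and not CAP --- together with Poincar\'e duality, which identifies $H^i_c(\mathcal V_\C)$ with the contragredient of $H^{6d-i}(\mathcal V_\C^\vee)$ up to a Tate twist, gives $\dim_\C H^i_c(\mathcal V_\C)_{\mathfrak m}=\dim_\C H^i_!(\mathcal V_\C)_{\mathfrak m}$; so the tautological surjection $H^\ast_c(\mathcal V_\C)_{\mathfrak m}\twoheadrightarrow H^\ast_!(\mathcal V_\C)_{\mathfrak m}$ is an isomorphism. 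Finally, $\mathfrak m$-localizing the inclusions~(\ref{eq:inner coh sandwich}) gives $H^\ast_{\operatorname{cusp}}(\mathcal V_\C)_{\mathfrak m}\subseteq H^\ast_!(\mathcal V_\C)_{\mathfrak m}\subseteq\operatorname{im}(H^\ast_{(2)}(\mathcal V_\C)_{\mathfrak m}\to H^\ast(\mathcal V_\C)_{\mathfrak m})$, and by (a) and the injectivity of $H^\ast_{\operatorname{cusp}}\to H^\ast$ recalled in \S\ref{subsubsec: diagram section} the right-hand term equals $\operatorname{im}(H^\ast_{\operatorname{cusp}}(\mathcal V_\C)_{\mathfrak m}\to H^\ast(\mathcal V_\C)_{\mathfrak m})=H^\ast_{\operatorname{cusp}}(\mathcal V_\C)_{\mathfrak m}$, so all five localized groups coincide. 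Combining this with the isomorphism $H^\ast_{(2)}(\mathcal V_\C)\cong IH^\ast(S(\boldgsp_4)^\ast,\mathcal V_\C)$ of \S\ref{subsubsec: diagram section} and passing to $\Pi_{S_f}$-isotypic parts produces all the asserted isomorphisms.

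I do not expect a genuine obstacle: the content is entirely in Theorem~\ref{thm: global endoscopy statement} --- cuspidality of the near-equivalence class and the not-CAP property --- after which one is left with the formalism recalled in \S\ref{subsubsec: diagram section}. The only points requiring care are the bookkeeping between $E$- and $\C$-coefficients, handled by the base-change remark above, and the fact that Poincar\'e duality swaps $\Pi_{S_f}$ with $\Pi^\vee_{S_f}$, which is harmless since the latter is again a cuspidal Yoshida lift.
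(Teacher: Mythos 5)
Your proposal is correct and follows essentially the same route as the paper's (much terser) proof: the only real inputs are Weissauer's theorem (cuspidality of the near-equivalence class and the not-CAP property), the Zucker-conjecture diagram of \S\ref{subsubsec: diagram section}, Lemma \ref{lemma: retard corner}, and Poincar\'e duality, all of which you use in the same way. The Hecke-localization and base-change bookkeeping you add is a legitimate elaboration of the paper's implicit passage to isotypic parts, not a different argument.
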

\begin{proof}
By Theorem \ref{thm: global endoscopy statement}, $$H_{\operatorname{cusp}}( S(\boldgsp_4), \mathcal V_{(\boldsymbol m -2, 0)}) [\Pi_{S_f}]\simeq H_{(2)} ( S(\boldgsp_4), \mathcal V_{(\boldsymbol m -2, 0)}) [\Pi_{S_f}], $$
and the first statement follows by the discussion in (\ref{subsubsec: diagram section}). The second assertion is an immediate consequence of Lemma \ref{lemma: retard corner} (and Poincar\'e duality), since $\Pi_{S_f}$ is not Eisenstein.
\end{proof}


\section{Periods of Yoshida lifts}\label{Section:. See Yoshida}
\subsection{The period problem}
\subsubsection{}\label{subsubsec:measure on H}
Let $ \pi_1 \boxtimes \pi_2$ be a cuspidal automorphic representation of $\GSO(V) (\A_F) $, and let $\pi$ be an auxiliary cuspidal automorphic representation of $\GL_2(\A_F)$ such that $\pi ^\check$ and $\pi_i $
have the same central character. Consider the subgroup $$H= \GL_2\times_{\mathbb G_m}\GL_2\subset\GSp_4 $$
and the period integral $\mathcal P_{S,\pi_1,\pi_2,\pi}: \Pi_S(\pi_1,\pi_2)\otimes \pi \to \C$ defined by
\begin{equation}\label{eq:  definition of.}
   \mathcal P_{S,\pi_1,\pi_2,\pi}(\alpha,\beta) = \int_{[Z_H \backslash H]} \alpha(h, h')\cdot \beta(h)\d (h, h'),
\end{equation}
where $H (\A_F) \subset\GSp_4 (\A_F) $
is parameterized by pairs $(h, h ')\in \GL_2(\A_F)\times \GL_2(\A_F) $ such that $\det(h) = \det(h')$. When $\pi_1,$ $\pi_2,$ and $\pi$ are clear from context, we drop them from the notation $\mathcal P_{S,\pi_1,\pi_2,\pi}$. The goal of this section is to calculate $\mathcal P_{S,\pi_1,\pi_2,\pi} $
explicitly (Theorems \ref{thm: non-generic vanishing} and \ref{theorem: new global. Pairing}). The result is applied to the cohomology of Shimura varieties in the next section.
\subsubsection{}
Of course, we must specify a Haar measure on $[Z_H\backslash H]$ for (\ref{eq:  definition of.}) to be well-defined.
Let $\mathcal C =\A_F ^ {\times, 2} F ^\times\backslash\A_F ^\times $, and let $\d c $
be the Haar measure on $\mathcal C $
assigning volume $1 $ to the image of $\widehat {\O}_F $.
As the  measure on $[Z_H\backslash H]$, we take the measure induced by pullback from the surjection $[\SL_2]\times [\SL_2]\times\mathcal C \twoheadrightarrow [Z_H\backslash H]$. The Haar measure on $\SL_2 $
is  described in (\ref{subsubsec:measures}).

\subsubsection{}
Before we begin the calculation of (\ref{eq:  definition of.}), we explain the seesaw diagram that lies behind it:
\begin{center}
    \begin{tikzcd}
    \GSp_4 \arrow[d,dash]\arrow[dr,dash] & \GSO(V_B)\times_{\mathbb G_m} \GSO(V_B) \arrow[d,dash]\arrow[dl,dash] \\ H & \GSO(V_B)
    \end{tikzcd}
\end{center}
Here $B$ is the quaternion algebra ramified at $S$, the vertical lines are inclusions, and the diagonals are similitude dual pairs inside $\GSp_8$; the diagram corresponds to the two decompositions $$W_4\otimes V_B = W_2\otimes V_B\oplus W_2\otimes V_B $$
of $W_{16}$. 
Since $\Pi_{S} (\pi_1,\pi_2) $
is spanned by theta lifts $\theta_{\xphi} (f_1\otimes f_2) $
for $f_i\in\pi_i ^B$, we wish to apply the formal seesaw identity: \begin{equation}
    \langle \theta_\xphi(f_1\otimes f_2)|_H, \beta\otimes \mathbbm 1\rangle_H = \langle f_1\otimes f_2, \theta_\xphi(\beta\otimes \mathbbm 1)|_{\GSO(V_B)}\rangle_{\GSO(V_B)}, 
\end{equation}
Here $\beta\otimes \mathbbm 1$ is the automorphic form $(h,h')\mapsto \beta(h)$ on $H$.
Now, the theta lift from $H $ to $\GSO (V_B)\times_{\mathbb G_m}\GSO (V_B) $
is simply two copies of the theta lift from $\GL_2 $
to $\GSO (V_B) $; restriction to the diagonal amounts to multiplying the theta lifts of $\beta $
and $\mathbbm 1$ on $\GSO(V_B)$.
The theta lift of $\beta $ to $\GSO (V_B) $
will be a vector in $\pi^B\boxtimes \pi^B$, where $\pi^B$ is the Jacquet-Langlands transfer. However, the  theta lift of the constant function is formally divergent;
 to regularize it,  we need a certain second-term Siegel-Weil formula. Ignoring this technicality, the theta lift $\theta_\xphi(\beta\otimes \mathbbm 1) $ restricted to the diagonal $\GSO(V_B)$
should be the product of a vector in $\pi^B\boxtimes \pi^B$ and an Eisenstein series on $\GSO(V_B)$. Of course, the Eisenstein series can only exist when $B$ is split, so  (\ref{eq:  definition of.})
should vanish identically unless $S =\emptyset$. But when $S =\emptyset $, integrating $\theta_\xphi(\beta\otimes 1)$ against the form $f_1\otimes f_2$ gives a Rankin-Selberg integral that unfolds to an Euler product and ultimately an $L$-function. 

Thus to compute $\mathcal P_{S,\pi_1,\pi_2,\pi} $, we first must dispatch the trivial case $S\neq \emptyset$, and then study the theta lift of both cusp forms and constant functions from $\GL_2$
to $\GSO (V)$. This is the content of the next three subsections.
\subsection{Calculation of period integral: trivial case}
\subsubsection{}
The trivial case $S\neq\emptyset$ can be handled easily:
\begin{thm}\label{thm: non-generic vanishing}
If $S\neq\emptyset$, then $\mathcal P_{S} $ is identically zero.
\end{thm}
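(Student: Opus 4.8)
The plan is to use the seesaw identity sketched above to rewrite $\mathcal P_S$ as an integral against the theta lift of the constant function from $\GL_2$ to $\GSO(V_B)$, and then to observe that this theta lift vanishes identically when $B$ is nonsplit. Let $B$ be the quaternion algebra over $F$ ramified exactly at $S$. By Theorem~\ref{thm: global endoscopy statement}, $\Pi_S(\pi_1,\pi_2) = \Theta_{V_B,W_4}(\pi_1^B\boxtimes\pi_2^B)$, so it suffices to show $\mathcal P_S(\theta_{\xphi}(f),\beta) = 0$ for every $f = f_1\otimes f_2 \in \pi_1^B\boxtimes\pi_2^B$, every $\beta\in\pi$, and every decomposable Schwartz function $\xphi = \xphi_1\otimes\xphi_2$ adapted to the splitting $W_4 = W_2\oplus W_2$ into symplectic subspaces; such theta lifts span $\Pi_S(\pi_1,\pi_2)$. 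The point of assuming $B$ nonsplit is that $[\SO(V_B)]$ and $[\PGSO(V_B)]$ are then compact and $\Pi_S(\pi_1,\pi_2)$ is cuspidal, so all the integrals below converge absolutely and no Siegel--Weil regularization is needed.

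First I would unfold $\theta_{\xphi}(f)$ and use Fubini to move the $[\SO(V_B)]$-integral outside, which gives
\[
\mathcal P_S(\theta_{\xphi}(f),\beta) = \int_{[\SO(V_B)]} f(s)\left(\int_{[Z_H\backslash H]} \theta_{W_2,V_B}(h,s;\xphi_1)\,\theta_{W_2,V_B}(h',s;\xphi_2)\,\beta(h)\,\d(h,h')\right)\d s,
\]
where the theta kernel attached to $W_4\otimes V_B = (W_2\otimes V_B)\oplus(W_2\otimes V_B)$ has been factored using that $H = \GSp(W_2)\times_{\mathbb G_m}\GSp(W_2)$ acts block-diagonally. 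Next I would carry out the integral over the $h'$-variable: with respect to the measure on $[Z_H\backslash H]$ fixed in \S\ref{subsubsec:measure on H}, this amounts to integrating $\theta_{W_2,V_B}(h',s;\xphi_2)$ over a copy of $[\SL_2]$, and the result is exactly a value of the similitude theta lift $\theta_{\xphi_2}(\mathbbm 1)$ of the constant function from $\GL_2$ to $\GSO(V_B)$, matching the formal seesaw identity of the introduction.

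The crucial input is then that $\theta_{\xphi_2}(\mathbbm 1)\equiv 0$ on $\GSO(V_B)(\A_F)$ when $B$ is nonsplit. Were it nonzero, it would generate a nonzero automorphic representation, necessarily square-integrable since $[\PGSO(V_B)]$ is compact; by the argument of Proposition~\ref{prop: local global compatibility} its component at any $v\in S$ would be a nonzero quotient of the local theta lift $\Theta_{W_2,V_{B_v}}(\mathbbm 1)$ for the anisotropic $4$-dimensional norm space $V_{B_v}$ of the local division algebra. Restricting to $\SL_2(F_v)$, this is controlled by the theta lift of the trivial representation of $\SL_2(F_v)$ to the anisotropic $\SO_4$, which vanishes: by the Kudla--Rallis conservation relation the trivial representation first occurs in the discriminant-trivial even orthogonal Witt tower only in dimension $6$, hence not in dimension $4$; alternatively one can invoke the Siegel--Weil formula, which exhibits $\theta_{\xphi_2}(\mathbbm 1)$ as a degenerate Eisenstein series induced from a parabolic of $\GSO(V_B)$ that does not exist when $B$ is a division algebra. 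Granting this, the inner integral in the display vanishes for all $s$, hence $\mathcal P_S\equiv 0$. (If $\pi$ is not a discrete series at some place of $S$ the vanishing is even more immediate, since then $\pi^B = 0$ and already the theta lift of $\beta$ to $\GSO(V_B)$ is zero.) I expect the only genuine subtlety to be the verification that for nonsplit $B$ all the integrals in the first two steps converge absolutely --- so that the seesaw manipulation is rigorous without regularization --- which reduces to the compactness of $[\SO(V_B)]$, the cuspidality and moderate growth of the forms involved, and Weil's convergence criterion for the $[\SL_2]$-integral over an anisotropic quadratic space.
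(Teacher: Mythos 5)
Your argument is correct, and for nonsplit $B$ the seesaw manipulation you propose is indeed rigorous: Weil's convergence criterion applies to the $[\SL_2]$-integral because $V_B$ is anisotropic, so no regularization is needed and the period does reduce to an integral against $\theta_{\xphi_2}(\mathbbm 1)$, which vanishes. The paper's proof of Theorem \ref{thm: non-generic vanishing} reaches the same conclusion by a softer route that never performs the seesaw or any Fubini/convergence analysis: it observes that the period, viewed as a linear functional of the Schwartz function at a single ramified place $v$, is invariant under the Weil-representation action of $\set{1}\times\SL_2(F_v)\subset H(F_v)$, hence factors through the maximal $\SL_2(F_v)$-coinvariant quotient of $\mathcal S_{F_v}(B\oplus B)$; a two-line computation with the unipotent and torus actions (support on the null cone $\set{N(b_2)=0}=\set{0}$, then the modulus $|a|^2$) shows this quotient is zero. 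Note that this local coinvariants computation is exactly the fact underlying your appeal to first occurrence: the trivial representation of $\SL_2(F_v)$ does not occur opposite the four-dimensional anisotropic space. (Your precise numerology for the first occurrence in the quaternionic tower is not needed and may not be exactly right; only the non-occurrence in dimension $4$ matters, and that is what the coinvariants calculation gives directly.) What your route buys is a concrete identification of the vanishing object as the global lift $\theta(\mathbbm 1)$, consistent with the heuristic of the introduction; what the paper's route buys is brevity and the complete avoidance of convergence issues, at the cost of hiding the seesaw. The parenthetical about $\pi^B=0$ is a red herring: $\beta$ is never theta-lifted in your main computation, and no discrete-series hypothesis on $\pi$ enters.
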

\begin{proof}
Let $B $ be the quaternion algebra over $F $ ramified exactly at $S $ (recall $| S | $ is even). By Theorem \ref{thm: global endoscopy statement}, it suffices to show the vanishing of all integrals of the form
 $$I (\xphi, g,f)=\int_{[Z_H\backslash H]} \theta_\xphi(g)(h,h') \cdot f(h) \d (h,h'),$$
 where $\xphi\in \mathcal S_\A (W_2\otimes B)$, $W_2\subset W$ a maximal isotropic subspace of the standard four-dimensional symplectic space, and $g\in \pi_1^B\boxtimes \pi_2^B$. 
  Let us fix a place $v$ at which $B $
  ramifies, and a Schwartz function $\xphi ^ {(v)}\in \mathcal S_{{\A_F} ^ {(v)}} (W_2\otimes B). $ Then, holding the other data $f,g$ fixed as well, consider
  the linear map $I_v: \mathcal S_{F_v} (W_2\otimes B)\to\C $ defined by \begin{equation}
      \xphi_v\mapsto I (\xphi ^ {v}\otimes\xphi_v, f,g).
  \end{equation}
Now $I_v$ clearly factors through the maximal quotient $Q$ of $\mathcal S_{F_v} (W_2\otimes B) = \mathcal S_{F_v} (B\oplus B) $ on which $\set {1}\times \SL_2(F_v)\subset H(F_v)\subset\GSp_4 (F_v)$ acts trivially. We claim this quotient is trivial. Indeed, the action of the  Borel subgroup of $\set {1}\times \SL_2(F_v) $ is explicitly described by:
\begin{equation}
    \begin{split}
        \omega \left(1\times \begin{pmatrix} 1 & n \\0 & 1\end{pmatrix},1\right) \xphi_v(b_1, b_2)=\psi\left (\frac {1} {2} n N(b_2)\right)\xphi_v (b_1, b_2)\\
        \omega\left (1\times\begin {pmatrix} a & 0\\ 0 &a ^ {-1} \end{pmatrix},1\right)\xphi_v  = |a | ^ 2\xphi_v (b_1, ab_2).
    \end{split}
\end{equation}
Since $B_v $ is anisotropic, it follows from the first equation that $\mathcal S_{F_v} (W_2\otimes B)\to Q $ factors through $\xphi_v\mapsto\xphi_v (b_1,0)$; then the second equation implies $Q =0.$ Therefore $I_v$ is identically zero for all choices of $(\xphi ^ v, f,g) $, and in particular (since the adelic Schwartz space is generated by factorizable Schwartz functions) all the period integrals $I(\xphi, f, g)$ vanish as well.
 \end{proof}
 \subsection{Lifts of cuspidal representations from $\GL_2$ to $\GSO (V) $}
 \subsubsection{}
 Since $V$ is split, the Weil representation for the pair $(W_2,V)$ has the alternate model given by the complete polarization $V = V_1\oplus V_2,$  where
 \begin{equation}\label{eq:isotropic of V}
   V_1 =   \begin{pmatrix}
        x & y\\0 & 0
      \end{pmatrix},\;\; V_2 =\begin{pmatrix}0 & 0\\z & w\end{pmatrix}.
  \end{equation} 
 \subsubsection{}
  Let $\pi $ be a cuspidal automorphic representation of $\GL_2 (\A_F) $. It is well-known that the theta lift $\Theta(\pi)\subset \mathcal A_0(\GSO(V)(\A_F))$ is isomorphic to the automorphic representation $\pi\boxtimes\pi$ of $\GSO(V)$. 
  To obtain our ultimate period formula, we will require the following calculation:
  \begin{lemma}\label{Lemma: Fourier coefficient of cuspidal left}
  Let $\xphi =\otimes_v\xphi\in\mathcal S_\A (\langle e_2\rangle\otimes W) $
  and $f =\otimes_vf_v\in\pi $ be factorizable vectors, and choose a factorization $$W_{\psi,f}(h) = \prod_v W_{f,v}(h_v),\;\; h = (h_v)\in \GL_2(\A_F)$$ of the global Whittaker function of $f$ (so that $W_f(h_v)(1) = 1$ for almost all $v$). Then the Whittaker coefficient of $\theta_\xphi (f) $ along the standard unipotent subgroup $N\times N\subset\boldsymbol p_Z (\GL_2\times \GL_2) $ is given by:
  $$\theta_{\xphi} (f) (g)_{N\times N,\psi ^ {-1}\times\psi ^ {-1}} =\prod_v\left(\integral_{\mathrm{SL}_2 (F_v)} W_{f,v} (h_vh_{c_v})\omega (h_vh_{c_v}, g)\widehat\xphi (1, 0, 0, -1)\d h_v\right),\;\; c = (c_v) =\det(g).$$
  \end{lemma}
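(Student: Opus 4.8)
The plan is to unfold the definition of the theta lift and run a Fourier-coefficient computation of the type familiar from the classical $\GL_2$-to-$\operatorname{GO}(2,2)$ theta correspondence. Writing $H_1 = \SL_2$, $H = \GL_2$, the definition (\ref{eq: glopbal theta lift def}) reads $\theta_\xphi(f)(g) = \int_{[\SL_2]}\theta(h_1 h_c, g;\xphi)\,f(h_1 h_c)\,\d h_1$ with $c = \det g$. First I would substitute this into the definition of the $(N\times N,\psi^{-1}\times\psi^{-1})$-Fourier coefficient of $\theta_\xphi(f)$ along $\boldsymbol p_Z(\GL_2\times\GL_2) = \GSO(V)$, and interchange the $[N\times N]$-integral with the $[\SL_2]$-integral; this is legitimate by absolute convergence, using the rapid decay of the cusp form $f$ together with the Schwartz decay of $\xphi$.

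After this interchange the heart of the matter is the $(N\times N,\psi^{-1}\times\psi^{-1})$-Fourier coefficient of the \emph{theta kernel} in the $\GSO(V)$-variable. I would expand the kernel as the lattice sum over $x\in V(F) = M_2(F)$ and use that $\boldsymbol p_Z(n_1,n_2)$ acts on $M_2$ by the unipotent map $x\mapsto n_1 x n_2^\ast$; decomposing the sum according to the $N(F)\times N(F)$-orbits on $M_2(F)$, which are controlled by the invariants $x_{21}$ and $\det x$, the $[N\times N]$-integral annihilates every orbit on whose stabilizer the character $\psi^{-1}\times\psi^{-1}$ is nontrivial. The generic orbit (meeting the cell where $x_{21}$ is invertible) has trivial stabilizer, so its $[N\times N]$-integral unfolds to an integral over $N(\A_F)\times N(\A_F)$; a Poisson summation in the remaining lattice coordinate is precisely the partial Fourier transform (\ref{eqn: Fourier_transform}), converting $\xphi$ into $\widehat\xphi$ and pinning the argument of $\widehat\xphi$ to the orbit representative, which in the coordinates of the mixed model is $(1,0,0,-1)$.

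It remains to feed the result back into the $[\SL_2]$-integral against $f$. The surviving lattice sums combine with the Whittaker expansion of $f$ to reconstitute the single adelic integral $\int_{\SL_2(\A_F)}W_{\psi,f}(h h_c)\,[\omega(h h_c,g)\widehat\xphi](1,0,0,-1)\,\d h$, and since $W_{\psi,f} = \prod_v W_{f,v}$, $\widehat\xphi = \otimes_v\widehat\xphi_v$, and the Weil representation and Haar measures factor over the places of $F$, this is exactly the Euler product in the statement.

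The step I expect to be the main obstacle is the orbit bookkeeping in the middle paragraph. Beyond checking that the generic orbit yields the stated main term with the correct evaluation point and no spurious constants, one must dispose of the non-generic orbits that also pass the stabilizer/character test — notably the orbit of the diagonal matrices $\operatorname{diag}(a,-a)$, $a\in F^\times$, whose stabilizer in $N\times N$ is the antidiagonal line, on which $\psi^{-1}\times\psi^{-1}$ restricts trivially — and show that their contributions vanish once integrated against $f$; this is where the cuspidality of $f$ is genuinely used, via the vanishing of the corresponding degenerate period. The other point requiring care is tracking the Weil-representation formulas, the self-dual measure implicit in (\ref{eqn: Fourier_transform}), and the volume factors $\operatorname{vol}(F\backslash\A_F)$ — hence powers of $|D_F|$ — that arise along the way and must ultimately cancel.
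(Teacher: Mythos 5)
Your overall skeleton (unfold the lift, Fourier--analyze the kernel along $N\times N$, reassemble the Whittaker function of $f$) is the right shape, but the execution in the Schr\"odinger model $\mathcal S_{\A_F}(V)$ breaks down at exactly the step you flag, and the mechanism you propose there is wrong. The non-free orbits of $N(F)\times N(F)$ on $M_2(F)$ that survive the stabilizer--character test are the lines $\left\{\left(\begin{smallmatrix}t&\ast\\0&-t\end{smallmatrix}\right)\right\}$, $t\in F^\times$, as you say; but every point of such an orbit has determinant $-t^2\neq 0$. Since $N_{\SL_2}$ acts in this model by $\omega(n(r),1)\xphi(x)=\psi(r\det x)\xphi(x)$, integrating that orbit's contribution against $f$ over $[\SL_2]$ produces the $\psi_{-t^2}$-Fourier coefficient of $f$ along $N_{\SL_2}$ --- a \emph{nondegenerate} Whittaker coefficient, not a constant term --- so cuspidality does not kill it. (Cuspidality is used, but to kill the isotropic free orbits, i.e.\ those with $x_{21}\neq 0$ and $\det x=0$; you have conflated these with the $\det x=-t^2$ strata.) Worse, these terms should not vanish: tracking the partial Fourier transform in the $V_1$-variable, which preserves the second-row coordinate $v_2$, the strata with $v_2=(0,-t)$ match exactly the small-Bruhat-cell cosets in the unfolded sum over $N(F)\backslash\SL_2(F)\cong F^2\setminus\{0\}$, so they supply a genuine, generically nonzero piece of the final integral $\integral_{\SL_2(\A_F)}W_{\psi,f}(hh_c)\,\omega(hh_c,g)\widehat\xphi(1,0,0,-1)\,\d h$; an argument making them vanish would prove a false (incomplete) formula. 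There is also a structural problem with the orbit-by-orbit bookkeeping: in the Schr\"odinger model $\SL_2=H_1$ acts through the Weil representation rather than geometrically, so the individual orbit contributions are not left $\SL_2(F)$-invariant in $h$, and ``$\integral_{[\SL_2]}f\cdot(\text{one orbit})$'' only makes sense after fixing a fundamental domain; the generic/degenerate grouping does not interact cleanly with the automorphy needed for your final unfolding.

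The paper avoids all of this by applying Poisson summation to the whole kernel at the outset and working in the mixed model $\mathcal S_{\A_F}(W\otimes V_2)$. There the first copy of $N$ is the unipotent radical of the parabolic $P(V_1)\subset\GSO(V)$ and acts by a character, so its Fourier coefficient is an \emph{exact} restriction of the lattice sum to the quadric $\{z_1w_2-w_1z_2=1\}$ --- no orbit analysis and no surviving degenerate strata --- and that quadric is a single free $\SL(W)(F)=\SL_2(F)$-orbit which unfolds against $[\SL_2]$. The second copy of $N$ sits in the Levi $\GL(V_1)$ and is moved to the $\SL_2$-side via the identity $\omega(nh,g)\widehat\xphi(1,0,0,-1)=\omega(h,\boldsymbol p_Z(1,n)g)\widehat\xphi(1,0,0,-1)$, after which the remaining $[N]$-integral against $f$ reassembles $W_{\psi,f}$. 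Nothing is discarded and cuspidality is never invoked. If you insist on the Schr\"odinger-model route, the correct task is not to make the $\det=-t^2$ orbits vanish but to show that they reconstitute the small-cell portion of the adelic integral; the cleaner fix is simply to pass to the mixed model first.
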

  \begin{proof}
 We compute in two steps. First, for $(h,g)\in R_0(\A_F)$,
 \begin{align*}
     \theta (h,g;\xphi)_{N\times 1,\psi^{-1}\times 1}& = \integral_{[N]}\sum_{x\in W\otimes V_2}\omega(h,ng) \widehat\xphi (x)\psi (n)\d n\\
     & =\integral_{[\mathbb G_a]}\sum_{(z_1, w_1, z_2, w_2)}\psi (a (w_2z_1 - z_2w_1))\omega(h,g)\widehat\xphi (z_1, w_1, z_2, w_2)\psi (a)\d a\\
     &=  \sum_{\substack{(z_1, w_1, z_2, w_2)\\z_1w_2 - w_1z_2 = 1}}\omega(h,g)\widehat \xphi (x) \\
   &= \sum_{\gamma\in \SL_2(F)} \omega(\gamma h, g)\widehat \xphi(1,0,0,-1).
 \end{align*}
 Here $\d n$ is the Haar measure on $N$ such that $[N]$ has volume 1. 
 Now, using the identity $$\omega (n h, g)\widehat\xphi (1, 0, 0, -1)=\omega (h, \boldsymbol p_Z(1,n) g)\widehat\xphi (1, 0, 0, -1),\;\; (g, h)\in R_0,\;n\in N (\A), $$
 we obtain:
 \begin{align*}
     \theta_\xphi (f) (g)_{N\times N,\psi ^ {-1}\times\psi ^ {-1}} &=\integral_{[N]}\integral_{[\SL_2]} \theta (hh_c, \boldsymbol p_Z(1,n)g;\xphi)_{N\times 1,\psi ^ {-1}\times 1}\psi (n) f(hh_c)\d h\d n\\
     & =\integral_{[N]}\integral_{\SL_2(\A)}\omega (hh_c, \boldsymbol p_Z(1,n) g)\widehat\xphi(1,0,0,-1)\psi (n) f (hh_c)\d h\d n\\
     & =\integral_{[N]}\integral_{\SL_2(\A)}\omega ( hh_c, g)\widehat\xphi (1, 0, 0, -1)\psi (n) f (n^{-1}hh_c)\d n\d h\\
     &=\integral_{\SL_2 (\A)}\omega (hh_c,g)\widehat\xphi(1,0,0,-1)W_{\psi, f} (hh_c)\d h,
 \end{align*}
 which gives the lemma.
  \end{proof}
\subsection{A Siegel-Weil identity for $\GSO(V)$}
\subsubsection{Degenerate principal series for $\GSO (V) $}
The  maximal isotropic subspace $V_1\subset V$ of (\ref{eq:isotropic of V}) has stabilizer \begin{equation}
    P = \boldsymbol p_Z(B\times \GL_2)\subset \GSO(V).
\end{equation} 
Let $v$ be a place of $F $, and consider the (normalized) induced representation
 $$\boldsymbol I_v(s) = \Ind_{P(F_v)}^{\GSO(V)(F_v)} \delta_{P_1}^s.$$ 
We also consider the induced representations $I_v(s) = \Ind_{B(F_v)}^{\GL_2(F_v)} \delta_B^s$.
The representations $I_v(s)$ and $\boldsymbol I_v(s)$ are related by the following observation.
\begin{prop}\label{prop:GSO_induced}
The map $$M: \boldsymbol I_v(s) \to I_v(s)$$ defined by $$M(\phi)(g) = \phi(\boldsymbol p_Z(g, 1))$$ is a linear isomorphism and an intertwining map of $\GL_2(F_v)\times \GL_2(F_v)$ representations, if $\GL_2(F_v)\times \GL_2(F_v)$ acts on the left through the quotient $\GL_2(F_v)\times \GL_2(F_v) \twoheadrightarrow GSO(V)(F_v)$ and on the right through the quotient $\GL_2(F_v)\times \GL_2(F_v)  \twoheadrightarrow \GL_2(F_v) \times \set{1}.$
\end{prop}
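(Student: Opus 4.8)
The plan is to recognize $M$ as the isomorphism of induced representations attached to the Bruhat-type decomposition
\[
\GSO(V)(F_v)=P(F_v)\cdot\boldsymbol p_Z\big(\GL_2(F_v)\times\{1\}\big).
\]
This is immediate from the fact that $\boldsymbol p_Z\colon\GL_2\times\GL_2\to\GSO(V)$ is a surjective homomorphism: any element of $\GSO(V)(F_v)$ equals $\boldsymbol p_Z(g_1,g_2)=\boldsymbol p_Z(1,g_2)\,\boldsymbol p_Z(g_1,1)$, and $\boldsymbol p_Z(1,g_2)\in\boldsymbol p_Z(B\times\GL_2)=P$ because $1\in B$. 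The same argument shows that $g\mapsto P(F_v)\boldsymbol p_Z(g,1)$ induces a bijection $B(F_v)\backslash\GL_2(F_v)\xrightarrow{\sim}P(F_v)\backslash\GSO(V)(F_v)$: it is surjective, and $\boldsymbol p_Z(g'g^{-1},1)\in P$ forces $g'g^{-1}$ into $B(F_v)$ up to the antidiagonal central $\mathbb G_m=\ker\boldsymbol p_Z$, hence into $B(F_v)$. So $\boldsymbol I_v(s)$ and $I_v(s)$ are spaces of twisted functions on the \emph{same} coset space, and the proposition asserts that $M$ matches up the two twists.

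First I would record the only genuinely computational input: the identities $\delta_{P_1}(\boldsymbol p_Z(b,1))=\delta_B(b)$ for $b\in B$, and $\delta_{P_1}(\boldsymbol p_Z(1,g_2))=1$ for all $g_2\in\GL_2$. Both follow from the definition of $\delta_{P_1}$ and the formula $\boldsymbol p_Z(g_1,g_2)x=g_1xg_2^\ast$: the unipotent radical of $P$ is the one-dimensional group $\boldsymbol p_Z(N\times\{1\})$, and its Levi $\boldsymbol p_Z(T\times\GL_2)$ acts on the corresponding root line by the scalar $t_1/t_2$ at $\boldsymbol p_Z(\mathrm{diag}(t_1,t_2),\gamma_2)$, independently of $\gamma_2$ --- the same scalar by which $\mathrm{diag}(t_1,t_2)$ acts on the root line of $B\subset\GL_2$. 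Comparing yields both identities.

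Granting this, well-definedness and injectivity of $M$ are formal. For $b\in B$ one has $M(\phi)(bg)=\phi(\boldsymbol p_Z(b,1)\,\boldsymbol p_Z(g,1))=\delta_{P_1}(\boldsymbol p_Z(b,1))^s\phi(\boldsymbol p_Z(g,1))=\delta_B(b)^sM(\phi)(g)$, so $M(\phi)\in I_v(s)$; and since $\phi(p\,\boldsymbol p_Z(g,1))=\delta_{P_1}(p)^sM(\phi)(g)$ for all $p\in P$, the decomposition shows $\phi$ is recovered from $M(\phi)$, so $M$ is injective. For surjectivity, given $f\in I_v(s)$ I would set $\phi(p\,\boldsymbol p_Z(g,1)):=\delta_{P_1}(p)^sf(g)$; the only ambiguity, namely passing from $(p,g)$ to $(p\,\boldsymbol p_Z(b,1),\,b^{-1}g)$ for $b\in B$, is absorbed by $\delta_{P_1}(\boldsymbol p_Z(b,1))=\delta_B(b)$ together with the transformation law of $f$ under $B$. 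By construction $\phi\in\boldsymbol I_v(s)$ and $M(\phi)=f$, so $M$ is a linear isomorphism.

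Finally, equivariance is a one-line computation using the second modulus identity: for $(\gamma_1,\gamma_2)\in\GL_2\times\GL_2$ acting on $\boldsymbol I_v(s)$ through $\boldsymbol p_Z$,
\[
M\big((\gamma_1,\gamma_2)\cdot\phi\big)(g)=\phi\big(\boldsymbol p_Z(g\gamma_1,\gamma_2)\big)=\phi\big(\boldsymbol p_Z(1,\gamma_2)\,\boldsymbol p_Z(g\gamma_1,1)\big)=\delta_{P_1}(\boldsymbol p_Z(1,\gamma_2))^sM(\phi)(g\gamma_1)=M(\phi)(g\gamma_1),
\]
which is precisely $(\gamma_1,\gamma_2)$ acting on $M(\phi)\in I_v(s)$ through the first projection. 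I expect the only real obstacle to be the bookkeeping behind the second paragraph: pinning down the normalization of $\delta_{P_1}$ and the role of the similitude $\mathbb G_m$ so that $\delta_{P_1}\circ\boldsymbol p_Z$ restricts to $\delta_B$ on $B\times\{1\}$ and to the trivial character on $\{1\}\times\GL_2$; the rest is formal manipulation of the double-coset decomposition.
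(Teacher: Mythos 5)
Your argument is correct. The paper in fact gives no proof of this proposition at all --- it is stated as an observation and immediately used to deduce Corollary \ref{Cor:induced_properties} --- so your write-up supplies precisely the details being taken for granted. The three ingredients all check out: the decomposition $\GSO(V)(F_v)=P(F_v)\cdot\boldsymbol p_Z(\GL_2(F_v)\times\{1\})$ and the induced bijection $B(F_v)\backslash\GL_2(F_v)\simeq P(F_v)\backslash\GSO(V)(F_v)$ follow exactly as you say from $\boldsymbol p_Z(1,g_2)\in P$ and $\ker\boldsymbol p_Z=\{(\lambda,\lambda^{-1})\}$; and for the modulus identities, the unipotent radical of $P$ is indeed the one-parameter group $\boldsymbol p_Z(N\times\{1\})$ (compare $\dim P=6$ with the five-dimensional Levi $\GL(V_1)\times\GL_1$), on whose root line $\boldsymbol p_Z(\operatorname{diag}(t_1,t_2),\gamma_2)$ acts by $t_1/t_2$ while $\boldsymbol p_Z(1,\gamma_2)$ acts trivially because the two factors commute. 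The one genuine loose end is the one you flag yourself: $\delta_{P_1}$ is a priori a character of $P_1$ only, and the paper never specifies its extension to $P$; the extension via the adjoint action on $\operatorname{Lie}N_P$ (equivalently, the extension trivial on $\boldsymbol p_Z(\{1\}\times\GL_2)$) is the one under which the proposition holds as stated, and any overall normalized-versus-unnormalized convention cancels because it enters identically on both sides through $\delta_{P_1}\circ\boldsymbol p_Z|_{B\times\{1\}}=\delta_B$. With that convention fixed, the rest is the formal transport of induced functions along the coset bijection, exactly as you carry it out.
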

Then by the well-known theory of principal series for $\GL_2, $ we deduce:
\begin{cor}\label{Cor:induced_properties}
For all places $v$, the representation $\boldsymbol I_v(1/2) $ has a unique irreducible subrepresentation, and the corresponding quotient is the trivial character of $\GSO (V) (F_v). $
\end{cor}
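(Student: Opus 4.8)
The plan is to reduce everything to the classical reducibility of the degenerate principal series for $\GL_2$ via the isomorphism $M$ of Proposition~\ref{prop:GSO_induced}. First I would record the elementary group-theoretic input: from the exact sequence $1 \to \mathbb G_m \to \GL_2 \times \GL_2 \xrightarrow{\boldsymbol p_Z} \GSO(V) \to 1$ and Hilbert 90 (so $H^1(F_v, \mathbb G_m) = 1$), the map $\boldsymbol p_Z$ is surjective on $F_v$-points with kernel the antidiagonal $F_v^\times$, hence $\GSO(V)(F_v) = (\GL_2(F_v)\times \GL_2(F_v))/F_v^\times$. Consequently smooth representations of $\GSO(V)(F_v)$ are exactly smooth representations of $\GL_2(F_v)\times\GL_2(F_v)$ on which the antidiagonal $F_v^\times$ acts trivially, and this identification commutes with the formation of subrepresentations, quotients, and the testing of irreducibility. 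Under this dictionary, Proposition~\ref{prop:GSO_induced} identifies $\boldsymbol I_v(1/2)$, as a $\GSO(V)(F_v)$-representation, with $I_v(1/2) = \Ind_{B(F_v)}^{\GL_2(F_v)}\delta_B^{1/2}$, viewed as a representation of the first $\GL_2(F_v)$-factor with the second factor acting trivially. So it suffices to describe $I_v(1/2)$ as a $\GL_2(F_v)$-representation.

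Second, I would invoke the standard structure of the principal series for $\GL_2$: the normalized induction $\Ind_{B(F_v)}^{\GL_2(F_v)}\delta_B^{1/2}$ is reducible of length two, fitting into a non-split short exact sequence $0 \to \mathrm{St}_{\GL_2} \to I_v(1/2) \to \mathbbm 1 \to 0$, where $\mathrm{St}_{\GL_2}$ is the (irreducible) Steinberg representation and $\mathbbm 1$ is the trivial character of $\GL_2(F_v)$. Here it is worth noting two points explicitly: the Steinberg, being essentially square-integrable, occurs as the submodule and the one-dimensional constituent occurs as the quotient; and since the inducing character $\delta_B^{1/2}$ is trivial on the center of $\GL_2$, that one-dimensional constituent is genuinely the trivial character and not a nontrivial unramified twist of it. Hence $I_v(1/2)$ has a unique irreducible subrepresentation, with quotient the trivial character.

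Finally, transporting back through $M^{-1}$ and the dictionary of the first paragraph yields that $\boldsymbol I_v(1/2)$ has a unique irreducible subrepresentation and that the corresponding quotient is the trivial character of $\GSO(V)(F_v)$ — here one only needs to observe that the trivial character of $\GL_2(F_v)$, inflated trivially over the second factor, is trivial on the antidiagonal and hence is precisely the pullback of the trivial character of $\GSO(V)(F_v)$. I do not anticipate a genuine obstacle: the entire content is carried by Proposition~\ref{prop:GSO_induced} together with the classical $\GL_2$ theory, and the only things needing a moment's care are the surjectivity of $\boldsymbol p_Z$ on $F_v$-points (so that the $\GSO(V)$- and $\GL_2\times\GL_2$-pictures agree on the level of subquotients) and keeping straight which composition factor of the $\GL_2$ degenerate principal series is the submodule versus the quotient, along with the fact that the one-dimensional factor is the untwisted trivial character.
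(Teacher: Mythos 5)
Your proposal is correct and is essentially the argument the paper intends: the corollary is deduced from Proposition \ref{prop:GSO_induced} together with the standard length-two filtration $0\to \mathrm{St}\to I_v(1/2)\to \mathbbm 1\to 0$ of the $\GL_2$ principal series, and you have simply made explicit the routine points (surjectivity of $\boldsymbol p_Z$ on $F_v$-points via Hilbert 90, and the fact that the second $\GL_2$-factor acts trivially so that subquotients match up). The only cosmetic caveat is that at archimedean $v$ the unique irreducible submodule is the weight-two (essentially square-integrable) discrete series of $\GL_2(\R)$ rather than the Steinberg, but your square-integrability argument applies verbatim there.
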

\subsubsection{}
Consider the map $$[\cdot]_v: S_{F_v}(\langle e_2\otimes\rangle V) \to \boldsymbol I_v(1/2)$$ 
defined by $$[\xphi](g) = \omega(h_{\nu(g)},g) \widehat \xphi(0).$$ A standard calculation shows that $[\cdot]$ is equivariant for the action of $R_0(F_v) \subset \GL_2(F_v)\times \GSO(V)(F_v)$ on both sides, where $R_0(F_v)$ acts on $\boldsymbol I_v(1/2)$ through the projection $R_0 \twoheadrightarrow \GSO(V)$. We may then extend $[\xphi]_v$ to a holomorphic section $[\xphi]_v(s)\in \boldsymbol I_v(s)$
by requiring the restriction of $[\xphi]$ to the maximal compact subgroup $K_0\subset \GSO(V)(F_v)$ to be independent of $s$.
\begin{lemma}\label{Lemma:surj_map_to_induced}
For any place $v$, the map $\xphi \mapsto [\xphi]$ identifies $\boldsymbol I_v(1/2)$ with the maximal quotient of $S_{F_v}(\langle e_2\rangle\otimes V)$ on which $\SL_2(F_v) \subset R_0$ acts trivially. 
\end{lemma}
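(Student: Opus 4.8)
The plan is to make $[\cdot]_v$ completely explicit, deduce surjectivity from the known reducibility of $\boldsymbol I_v(1/2)$, and then obtain injectivity from an analysis of the $\SL_2(F_v)$-coinvariants of the Weil representation.

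First I would unwind the definition. In the model $\mathcal S_{F_v}(\langle e_2\rangle\otimes V)\cong \mathcal S_{F_v}(V)$, where $\SO(V)(F_v)$ acts by $\omega(1,g)\xphi=\xphi\circ g^{-1}$, the extension formula (\ref{Extend two similar tunes}) (with $\dim V=4$ and $\dim W=2$, so the exponent $mn/2$ equals $1$) together with the partial Fourier transform (\ref{eqn: Fourier_transform}) gives, up to the normalization of Haar measure on $V_1$,
\begin{equation*}
[\xphi]_v(g)\;=\;|\nu(g)|^{-1}\int_{V_1}\xphi(g^{-1}v)\,dv,\qquad g\in\GSO(V)(F_v).
\end{equation*}
Since $V_1$ is the Lagrangian with stabilizer $P$, a change of variables shows this section lies in $\boldsymbol I_v(1/2)$, recovering the assertion preceding the lemma. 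Because $R_0(F_v)$ acts on $\boldsymbol I_v(1/2)$ through its projection to $\GSO(V)(F_v)$, and $\SL_2(F_v)\times\{1\}\subset R_0(F_v)$ maps to $\{1\}$, the $R_0(F_v)$-equivariance of $[\cdot]_v$ forces $[\omega(h,1)\xphi]_v=[\xphi]_v$ for all $h\in\SL_2(F_v)$; hence $[\cdot]_v$ factors through a $\GSO(V)(F_v)$-equivariant map $\overline{[\cdot]}_v\colon \mathcal S_{F_v}(\langle e_2\rangle\otimes V)_{\SL_2(F_v)}\to\boldsymbol I_v(1/2)$ from the maximal $\SL_2(F_v)$-coinvariant quotient, and the content of the lemma is that $\overline{[\cdot]}_v$ is an isomorphism.

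For surjectivity I would invoke Corollary \ref{Cor:induced_properties}: $\boldsymbol I_v(1/2)$ has length two, with the trivial character as its unique irreducible quotient, so its unique irreducible submodule $J$ is not spherical and the $1$-dimensional space of $K_0$-fixed vectors in $\boldsymbol I_v(1/2)$ is not contained in $J$; thus any nonzero $K_0$-fixed vector generates $\boldsymbol I_v(1/2)$ as a $\GSO(V)(F_v)$-module. Taking $\xphi_0$ to be the indicator function of a self-dual lattice, $[\xphi_0]_v$ is a $K_0$-fixed vector with $[\xphi_0]_v(1)=\widehat{\xphi_0}(0)\neq 0$, and the image of $[\cdot]_v$ is a $\GSO(V)(F_v)$-submodule containing it, so it is all of $\boldsymbol I_v(1/2)$.

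The remaining point — and the main obstacle — is the injectivity of $\overline{[\cdot]}_v$, i.e.\ that $\ker[\cdot]_v$ is exactly the span of the elements $\omega(h,1)\xphi-\xphi$ with $h\in\SL_2(F_v)$. Since $\overline{[\cdot]}_v$ is already known to be surjective and $\boldsymbol I_v(1/2)$ has length two, this is equivalent to showing that $\mathcal S_{F_v}(\langle e_2\rangle\otimes V)_{\SL_2(F_v)}$ has length at most two as a $\GSO(V)(F_v)$-module. I would establish this by computing the coinvariants directly from the Weil-representation formulas, following the orbit analysis of \cite{rallis1982langlands}: the unipotent $N\subset\SL_2$ acts on $\mathcal S_{F_v}(V)$ by $\xphi(v)\mapsto\psi(bQ(v))\xphi(v)$ with $Q$ the quadratic form (a submersion away from $0$), so the standard Schwartz-space argument identifies $\mathcal S_{F_v}(V)_{N(F_v)}$ with $\mathcal S_{F_v}(V)/Q\cdot\mathcal S_{F_v}(V)$, i.e.\ with Schwartz functions on the isotropic cone $\mathcal C=\{Q=0\}$; the torus of $\SL_2$ then acts by the dilation $v\mapsto av$ and the Weyl element by a normalization of the Fourier transform on $V$, so passing to the full $\SL_2(F_v)$-coinvariants leaves a space supported on the projectivization of $\mathcal C$, which one checks $\overline{[\cdot]}_v$ carries isomorphically onto the compact picture $C^\infty(K_0\cap P\backslash K_0)$ of $\boldsymbol I_v(1/2)$. (Conceptually, $\mathcal S_{F_v}(\langle e_2\rangle\otimes V)_{\SL_2(F_v)}$ is the big theta lift $\Theta_{W_2,V}(\mathbbm{1}_{\SL_2(F_v)})$, whose finite length in this range is standard.) The delicate part throughout is the behavior of these Schwartz-space coinvariants at the cone point $0\in\mathcal C$, where $Q$ fails to be a submersion — precisely the technicality handled in \cite{rallis1982langlands} and in the Siegel–Weil literature.
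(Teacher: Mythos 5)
Your overall strategy coincides with the paper's. The kernel computation --- that $\ker[\cdot]_v$ is exactly the augmentation submodule for $\SL_2(F_v)$, equivalently that $[\cdot]_v$ realizes its image as the maximal $\SL_2(F_v)$-trivial quotient --- is in the end the content of \cite[Theorem II.1.1]{rallis1984howe} (with \cite{kudla1990degenerate} at archimedean places), which is precisely what the paper cites; your orbit-method sketch of this step is the standard proof idea, but, as you yourself acknowledge, the hard points (the cone point $0$, the Weyl element, and the entire archimedean analysis) are exactly what those references supply, and some details as written are off --- for instance, in the $p$-adic case the $N$-coinvariants are $\mathcal S_{F_v}(V)/\set{\xphi: \xphi|_{\mathcal C}=0}\cong C^\infty_c(\mathcal C)$ rather than $\mathcal S/Q\cdot\mathcal S$ (note $Q\cdot\mathcal S\not\subset\mathcal S$, since $Q$ is not locally constant). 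Where you genuinely differ is the surjectivity step: the paper exhibits a nonzero $\SL_2(F_v)\times\SO(V)(F_v)$-invariant functional (the $L^2$-norm, via Plancherel), which forces the image to surject onto the trivial quotient of $\boldsymbol I_v(1/2)$ and hence, by Corollary \ref{Cor:induced_properties}, to be everything; you instead observe that the image contains a spherical vector not lying in the unique irreducible submodule. Both arguments work, but the paper's functional is uniform in $v$, whereas your ``indicator function of a self-dual lattice'' has no meaning at archimedean places --- there you must replace it by the Gaussian and argue in the category of $(\mathfrak g, K_0)$-modules that the unique irreducible submodule (the Steinberg constituent under the isomorphism $M$ of Proposition \ref{prop:GSO_induced}) contains no $K_0$-fixed vector. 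With those repairs your argument goes through.
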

\begin{proof}
By \cite[Theorem II.1.1]{rallis1984howe} (cf. \cite{kudla1990degenerate} in the Archimedian case), the map $\xphi\mapsto [\xphi]$  realizes its image as the maximal quotient of $S_{F_v}(\langle e_2\rangle\otimes V) $ on which $\SL_2(F_v) $
acts trivially. In light of Corollary  \ref{Cor:induced_properties} it suffices to show that there exists a nontrivial map $\mathcal S_{F_v} (\langle e_2\rangle\otimes V)\to\C $
which is equivariant for $\SL_2 (F_v)\times SO (V) (F_v)\subset R_0 (F_v). $
The $L ^ 2 $-norm $$\xphi\mapsto\int_{\langle e_2\rangle\otimes V} |\xphi(z)|^2\d z$$
is such a map (by the Plancherel identity). 
\end{proof}

\subsubsection{Eisenstein series on $GSO(V)$}
Let $\boldsymbol I (s) =\Ind_{P(\A_F)} ^ {\GSO (V) (\A_F)}\delta_{P} ^s $
be the global parabolic induction, and for holomorphic sections $\phi_s\in\boldsymbol I (s) $
consider the Eisenstein series:
\begin{equation}
    \boldsymbol E (g, s;\phi) =\sum_{\gamma\in P (F)\backslash \GSO (V) (F)}\phi_s(\gamma g),\;\;g\in \GSO (V) ({\A_F}),
\end{equation}
which converges for $\Re(s)\gg 0. $ We also consider $I(s) =\Ind_{B ({\A_F})} ^ {GL_2 ({\A_F})}\delta_B ^ s $ and, for holomorphic sections $\phi_s\in I (s) $, the corresponding family of Eisenstein series:
\begin{equation}
    E(g,s;\phi) = \sum_{\gamma\in B (F)\backslash \GL_2 (F)}\phi_s (\gamma g),\;\;g\in \GL_2({\A_F}).
\end{equation}
\begin{prop}\label{prop:GSO_Eisenstein_GL2}
Let $M =\otimes_v M_v: \boldsymbol I (s)\to I (s) $ be the global intertwining map. Then $$E(g_1;s,M(\phi)) =\boldsymbol E(\boldsymbol p_Z(g_1,g_2); s, \phi)$$ as functions on $\C\times GL_2({\A_F})\times GL_2({\A_F})$ for $\Re(s) \gg 0$ and holomorphic sections $\phi \in \boldsymbol I$.\qed
\end{prop}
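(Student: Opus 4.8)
The plan is to unfold both Eisenstein series into sums over coset spaces and match them term by term along the isogeny $\boldsymbol p_Z$. Two inputs are required: a bijection of the relevant coset spaces, and the fact that each \emph{individual} summand on the $\GSO(V)$-side — not merely their sum — is independent of the variable $g_2$. For the first, I would argue as follows. The kernel of $\boldsymbol p_Z\colon\GL_2\times\GL_2\to\GSO(V)$ is the antidiagonal $\mathbb G_m$, which is contained in $B\times\GL_2$ since $\mathbb G_m\subset B$. Applying $H^1(F,\mathbb G_m)=0$ (Hilbert's Theorem 90) to the exact sequences $1\to\mathbb G_m\to B\times\GL_2\xrightarrow{\boldsymbol p_Z}P\to1$ and $1\to\mathbb G_m\to\GL_2\times\GL_2\xrightarrow{\boldsymbol p_Z}\GSO(V)\to1$, where $P=\boldsymbol p_Z(B\times\GL_2)$ is the stabilizer of $V_1$, gives $P(F)=\boldsymbol p_Z\big((B\times\GL_2)(F)\big)$ and $\GSO(V)(F)=\boldsymbol p_Z\big((\GL_2\times\GL_2)(F)\big)$. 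Since $\ker\boldsymbol p_Z\subset(B\times\GL_2)(F)$, passing to quotients and using $(B\times\GL_2)(F)\backslash(\GL_2\times\GL_2)(F)\cong B(F)\backslash\GL_2(F)$ yields a bijection
\[
B(F)\backslash\GL_2(F)\ \xrightarrow{\ \sim\ }\ P(F)\backslash\GSO(V)(F),\qquad \gamma_1\longmapsto P(F)\,\boldsymbol p_Z(\gamma_1,1).
\]

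For the second input, I would invoke Proposition \ref{prop:GSO_induced}: each $M_v\colon\boldsymbol I_v(s)\to I_v(s)$ is an \emph{isomorphism} intertwining the right-translation action of $\GL_2(F_v)\times\GL_2(F_v)$ on $\boldsymbol I_v(s)$ (through $\GSO(V)(F_v)$) with its action on $I_v(s)$ through the first factor only. Thus right translation by $\boldsymbol p_Z(1,g_2)$, $g_2\in\GL_2(F_v)$, is sent by the injective map $M_v$ to the identity, hence is already trivial on $\boldsymbol I_v(s)$ — equivalently, $\boldsymbol p_Z(1\times\GL_2)$ acts trivially on the one-dimensional flag variety $P\backslash\GSO(V)$ and $\delta_P$ is trivial on $\boldsymbol p_Z(1\times\GL_2)\subset M_P$. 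Taking restricted tensor products, every section $\phi\in\boldsymbol I$ satisfies $\phi_s\big(h\,\boldsymbol p_Z(1,g_2)\big)=\phi_s(h)$ for all $h\in\GSO(V)(\A_F)$, $g_2\in\GL_2(\A_F)$.

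Granting these, the identity falls out: for $\Re(s)\gg0$, choosing the representatives $\boldsymbol p_Z(\gamma_1,1)$ of $P(F)\backslash\GSO(V)(F)$ from the bijection above and recalling $M(\phi)_s(g)=\phi_s(\boldsymbol p_Z(g,1))$, one gets
\[
\boldsymbol E\big(\boldsymbol p_Z(g_1,g_2),s;\phi\big)=\sum_{\gamma_1\in B(F)\backslash\GL_2(F)}\phi_s\big(\boldsymbol p_Z(\gamma_1,1)\,\boldsymbol p_Z(g_1,g_2)\big),
\]
and since $\boldsymbol p_Z(\gamma_1,1)\,\boldsymbol p_Z(g_1,g_2)=\boldsymbol p_Z(\gamma_1 g_1,g_2)=\boldsymbol p_Z(\gamma_1 g_1,1)\,\boldsymbol p_Z(1,g_2)$, the $g_2$-independence rewrites each term as $\phi_s\big(\boldsymbol p_Z(\gamma_1 g_1,1)\big)=M(\phi)_s(\gamma_1 g_1)$, so the sum equals $E\big(g_1,s;M(\phi)\big)$; this also settles convergence, since the last expression is the convergent $\GL_2$ Eisenstein series. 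The one step that is not routine bookkeeping with $\boldsymbol p_Z$ is the $g_2$-independence of the \emph{individual} summands, which is precisely where Proposition \ref{prop:GSO_induced} — or, geometrically, the triviality of the $\boldsymbol p_Z(1\times\GL_2)$-action on $P\backslash\GSO(V)\simeq\mathbb P^1$ — enters; everything else is routine.
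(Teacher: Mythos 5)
Your argument is correct, and it is the standard unfolding argument that the paper omits (the proposition is stated with a bare \qed): Hilbert 90 gives the identification $B(F)\backslash\GL_2(F)\simeq P(F)\backslash\GSO(V)(F)$ of coset spaces, and Proposition \ref{prop:GSO_induced} (equivalently, the normality of $\boldsymbol p_Z(1\times\GL_2)$ inside $\GSO(V)$ together with the triviality of $\delta_P$ on it) gives the termwise $g_2$-invariance needed to match the summands. No gaps.
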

By Proposition \ref{prop:GSO_Eisenstein_GL2} and the well-known theory of Eisenstein series for $\GL_2, $ $\boldsymbol E (g, s;\phi) $ has a meromorphic continuation to $s\in\C, $
with at most a simple pole at $s =\frac{1}{2}. $ 
Let
\begin{equation}
    [\cdot]_s:\mathcal S_{\A_F} (\langle e_2\rangle\otimes V)\to\boldsymbol I (s)
\end{equation}
be the tensor product of the local maps $[\cdot]_{v,s} $. For each $\xphi\in\mathcal S_{\A_F} (\langle e_2\rangle\otimes V) $, we consider the Laurent series expansion:
\begin{equation}
    \boldsymbol E (g, s;[\xphi]) = \frac{A_{-1}(g;\xphi)}{s - \frac {1} {2}} + A_0 (g;\xphi) + \cdots.
\end{equation}
\begin{lemma}\label{lem:A0_intertwining}
For each $\xphi $, $A_{-1} (g;\xphi) $
is a constant function of $g $. Moreover, the linear map
$$A_0:\mathcal S_{\A_F} (\langle e_2\rangle\otimes V)\to\mathcal A (\GSO (V)(\A_F))$$
is an $R_0(\A_F)$-intertwining operator modulo constant functions.
\end{lemma}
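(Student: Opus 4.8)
The plan is to reduce both assertions to the corresponding statements for $\GL_2$-Eisenstein series via Proposition~\ref{prop:GSO_Eisenstein_GL2}, exploiting the $R_0(\A_F)$-equivariance of the section map $[\,\cdot\,]$ at the special point $s=\tfrac12$.

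First I would record a general fact: for \emph{any} holomorphic section $\Psi_s$ of $\boldsymbol I$, the residue $\Res_{s=1/2}\boldsymbol E(g,s;\Psi_s)$ is a constant function of $g\in\GSO(V)(\A_F)$. Indeed, Proposition~\ref{prop:GSO_Eisenstein_GL2} identifies $\boldsymbol E(\boldsymbol p_Z(g_1,g_2),s;\Psi_s)$ with the $\GL_2$-Eisenstein series $E(g_1,s;M(\Psi_s))$ for $\Re(s)\gg 0$, hence everywhere by meromorphic continuation; and the residue at $s=\tfrac12$ of the latter is a constant function of $g_1$ by the classical theory of Eisenstein series for $\GL_2$, the relevant residual representation being the trivial character (cf.\ Corollary~\ref{Cor:induced_properties}). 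Since $\boldsymbol p_Z\colon\GL_2(\A_F)\times\GL_2(\A_F)\to\GSO(V)(\A_F)$ is surjective (its kernel $\mathbb G_m$ has vanishing $H^1$), a function on $\GSO(V)(\A_F)$ whose pullback is independent of $g_1$ and, trivially, of $g_2$ must be constant. Taking $\Psi_s=[\xphi]_s$ gives the first claim, that $A_{-1}(g;\xphi)$ is constant; the general version is used in the next step.

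Next I would prove the intertwining property of $A_0$. Fix $(h,g')\in R_0(\A_F)$ and write $\rho(g')$ for right translation on $\boldsymbol I(s)$, so that $\boldsymbol E(gg',s;[\xphi]_s)=\boldsymbol E(g,s;\rho(g')[\xphi]_s)$. The key comparison is between $\rho(g')[\xphi]_s$ and $[\omega(h,g')\xphi]_s$: both are holomorphic sections near $s=\tfrac12$ (right translates, resp.\ Weil-representation translates, of holomorphic sections), and they agree at $s=\tfrac12$ by the $R_0(\A_F)$-equivariance of $[\,\cdot\,]_{1/2}$ recorded just before the lemma; comparing Iwasawa decompositions, and using that $[\xphi]_s|_{K_0}$ is independent of $s$ by construction, shows that on $K_0$ the two sections differ exactly by the factor $\delta_P(\,\cdot\,)^{s-1/2}$, so in fact
\[
\rho(g')[\xphi]_s=[\omega(h,g')\xphi]_s+(s-\tfrac12)\,\Psi_s
\]
for a holomorphic section $\Psi_s$ of $\boldsymbol I$. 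Applying $\boldsymbol E(g,s;-)$ gives
\[
\boldsymbol E(gg',s;[\xphi]_s)=\boldsymbol E(g,s;[\omega(h,g')\xphi]_s)+(s-\tfrac12)\,\boldsymbol E(g,s;\Psi_s),
\]
and I would then read off the $s^0$-coefficients of the Laurent expansions at $s=\tfrac12$: on the left it is $A_0(gg';\xphi)$, in the first right-hand term $A_0(g;\omega(h,g')\xphi)$, and in $(s-\tfrac12)\boldsymbol E(g,s;\Psi_s)$ it is $\Res_{s=1/2}\boldsymbol E(g,s;\Psi_s)$, which is constant in $g$ by the first step. Hence $A_0(gg';\xphi)-A_0(g;\omega(h,g')\xphi)$ is a constant function of $g$, which is exactly the assertion that $A_0$ is an $R_0(\A_F)$-intertwining operator modulo constant functions.

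The only mildly delicate point is the displayed identity $\rho(g')[\xphi]_s=[\omega(h,g')\xphi]_s+(s-\tfrac12)\Psi_s$ with $\Psi_s$ genuinely holomorphic at $s=\tfrac12$: this rests entirely on the two sections being holomorphic there and agreeing (not merely having controlled order of vanishing), which is precisely the equivariance of $[\,\cdot\,]$ at $s=\tfrac12$ established before the lemma. Granting this, the argument is a mechanical manipulation of Laurent expansions together with the classical residue computation for $\GL_2$; I do not anticipate any serious obstacle beyond the bookkeeping of the perturbation term.
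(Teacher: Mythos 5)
Your proof is correct and follows essentially the same route as the paper: the first claim via Proposition \ref{prop:GSO_Eisenstein_GL2} together with the classical fact that the residue at $s=\tfrac12$ of a $\GL_2$-Eisenstein series is constant, and the second via the perturbation-of-sections argument that the paper simply imports by citing \cite[Proposition 6.4]{gan2014regularized}. The only cosmetic imprecision is that the factor relating the two sections on $K_0$ is $\delta_{P_1}(p(k,g'))^{s-1/2}$, where $p(k,g')$ denotes the $P$-component of $kg'$ in the Iwasawa decomposition, rather than literally $\delta_P(\,\cdot\,)^{s-1/2}$; this does not affect the argument.
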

\begin{proof}
The first claim is immediate from Proposition \ref{prop:GSO_Eisenstein_GL2}.
 For the second, the proof of \cite[Proposition 6.4]{gan2014regularized} applies almost verbatim, taking into account Lemma \ref{Lemma:surj_map_to_induced}.
\end{proof}
\subsubsection{The spherical Eisenstein series}
Let $\phi ^ 0_s\in I (s) $ be the unique $\GL_2 (\widehat {\O}_F)\cdot\SO (2) $-spherical section such that $\phi ^ 0_s (1) = 1, $ and let \begin{equation}
    E_0 (g, s)\coloneqq E (g, s;\phi^0_s)
\end{equation}
be the resulting Eisenstein series on $\GL_2(\A_F)$. We record the following:
\begin{prop}\label{prop:constanttermcomputed}
The residue of $E_0(h,s)$ at $s= \frac {1} {2} $ is given by: $$\kappa=\frac{\pi^d\Res_{s=1}\zeta_F(s)}{2|D_F|^ {\frac {1} {2}}\zeta_F(2)}.$$
\end{prop}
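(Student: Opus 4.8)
The plan is to read the residue off the constant term of $E_0(g,s)$ along the unipotent radical $N$ of $B$. By the general theory of Eisenstein series on $\GL_2$ (Langlands), this constant term has the shape $\phi^0_s(g) + (M(s)\phi^0_s)(g)$, where $M(s) = \otimes_v M_v(s)$ is the global standard intertwining operator attached to the nontrivial Weyl element. The spherical section $\phi^0_s$ is entire in $s$ with $\phi^0_s(1) = 1$, so any pole of the constant term at $s = \tfrac12$ is carried by the second term; by Proposition \ref{prop:GSO_Eisenstein_GL2} and the theory of $\GL_2$-Eisenstein series this pole is at worst simple, and the residue $\Res_{s=1/2}E_0(g,s)$ is, by the general theory, a constant function $\kappa$ of $g$, spanning the residual representation attached to the trivial Hecke character (cf. Corollary \ref{Cor:induced_properties}).

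Comparing constant terms then gives $\kappa = \Res_{s=1/2}(M(s)\phi^0_s)(g)$ for every $g$; evaluating at $g=1$ and factoring over places, $\kappa = \Res_{s=1/2}\prod_v c_v(s)$, where $c_v(s) = (M_v(s)\phi^0_{s,v})(1)$ is the local intertwining scalar on the spherical vector. These are computed by the Gindikin--Karpelevich formula: with the measures fixed in \S\ref{places etc}, $c_v(s) = \zeta_{F_v}(2s)/\zeta_{F_v}(2s+1)$ at finite $v$, where $\zeta_{F_v}(s) = (1-q_v^{-s})^{-1}$, and $c_v(s) = \Gamma_\R(2s)/\Gamma_\R(2s+1)$ at the $d$ real places, where $\Gamma_\R(s) = \pi^{-s/2}\Gamma(s/2)$. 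Taking the product over all places — and tracking the discrepancy between the chosen local measures (which give $\O_v$ volume $1$) and the self-dual ones, which produces the power of the discriminant — yields
$$\prod_v c_v(s) = \frac{\Lambda_F(2s)}{\Lambda_F(2s+1)}, \qquad \Lambda_F(u) := |D_F|^{u/2}\,\Gamma_\R(u)^d\,\zeta_F(u),$$
the completed Dedekind zeta function of $F$.

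It remains to extract the residue. Since $\Lambda_F(u)$ has a simple pole at $u=1$ and is finite and nonzero at $u=2$,
$$\kappa = \Res_{s=1/2}\frac{\Lambda_F(2s)}{\Lambda_F(2s+1)} = \frac{\tfrac12\,\Res_{u=1}\Lambda_F(u)}{\Lambda_F(2)},$$
the factor $\tfrac12$ coming from the substitution $u = 2s$. Now $\Gamma_\R(1) = \pi^{-1/2}\Gamma(1/2) = 1$, so $\Res_{u=1}\Lambda_F(u) = |D_F|^{1/2}\Res_{s=1}\zeta_F(s)$, and $\Gamma_\R(2) = \pi^{-1}$, so $\Lambda_F(2) = |D_F|\,\pi^{-d}\,\zeta_F(2)$; substituting these gives $\kappa = \dfrac{\pi^d\,\Res_{s=1}\zeta_F(s)}{2\,|D_F|^{1/2}\,\zeta_F(2)}$, as claimed.

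I expect the only genuine subtlety to be the bookkeeping of the non-self-dual measure normalizations fixed in \S\ref{places etc}: one must check that the Euler product of the local Gindikin--Karpelevich scalars assembles precisely into the \emph{completed} $\Lambda_F$, with the correct powers of $|D_F|$ and of $\pi$, rather than into the bare product of local zeta factors. Everything else is the standard computation of the residue of the spherical Eisenstein series on $\GL_2$ over a number field.
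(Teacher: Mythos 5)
Your proposal is correct and follows essentially the same route as the paper: both compute the constant term of $E_0$ along $N$, evaluate the intertwining integral on the spherical section by Gindikin--Karpelevich, and take the residue of the resulting ratio of (completed) zeta functions; the paper simply carries out the Bruhat unfolding explicitly and keeps the factor $\vol([N])=|D_F|^{1/2}$ separate rather than absorbing it into $\Lambda_F$. One bookkeeping caveat: with the local factors $c_v(s)$ exactly as you define them (i.e.\ using the measures of \S\ref{places etc}, which give $\O_v$ volume $1$), the product is $|D_F|^{1/2}\,\Lambda_F(2s)/\Lambda_F(2s+1)$, and the compensating $|D_F|^{-1/2}$ enters because the residue of $E_0$, being constant, equals its \emph{average} over $[N]$, whose volume under these measures is $|D_F|^{1/2}$ --- equivalently, your displayed identity $\prod_v c_v(s)=\Lambda_F(2s)/\Lambda_F(2s+1)$ holds only after passing to self-dual local measures, consistently with your earlier normalization of $M(s)$; once this is made explicit the argument closes and the final constant agrees with the paper.
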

\begin{proof}
Although this is standard, we give a sketch for the reader's convenience. In the Fourier expansion of $E_0(h,s)$, the non-constant Fourier coefficients are holomorphic. We therefore wish to calculate $$\Res_{s=\frac{1}{2}}\frac {1} {\vol([N])} \integral_{[N]} E_0(n, s)\d n, $$
where $\d n $ is the Haar measure on $N ({\A_F}) $ induced by the identification $N(\A_F)\simeq \A_F$ and (\ref{places etc}). Unfolding, we obtain (using the Bruhat decomposition of $\GL_2$):
\begin{align*}
    \frac {1} {\vol([N])}\integral_{[N]} E_0(n, s)\d n &= \frac {1} {\vol([N])}\integral_{[N]} \sum_{\gamma\in B(F)\backslash \GL_2(F)}\phi^0_s (\gamma n)\d n \\
    &=\frac {1} {\vol([N])}\integral_{[N]}\phi^0_s(n)\d n + \frac{1}{\vol([N])} \integral_{[N]}\sum_{a\in F} \phi^0_s(w_0an)\d n \\
    &= \frac {1} {\vol([N])}\integral_{[N]}\phi^0_s(n)\d n + \frac{1}{\vol([N])} \integral_{N({\A_F})} \phi^0_s(w_0n)\d n, \\
\end{align*}
where $$w_0=\begin{pmatrix} 0 & 1 \\ -1 & 0\end{pmatrix}$$ is the Weyl element.
The first term is holomorphic in $s$, so we may discard it and compute:
$$ \frac{1}{\vol([N])} \prod_v \integral_{N(F_v)}\phi^0_s(w_0n_v)\d n_v,$$ where $\d n_v$ is the standard Haar measure assigning volume one to $\O_v$. By the Gindikin-Karpelevich formula (e.g. \cite{fleig_gustafsson_kleinschmidt_persson_2018}), this product is $$\frac{1}{\vol([N])} \left(\sqrt{\pi}\frac{\Gamma(s-1/2)}{\Gamma(s)}\right)^d \prod_{v\nmid \infty} \frac{1 - q_v^{-2s-1}}{1 - q_v ^{-2s}} = \frac{1}{\vol([N])} \left(\sqrt{\pi}\frac{\Gamma(s)}{\Gamma(s+1/2)}\right)^d \frac{\zeta_F(2s)}{\zeta_F(2s+1)}. $$ Taking residue at $s =\frac {1} {2}, $
we obtain
$$\kappa=\frac {\pi ^ d\Res_{s = 1}\zeta_F (s)} {2\vol([N]) \zeta_F(2)}.$$
Finally, we may calculate $$\vol([N]) = \text{Vol} (F\backslash {\A_F}/\widehat O_F) = \text{Vol}(\R^d/\O_F) =  |D_F |^ {\frac {1} {2}} $$ by strong approximation.
\end{proof}
\subsubsection{Regularized theta integrals}
We now recall the regularization, due to Kudla and Rallis \cite{kudla1994regularized}, of the (non-convergent) theta integral 
$$g\mapsto \int_{[\SL_2]} \theta( h_1h_{\nu(g)},g; \xphi) \d h_1,\;\; g\in \GSO(V)({\A_F}),$$
where $\xphi\in \mathcal S_{\A_F}(\langle e_2\rangle \otimes V)$.
The first step of the regularization is to define a certain element $z$ of the universal enveloping algebra of $\mathfrak {sl}_2$; for the precise definition, see \cite[\S5.1]{kudla1994regularized}.
 Kudla-Rallis' regularized theta integral (adapted to the similitude case) is then:
\begin{equation}
    I(g,s;\xphi) \coloneqq \frac{1}{\kappa\cdot(4s^2 - 1)} \int_{[\SL_2]} \theta(g, h_1h_{\nu(g)}; \omega(z)\xphi) E_0(h_1,s) \d h_1,\;\; g\in \GSO(V)({\A_F}).
\end{equation}
(The factor of $4s^2 - 1$ is designed to cancel the effect of $\omega(z)$, cf. \cite[\S5.5]{kudla1994regularized}. Our normalization of $s$ differs from \emph{loc. cit.} by a factor of two.) 
The regularized integral $I (g, s;\xphi) $ is a meromorphic function of $s$ whose poles coincide with the poles of $E_0(h_1,s)$. The Laurent expansion about $s = \frac {1} {2} $ has the form:
\begin{equation}
    I (g, s;\xphi) =\frac {B_{-2} (g,\xphi)} {\left (s - \frac {1} {2}\right) ^ 2} +\frac{B_{-1}(g,\xphi)}{s - \frac {1} {2}}+ B_0 (g,\xphi) +\cdots
\end{equation}
By definition, the linear maps \begin{equation}
    B_d: \mathcal S_{\A_F} (\langle e_2 \rangle\otimes V)\to\mathcal A (GSO (V))
\end{equation}
are $\GSO (V) ({\A_F}) $-equivariant, where $g\in \GSO(V)({\A_F})$ acts on the left by $\xphi \mapsto \omega( h_{\nu(g)},g)\xphi.$
\begin{thm}[Gan-Qiu-Takeda]\label{thm: second term identity}
For all $\xphi\in\mathcal S_{\A_F}(\langle e_2\rangle\otimes V) $ and all $g\in \GSO(V)({\A_F})$,
we have:
\begin{align*}B_{-2}(g, \xphi) &= \vol([\SL_2])A_{-1}(g,\xphi) \\
B_{-1}(g,\xphi) &= \vol([\SL_2]) A_0(g,\xphi) +C(\nu(g),\xphi),
\end{align*}
where the volume of $[\SL_2]$ is taken with respect to $\d h_1.$
\end{thm}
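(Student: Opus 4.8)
The plan is to obtain this as the instance of the regularized Siegel--Weil formula of Gan--Qiu--Takeda \cite{gan2014regularized} --- building on Kudla--Rallis \cite{kudla1994regularized} --- attached to the dual pair $(\SL_2,\SO(V))$ with $V$ the four-dimensional split quadratic space. In the normalization fixed above, this is precisely the ``first term range'': the Siegel Eisenstein series $\boldsymbol E(g,s;[\xphi])$ on $\GSO(V)$ has a simple pole at $s=\tfrac12$, while the regularized theta integral $I(g,s;\xphi)$ acquires a double pole there. The first step is to reduce the asserted identity from $\GSO(V)$ to the isometry group $\SO(V)$. Since $B_{-2},B_{-1}$ and $A_{-1},A_0$ are all equivariant for the $\GSO(V)(\A_F)$-action $\xphi\mapsto\omega(h_{\nu(g)},g)\xphi$, their restrictions to $g\in\SO(V)(\A_F)$ --- where $h_{\nu(g)}=1$, so that $(h_1,g)$ literally lies in $R_0$ --- are exactly the classical isometry-group regularized theta integral and Siegel Eisenstein series, to which \cite{gan2014regularized} applies; the general case then follows by equivariance.

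In the isometry setting, the first term identity $B_{-2}=\vol([\SL_2])A_{-1}$ is the more accessible half: both sides are constant functions of $g$ (for $A_{-1}$ this is Lemma \ref{lem:A0_intertwining}), so one need only match leading Laurent coefficients. This is carried out by the standard descent along the Witt tower of $V$ to a range where the theta integral converges absolutely and the classical Siegel--Weil theorem identifies it with a value of the Eisenstein series; the normalizing factor $\tfrac1{\kappa(4s^2-1)}$, together with the residue computation $\kappa=\Res_{s=1/2}E_0(h,s)$ of Proposition \ref{prop:constanttermcomputed} and the measure conventions of \S\ref{subsubsec:measures}, is what makes the proportionality constant come out to $\vol([\SL_2])$. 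The second term identity is the substantive part. Following Kudla--Rallis and Gan--Qiu--Takeda, one expresses the subleading Laurent coefficient of $I(g,s;\xphi)$ as $\vol([\SL_2])$ times the subleading coefficient $A_0$ of $\boldsymbol E$ plus a ``first term'' contribution attached to a smaller member of the Witt tower of $V$; the bridge between the two is a computation of the constant terms, along the Siegel parabolic $P$ of $\GSO(V)$, of the Eisenstein series associated with successive Witt-tower members. Because the relevant smaller space has dimension at most $2$, so that its orthogonal group is a torus (or trivial), this correction is insensitive to the $\SO(V)$-variable; transported back to $\GSO(V)$ by equivariance, it is recorded as a function $C(\nu(g),\xphi)$ of the similitude norm alone.

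The main obstacle is bookkeeping rather than conceptual. One must check that the Kudla--Rallis regularization as set up here --- the particular element $z$ of the universal enveloping algebra of $\mathfrak{sl}_2$, the use of the spherical $\GL_2$-Eisenstein series $E_0$ (restricted to $\SL_2$) in place of an abstract auxiliary Siegel Eisenstein series, and the measure normalizations of \S\ref{subsubsec:measures} and \S\ref{subsubsec:measure on H} --- matches the hypotheses and normalizations of \cite{gan2014regularized}, so that the cited second term identity genuinely applies, and then trace through that identity to pin down $C(\nu(g),\xphi)$ in the stated form. A secondary remark: by Proposition \ref{prop:GSO_Eisenstein_GL2} and the isomorphism $\GSO(V)\cong(\GL_2\times\GL_2)/\mathbb G_m$, the Eisenstein series on $\GSO(V)$ is literally a $\GL_2$-Eisenstein series, so one could alternatively establish both identities by hand from the classical rank-one theory for $\SL_2$ (Langlands, together with Proposition \ref{prop:constanttermcomputed}), at the price of a more computational argument.
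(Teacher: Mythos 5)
Your overall route is the paper's: quote the Gan--Qiu--Takeda identities on the isometry group and transport them to $\GSO(V)$ using the equivariance of the maps $\xphi\mapsto B_d(\cdot,\xphi)$ and $\xphi\mapsto A_d(\cdot,\xphi)$. The first identity is handled essentially as you say; the paper additionally records why $B_{-2}(\cdot,\xphi)$ is constant (being $\SL_2(\A_F)$-invariant and $\GSO(V)(\A_F)$-equivariant, it factors through the trivial quotient of $\boldsymbol I(1/2)$), which together with the constancy of $A_{-1}$ from Lemma \ref{lem:A0_intertwining} reduces the comparison to matching two constants.

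The one place where your argument is too quick is the passage from $\SO(V)$ to $\GSO(V)$ in the second identity. The subleading coefficient $A_0$ is \emph{not} equivariant: Lemma \ref{lem:A0_intertwining} only makes it an $R_0(\A_F)$-intertwining operator \emph{modulo constant functions}, since the flat extension $[\xphi]_s$ of $[\xphi]_{1/2}$ is not compatible with the group action away from $s=\tfrac{1}{2}$. Transporting the isometry-group identity along a decomposition $g=g_1g_0$ therefore produces, in addition to the GQT correction, a discrepancy of the form $A_0(g_1,\omega(h_{\nu(g_0)},g_0)\xphi)-A_0(g,\xphi)$, which is a constant depending a priori on the chosen decomposition; the paper's proof shows it depends only on $\nu(g)$ precisely because the identity of Lemma \ref{lem:A0_intertwining} holds for \emph{every} decomposition with the same similitude factor. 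Your alternative justification for the $\nu(g)$-dependence --- that the Witt-tower correction is insensitive to the $\SO(V)$-variable because the complementary space has a torus as orthogonal group --- is not the right mechanism: the correction term is still a function on the big group (it is itself a leading Laurent coefficient of a degenerate Eisenstein series there, constant for the same reason $A_{-1}$ is), and in any case this does not account for the extra constant coming from the failure of equivariance of $A_0$. These are repairable slips rather than a wrong approach, and your closing observation that both identities could be verified by hand via Proposition \ref{prop:GSO_Eisenstein_GL2} and the rank-one theory for $\GL_2$ is a legitimate alternative to citing \cite{gan2014regularized}.
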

\begin{proof}
Fix $\xphi$; it follows immediately from \cite{gan2014regularized} that the identities hold for all $g\in \SO (V) ({\A_F}) $, and for some $C(1,\xphi)$. On the other hand, the map $\xphi \mapsto B_{-2}(\cdot, \xphi)$ is $\SL_2$-invariant and $\GSO(V)({\A_F})$-invariant, in particular $R_0({\A_F})$-invariant; thus it factors through the maximal $\GSO(V)({\A_F})$-quotient of $\boldsymbol I(1/2)$ on which $\SO(V)({\A_F})$ acts trivially, i.e. the trivial character of $\GSO(V)({\A_F})$, and $B_{-2}(\cdot,\xphi)$ is constant. Since $A_{-1}(\cdot, \xphi)$ is also constant by Lemma \ref{lem:A0_intertwining}, the first identity follows.

For the second identity, for all $g = g_1g_0\in \GSO (V) ({\A_F}) $ with $g_0\in 
\SO(V)({\A_F})$, Lemma \ref{lem:A0_intertwining} implies:
\begin{align*}
    A_0(g_1, \omega(h_{\nu(g_0)},g_0)\xphi) &= A_0(g_1g_0,\xphi) + C(g_0,\xphi);
\end{align*}
since this applies to all decompositions $g = g_1g_0$, $C(g_0,\xphi)$ depends only on $\nu(g_0) = \nu(g)$. Combining this with the identity for isometry groups,
\begin{align*}
    B_{-1}(g,\xphi) &= B_{-1}(g_1, \omega(h_{\nu(g_0)},g_0)\xphi) \\
    &= \vol([\SL_2])A_0(g_1, \omega(h_{\nu(g_0)},g_0)\xphi) + C(\xphi) \\
    &= \vol([SL_2])A_0(g_1g_0,\xphi) + C'(\nu(g),\xphi).
\end{align*}
\end{proof}
\subsection{Calculation of the period: nontrivial case}
 \subsubsection{}
 We now assume that $S =\emptyset, $ so that $\Pi =\Pi_\emptyset(\pi_1,\pi_2) $ is generic, and compute $\mathcal P_{\emptyset,\pi_1,\pi_2,\pi} $. 
\begin{thm}
\label{theorem: new global. Pairing}\begin{enumerate}
    \item 
Choose vectors $\xphi_1\in\mathcal S_{\A_F}(\langle e_2\rangle\otimes V)$, $\xphi_2\in\mathcal S_{\A_F}(\langle e_4\rangle\otimes V)$, $\alpha\in \pi_1\boxtimes \pi_2$, and $\beta\in\pi$. Then: 
$$\mathcal P_{\emptyset,\pi_1,\pi_2,\pi}(\theta_{\xphi_1\otimes\xphi_2}(\alpha),\beta)=\Val_{s=\frac{1}{2}}\integral_{[\PGSO(V)]}\boldsymbol E(g,s;[\xphi_2])\alpha(g)\theta_{\xphi_1}(\beta)(g)\d g,$$
where $\PGSO(V)(\A_F) = \PGL_2(\A_F)\times \PGL_2(\A_F)$ is given the product Haar measure.
\item
$\mathcal P_{\emptyset, \pi_1,\pi_2,\pi}$ is identically zero unless $\pi\cong\pi_2^\vee$.
\item  Suppose we are given factorizations:
$$\xphi_1 =\otimes_v\xphi_{1, v}\in\mathcal S_{\A_F} (V),\;\;\xphi_2 =\otimes_v\xphi_{2, v}\in\mathcal S_{\A_F} (V),$$
$$\alpha= \otimes_v\alpha_{v}\in\pi_1\boxtimes \pi_2 ,\;\;\;\beta=\otimes_v\beta_v\in\pi_2 ^\vee,$$ along with decompositions of the global Whittaker functions:
$$ \alpha_{N\times N,\psi\times\psi }(g)=\prod_vW_{\alpha, v}(g_v),\;\;g=(g_v)\in \GSO(V) (\A_F),$$ 
$$ \beta_{N,\psi^{-1}}(h)=\prod_v W_{\beta,v}(h_v),\;\;h=(h_v)\in \GL_2(\A).$$

Then for a sufficiently large finite set of primes $S $,  we have: $$\mathcal P_{\emptyset} (\theta_{\xphi_1\otimes\xphi_2} (\alpha),\beta) = 2|D_F|^{\frac{1}{2}}\cdot\pi^{-d}\frac{L ^ S (1,\pi_1\times\pi_2 ^\vee) L ^ S (1,\Ad\pi_2)}{\zeta_F^S(2)} \prod_{v\in S}\frac{\mathcal Z_v (\xphi_{1, v},\xphi_{2, v},\alpha_{v},\beta_v)} {1 -q_v ^ {-1}} $$
where $\mathcal Z_v (\xphi_{1, v},\xphi_{2, v},\alpha_{v}, \beta_v)$ is the local zeta integral:
\begin{equation}
    \begin{split}
        \integral_{(N\times N\backslash\PGSO(V))(F_v)}\integral_{SL_2 (F_v)}W_{\alpha,v}(g)
W_{\beta, v}(h_1h_c)  \omega(h_1h_c, g)\widehat\xphi_1 (1,0,0,-1)\phi ^ 0 (g_2) [\xphi_2] (g_1)\d h_1\d g\\ c = \det(g_1g_2), \;\;g=\boldsymbol p_Z (g_1, g_2). 
    \end{split}
\end{equation}
Here $\phi ^ 0 (g_2) $ is the standard spherical section of $I(1)$.
\item The $L$-values $L^S(1, \pi_1\times \pi_2^\vee)$ and $L^S(1,\Ad \pi_2)$ are nonzero.
Moreover,  for each place $v $, there exist choices of $\xphi_{i, v},\alpha_{v} $, and $\beta_{v} $ such that $$\mathcal Z_v(\xphi_{1, v},\xphi_{2, v},\alpha_{ v}, \beta_v)\neq 0.$$
\end{enumerate}
\end{thm}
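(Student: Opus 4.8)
The statement has four parts; part (1) carries the essential content, and (2)--(4) follow from it together with standard inputs.

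\smallskip
\noindent\emph{Part (1): the seesaw identity.} The plan is to compute the period through the seesaw diagram displayed above, in which the divergent leg is handled by the regularized Siegel--Weil identity. Writing $\theta_{\xphi_1\otimes\xphi_2}(\alpha)$ as an integral over $[\SO(V)]$ of the theta kernel for the pair $(\GSp_4,\GSO(V))$ and substituting into $\mathcal P_{\emptyset}$, I would use the symplectic decomposition $W_4 = W_2\oplus W_2$ to factor the kernel, for $h = (h_1,h_2)\in H$, as
$$\theta(\iota(h_1,h_2),g;\xphi_1\otimes\xphi_2) = \theta^{(1)}(h_1,g;\xphi_1)\,\theta^{(2)}(h_2,g;\xphi_2),$$
where $\theta^{(i)}$ are the kernels for the two copies of the pair $(\GL_2,\GSO(V))$. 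Carrying out the $h_1$-integration over $[\SL_2]$ (together with the similitude variable) reconstitutes the cuspidal theta lift $\theta_{\xphi_1}(\beta)$ on $\GSO(V)$, while the $h_2$-integration formally produces the \emph{divergent} theta integral of the constant function $\mathbbm 1$. Rather than exchange integrals naively, I would insert the Kudla--Rallis regularized integral $I(g,s;\xphi_2)$ and apply Theorem \ref{thm: second term identity}: near $s = \tfrac12$ its double-pole term and the term $C(\nu(g),\xphi_2)$ of its simple pole are constant along $\SO(V)$ (resp.\ depend only on $\nu(g)$), hence are annihilated upon pairing against the cusp form $\alpha\in\pi_1\boxtimes\pi_2$ — here one uses $\pi_1\not\cong\pi_2$, so no twist of $\pi_1$ by a character of the similitude line can match $\pi_2^\vee$ on both $\GL_2$-factors at once. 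What survives is $\vol([\SL_2])\,A_0(g,\xphi_2) = \vol([\SL_2])\,\Val_{s=1/2}\boldsymbol E(g,s;[\xphi_2])$, yielding the stated formula once the volume factor is absorbed into the normalizations of the regularization and of the product measure on $[\PGSO(V)]$. \emph{This regularization step is the main obstacle}: one must justify the exchange of integrals in a domain of convergence and propagate it along the meromorphic continuation, which I would do following the templates of \cite{kudla1994regularized,gan2014regularized}.

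\smallskip
\noindent\emph{Part (2).} This is immediate from part (1) and Proposition \ref{prop:GSO_Eisenstein_GL2}, which shows $\boldsymbol E(g,s;[\xphi_2])$ depends only on the first $\GL_2$-coordinate of $g = \boldsymbol p_Z(g_1,g_2)$. Integrating out $g_2$ over $[\PGL_2]$ first therefore pairs the $\pi_2$-component of $\alpha$ against the corresponding component of $\theta_{\xphi_1}(\beta)$, which lies in $\pi$ since $\Theta(\pi)\simeq\pi\boxtimes\pi$; this pairing $\pi_2\times\pi\to\C$ vanishes identically unless $\pi\cong\pi_2^\vee$.

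\smallskip
\noindent\emph{Part (3).} Assume $\pi = \pi_2^\vee$. Starting from the formula of part (1), I would unfold $\boldsymbol E(g,s;[\xphi_2]) = E(g_1,s;M[\xphi_2])$ against $\alpha(g)\theta_{\xphi_1}(\beta)(g)$, picking up the $\psi$-Whittaker coefficient in $g_1$, and then substitute the explicit $N\times N$-Fourier coefficient of $\theta_{\xphi_1}(\beta)$ from Lemma \ref{Lemma: Fourier coefficient of cuspidal left} together with the Whittaker expansion of $\alpha$. Since all data are factorizable, the outcome is an Euler product whose $v$-factor is exactly the local integral $\mathcal Z_v$ of the statement. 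For $v\notin S$ with unramified data this is a routine unramified computation evaluating to $\frac{L_v(1,\pi_{1,v}\times\pi_{2,v}^\vee)\,L_v(1,\Ad\pi_{2,v})}{\zeta_{F,v}(2)(1-q_v^{-1})}$ — the first $L$-factor from the Rankin--Selberg integral for $\pi_1\times\pi_2^\vee$ carried by the Eisenstein series, the second from the Rankin--Selberg integral for $\pi_2\times\pi_2^\vee$ intrinsic to the theta lift $\theta_{\xphi_1}(\beta)$, whose pole is exactly cancelled by the residue of $\boldsymbol E$ once one takes $\Val_{s=1/2}$. Taking the product over $v\notin S$ gives $\frac{L^S(1,\pi_1\times\pi_2^\vee)\,L^S(1,\Ad\pi_2)}{\zeta_F^S(2)}$, and the constant $2|D_F|^{1/2}\pi^{-d}$ is pinned down from the residue $\kappa$ of Proposition \ref{prop:constanttermcomputed} (supplying the $\pi^d$, $|D_F|^{1/2}$, $\zeta_F(2)$ factors) and the archimedean local contributions. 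The unramified local computation is the chief calculational labor here.

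\smallskip
\noindent\emph{Part (4).} The completed $L$-functions $L(s,\pi_1\times\pi_2^\vee)$ and $L(s,\Ad\pi_2)$ are nonzero at $s=1$ by Shahidi \cite{shahidi1981amer} (the former being holomorphic there as $\pi_1\not\cong\pi_2$), and removing the finitely many local factors at $S$ — finite and nonzero since the $\pi_{i,v}$ are unitary generic — preserves nonvanishing, so $L^S(1,\pi_1\times\pi_2^\vee)$ and $L^S(1,\Ad\pi_2)$ are nonzero. For the local integrals $\mathcal Z_v$: at unramified $v$ the computation of part (3) already exhibits a nonzero value, and at the remaining (ramified or archimedean) places I would take $\xphi_{1,v},\xphi_{2,v}$ to be suitable bump functions and choose $\alpha_v,\beta_v$ appropriately, so that $\mathcal Z_v$ factors as a nonzero local theta-lift functional (the lift $\GL_2(F_v)\to\GSO(V)(F_v)$ of $\pi_v$ is nonzero) composed with a local Rankin--Selberg integral admitting a nonvanishing test vector. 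The archimedean place is the most delicate, but one needs only \emph{some} nonzero test data — not the cohomologically normalized vectors of later sections — so a support/density argument suffices.
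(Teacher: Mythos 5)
Your proposal follows the same route as the paper: the seesaw with the Kudla--Rallis regularization and the second-term identity for part (1), the factorization of $\boldsymbol E$ through the first $\GL_2$-coordinate for part (2), unfolding to an Euler product for part (3), and Shahidi plus local nonvanishing of Rankin--Selberg integrals for part (4). Parts (1), (2) and (4) match; in particular your justification in (1) for discarding the constant term $A_{-1}$ and the term $C(\nu(g),\xphi_2)$ --- that no single character of the similitude line can simultaneously match $\pi_1$ and $\pi_2$ against the two coordinates of $\theta_{\xphi_1}(\beta)\in\pi'\boxtimes\pi'$, since $\pi_1\not\cong\pi_2$ --- is exactly what is needed (the paper compresses this to ``cuspidality'').

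The gap is in part (3). Unfolding only the Eisenstein series $E(g_1,s;M[\xphi_2])$ Euler-izes the first $\PGL_2$-variable but leaves the integral over the second copy of $[\PGL_2]$ intact: what remains in $g_2$ is a global Petersson-type pairing between the second coordinates of $\alpha$ and of $\theta_{\xphi_1}(\beta)$, which is not a product of local integrals and in particular cannot yield the local factors $\mathcal Z_v$ of the statement, which visibly contain the spherical section $\phi^0(g_2)$ of $I(1)$. The paper's device is to rewrite $\Val_{s=\frac12}$ as $\kappa^{-1}\Res_{s=\frac12}$ of the integral against the product $E(g_1,s;M[\xphi_2])\,E_0(g_2,s)$, where $E_0(g_2,s)$ is the auxiliary spherical Eisenstein series with constant residue $\kappa$ (Proposition \ref{prop:constanttermcomputed}); both Eisenstein series then unfold simultaneously to $N\times N\backslash\PGSO(V)$, and the Euler product appears with the pole at $s=\frac12$ supplied by the factor $L(s+\frac12,\pi_2\times\pi_2^\vee)=\zeta_F(s+\frac12)L(s+\frac12,\Ad\pi_2)$. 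Your sentence that this pole is ``cancelled by the residue of $\boldsymbol E$'' has the bookkeeping backwards: $\boldsymbol E(g,s;[\xphi_2])$ contributes nothing polar at this stage (its residue, being constant, was already killed in part (1)); it is the inserted $E_0(g_2,s)$ whose residue $\kappa$ converts $\Res$ back into $\Val$, and the pole of $\zeta_F(2s)$ in the unramified Euler factors that makes the residue nonzero. Without this insertion the passage from the formula of part (1) to the stated Euler product does not go through.
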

\begin{proof}
First, fix the Haar measure $\d g$ on $\SO(V)(\A_F)$  such that, under the surjective natural map $[\SO(V)]\times \mathcal C\to [\PGSO(V)] = [\PGL_2]\times [\PGL_2]$, the  Haar measure on $[\PGL_2]\times [\PGL_2]$ induced from (\ref{subsubsec:measures}) pulls back to $\d g\d c$.
We expand: 
\begin{equation*}
\tag{$\ast$} \mathcal P_{\emptyset,\pi_1,\pi_2,\pi}\left (\theta_{\xphi_1\otimes\xphi_2} (\alpha),\beta \right) = \integral_{[Z_H\backslash H]}\theta_{\xphi_1\otimes\xphi_2} (\alpha) (h, h')\beta (h),
\end{equation*}
which by definition is:
\begin{align*}
(\ast)&=\integral_{[Z_H \backslash H]}\integral_{[\GSO (V) ^ {\nu (h)}]}\theta (h, g;\xphi_1) \theta (h', g;\xphi_2)\alpha(g)\beta (h)\\
& =\integral_{\mathcal C}\integral_{[\SL_2]}\integral_{[\SO (V)]}\integral_{[\SL_2]}\theta (hh_c, gg_c;\xphi_1)\theta (h' h_c, gg_c;\xphi_2)\alpha (gg_c) \beta (h)\\
& =\integral_{\mathcal C}\integral_{[\SL_2]}\integral_{[\SO (V)]}\theta (h'h_c,g g_c;\xphi_2)\theta_{\xphi_1} (\beta) (gg_c) \alpha (gg_c).
\end{align*}
Now, by the reasoning of \cite[\S5.5]{kudla1994regularized}, the latter integral is  equal to the residue at $s = \frac {1} {2} $ of:
\begin{align*}
    \frac{1}{\kappa\cdot (4s ^ 2-1)}\integral_{\mathcal C}\integral_{[\SL_2]}\integral_{[\SO (V)]}\theta (h h_c, gg_c;\omega (z)\xphi_2)E_0 (h, s) \alpha(g) \theta_{\xphi_1} (\beta) (gg_c)\d g\d h\d c,
\end{align*}
which is meromorphic for $\Re (s)\gg 0$. Here $\kappa$ is as in Proposition \ref{prop:constanttermcomputed}.
Now, by the principle of meromorphic continuation, we may interchange the integrals over $\SL_2 $ and $\SO (V) $, and obtain:
\begin{align*}
    (\ast)&=\integral_{[\PGSO(V)]}B_{-1} (g,\xphi_2) \alpha (g) \theta_{\xphi_1} (\beta) (g) \d {g} \\
    &=\Val_{s = \frac {1} {2}}\integral_{[\PGSO(V)]}\boldsymbol E ( g,s; [\xphi_2]_s) \alpha (g) \theta_{\xphi_1} (\beta) (g) \d {g},
\end{align*}
by Theorem \ref{thm: second term identity} and the cuspidality of $A.$ This is (1). 
For (2), since $\theta_{\xphi_1}(\beta)(g)$
lies in the automorphic representation $\pi'\boxtimes\pi'$ of $\GSO(V)(\A_F)$, it is a linear combination of functions of the form $$\boldsymbol p _Z(g_1,g_2)\mapsto f_1(g_1)f_2(g_2).$$
Combining this observation with Proposition \ref{prop:GSO_Eisenstein_GL2}, it follows that $(\ast)$ is a linear combination of integrals of the form $$\Val_{s=\frac{1}{2}} \integral _{[\PGL_2\times\PGL_2]} E(g_1,s;M[\xphi_2])\alpha (\boldsymbol p_Z (g_1, g_2)) f_1(g_1)f_2(g_2)\d g_1\d g_2,$$
which clearly vanish unless $f_2\in\pi_2^\vee$, i.e., unless $\pi'\cong\pi_2.$
This proves (2).
In order to prove (3), we replace  $(\ast)$ with an equivalent integral that can be unfolded:
\begin{align*}
    (\ast) & =\frac{1}{\kappa}\Res_{s=\frac{1}{2}}\integral_{[\PGSO(V)]} E (g_1, s; M [\xphi_2 ]) E_0(g_2,s) \alpha(g)\theta_{\phi_1}(\beta) (g)\d g,\;\; g=\boldsymbol p_Z (g_1, g_2) \\
    &=\frac{1}{\kappa}\Res_{s=\frac{1}{2}}\integral_{N(\A)\times N (\A)\backslash \PGSO(V) (\A)  } M[\xphi_2]_s (g_1)\phi^0_s (g_2) \alpha_{N\times N,\psi\times\psi}(g)\theta_{\phi_1}(\beta)_{N\times N,\psi^{-1}\times \psi^{-1}}(g)\d g.
\end{align*}
This factors into an Euler product $$(\ast)=\frac {1} {\kappa} \Res _{s=\frac{1}{2}}\prod_v\mathcal Z_v (s,\xphi_{1,v}, \xphi _{2,v}, \alpha_v,\beta_v), $$ where the local zeta integrals are  (applying Lemma \ref{Lemma: Fourier coefficient of cuspidal left}):\begin{equation}
    \begin{split}
        \integral_{(N\times N\backslash\PGSO(V))(F_v)}\integral_{SL_2 (F_v)}W_{\alpha,v}(g)
W_{\beta, v}(h_1h_c)  \omega(h_1h_c, g)\widehat\xphi_1 (1,0,0,-1)\phi ^ 0 (g_2) [\xphi_2] (g_1)\d h_1\d g\\ c = \det(g_1g_2), \;\;g=\boldsymbol p_Z (g_1, g_2). 
    \end{split}
\end{equation}
At an unramified place $v$ such that $W_{\alpha, v} $, $W_{\beta,v}$, $\xphi_{i,v} $
are all the standard spherical vectors, the inner integral $$\integral_{\SL_2(F_v)} W_{\beta, v}(h_1h_c)  \omega(h_1h_c, g)\widehat\xphi_1 (1,0,0,-1) \d h_1 $$
is exactly the standard spherical Whittaker function for $\pi_2^\vee\boxtimes \pi_2^\vee $, by the unramified theta correspondence. Then,  the standard Rankin-Selberg calculations show that we have the Euler factor $$\mathcal Z_v (s,\xphi_{1,v}, \xphi _{2,v}, \alpha_v,\beta_v)=\frac{L_v(s+\frac{1}{2},\pi_1\boxtimes\pi_2^\vee)L_v  (s+\frac{1}{2},\pi_2\boxtimes\pi_2^\vee)}{1-q_v^{-2}}.$$
Now the formula (3) follows by comparing with Proposition \ref{prop:constanttermcomputed}.
The non-vanishing of the $L$-values in (4) is well-known; see for instance \cite{shahidi1981amer} and \cite{hida1988modules}. The non-vanishing of the local zeta integrals at ramified places also follows from the non-vanishing for Rankin-Selberg local zeta integrals, cf. \cite{jacquet1972rs}.
\end{proof}
\section{Proof of main result: special cycles in the generic case}\label{sec:coh generic proof}
In this section, we apply the results of \S\ref{Section:. See Yoshida} to the cohomology of Shimura varieties. Since the Schwartz functions at the Archimedean places must be chosen rather carefully to obtain automorphic forms that contribute to cohomology, we must begin with several local calculations.
\subsection{Archimedean     calculations}\label{subsec: Archimedean calculations}
\subsubsection{}\label{subsubsec: arch Conventions part one} 
We first establish some general conventions for the local Weil representation for the pair $(V,W_{2n,\R})$, where $V = V_{M_2(\R)}$.
 Fix coordinates on $W_{2n}\otimes V $ by:
\begin{equation}
\begin{split}
    (\underline x_1,\cdots,\underline x_{2n}) &\longleftrightarrow\sum e_i\otimes\underline x_i,\\
    \underline x_i = (x_i, y_i, z_i, w_i) &\longleftrightarrow\begin{pmatrix}x_i & y_i\\z_i & w_i\end{pmatrix}.
\end {split}
\end{equation}
Let $K_n\subset G = \GSp_{2n,\R} $ be as in (\ref{Sub sub: weights for unitary group}), let  $H = \GSO(V)$, let $R_0$ be as in (\ref{subsubsection:local notation}), and let $L =Z_H\cdot\boldsymbol p_Z (\SO (2)\times\SO (2))\subset H(\R). $ Also let $L_1 \subset L $
be the kernel of $\nu_H $ restricted to $L $, so that $L_1 =\boldsymbol p_Z (\SO (2)\times\SO (2)) $.
For any integers $m_1, m_2 $
with $m_1\equiv m_2\pmod 2, $ let $\chi_{m_1, m_2} $ be the character of $L$ which is given by $\omega_{m_1}^{-1} = \omega_{m_2}^{-1}$ on $Z_H$ and by $\chi_{m_1}\boxtimes\chi_{m_2} $
on $L_1$. Finally let $(K\times L)_0 = (K\times L)\cap R_0$.
\subsubsection{}\label{subsubsec: Arch conventions part two}
Let $S ^ 0(n)\subset\mathcal S_{F_v} (\langle e_2, \cdots, e_{2n}\rangle\otimes V) $
be the subspace of Schwartz functions of the form $$\xphi (\underline x_2,\cdots, \underline x_{2n})=p(\underline x_2,\cdots,\underline x_{2n})\exp (-\pi (|\underline x_2 | ^ 2+ \cdots +|\underline x_{2n} | ^ 2), $$
where $p $ is a polynomial, and let $S ^ 0_d(n)\subset S ^ 0 (n)$
be the subset such that $p $
is homogeneous of degree $d $.
As a $(\mathfrak r_0, (K_n\times L)_0) $-module, $S ^ 0 $
is isomorphic to the Fock space $\mathcal F_n$ of complex polynomials in $4n$ variables, cf. \cite{howe1989transcending}; the isomorphism does not preserve degrees, but it does carry $S ^ 0_{\leq d}(n) =\oplus_{i\leq d} S ^ 0_i $
isomorphically  onto $\mathcal F_{\leq d} $, the subspace of polynomials of degree less than or equal to $d $.
The following proposition is the key fact we will need about the structure of the $(K_n\times L)_0$-module $S_{\leq d}^0(n)$.
\begin{prop}\label{prop: substitute for Harris}
\begin{enumerate}
    \item If the $U (n) $-representation of highest weight $(a_1,\cdots,a_n)$ appears in $S^0(n)_{\leq d}$, then $|a_1| + \cdots + |a_n| \leq d$.
    \item If $m_1\equiv m_2\pmod 2$ are integers such that $m_1 = \pm m_2$ if $n = 1$, define $$a = \frac{|m_1 + m_2|}{2},\;\; b = \frac{|m_1 - m_2|}{2},$$ 
    and let $\tau$ be the unique representation of $K_n$ whose restriction to $\R^\times$ is $\omega_m^{-1}$ and which has weight $(a,0,\cdots, 0, -b)$ when restricted to $U(n)$. 
    Then $$\dim\left(S^0_{\leq a + b}(n)\otimes \tau\otimes \chi^\vee_{m_1,m_2}\right)^{(K_n\times L)_0} = 1.$$
\end{enumerate}
\end{prop}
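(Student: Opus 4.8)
The plan is to carry out the whole computation inside the Fock model, reducing both assertions to classical invariant theory for the pair $\mathrm{GL}_n\times\mathrm{GL}_2$. First I would record the explicit structure of $\mathcal F_n$ as a module over $(K_n\times L)_0$, following \cite{howe1989transcending}. Writing $V_{M_2(\R)}\otimes\R=V^+\oplus V^-$ for the decomposition into a positive- and a negative-definite plane, $\mathcal F_n$ is a polynomial ring in $4n$ variables that splits into two families of $2n$: on the family attached to one of $V^{\pm}$ the group $U(n)\subset K_n$ acts through the standard representation (weights $e_i$) and on the other through its conjugate (weights $-e_i$); in each family $\mathrm{GL}(V^{\pm}_{\C})\cong\mathrm{GL}_2$ acts through its standard representation, so that $\SO(V^+)$ acts with weights $\pm 1$ on the first family and trivially on the second, and symmetrically for $\SO(V^-)$; and the central factors $\R^\times\subset K_n$ and $Z_H\subset L$, linked by the condition $\nu_G=\nu_H$ cutting out $(K_n\times L)_0$, act by the signs and powers of $|\cdot|$ dictated by the similitude normalisation (\ref{Extend two similar tunes}). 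By hypothesis the degree filtration on $\mathcal F_n$ corresponds to the one on $S^0(n)$, so it suffices to work with the graded pieces of $\mathcal F_n$.

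Part (1) is then immediate: each of the $4n$ generators is a $U(n)$-weight vector of weight $\pm e_i$, so every $U(n)$-weight occurring in $S^0_{\le d}(n)\cong\mathcal F_{\le d}$ is a sum of at most $d$ such weights and hence has $\ell^1$-norm at most $d$; in particular any highest weight $(a_1,\dots,a_n)$ occurring there satisfies $|a_1|+\dots+|a_n|\le d$.

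For part (2) I would compute $\dim\big(\mathcal F_{\le a+b}\otimes\tau\otimes\chi^\vee_{m_1,m_2}\big)^{(K_n\times L)_0}=\dim\Hom_{(K_n\times L)_0}\big(\tau^\vee\boxtimes\chi_{m_1,m_2},\mathcal F_{\le a+b}\big)$ as follows. Invariance under $L_1=\boldsymbol p_Z(\SO(2)\times\SO(2))$ pins the admissible Fock vectors into a single weight space for the torus $\SO(V^+)\times\SO(V^-)$: transporting $\chi_{m_1}\boxtimes\chi_{m_2}$ through the isogeny $\SL_2\times\SL_2\to\SO(V^+\oplus V^-)$ turns it into the character of $\SO(V^+)\times\SO(V^-)$ of weight $\big(\tfrac{m_1-m_2}{2},-\tfrac{m_1+m_2}{2}\big)$, which is integral precisely because $m_1\equiv m_2\pmod 2$; invariance under $U(n)$ forces the vectors to span the type $\tau|_{U(n)}=V_{(a,0,\dots,0,-b)}$, of $\ell^1$-norm $a+b$; and the remaining condition, on the central characters of $\R^\times$ and $Z_H$, is then automatically met on the surviving vectors, as one checks from the Weil-representation formulas. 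To count, decompose $\mathcal F_n$ by the Cauchy identity applied to each variable family, $\mathcal F_n=\bigoplus_{\ell(\lambda),\ell(\mu)\le 2}\big(\sigma_\lambda\otimes S^\lambda(\C^2_{V^+})\big)\otimes\big(\sigma'_\mu\otimes S^\mu(\C^2_{V^-})\big)$, where $\sigma_\lambda,\sigma'_\mu$ are the appropriate (conjugate, resp.\ standard) $\mathrm{GL}_n$-types and the $(\lambda,\mu)$-summand sits in degree $|\lambda|+|\mu|$. Littlewood's rule for the tensor product of the polynomial and the antipolynomial $\mathrm{GL}_n$-representation shows that $V_{(a,0,\dots,0,-b)}$ occurs there only if there is a partition $\gamma$ with $\mu/\gamma$ a horizontal $a$-strip and $\lambda/\gamma$ a horizontal $b$-strip; then the summand lies in degree $a+b+2|\gamma|$, so $|\lambda|+|\mu|\le a+b$ forces $\gamma=\emptyset$, $\mu=(a)$, $\lambda=(b)$ and degree exactly $a+b$, with the $U(n)$-type occurring once as the top piece of the Cartan decomposition of $\Sym^a\C^n\otimes(\Sym^b\C^n)^\vee$. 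On that summand the two $\mathrm{GL}_2$-factors are $\Sym^b(\C^2_{V^+})$ and $\Sym^a(\C^2_{V^-})$, and since $\big|\tfrac{m_1-m_2}{2}\big|=b$ and $\big|\tfrac{m_1+m_2}{2}\big|=a$, the required $\SO(V^{\pm})$-weights are exactly the extreme weights of these symmetric powers, each occurring with multiplicity one; hence the invariant space is one-dimensional. The case $n\le 3$, where $V_{(a,0,\dots,0,-b)}$ may have no interior zeros, is treated separately: for $n=1$ the hypothesis $m_1=\pm m_2$ forces $a=0$ or $b=0$ and reduces everything to a weight count on $U(1)$, and for $n=2,3$ one reruns the same argument keeping track of the few additional Littlewood--Richardson coefficients.

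I expect the main obstacle to be the bookkeeping entering the first two steps of part (2): identifying the positive complex structure used to form $\mathcal F_n$ (hence which family of variables carries the standard and which the conjugate $U(n)$-action, and which of $\SO(V^{\pm})$ is attached to which), transporting $\chi_{m_1,m_2}$ correctly from $\boldsymbol p_Z(\SO(2)\times\SO(2))$ to $\SO(V^+)\times\SO(V^-)$ across the double cover, and matching the similitude and central characters. These sign-and-normalisation points are what make the multiplicity exactly one rather than zero or larger, whereas the invariant-theoretic core — the Cauchy and Littlewood decompositions — is routine.
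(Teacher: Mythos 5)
Your argument is correct in substance but takes a genuinely different route from the paper: the paper's entire proof of this proposition is a citation to Harris, \cite[Proposition 4.2.1]{harris1992arithmetic}, together with the remark there translating $O(2)\times O(2)$-parameters into $\boldsymbol p_Z(\SO(2)\times\SO(2))$-parameters, whereas you reprove the statement from scratch in the Fock model via $(\GL_n,\GL_2)$-duality. Your part (1) is complete as written. For part (2), the invariant-theoretic core is sound: the Cauchy decomposition of each family of $2n$ variables, the constraint $|\lambda|+|\mu|=a+b+2|\gamma|$ forcing $\gamma=\emptyset$, $\mu=(a)$, $\lambda=(b)$, the multiplicity-one occurrence of $V_{(a,0,\dots,0,-b)}$ in $\Sym^a\C^n\otimes(\Sym^b\C^n)^\vee$, and the matching of the extreme $\SO(V^\pm)$-weights of $\Sym^a\C^2$ and $\Sym^b\C^2$ with $\bigl|\tfrac{m_1\pm m_2}{2}\bigr|$. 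What you buy is a self-contained proof and an explanation of \emph{why} the bound $a+b$ is sharp; what the citation buys is precisely the sign-and-normalisation bookkeeping you defer — which complex structure defines $\mathcal F_n$, which variable family carries the standard versus conjugate $U(n)$-action, the transport of $\chi_{m_1,m_2}$ across the isogeny $\SL_2\times\SL_2\to\SO(V^+\oplus V^-)$, and the verification that the linked central characters of $\R^\times\subset K_n$ and $Z_H\subset L$ match on the surviving summand. That last verification is not optional: it is exactly what distinguishes multiplicity one from multiplicity zero, and you acknowledge but do not carry it out, nor do you fully execute the low-rank ($n=2,3$) Littlewood--Richardson check. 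So as a replacement for the paper's proof your write-up would still need either those computations or the same appeal to Harris to close it.
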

\begin{proof}
This follows from \cite[Proposition 4.2.1]{harris1992arithmetic}; see Remark 3.2.2 of \emph{loc. cit.} to translate the $O(2)\times O(2)$ parameters into $\boldsymbol p_Z(\SO(2)\times \SO(2))$-parameters.
\end{proof}
In practice, we supplement this proposition with an explicit calculation:
\begin{prop}\label{prop: S02 covariance}
Suppose $n = 1$ and $m\geq 0$. Then for $\epsilon = \pm 1$, a generator for the one-dimensional space $$ (S ^ 0_{\leq m} (1)\otimes\chi_{\epsilon m}\otimes\chi ^\vee_{m,\epsilon m})^{(K_1\times L)_0}$$
is given by 
$$\xphi^\epsilon_m (x, y, z, w) \coloneqq(x+\epsilon iy + iz -\epsilon w) ^ m\exp (-\pi |\underline{x}| ^ 2).$$
\end{prop}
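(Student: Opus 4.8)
The plan is to verify directly that the explicit Gaussian-times-polynomial $\xphi^\epsilon_m$ has the asserted covariance under $(K_1 \times L)_0$, and then invoke Proposition \ref{prop: substitute for Harris}(2) (with $n=1$, $m_1 = m$, $m_2 = \epsilon m$, so $a = \frac{|m+\epsilon m|}{2}$ and $b = \frac{|m - \epsilon m|}{2}$, giving $\{a,b\} = \{m,0\}$ in some order) to conclude that the one-dimensional covariant space is exactly spanned by it. So the only real work is checking that $\xphi^\epsilon_m$ lies in $(S^0_{\leq m}(1) \otimes \chi_{\epsilon m} \otimes \chi^\vee_{m,\epsilon m})^{(K_1 \times L)_0}$, i.e. that it is nonzero (clear, since the polynomial factor $(x + \epsilon i y + i z - \epsilon w)^m$ is a nonzero homogeneous polynomial of degree $m \leq m$, so $\xphi^\epsilon_m \in S^0_{\leq m}(1)$), and that it transforms by the stated character.

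First I would recall, from the formulas in \S\ref{subsubsection:local notation} specialized to $(V, W_{2,\R})$ with $V = V_{M_2(\R)}$, the explicit action of the relevant one-parameter subgroups on $\mathcal S_\R(\langle e_2 \rangle \otimes V) = \mathcal S_\R(\R^4)$: the action of $\SO(2) \subset \SL_2 = G_1$, the action of $\boldsymbol p_Z(\SO(2) \times \SO(2)) \subset \SO(V)$, and the central $\R^\times$. The group $(K_1 \times L)_0$ is generated modulo center by $\SO(2) \times \boldsymbol p_Z(\SO(2) \times \SO(2))$ intersected with $R_0$; since all these compact pieces have trivial similitude, $(K_1 \times L)_0$ is generated by $\SO(2)$ (inside $K_1$), $\boldsymbol p_Z(\SO(2) \times \SO(2)) = L_1$, and $Z_H = \R^\times$. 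On the standard Gaussian $\exp(-\pi|\underline x|^2)$ the Weil representation of $K_1 \times L_1$ acts, in the Fock model, diagonally by characters on the degree-$m$ part of the polynomial ring; concretely, each of the three circle actions acts on the coordinates $(x,y,z,w)$ by a rotation, and one checks that the complex linear form $\ell^\epsilon := x + \epsilon i y + i z - \epsilon w$ is a simultaneous eigenvector. Explicitly: under $r_\theta \in \SO(2) \subset G_1$ the Weil action sends $\ell^\epsilon \mapsto e^{i\theta} \ell^\epsilon$ (up to the sign convention fixed by $\chi_m$), and under $(r_{\theta_1}, r_{\theta_2}) \in \SO(2) \times \SO(2)$ acting via $\boldsymbol p_Z$ it sends $\ell^\epsilon \mapsto e^{-i(\theta_1 + \epsilon \theta_2)} \ell^\epsilon$ or similar — the precise phases being exactly what is needed so that $(\ell^\epsilon)^m \exp(-\pi|\underline x|^2)$ transforms by $\chi_{\epsilon m}$ on $K_1$ and by $\chi^\vee_{m, \epsilon m}$ on $L$. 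I would track the central character separately: $Z_H$ acts by $\omega_m^{-1}$ (since $\omega_m = \omega_{\epsilon m}$), matching both $\chi_{\epsilon m}$ and $\chi^\vee_{m,\epsilon m}$ on the center.

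The main obstacle is purely bookkeeping: getting all the sign and phase conventions consistent between the definition of the Weil representation in \S\ref{subsubsection:local notation}, the definition of $\chi_m$ on $\SO(2)$ in \S2 (as $(\cos\theta + i\sin\theta)^m$), the identification of $L_1$ with $\SO(2) \times \SO(2)$ via $\boldsymbol p_Z$, and the definition of $\chi_{m_1,m_2}$ on $L$. It may be cleanest to first verify the claim for one sign, say $\epsilon = +1$, by a direct differentiation argument: compute the action of the Lie algebra elements spanning $\mathfrak{so}(2) \subset \mathfrak{sl}_2$ and $\mathfrak{so}(2) \oplus \mathfrak{so}(2) \subset \mathfrak{so}(V)$ on $\xphi^+_m$ via the infinitesimal Weil action (which is given by the familiar quadratic expressions in multiplication and differentiation operators), check that $\xphi^+_m$ is an eigenvector with the correct eigenvalues, and then exponentiate. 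The case $\epsilon = -1$ then follows by the evident symmetry $(y,w) \mapsto (-y,-w)$, or by applying the outer automorphism of one $\SO(2)$ factor. Finally, once covariance is established, Proposition \ref{prop: substitute for Harris}(2) guarantees the space in question is one-dimensional, so $\xphi^\epsilon_m$ is a generator, completing the proof. $\blacksquare$
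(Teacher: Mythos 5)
Your proposal matches the paper's proof: the paper likewise reduces to checking the covariance of $\xphi^\epsilon_m$ under $U(1)\times\boldsymbol p_Z(\SO(2)\times\SO(2))$, verifies the $U(1)\subset\SL_2$ part by computing the infinitesimal Weil action (the explicit second-order differential operators) on $(x+\epsilon iy+iz-\epsilon w)^m\exp(-\pi|\underline x|^2)$, treats the $L$-part as a direct check, and gets one-dimensionality from Proposition \ref{prop: substitute for Harris}(2). The only difference is that the paper carries out both signs $\epsilon=\pm1$ simultaneously in the Lie-algebra computation rather than deducing one from the other by a symmetry, but this is cosmetic.
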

\begin{proof}
It suffices to show that for all $$(k,\boldsymbol p_Z (k_1, k_2))\in U(1)\times\boldsymbol p_Z (\SO (2)\times\SO (2))\subset\SL_2 (F_v)\times\SO (V) (F_v), $$
we have: $$\omega (k,\boldsymbol p_Z (k_1, k_2))\xphi ^\epsilon_m =\chi_{-\epsilon m} (k)\chi_m (k_1)\chi_{\epsilon m} (k_2)\xphi ^\epsilon_m. $$

For the action of $U(1)\subset\SL_2, $ we calculate on the Lie algebra level using the following formulas for differential $\d \omega$ of the Weil representation:
 \begin {align*}\d\omega\left (\begin {pmatrix} 0 & 0\\1 & 0\end {pmatrix}, 0\right)& =\frac {1} {2\pi i}\left (\frac {\partial^2} {\partial z\partial y} -\frac {\partial ^ 2} {\partial w\partial x}\right),\\
 \d\omega\left (\begin {pmatrix} 0 & 1\\0 & 0\end {pmatrix}, 0\right) & = 2\pi i (xw - yz).\end {align*}
 Since $$\d\omega\left (\begin {pmatrix} 0 & 1\\-1 & 0\end {pmatrix}, 0\right)\xphi ^\epsilon_m = -im\epsilon\xphi ^\epsilon_m, $$
 the lemma follows.
\end{proof}
\subsubsection{}
For the remainder of this subsection,  $n= 2$.
We now define the vector-valued Schwartz functions adapted to constructing cohomology classes on Shimura varieties
as in \S\ref{sec: cohomology}, and compute two related local zeta integrals. 
Fix a choice of sign $\epsilon =\pm$ and an integer $m\geq2$, and let  $\tau_m ^\epsilon $
be the representation of $K_2$ defined in (\ref{subsubsec: notation for endoscopic lifts in coh}).
\begin{rmk}
In practice, it would suffice to perform the calculations below for a single choice of $\epsilon$; we have included both for maximum clarity and for the convenience of the reader.
\end{rmk}
\begin{prop}
\label{prop: one-dimensional harmonic}
Let $\phi_m^\epsilon$ generate the one-dimensional space  $$(S ^ 0_{\leq m +2}(2)\otimes\tau_m^\epsilon\otimes\chi_{(m +2),-\epsilon m}^\vee) ^ {(K_2\times L)_0}. $$
If $\l\in\tau_m ^{\epsilon,\check} $ is a highest (resp. lowest) weight vector if $\epsilon = + $ (resp. $\epsilon = - $),
then $\l(\fee)$ is proportional to $$\xphi_{m +1} ^{-\epsilon} (\underline x_2)\cdot\xphi ^ {\epsilon}_{1} (\underline x_4). $$
\end{prop}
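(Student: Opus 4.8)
The plan is to work entirely within the Fock-model picture described in \S\ref{subsubsec: Arch conventions part two} and reduce the statement to the $n=1$ computation of Proposition \ref{prop: S02 covariance} together with the branching of the $U(2)$-type $\tau_m^\epsilon$ under the relevant Levi. First I would recall that $S^0(2) \cong \mathcal F_2$, and that restricting a Schwartz function along the splitting $W_4\otimes V = (\langle e_2\rangle \otimes V) \oplus (\langle e_4\rangle \otimes V)$ gives a $(K_1\times L)_0 \times (K_1\times L)_0$-equivariant factorization $S^0_{\leq m+2}(2) \hookrightarrow S^0(1) \widehat\otimes S^0(1)$ (completed tensor product of Fock spaces), where the two copies of $K_1 = U(1)\cdot\R^\times$ sit inside $K_2$ as the diagonal torus of $U(2)$ (times the common center), and $L$ maps diagonally. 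The key point is that applying the functional $\l \in \tau_m^{\epsilon,\vee}$, a highest weight vector when $\epsilon=+$ (lowest when $\epsilon=-$), picks out the $U(1)\times U(1)$-weight $(1, -m-1)$ component of $\tau_m^\epsilon$ (the highest weight), so $\l(\phi_m^\epsilon)$ transforms under the first $K_1$ with character $\chi_{-1}$ (weight $1$, hence dual weight giving $\chi$-type $m+1$ after accounting for the $\R^\times$-twist) and under the second with character $\chi_{m+1}$; one must bookkeep the central character $\omega_m^{-1}$ carefully, but the upshot is that $\l(\phi_m^\epsilon)$ lands in
$$
\left(S^0_{\leq m+1}(1)\otimes \chi_{-\epsilon(m+1)}\otimes \chi^\vee_{m+1,*}\right)^{(K_1\times L)_0} \otimes \left(S^0_{\leq 1}(1)\otimes \chi_{-\epsilon}\otimes\chi^\vee_{1,*}\right)^{(K_1\times L)_0},
$$
where the $L$-covariance in each factor is forced by the $\chi^\vee_{(m+2),-\epsilon m}$-equivariance of $\phi_m^\epsilon$ together with the fact that $L$ embeds diagonally and $\chi_{m+2}\chi_{-\epsilon m}$ splits as $\chi_{m+1}\boxtimes\chi_1$ appropriately when paired against the two $\SO(2)$-factors of $V$.

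Next I would invoke Proposition \ref{prop: S02 covariance} in each of the two $n=1$ factors: it shows each of these one-dimensional spaces is spanned by an explicit Gaussian-times-polynomial, namely $\xphi^{\pm}_{m+1}(\underline x_2)$ in the first factor and $\xphi^{\epsilon'}_1(\underline x_4)$ in the second, for appropriate signs. To pin down the signs I would match $U(1)$-weights: the highest-weight vector of $\tau_m^+$ has first $U(1)$-coordinate $+1$ and the $m+2$ total "degree budget" is split as $1$ (in the $e_2$-slot, carrying the small weight) plus $m+1$ (in the $e_4$-slot). Tracking this through the identification in the proof of Proposition \ref{prop: S02 covariance} — in particular which of $\pm$ appears on the weight-$(m+1)$ factor versus the weight-$1$ factor — yields $\xphi_{m+1}^{-\epsilon}(\underline x_2)\cdot\xphi_1^{\epsilon}(\underline x_4)$, which is exactly the claimed formula. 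The sign flip ($-\epsilon$ on the large piece, $\epsilon$ on the small piece) is precisely what Proposition \ref{prop: substitute for Harris}(2) predicts: the $U(2)$-type $(a,0,\dots,0,-b)$ with $a = \frac{|m_1+m_2|}{2}$, $b = \frac{|m_1-m_2|}{2}$ distributes its two extremal weights with opposite signs of the "$\epsilon$" character, since one extremal weight is positive and the other negative.

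The main obstacle, and the step requiring genuine care rather than formality, is the bookkeeping of normalizations: (i) the precise way the two $U(1)$'s sit inside $U(2)$ under the chosen coordinates on $W_4\otimes V$, i.e. whether the Fock-space splitting respects the weight decomposition of $\tau_m^\epsilon$ as stated or up to a permutation/sign; (ii) matching the $\R^\times$-central-character conventions, since $\tau_m^\epsilon$ has central character $\omega_m^{-1}$ on $\R^\times \subset K_2$ while each $\chi_{m+1}$, $\chi_1$ carries its own $\omega$-twist, and these must be consistent with $\chi^\vee_{(m+2),-\epsilon m}$ on $L$; and (iii) verifying that evaluation at the extremal weight vector $\l$ of $\tau_m^\epsilon$ really does factor through $S^0_{\leq m+1}(1)\otimes S^0_{\leq 1}(1)$ and not merely through $S^0_{\leq m+2}(1)\otimes S^0_{\leq m+2}(1)$ — this uses that the harmonic/lowest-degree structure of $\phi_m^\epsilon$ (it generates a one-dimensional space in $S^0_{\leq m+2}$, hence is the lowest-degree, i.e. degree exactly $m+2$, harmonic generator) forces the bidegree to be exactly $(m+1,1)$ on the extremal weight line by weight considerations, a point that can be checked directly from the explicit generators $\xphi^\pm$ and their transformation under $\d\omega$ as in the proof of Proposition \ref{prop: S02 covariance}. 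Once the normalizations are fixed, the proportionality is immediate from one-dimensionality of all the spaces involved.
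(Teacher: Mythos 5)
Your overall strategy --- compute the torus weight of $\l(\fee_m^\epsilon)$, show the corresponding covariant subspace of $S^0_{\leq m+2}(2)$ is one-dimensional, and exhibit the explicit product as an element of it --- matches the paper's, and it uses the same two inputs (Propositions \ref{prop: substitute for Harris} and \ref{prop: S02 covariance}). The difference is that the paper never factorizes the Fock space: it notes that $\l(\fee_m^\epsilon)$ has $U(2)$-weight $\epsilon(m+1,-1)$, that by Proposition \ref{prop: substitute for Harris}(1) the only $U(2)$-type of $S^0_{\leq m+2}(2)$ containing this weight is $\tau_m^{\epsilon,\vee}$ (any highest weight $(a_1,a_2)$ with $a_1\geq m+1$ and $a_1+a_2=m$ satisfies $|a_1|+|a_2|\geq m+2$, with equality only for $(m+1,-1)$), and that the weight is extremal in that type; hence the weight-$\epsilon(m+1,-1)$, $L_1$-covariant line is one-dimensional, and the explicit product lands in it by Proposition \ref{prop: S02 covariance}. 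This sidesteps your tensor decomposition entirely, and in particular the question of whether $\l(\fee_m^\epsilon)$ is a pure tensor.

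Two points in your version need repair. First, the dualization is off: the highest weight vector $\l$ of $\tau_m^{+,\vee}$ pairs nontrivially with the \emph{lowest} weight space of $\tau_m^+$, and the contraction $\l(\fee_m^\epsilon)$ transforms by the weight of $\l$ itself, namely $(m+1,-1)$, not by the highest weight $(1,-m-1)$ of $\tau_m^+$; relatedly, your sentence assigning degree $1$ to the $e_2$-slot and $m+1$ to the $e_4$-slot contradicts both your own displayed factorization and the formula being proved. Second, the claim that the $L$-covariance ``in each factor is forced'' because $L$ embeds diagonally is not yet an argument: a priori only the product of the two $L_1$-characters is determined. Closing this requires exactly the bookkeeping you defer to point (iii): a $U(1)$-weight-$\epsilon(m+1)$ vector in $S^0(1)$ has degree at least $m+1$ and a weight-$(-\epsilon)$ vector has degree at least $1$, so the bound $m+2$ on total degree forces bidegree exactly $(m+1,1)$; such extremal-weight, extremal-degree vectors are harmonic in each slot, and only then does Proposition \ref{prop: S02 covariance} determine each factor up to scalar. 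With those repairs your argument goes through, but it is strictly longer than working with the $U(2)$-action directly.
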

\begin {proof}
The one-dimensionality follows from Proposition \ref{prop: substitute for Harris}(2). For the rest, $\l (\fee) $ is a vector of weight $\epsilon(m+1, -1)$ for $U(2) $. By Proposition \ref{prop: substitute for Harris}(1), $\tau ^\check $ is the only $U(2)$-type containing the highest weight $(\epsilon (m +1), -\epsilon) $
to appear in $S ^ 0_{\leq m +2} (2) $, so $\l (\fee) $
spans the weight $\epsilon(m +1, -1) $
subspace  of $(S ^ 0_{\leq m +2}\otimes\chi_{ (m +2,-\epsilon m)}^\check) ^ {L_1} $; on the other hand $\xphi ^{-\epsilon}_{m +1} (\underline x_2)\cdot\xphi ^ {\epsilon}_{1} (\underline x_4) $
lies in this subspace by Proposition \ref{prop: S02 covariance}, and the proposition follows.
\end{proof}
\begin{prop}\label{prop:Calculating theWheat Dr. Schwartz function}
Let $\l\in\tau_m ^{\epsilon,\check} $ be a vector of weight $(\epsilon m, 0) $. Then $\l (\fee_m^\epsilon) $
is proportional to $$\fee_{m}^\epsilon\coloneqq \left ((m+1)\left((x_4+ i z_4) ^ 2+ (y_4+ iw_4) ^ 2\right) - (x_2+ iz_2) ^ 2- (y_2+ iw_2) ^ 2\right)\xphi ^{-\epsilon}_{m}(\underline x_2) \exp(-\pi|x_4|^2). $$
\end{prop}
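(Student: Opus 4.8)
The plan is to deduce this from Proposition \ref{prop: one-dimensional harmonic} together with the structure of the $U(2)$-isotypic decomposition of $S^0_{\leq m+2}(2)$, following the same strategy as the proof of that proposition. Denote by $F$ the right-hand side of the displayed formula, so $F = Q(\underline x_2,\underline x_4)\,\xphi^{-\epsilon}_m(\underline x_2)\exp(-\pi|\underline x_4|^2)$ with $Q = (m+1)\big((x_4+iz_4)^2+(y_4+iw_4)^2\big)-(x_2+iz_2)^2-(y_2+iw_2)^2$, and write $\phi_m^\epsilon$ for the generator of the one-dimensional space $(S^0_{\leq m+2}(2)\otimes\tau_m^\epsilon\otimes\chi_{(m+2),-\epsilon m}^\vee)^{(K_2\times L)_0}$ from Proposition \ref{prop: one-dimensional harmonic}. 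Since that space is one-dimensional, $\l(\phi_m^\epsilon)$ is determined up to a scalar; it is nonzero because $(\epsilon m,0)$ is a weight of the irreducible $K_2$-type $\tau_m^{\epsilon,\check}$. Concretely, contracting $\phi_m^\epsilon$ with $\l$ realizes $\l(\phi_m^\epsilon)$ as the weight-$(\epsilon m,0)$ vector (for the $U(2)$-action through $\omega$) of a single copy of $\tau_m^{\epsilon,\check}$ sitting inside the $\chi_{m+2}\boxtimes\chi_{-\epsilon m}$-isotypic subspace of $S^0_{\leq m+2}(2)$ for the $L_1=\boldsymbol p_Z(\SO(2)\times\SO(2))$-action.

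It therefore suffices to show that $F$ lies in the \emph{same} one-dimensional space. By Proposition \ref{prop: substitute for Harris}(1) every $U(2)$-type occurring in $S^0_{\leq m+2}(2)$ has $|a_1|+|a_2|\leq m+2$, and a short check shows that the only such type which both contains the weight $(\epsilon m,0)$ and, by Proposition \ref{prop: substitute for Harris}(2), occurs in $S^0_{\leq m+2}(2)$ with $L_1$-character $\chi_{m+2}\boxtimes\chi_{-\epsilon m}$ is $\tau_m^{\epsilon,\check}$, with multiplicity one; hence the weight-$(\epsilon m,0)$ subspace of the $\chi_{m+2}\boxtimes\chi_{-\epsilon m}$-covariant part of $S^0_{\leq m+2}(2)$ is one-dimensional. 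Now $F$ is nonzero with polynomial part of degree $m+2$, so it only remains to verify that $F$ is a $U(2)$-weight vector of weight $(\epsilon m,0)$ transforming under $L_1$ by $\chi_{m+2}\boxtimes\chi_{-\epsilon m}$. For this I would argue as in the proof of Proposition \ref{prop: S02 covariance}: the factor $\xphi^{-\epsilon}_m(\underline x_2)$ already has $U(2)$-weight $(\epsilon m,0)$ and $L_1$-character $\chi_m\boxtimes\chi_{-\epsilon m}$; using the identity $(x_j+iz_j)^2+(y_j+iw_j)^2 = (x_j+iy_j+iz_j-w_j)(x_j-iy_j+iz_j+w_j)$ and Proposition \ref{prop: S02 covariance}, each quadratic in $Q$ is an $L_1$-eigenvector of character $\chi_2\boxtimes\chi_0$ with trivial $U(2)$-weight; hence $Q$ is such an eigenvector, $F$ has $L_1$-character $\chi_m\boxtimes\chi_{-\epsilon m}$ times $\chi_2\boxtimes\chi_0$, i.e. $\chi_{m+2}\boxtimes\chi_{-\epsilon m}$, and $U(2)$-weight $(\epsilon m,0)$. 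Thus $F$ and $\l(\phi_m^\epsilon)$ are both nonzero elements of a one-dimensional space, hence proportional.

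The only genuine work is the covariance computation for $F$ via the infinitesimal Weil-representation formulas, and the main obstacle I expect is keeping the $U(2)$-weight conventions and the Fock-model identification of Proposition \ref{prop: substitute for Harris} consistently aligned, so that the weight really comes out $(\epsilon m,0)$ rather than its negative. The argument above yields proportionality but not the precise shape of $Q$; the relative coefficient $m+1$ is most transparent via the alternative route of applying the compact root vector $X\in\mathfrak u(2)_\C$ carrying the extreme weight $\epsilon(m+1,-1)$ of $\tau_m^{\epsilon,\check}$ to $(\epsilon m,0)$ to the vector $\l_0(\phi_m^\epsilon)\propto\xphi^{-\epsilon}_{m+1}(\underline x_2)\xphi^\epsilon_1(\underline x_4)$ of Proposition \ref{prop: one-dimensional harmonic}, using that $\phi_m^\epsilon$ intertwines the $\mathfrak u(2)$-actions; here $\d\omega(X)$ is an explicit second-order operator mixing the $\underline x_2$- and $\underline x_4$-variables, and the factor $m+1$ drops out of differentiating the power $(\cdot)^{m+1}$. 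Either way, the conceptual content is just the reduction to Propositions \ref{prop: one-dimensional harmonic} and \ref{prop: substitute for Harris}, and the remainder is a routine oscillator calculation.
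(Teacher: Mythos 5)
Your covariance computation for $F$ (the displayed function) is fine, but the multiplicity\nobreakdash-one claim on which the whole argument rests is false, and this is a genuine gap. Write $q(\underline x)=(x+iz)^2+(y+iw)^2$. Then the two functions
$$q(\underline x_4)\,\xphi^{-\epsilon}_{m}(\underline x_2)\exp(-\pi|\underline x_4|^2)\qquad\text{and}\qquad q(\underline x_2)\,\xphi^{-\epsilon}_{m}(\underline x_2)\exp(-\pi|\underline x_4|^2)$$
are linearly independent, both lie in $S^0_{\leq m+2}(2)$, and \emph{both} have diagonal-torus weight $(\epsilon m,0)$ and $L_1$-character $\chi_{m+2}\boxtimes\chi_{-\epsilon m}$ — exactly by your own factorization $q=(x+iy+iz-w)(x-iy+iz+w)$ together with Proposition \ref{prop: S02 covariance} (note $q(\underline x_2)\cdot(x_2-\epsilon iy_2+iz_2+\epsilon w_2)^m=(x_2+\epsilon iy_2+iz_2-\epsilon w_2)(x_2-\epsilon iy_2+iz_2+\epsilon w_2)^{m+1}$ is still harmonic for $\partial_z\partial_y-\partial_w\partial_x$, so it is an honest weight vector). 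Hence the relevant covariant subspace is two-dimensional. The error in the "short check" is that Proposition \ref{prop: substitute for Harris}(2) computes the multiplicity of one distinguished pair $(\tau,\chi_{m_1,m_2})$; it does not assert that no other $U(2)$-type pairs with $\chi_{m+2}\boxtimes\chi_{-\epsilon m}$. In fact both types allowed by Proposition \ref{prop: substitute for Harris}(1) and containing the weight $(\epsilon m,0)$ — namely $\tau_m^{\epsilon,\check}|_{U(2)}$ and the type with extreme weight $(\epsilon m,0)$ — do occur in that isotypic component, as the two vectors above show.

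The consequence is that torus weight plus $L_1$-character pin down $\l(\fee_m^\epsilon)$ only up to this two-dimensional ambiguity, which is precisely the ambiguity in the relative coefficient $(m+1)$ versus $-1$ in the displayed formula — i.e., the entire content of the proposition. Your primary argument cannot even exclude that $\l(\fee_m^\epsilon)$ is a multiple of the $q(\underline x_2)$-term alone, in which case the later archimedean zeta computation would give zero (in Proposition \ref{prop: local Zeta archimedean} the $\xphi_1'\otimes\xphi_2'$ piece integrates to $0$, and all the nonvanishing comes from the $q(\underline x_4)$ piece). So the "alternative route" you relegate to the last sentences — applying the explicit lowering/raising operator $\d\omega(L^\epsilon)$ to the extreme-weight vector $\xphi^{-\epsilon}_{m+1}(\underline x_2)\xphi^{\epsilon}_1(\underline x_4)$ of Proposition \ref{prop: one-dimensional harmonic} and reading off the result — is not an optional way to make the coefficient "transparent"; it is the proof, and it is exactly what the paper does. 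What can be salvaged from your first argument is only the reduction to determining a vector in a two-dimensional space; some oscillator computation distinguishing the copy of $\tau_m^{\epsilon,\check}$ from the other $U(2)$-type (e.g.\ the lowering-operator calculation, or checking that the raising operator returns the extreme-weight vector) is unavoidable.
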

\begin{proof}
In light of Proposition \ref{prop: one-dimensional harmonic}, we must apply a lowering (resp. raising) operator to $\xphi ^{-\epsilon}_{m +1} (\underline x_2)\cdot\xphi ^ {\epsilon}_{1, v} (\underline x_4)$
in the case $\epsilon = + $ (resp. $\epsilon = - $). First, on the Lie algebra level, the lowering operator for $K _2$ is:
$$L =\begin {pmatrix}0 & 0 & 1 & i\\0 & 0 & - i & 1\\-1 & i & 0 & 0\\- i & -1 & 0 & 0\end {pmatrix}\in\mathfrak k\otimes\C\subset\mathfrak g\otimes\C, $$
and the raising operator $R $ is its complex conjugate. For compactness of notation, set $L ^ +\coloneqq L $ and $L ^ -\coloneqq R $. Recall the partial Fourier transform of \ref{subsubsec: Fourier}: $$\widehat\xphi (z_1, z_2, z_3, z_4, w_1, w_2, w_3, w_4) =\integral\xphi (\underline x_2,\underline x_4)\Sye (y_2z_1 - x_2w_1 + y_4z_3 - x_4w_3) \d x_2\d y_2\d x_4 \d y_4.$$
Using the identity $$\widehat {\omega (g, 1)\xphi} (z_1, z_2, z_3, z_4, w_1, w_2, w_3, w_4) =\widehat\xphi\left (g ^ {-1}\begin {pmatrix} z_1\\z_2\\z_3\\z_4\end {pmatrix}, g ^ {-1}\begin{pmatrix}
w_1\\w_2\\w_3\\w_4
\end{pmatrix}\right), $$
we calculate that \begin{multline}\omega (L ^\epsilon)\left [p (\underline x_2,\underline x_4)\xphi_0 (\underline x_2)\xphi_0 (\underline x_4)\right]= \big[- (z_2+\epsilon y_2) (\partial_{z_4} p +\epsilon\partial_{y_4} p) + (z_4 -\epsilon y_4) (\partial_{z_2} p -\epsilon\partial_{y_2} p) - (w_2 -\epsilon x_2) (\partial_{w_4} p -\epsilon\partial_{x_4} p) \\ + (w_4+\epsilon x_4) (\partial_{w_2} p +\epsilon\partial_{x_2} p) +\frac {\epsilon} {2\pi} (\partial ^ 2_{y_2, z_4} p +\partial ^ 2_{z_2, y_4} p -\partial ^ 2_{x_2, w_4} p -\partial ^ 2_{w_2, x_4} p)\big]\xphi_0 (\underline x_2)\xphi_0 (\underline x_4), \end{multline}
where we have abbreviated $$\xphi_0(\underline x)\coloneqq \exp(-\pi|\underline x|^2).$$
One can then check that $$\omega (L ^\epsilon)\left[\xphi ^{-\epsilon}_{m +1} (\underline x_2)\cdot\xphi ^ {\epsilon}_{1, v} (\underline x_4)\right]  =2\overline\fee_{m}^{\epsilon},$$
which proves the proposition.
\end{proof}
\subsubsection{}
For later use, we calculate two archimedean local zeta integrals related to $\overline{\fee}_{m}^{\epsilon}$. Write:
\begin{align*}
    \xphi_1 & =\xphi ^{-\epsilon}_{m} \\
    \xphi_2  & =\left ((x + iz) ^ 2+ (y + iw) ^ 2\right)\exp (-\pi|\underline x | ^ 2),\\
    \xphi_1'  & =\left ((x + iz) ^ 2+ (y + iw) ^ 2\right)\xphi ^{\epsilon}_{m}\\
    \xphi_2' & =\exp(-\pi |\underline x | ^ 2),
\end{align*}
so that
\begin{equation}
    \overline{\phi}_{m}^{\epsilon} = (m +1)\xphi_1\otimes\xphi_2 -\xphi_1'\otimes\xphi_2'.
\end{equation}
For each integer $n\geq 2 $ and pair of signs $\epsilon,\delta\in\set {\pm} $, let $W ^\epsilon_{n,\psi ^\delta} $
be the normalized weight $\epsilon n $
vector in the $\psi ^\delta $-Whitaker model of the discrete series representation of $\GL_2 (\R) $
of weight $n $; thus
\begin{equation}
    \label{equation: discrete series Whitaker new form}W_{n,\psi ^\delta} ^\epsilon\begin{pmatrix}
   \epsilon\delta  t ^ {1/2}& 0\\0 & t ^ {1/2}
    \end{pmatrix}=t ^ {n/2} e ^ {-2\pi t},\;\; W_{n,\psi ^\delta} ^\epsilon\begin{pmatrix}
    -\epsilon\delta t ^ {1/2} & 0\\0 & t ^ {-1/2}
    \end{pmatrix}= 0,\;\;\forall t >0.
\end{equation}
\begin{prop}
\label{prop: local Zeta archimedean}
With notation as above, $$\mathcal Z_v\left (\xphi_1,\xphi_2, W ^ -_{m +2,\psi}\otimes W ^\epsilon_{m,\psi}, W ^ {-\epsilon}_{m,\psi}\right) = \frac{\epsilon^{m}\cdot m!(m-1)!}{\pi^{2m+2}2^{3m+2}} $$
and $$\mathcal Z_v\left (\xphi_1',\xphi_{2}', W ^ -_{m +2,\psi}\otimes W ^\epsilon_{m,\psi}, W ^ {-\epsilon}_{m,\psi}\right) = 0.$$
\end{prop}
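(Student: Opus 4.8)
The plan is to compute both archimedean zeta integrals $\mathcal Z_v$ directly from the defining formula
$$\mathcal Z_v(\xphi_1,\xphi_2,W_\alpha,W_\beta) = \integral_{(N\times N\backslash\PGSO(V))(\R)}\integral_{\SL_2(\R)} W_\alpha(g)\,W_\beta(h_1h_c)\,\omega(h_1h_c,g)\widehat\xphi_1(1,0,0,-1)\,\phi^0(g_2)\,[\xphi_2](g_1)\,\d h_1\,\d g,$$
using the explicit Whittaker functions from \eqref{equation: discrete series Whitaker new form} and the explicit Weil-representation action on Schwartz functions from \S\ref{subsubsection:local notation}. First I would reduce the outer $(N\times N\backslash \PGSO(V))(\R)$-integral to an integral over the two torus directions (the split tori in each $\PGL_2$ factor of $\PGSO(V)\simeq \PGL_2\times\PGL_2$), using the Iwasawa decomposition and the covariance of all the data under the maximal compact $L_1 = \boldsymbol p_Z(\SO(2)\times\SO(2))$; the $K_\infty$-integral contributes only a constant, since $W^\epsilon_{m,\psi}$, $W^-_{m+2,\psi}$, $\xphi_1$, and $\xphi_2$ all transform by fixed characters of $\SO(2)$ (this is exactly what Propositions \ref{prop: S02 covariance} and \ref{prop:Calculating theWheat Dr. Schwartz function} record). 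Likewise the inner $\SL_2(\R)$-integral can be collapsed to an integral over the diagonal torus of $\SL_2$ together with an $\SO(2)$-integral, and by the weight bookkeeping of \S\ref{subsubsec: Arch conventions part two} the $\SO(2)$-integral again just produces a constant.

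Having done these reductions, the heart of the computation is the evaluation of the partial Fourier transform $\widehat\xphi_1(1,0,0,-1)$ under the action $\omega(h_1h_c,g)$, which by the formulas in \S\ref{subsubsection:local notation} becomes a product of a Gaussian times a polynomial in the torus coordinates, and the evaluation of $[\xphi_2](g_1) = \omega(h_{\nu(g_1)},g_1)\widehat\xphi_2(0)$ on the torus. At this point the whole integral is a finite sum of products of one-dimensional Mellin-type integrals of the form $\int_0^\infty t^{k}e^{-ct}\,\d^\times t = c^{-k}\Gamma(k)$ (after possible shifts), together with Gaussian integrals $\int_{\R} x^{2j}e^{-\pi x^2}\,\d x$; each of these is elementary, and the only bookkeeping is to track the constants $\pi$, $2$, and the sign $\epsilon^m$ and to verify that all the factorials $m!$, $(m-1)!$ appear with the claimed exponents of $2$ and $\pi$. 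For the second identity, the vanishing of $\mathcal Z_v(\xphi_1',\xphi_2',W^-_{m+2,\psi}\otimes W^\epsilon_{m,\psi},W^{-\epsilon}_{m,\psi})$, I expect this to fall out for a structural reason rather than from cancellation in the integral: the Schwartz function $\xphi_2'=\exp(-\pi|\underline x|^2)$ is the Gaussian, so $[\xphi_2']$ is a \emph{spherical} vector in $\boldsymbol I_v(1/2)$, and when paired against $\phi^0(g_2)\in I_v(1)$ the resulting section on $\GSO(V)(\R)$ has a $U(1)\times U(1)$-type which is incompatible with the weight $\epsilon m$ of $W^\epsilon_{m,\psi}$ in the second $\GL_2$-factor — equivalently, the relevant $\Hom$-space between $K_\infty$-types is zero. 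So I would prove the second statement by a weight/covariance argument: identify the precise $L_1$-character carried by $\omega(\cdot)\widehat\xphi_1(1,0,0,-1)\cdot[\xphi_2']$ and show it is orthogonal to $W^-_{m+2,\psi}\otimes W^\epsilon_{m,\psi}\otimes W^{-\epsilon}_{m,\psi}$ on the compact group, so that the $\SO(2)\times\SO(2)$-integral already vanishes.

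The main obstacle, as usual with archimedean theta/zeta integrals, will be the explicit manipulation of the partial Fourier transform in the Weil representation and keeping precise track of all normalizing constants — the measures $\d h_1$, $\d g$, $\d c$ fixed in \S\ref{subsubsec:measures} and \S\ref{subsubsec:measure on H}, the self-dual measure in \eqref{eqn: Fourier_transform}, and the $|\nu_G(g)|^{-mn/2}$ factor from \eqref{Extend two similar tunes} — so that the answer comes out as the clean closed form $\frac{\epsilon^m m!(m-1)!}{\pi^{2m+2}2^{3m+2}}$ rather than merely up to a nonzero scalar. (For the applications in the next sections only nonvanishing is needed, but the sharp constant is useful for checking consistency.) A secondary subtlety is confirming that the reductions in the first paragraph — interchanging the $\SL_2$- and $\SO(V)$-integrals, and splitting off the $\SO(2)$-integrals — are valid, but these are justified by absolute convergence since all the archimedean Schwartz functions are Gaussians times polynomials and the Whittaker functions decay exponentially on the positive chamber.
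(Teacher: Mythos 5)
Your plan for the first identity is essentially the route the paper takes: exploit the $\SO(2)$- and $L_1$-covariance of all the data to collapse the compact directions and reduce everything to one-dimensional Gamma integrals. The paper streamlines the inner $\SL_2(\R)$-integral by invoking the archimedean theta correspondence to recognize it \emph{a priori} as $\lambda\, W^+_{m,\psi^{-1}}(g_1)W^{-\epsilon}_{m,\psi^{-1}}(g_2)$ for a scalar $\lambda$, which it then pins down by evaluating at a single well-chosen group element (obtaining $\lambda=(2\epsilon)^m$); this saves carrying the full torus integral through the computation, but your more pedestrian version would also work and the bookkeeping of measures and constants is exactly as you describe.

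The proposed proof of the second identity, however, has a genuine gap: the vanishing is \emph{not} forced by any $K$-type or weight incompatibility, and the $\SO(2)\times\SO(2)$-integral does not vanish. Check the weights: the covariance of $\xphi_1'$ is $\chi_{\epsilon m}(k)\chi_{m+2}(k_1)\chi_{-\epsilon m}(k_2)$, so the inner integral produces $W'(g_1)W^{-\epsilon}_{m,\psi^{-1}}(g_2)$ with $W'$ a \emph{nonzero} weight-$(m+2)$ vector in the Whittaker model of the discrete series of weight $m$. In the first $\PGL_2$-factor the weights are then $-(m+2)$ (from $W^-_{m+2,\psi}$), $+(m+2)$ (from $W'$) and $0$ (from $[\xphi_2']$), summing to zero; in the second factor they are $\epsilon m$, $-\epsilon m$ and $0$, again summing to zero. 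So both integrands are right $\SO(2)$-invariant and there is no structural obstruction of the kind you propose. The actual reason for the vanishing is a cancellation between two Gamma integrals: $W'$ is the image of $W^+_{m,\psi^{-1}}$ under the raising operator, so on the relevant torus one has (up to scalar) $W'\begin{pmatrix}-t^{1/2}&0\\0&t^{-1/2}\end{pmatrix} = -2imt^{m/2}e^{-2\pi t}+8\pi i\, t^{m/2+1}e^{-2\pi t}$, and the resulting Mellin transform gives $-2im\frac{(m-1)!}{(4\pi)^m}+8\pi i\frac{m!}{(4\pi)^{m+1}}=0$. You must actually perform this computation (or supply an honest representation-theoretic reason that the zeta functional annihilates vectors in the image of the raising operator); the weight argument as stated would lead you to conclude, incorrectly, that the integrand already dies on the compact group.
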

\begin{proof}
First, note that:
\begin {align*}
\omega (k,\boldsymbol p_Z (k_1, k_2))\xphi_1 &=\chi_{\epsilon m} (k)\chi_m (k_1)\chi_{-\epsilon m} (k_2)\xphi_1,\\
\omega (k,\boldsymbol p_Z (k_1, k_2))\xphi_2 &=\chi_2 (k_1)\xphi_2,\\
\omega (k,\boldsymbol p_Z (k_1, k_2))\xphi_1' & =\chi_{\epsilon m} (k)\chi_{m +2} (k_1)\chi_{-\epsilon m} (k_2)\xphi_1',\\
\omega (k,\boldsymbol p_Z (k_1, k_2))\xphi_2' & =\xphi_2',\;\;\; \forall (k,\boldsymbol p_Z (k_1, k_2))\in U(1)\times\boldsymbol p_Z (\SO (2)\times\SO (2))\subset\SL_2\times\SO (V).
\end{align*}
The first identity is just Proposition \ref{prop: S02 covariance}, and the latter three may be proved similarly. (Alternatively, the action of the first factor $U(1)\subset\SL_2 $
can be deduced from Proposition \ref{prop:Calculating theWheat Dr. Schwartz function}.) 
Next, we compute the inner integral for $\mathcal Z_v\left (\xphi_1,\xphi_2, W ^ -_{m +2,\psi}\otimes W ^\epsilon_{m,\psi}, W ^ {-\epsilon}_{m,\psi}\right)$: \begin{equation}I(g_1,g_2)  = \integral_{\SL_2 (F_v)}\omega (h_1h_c,\boldsymbol p_Z (g_1, g_2))\widehat\xphi_1(1, 0, 0, -1) W ^ {-\epsilon}_{m,\psi} (h_1h_c)\d h_1,\;\; c =\debt (g_1)\debt (g_2) .\end{equation}
 By the archimedean local theta correspondence for $\GL_2\times\GSO (V) $ and  Proposition \ref{prop: S02 covariance}, we have \begin{equation}I (g_1, g_2) =\lambda W ^ +_{m ,\psi ^ {-1}}(g_1)W ^ {-\epsilon}_{m,\psi ^ {-1}} (g_2) \end{equation}
 for a  scalar $\lambda $. To pin down the scalar, it suffices to calculate:
 \begin{equation*}
     \tag{$\ast$}I(g_0),\;\;\; g_0 \coloneqq \boldsymbol p_Z \left (\begin{pmatrix}
     -1 & 0\\0 & 1
     \end{pmatrix},\begin{pmatrix}
\epsilon & 0\\0 & 1
     \end{pmatrix}\right).
 \end{equation*}
 By definition, we have:
 \begin{align*}
     (\ast) & =\integral\omega\left (h_1\begin {pmatrix} 1 & 0\\0 & -\epsilon\end {pmatrix}, g_0\right)\widehat\xphi^{-\epsilon}_{m} (1, 0, 0, -1) W ^ {-\epsilon}_{m,\psi}\left (h_1\begin {pmatrix} 1 & 0\\0 & -\epsilon\end {pmatrix}\right)\d h_1 \\
     &= \integral \omega\left (h_1\begin {pmatrix} 1 & 0\\0 & -\epsilon\end {pmatrix}, g_0\right)\xphi^{\epsilon}_{m} ( x,  y, 0,-1)\psi (y) (-\epsilon) ^ mW ^ {-\epsilon}_{m,\psi}\left (h_1\begin {pmatrix} -\epsilon & 0\\0 & 1\end {pmatrix}\right)\d x\d y \d h_1.
 \end{align*}
 Recall that the  Haar measure on $\SL_2 $
 is given by $$\d h_1 =\frac {\d a\d t\d\theta} {2\pi t ^ 2},\;\; h_1 =\begin{pmatrix}
 1 & a\\0 & 1
 \end{pmatrix}\begin {pmatrix} t ^ {1/2} & 0\\0 & t ^ {-1/2}\end {pmatrix}\begin {pmatrix}\cos\theta &\sin\theta\\-\sin\theta &\cos\theta\end {pmatrix}. $$
 Adopting these coordinates, the integral becomes
 \begin {align*}
 (\ast) & =(-\epsilon)^m\integral\psi (a) t ^ {m/2-2} e ^ {-2\pi t}\psi (- a x)\omega\left (\begin {pmatrix} t ^ {1/2} & 0\\0 & t ^ {-1/2}\end {pmatrix}, 1\right)\xphi^{-\epsilon}_{m} (- x, -\epsilon y, 0,\epsilon)\psi (y)\d x \d y \d a \d t \\
 &=(-\epsilon)^m \int t^{m-1} (-2 + iy)^m \exp (-\pi t (4+ y ^ 2)) \psi(y)\d t\d y\\
 & =(-i\epsilon)^m\frac {(m-1)!} {\pi ^ m}\integral \frac{e^{2\pi i y}\d y}{(y - 2i)^m}\\
 & =(- i\epsilon) ^ m (2\pi i) ^ m \frac {(m -1)!} {(m-1)!\pi ^ m} e ^ {-4\pi} =(2\epsilon)^me^{-4\pi};
 \end{align*}
 hence $\lambda = (2\epsilon)^m.$
 
 By the equivariance properties of $\xphi_2, $
 $$M[\xphi_{2}]\in\Ind_{\overline B(\R)}^{\PGL_2(\R)} |\cdot|^{1/2}$$ is a section of weight two for $\SO(2)$. Thus it is determined by: \begin{equation}M [\xphi_{2}] (1) =\integral\xphi_{2} (x, y, 0, 0)\d x\d y = \frac {1} {\pi}. \end{equation}
 Now, our local zeta integral is given by 
 $$
 (2\epsilon)^m\cdot \left (\integral_{N\backslash\PGL_2 (F_v)} W ^ -_{m +2,\psi} (g_1) [\xphi_{2}] (g_1) W ^ +_{m,\psi ^ {-1}} (g_1)\d g_1\right)\cdot \left (\integral_{N\backslash\PGL_2 (F_v)} W ^ {\epsilon}_{m,\psi} (g_2) W ^ {-\epsilon}_{m,\psi ^ {-1} }(g_2)\phi ^ 0 (g_2)\d g_2\right). $$
Since both integrands are  right $\SO (2) $-invariant, and since the Haar measure on $\PGL_2 (\R) $ is given by
$$\d g =\frac {\d a\d t\d \theta} {\pi t ^ 2},\;\;\; g =\begin {pmatrix} 1 & a\\0 & 1\end {pmatrix}\begin {pmatrix} t & 0\\0 & 1\end {pmatrix}\begin {pmatrix}\cos\theta &\sin\theta\\-\sin\theta &\cos\theta\end {pmatrix},\;\; t\in\R ^\times,\theta\in [0,\pi), $$
we obtain \begin{equation}\frac{(2\epsilon) ^ m} {\pi}\left(\integral_0^\infty t ^ {m} e ^ {-4\pi t}\d t\right)\cdot\left( \integral_0^\infty t^{m-1}e^{-4\pi t}\d t\right) = \frac{\epsilon^{m}\cdot m!(m-1)!}{\pi^{2m+2}2^{3m+2}},\end{equation}
as claimed.

To show that $$\mathcal Z_v\left (\xphi_1',\xphi_{2}', W ^ -_{m +2,\psi}\otimes W ^\epsilon_{m,\psi}, W ^ {-\epsilon}_{m,\psi}\right) = 0,$$
we may ignore scalar factors. The inner integral
\begin{equation}I'(g_1,g_2)  = \integral_{\SL_2 (F_v)}\omega (h_1h_c,\boldsymbol p_Z (g_1, g_2))\widehat\xphi'_1 (1, 0, 0, -1) W ^ {-\epsilon}_{m,\psi} (h_1h_c)\d h_1,\;\; c =\debt (g_1)\debt (g_2)\end{equation}
must be of the form \begin{equation}I' (g_1, g_2) = W' (g_1) W ^ {-\epsilon}_{m,\psi ^ {-1}} (g_2), \end{equation}
where $W' $ is a weight $m +2 $
vector in the $\psi ^ {-1} $-Whittaker model of the discrete series representation of $\GL_2 (\R) $ of weight $m $. To calculate $W' $, let \begin{equation}
    X\coloneqq\begin {pmatrix} - i & 1\\1 & i\end {pmatrix}\in\mathfrak{gl}_2
\end{equation}
be the $\SO (2) $-raising operator. Then, up to scalar, \begin{equation}
    W' = X\cdot W_{m,\psi ^ {-1}} ^ + =\begin {pmatrix} - i & 0\\0 & i\end {pmatrix} W ^ +_{m,\psi ^ {-1}} +2\begin {pmatrix} 0 & 1\\0 & 0\end {pmatrix}\cdot W_{m,\psi ^ {-1}} ^ + - im W_{m,\psi ^ {-1}} ^ +.
\end{equation}
In particular, $W'\begin {pmatrix}t ^ {1/2} & 0\\0 & t ^ {-1/2}\end {pmatrix} = 0 $ 
and\begin{equation}
    W'\begin {pmatrix} - t ^ {1/2} & 0\\0 & t ^ {-1/2}\end{pmatrix}=-2 im t ^ {m/2} e ^ {-2\pi t} +8\pi it ^ {m/2+1}e ^ {-2\pi t}
\end{equation} (up to scalar) for $t>0$. 
To complete the proof, we will show the integral \begin{equation*}
    \tag{$\ast$} \integral_{N\backslash\PGL_2 (F_v)} W' (g_1) [\xphi_2] (g_1) W ^ +_{m,\psi ^ {-1}} (g_1)\d g_1
\end{equation*}
vanishes. Since $[\xphi_2]$ is a section of weight zero, ($\ast$) is proportional to
\begin{align*}
    (\ast\ast) &= \integral_0^\infty (-2 imt ^ {m/2} +8\pi it ^ {m/2+1})\cdot t\cdot t ^ {m/2} e ^ {-4\pi t}\frac {\d t} {t^2}\\
    &= -2im \int_0^\infty t ^ {m -1} e ^ {-4\pi t}\d t +8\pi i\integral_0 ^\infty e ^ {-4\pi t} t ^ m\d t\\
    & = - 2im\frac {(m -1)!} {(4\pi) ^ m} +8\pi i\frac {m!} {(4\pi )^ {m +1}} \\
    &= 0.
\end{align*}
\end{proof}
\subsection{Cohomological span of special cycle}
\subsubsection{}
Let $H = \GL_2\times_{\mathbb G_m} \GL_2\subset \GSp_4 $, viewed as an algebraic group over $F$. Then $\boldsymbol H$ possesses a Shimura datum, and we have a natural embedding of pro-algebraic varieties $$\iota: S(\boldsymbol H)\hookrightarrow  S(\boldgsp_4)\times S({\boldgl_2}), $$ induced from the map on the level of groups: $(h_1,h_2)\mapsto ((h_1,h_2), h_1)$.
For all weights $\boldsymbol m$ as in (\ref{subsubsec: notation for endoscopic lifts in coh}) above,   abbreviate by ${ \mathcal W
}_{\boldsymbol m} $ the local system $\mathcal V_{(\boldsymbol m - 2,0)}^\vee\boxtimes \mathcal V_{\boldsymbol m - 2} $
on $ S(\boldgsp_4)\times S(\boldgl_2)$. Note that
the constant local system $\underline {\Q(\boldsymbol m)} $ on $S (\boldsymbol H)$ is a direct factor with multiplicity one of the pullback $\iota^\ast( {\mathcal W}_{\boldsymbol m}) $, and in particular, we have a composite map (well-defined up to a scalar):
\begin{equation}\label{eqn: map on cohomology for cycle definition}
H ^ {4d}_c ( S(\boldgsp_4)\times S (\boldgl_2), { \mathcal W}^\vee_{\boldsymbol m}) \to H ^{4d}_c (S (\boldsymbol H),\iota^\ast ( {\mathcal W}^\vee_{\boldsymbol m}))\to  H ^{4d}_c (S ({\boldsymbol H}),\Q(\boldsymbol m)). 
\end{equation}
\begin{definition}
The cycle class $[\mathcal Z]\in H^{4d}(S(\boldgsp_4)\times S(\boldgl_2), {\mathcal W}_{\boldsymbol m})(2d)$ is the image of the fundamental class of $S(\boldsymbol H) $ under the map
$$H ^ {0} (S(\boldsymbol H),\Q(\boldsymbol m))\to H ^ {2d} (S(\boldgsp_4)\times S(\boldgl_2), {\mathcal W}_{\boldsymbol m} )(2d)$$
induced by the dual of (\ref{eqn: map on cohomology for cycle definition}).   
We write $$[\mathcal Z]_\ast: H ^ {3d}_c (S(\boldgsp_4), \mathcal V_{(\boldsymbol m-2,0)})(d)\to H^{d} (S(\boldgl_2), \mathcal V_{\boldsymbol m-2}) $$
for the induced map.
\end{definition}
\subsubsection{} \label{subsubsec: recall Poincare} Let $\pi$ be an automorphic cuspidal representation of $\GL_2(\A_F)$ of weight $\boldsymbol m$ whose central character has infinity type $\omega_{\boldsymbol m}$. If $\pi$ is defined over $E$, recall that the trace map  induces a perfect pairing:
$$\langle\cdot,\cdot\rangle: H ^ d_c (S(\boldgl_2), \mathcal V_{\boldsymbol m-2,E})[\pi_f] \times H ^ d (S(\boldgl_2), \mathcal V^\vee_{\boldsymbol m-2,E})[\pi_f^\vee] \to H ^ {2d}_c (S(\boldgl_2), E)\to E(d). $$
\begin{prop}
\label{crop: cohomology. Pairing special cycle variant}
Let $\pi $ be as above, and let $\pi_1,\pi_2 $ be as in (\ref{subsubsec: notation for endoscopic lifts in coh}), with $\Pi =\Pi_S (\pi_1,\pi_2) $, for some $S = S_f\sqcup S_\infty $ such that $| S | $ is even.
\begin {enumerate}
\item For choices of signs $\boldsymbol\epsilon,\boldsymbol\epsilon' $,
let $\sigma_{\boldsymbol\epsilon,\boldsymbol\epsilon'}:\boldsymbol\tau ^ {\boldsymbol\epsilon}_{\boldsymbol m, S_\infty}\to\C $ be the projection onto the weight $(-\boldsymbol\epsilon'\boldsymbol m, 0) $-component (hence $\sigma$ is trivial unless $S_\infty = \emptyset$ and $\boldsymbol \epsilon = \boldsymbol\epsilon'$).
Then the following diagram commutes up to a nonzero scalar:
\vspace{0.3cm}
\begin{center}
\begin{tikzcd}
    \left(\Pi \otimes \boldsymbol\tau_{\boldsymbol m, S_\infty}^{\boldsymbol\epsilon}\right)^{\boldsymbol K_2} \otimes \left(\pi^\vee\otimes \boldsymbol\chi_{-\boldsymbol\epsilon'\boldsymbol m}^\vee\right)^{\boldsymbol K_1} \arrow[r, "\sigma_{\boldsymbol\epsilon,\boldsymbol\epsilon'}\otimes \operatorname{id}"]\arrow[d,"\CL_S^{\boldsymbol\epsilon}\otimes \CL_{\boldsymbol\epsilon'}'"] & \Pi\otimes \pi \arrow[dddd, "\mathcal P_S"] \\
        H^{\ast}_{(2)} (S(\boldgsp_4), \mathcal V_{(\boldsymbol m-2,0),\C})[\Pi_{S_f}] \otimes H^{\ast}_{(2)} (S(\boldgl_2), \mathcal V^\vee_{\boldsymbol m - 2,\C})[\pi_f^\vee]\arrow[d,    "\sim"] & \\
    H_c^{3d} (S(\boldgsp_4),\mathcal V_{(\boldsymbol m-2,0),\C})[\Pi_{S_f}] \otimes H_c^d (S(\boldgl_2), \mathcal V_{\boldsymbol m-2, \C}^\vee)[\pi_f] \arrow[d, "{[\mathcal Z]}_{\ast}\otimes \operatorname{id}"] & \\
    H^d (S(\boldgl_2), \mathcal V_{\boldsymbol m-2,\C}) \otimes H_c^d (S(\boldgl_2), \mathcal V_{\boldsymbol m-2,\C}^\vee) \arrow[d,"{\langle\cdot, \cdot\rangle}"] & \\
    \C \arrow[r,equals] & \C 
    \end{tikzcd}
\end{center}
\vspace{0.3cm}
\item Suppose $S_\infty =\emptyset$ and $\boldsymbol\epsilon = \boldsymbol\epsilon $. After fixing isomorphisms $$\Pi_f\simeq\left (\Pi\otimes\boldsymbol\tau ^ {\boldsymbol\epsilon}_{\boldsymbol m,\emptyset}\right) ^ {\boldsymbol K_2},\;\;\pi_f ^\vee\simeq\left (\pi ^\vee\otimes\boldsymbol\chi_{-\boldsymbol\epsilon\boldsymbol m}^\vee\right) ^ {\boldsymbol K_1}, $$
the composites with the map from (1):
$$\Pi_f\otimes\pi_f ^\vee\xrightarrow{\;\sim\;}\left (\Pi\otimes\boldsymbol\tau ^ {\boldsymbol\epsilon}_{\boldsymbol m,\emptyset}\right) ^ {\boldsymbol K_2}\otimes\left (\pi ^\vee\otimes\boldsymbol\chi_{-\boldsymbol\epsilon\boldsymbol m}^\vee\right) ^ {\boldsymbol K_1}\longrightarrow\C $$
are independent of $\boldsymbol \epsilon $ up to a nonzero scalar.
\end{enumerate}
\end{prop}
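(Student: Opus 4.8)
The plan is to prove part (1) by a direct comparison of the two sides, tracing through the explicit realization maps $\CL_S^{\boldsymbol\epsilon}$, $\CL_{\boldsymbol\epsilon'}'$, the cycle-class map $[\mathcal Z]_\ast$, and the Poincar\'e duality pairing, and then deducing part (2) as a formal consequence. The crucial point is that all four maps factor through vector-valued automorphic forms in the sense of (\ref{eqn:Realization cohomology definition}), and the whole diagram is governed by the plectic structure, which is a product over $v\mid\infty$. So the first step is to reduce everything to an identity of archimedean local zeta integrals, place by place. Concretely: the cycle class $[\mathcal Z]$ is the pushforward of the fundamental class along $\iota$, so $[\mathcal Z]_\ast$ composed with the cup product against a class on $S(\boldgl_2)$ is, by the projection formula, the integral of the pulled-back forms over $S(\boldsymbol H)$. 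Pulling back the $L^2$-forms representing $\CL_S^{\boldsymbol\epsilon}(\gamma)$ and $\CL_{\boldsymbol\epsilon'}'(\beta)$ and integrating over $S(\boldsymbol H)(\mathbb C)$, one recognizes exactly the adelic period integral $\mathcal P_S$ against the vectors obtained by evaluating against the chosen $\boldsymbol K_\infty$-covectors. The factor $\sigma_{\boldsymbol\epsilon,\boldsymbol\epsilon'}$ appears because the constant local system sits inside $\iota^\ast{\mathcal W}_{\boldsymbol m}$ with multiplicity one: the projection onto this summand is dual to the highest-weight covector that picks out precisely the weight $(-\boldsymbol\epsilon'\boldsymbol m,0)$ component of $\boldsymbol\tau^{\boldsymbol\epsilon}_{\boldsymbol m,S_\infty}$, and this component is nonzero only when $S_\infty=\emptyset$ and $\boldsymbol\epsilon=\boldsymbol\epsilon'$ by the branching computation implicit in (\ref{eq:hom space k infty}) together with Proposition \ref{prop: substitute for Harris}. (In the case $S_\infty\neq\emptyset$ or $\boldsymbol\epsilon\neq\boldsymbol\epsilon'$, both sides vanish: the left because $\sigma_{\boldsymbol\epsilon,\boldsymbol\epsilon'}=0$, the right because $\mathcal P_S$ vanishes by Theorem \ref{thm: non-generic vanishing} unless $S=\emptyset$, and even for $S=\emptyset$ with $\boldsymbol\epsilon\neq\boldsymbol\epsilon'$ the cohomological degrees $(\boldsymbol p(\boldsymbol\epsilon),\boldsymbol q(\boldsymbol\epsilon))$ and $(1-\boldsymbol p(\boldsymbol\epsilon'),1-\boldsymbol q(\boldsymbol\epsilon'))$ are complementary only if $\boldsymbol\epsilon=\boldsymbol\epsilon'$, so the cup product is zero for purely degree reasons.)

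The second step is the nonvanishing-of-the-scalar assertion, which is the substantive content. Here I would use the explicit formula of Theorem \ref{theorem: new global. Pairing}(3): after writing $\gamma$ as a theta lift $\theta_{\xphi_1\otimes\xphi_2}(\alpha)$ with $\xphi_i$ having the specific archimedean components dictated by the cohomological condition, the period $\mathcal P_\emptyset(\gamma,\beta)$ is a nonzero product of $L$-values times local zeta integrals, and the archimedean zeta integrals are computed to be nonzero in Proposition \ref{prop: local Zeta archimedean}. The key bookkeeping is to check that the archimedean Schwartz function whose existence is forced by Proposition \ref{prop:Calculating theWheat Dr. Schwartz function} — namely $\overline{\phi}^{\epsilon}_m = (m+1)\xphi_1\otimes\xphi_2 - \xphi_1'\otimes\xphi_2'$ — is precisely the one that realizes the minimal-$\boldsymbol K$-type vector $\gamma$ as a theta lift, and that the second term $\xphi_1'\otimes\xphi_2'$ contributes zero by the vanishing statement in Proposition \ref{prop: local Zeta archimedean}, so that only the nonzero first term survives. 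Combining this with the nonvanishing of $L^S(1,\pi_1\times\pi_2^\vee)$ and $L^S(1,\Ad\pi_2)$ and the nonvanishing of the finite-place zeta integrals for suitable test data gives a nonzero scalar for an appropriate choice of $\gamma,\beta$; by $\GL_2(\A_{F,f})$-equivariance of the diagram, the scalar is globally nonzero.

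For part (2), once part (1) is established, the map $\Pi_f\otimes\pi_f^\vee\to\mathbb C$ is identified up to scalar with $\mathcal P_S$ restricted to the appropriate $\boldsymbol K_\infty$-isotypic subspace, and this restriction is canonically determined by $\Pi_f\otimes\pi_f^\vee$ alone — independently of the choice of $\boldsymbol\epsilon$ — because the minimal-$\boldsymbol K_\infty$-type spaces $(\Pi\otimes\boldsymbol\tau^{\boldsymbol\epsilon}_{\boldsymbol m,\emptyset})^{\boldsymbol K_2}$ are all one-dimensional over $\Pi_f$ and the period functional $\mathcal P_\emptyset$ is a fixed global object. More precisely, the values of $\mathcal P_\emptyset$ on the distinct $\boldsymbol\epsilon$-isotypic lines differ only by the ratio of archimedean zeta integrals computed in Proposition \ref{prop: local Zeta archimedean}, and that formula shows these ratios are nonzero scalars (the $\epsilon$-dependence is only through the harmless sign $\epsilon^m$). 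Hence the composites are independent of $\boldsymbol\epsilon$ up to a nonzero scalar.

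The main obstacle I anticipate is the identification in the first step — verifying that the geometric composite (pullback along $\iota$, cup with the fundamental class, cap against the dual class, trace) really equals the adelic period $\mathcal P_S$ up to the expected nonzero normalizing constant, with all the measure normalizations from (\ref{subsubsec:measure on H}) and (\ref{subsubsec:measures}) correctly accounted for, and with the constant local system's multiplicity-one embedding matched to the weight projection $\sigma_{\boldsymbol\epsilon,\boldsymbol\epsilon'}$. This is where the plectic Hodge-type bookkeeping of \S\ref{sec: yoshida basics}, the explicit $\boldsymbol K$-type computations of \S\ref{subsec: Archimedean calculations}, and the definition of $[\mathcal Z]_\ast$ all have to be threaded together consistently; the analytic input (nonvanishing of $L$-values and local zeta integrals) is comparatively routine given Theorem \ref{theorem: new global. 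Pairing} and Proposition \ref{prop: local Zeta archimedean}.
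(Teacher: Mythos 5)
Your first step is the paper's own argument for (1): unwind $\langle[\mathcal Z]_\ast\CL_S^{\boldsymbol\epsilon}(\cdot),\CL'_{\boldsymbol\epsilon'}(\cdot)\rangle$ into an integral over $[Z_H\backslash H]$ of the pulled-back forms, and observe that the resulting archimedean contraction $\boldsymbol\tau^{\boldsymbol\epsilon}_{\boldsymbol m,S_\infty}\otimes\boldsymbol\chi^\vee_{-\boldsymbol\epsilon'\boldsymbol m}\to\C$ is equivariant for the compact torus, hence factors through the (at most one-dimensional) weight-zero line and is therefore a scalar multiple $c\cdot\sigma_{\boldsymbol\epsilon,\boldsymbol\epsilon'}$, forcing vanishing unless $S_\infty=\emptyset$ and $\boldsymbol\epsilon=\boldsymbol\epsilon'$. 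Your treatment of (2), via the uniformity in $\boldsymbol\epsilon$ of the archimedean zeta integrals, differs from the paper's (which translates by the element $(g_{\boldsymbol\epsilon},g'_{\boldsymbol\epsilon})\in H(\mathbb A_F)$ and notes that this leaves $\mathcal P_S$ unchanged), but it is a legitimate alternative --- indeed the paper remarks after Theorem \ref{biggie thm} that the uniform zeta-integral formula makes part (2) superfluous.

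The gap is in what you call the ``substantive'' second step. The nonzero scalar asserted in (1) is the constant $c$ above, i.e.\ the value of the \emph{finite-dimensional} contraction
$\boldsymbol\tau^{\boldsymbol\epsilon}_{\boldsymbol m,\emptyset}\otimes\boldsymbol\chi^\vee_{-\boldsymbol\epsilon\boldsymbol m}\to\wedge^{\ast}\mathfrak p^\ast_{\GSp_4}\otimes\wedge^{\ast}\mathfrak p^\ast_{\GL_2}\otimes V_{(\boldsymbol m-2,0)}\otimes V^\vee_{\boldsymbol m-2}\to\wedge^{4d}\mathfrak p_H^\ast\to\C$
on the weight-zero line; it is a purely local, representation-theoretic quantity, and the paper disposes of it by a direct calculation. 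What you propose instead --- the nonvanishing of $\mathcal P_\emptyset$ on suitable theta-lift test vectors via Theorem \ref{theorem: new global. Pairing} and Proposition \ref{prop: local Zeta archimedean} --- establishes that the \emph{right-hand} path of the diagram is a nonzero functional when $S=\emptyset$, but says nothing about $c$. To deduce $c\neq 0$ from that you would also need the \emph{left-hand} path to be nonzero, which is exactly the conclusion of Theorem \ref{biggie thm} that this proposition is designed to feed into; the argument is circular. (And when $S_f\neq\emptyset$ the period vanishes identically by Theorem \ref{thm: non-generic vanishing}, so no global input can detect $c$, even though the proposition is still used in that case to conclude $[\mathcal Z]_\ast=0$.) The period and zeta-integral computations belong to the proof of Theorem \ref{biggie thm}, not here; for the proposition you must check the local contraction is nonzero directly.
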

\begin{proof}
Let $\l:V_{(\boldsymbol m - 2,0)}\otimes V_{\boldsymbol m - 2}^\check\to \Q(\boldsymbol m)$ be an $H (F) $-invariant projection. There exists a basis $\mathbbm 1_{\boldsymbol H}$ of $\wedge^{4d} \mathfrak p_H$  such that $$\int_{S(\boldsymbol H)} \omega = \int_{[Z_{H}\backslash H]}h^\ast\omega (\mathbbm 1_{\boldsymbol H})\d h$$
for all top-degree forms $\omega$ on $S(\boldsymbol H)$. Then by definition,
$$\langle [\mathcal Z]_\ast \alpha, \beta\rangle =\int_{[Z_{H}\backslash H]}\l\left [h ^\ast\iota^\ast (\alpha \wedge \beta) (\mathbbm 1_{\boldsymbol H})\right]\d h. $$
The composite map:
\begin{equation}\label{eq: composite}
\begin{split}
\boldsymbol \tau_{\boldsymbol m,S_\infty} ^ {\boldsymbol\epsilon}\otimes 
\boldsymbol\chi^\vee_{-\boldsymbol\epsilon'\boldsymbol m}\to \wedge^{\boldsymbol p(\boldsymbol\epsilon,S_\infty),\boldsymbol q(\boldsymbol \epsilon,S_\infty)}\mathfrak p_{\GSp_4}^\ast \otimes \wedge ^ {1-\boldsymbol p (\boldsymbol\epsilon'),1-\boldsymbol q (\boldsymbol\epsilon')}\mathfrak p_{\GL_2} ^\ast\otimes V_{(\boldsymbol m - 2,0)}\otimes V_{\boldsymbol m - 2}\\ \xrightarrow{\iota^\ast\otimes \l}\wedge ^ {\boldsymbol p (\boldsymbol\epsilon, S_\infty) +1 -\boldsymbol p(\boldsymbol\epsilon'), \boldsymbol q (\boldsymbol\epsilon, S_\infty) +1-\boldsymbol q (\boldsymbol\epsilon')}\mathfrak p_H\xrightarrow{\mathbbm 1_{\boldsymbol H}} \C 
\end{split}
\end{equation}
is a map of $U(1)^d\times U(1)^d$-modules, where  the action on $\boldsymbol\chi_{-\boldsymbol\epsilon'\boldsymbol m}$, $\wedge ^ {\boldsymbol p (\boldsymbol\epsilon'),\boldsymbol q (\boldsymbol\epsilon')}\mathfrak p_{\GL_2}$, and 
$V_{\boldsymbol m - 2}$ is through projection to the first factor. In particular, (\ref{eq: composite})
is trivial unless $S_\infty = \emptyset$ and $\boldsymbol\epsilon = \boldsymbol\epsilon' $, in which case it is proportional to the projection onto the weight $(-\boldsymbol\epsilon\boldsymbol m, 0)$-component of $\boldsymbol\tau_{\boldsymbol m,\emptyset}^{\boldsymbol\epsilon}$; moreover, a direct calculation shows it is nonzero, and (1) follows.

For (2), let $$g_{\boldsymbol\epsilon} =\begin{pmatrix}\epsilon_ v & & &\\&\epsilon_v & &\\& & 1&\\& & & 1\end {pmatrix}_{v |\infty}\in G(F\otimes \R) \subset G (\A_F),\;\;\;\; g'_{\boldsymbol\epsilon} =\begin {pmatrix}\epsilon_v &\\& 1\end {pmatrix}_{v |\infty}\in G' (F\otimes\R)\subset G' (\A_F). $$
We have an obvious commutative diagram

\begin {center}
\begin{tikzcd}
  \left(\Pi\otimes\boldsymbol\tau ^ {\boldsymbol\epsilon}_{\boldsymbol m,\emptyset}\right) ^ {\boldsymbol K_2}\otimes\left (\pi ^\vee\otimes\boldsymbol\chi_{-\boldsymbol\epsilon\boldsymbol m}^\vee\right) ^ {\boldsymbol K_1} \arrow[r, "\sigma_{\boldsymbol\epsilon,-\boldsymbol\epsilon}\otimes \operatorname{id}"]\arrow[d] &\Pi\otimes\pi ^\vee\arrow[d]\\
   \left(\Pi\otimes\boldsymbol\tau ^ {\boldsymbol +}_{\boldsymbol m,\emptyset}\right) ^ {\boldsymbol K_2}\otimes\left (\pi ^\vee\otimes\boldsymbol\chi^\vee_{- \boldsymbol m}\right) ^ {\boldsymbol K_1}\arrow [r,"\sigma_{\boldsymbol+,\boldsymbol  -}\otimes \operatorname{id}"]&\Pi\otimes\pi ^\vee
\end{tikzcd}
\end {center}
in which  the vertical arrows are translation by $(g_{\boldsymbol\epsilon}, g'_{\boldsymbol\epsilon})$ and $ \pm $ stands for the constant sign $ (\pm)_v|\infty$.
However, since $(g_{\boldsymbol\epsilon}, g'_{\boldsymbol\epsilon})$ lies in $H (\A_F)\subset G (\A_F)\times G' (\A_F), $
this translation has no effect on the period integral $\mathcal P_S $, and (2) follows.
\end{proof}

\begin{thm}\label{biggie thm}
Let $\pi_1,\pi_2,\pi$ be cuspidal automorphic representations of $G' (\A) $
of weights $\boldsymbol m +2 $, $\boldsymbol m$, and $\boldsymbol m $, respectively, where $\boldsymbol m = (m_v)_{v |\infty} $
for positive  integers $m_v $. Assume that the central characters of $\pi_1 $ and $\pi_2$ agree, and that the the central characters of $\pi_1,$ $\pi_2$, and $\pi$ all have infinity type $\omega_{\boldsymbol m}$.
Let $\Pi_{S_f}$ be as  in (\ref{subsubsec: notation for endoscopic lifts in coh}) for set $S_f$ of finite places of $F$.
Then, for any  coefficient field $E \supset F ^ c $
such that $\Pi,\pi_i, $ and $\pi $
are defined over $E $, the induced map
$$[\mathcal Z]_\ast: H ^ {3d}_! (S(\boldgsp_4), \mathcal V_{(\boldsymbol m - 2,0)})(d) [\Pi_{S_f}]\to H ^ d_!(S(\boldgl_2), \mathcal V_{\boldsymbol m - 2}) [\pi_f] $$
is trivial unless $\pi =\pi_2 $ and $S_f =\emptyset $. In the latter case, $[\mathcal Z]_\ast $ takes the form:
$$\Pi_{\emptyset}\otimes H_! ^ {3d} (S(\boldgsp_4), \mathcal V_{(\boldsymbol m - 2,0)})_{\Pi_{\emptyset}}(d)\xrightarrow{\l\otimes s}\pi_{2,f}\otimes H _!^ d (S(\boldgl_2), \mathcal V_{\boldsymbol m - 2})_{\pi_{2,f}},$$
where $s$ is an surjection and $\l $ is a nontrivial $E $-linear map.
\end{thm}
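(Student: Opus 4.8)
The plan is to reduce the computation of $[\mathcal Z]_\ast$ to the period integrals $\mathcal P_S$ studied in \S\ref{Section:. See Yoshida} and then quote Theorems \ref{thm: non-generic vanishing} and \ref{theorem: new global. Pairing}. First I would pass from inner to $L^2$ cohomology: by Propositions \ref{Prop: comparison isomorphism's for G} and \ref{prop: comparison isomorphism for G'}, after extending scalars to $\C$ the source and target of $[\mathcal Z]_\ast$ become $H^\ast_{(2)}(S(\boldgsp_4),\mathcal V_{(\boldsymbol m-2,0),\C})[\Pi_{S_f}]$ and $H^\ast_{(2)}(S(\boldgl_2),\mathcal V_{\boldsymbol m-2,\C})[\pi_f]$, and triviality (or not) of an $E$-linear map may be checked after $\otimes_E\C$. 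Using the Poincar\'e pairing of \S\ref{subsubsec: recall Poincare}, I would detect $[\mathcal Z]_\ast\alpha$ by its pairings against classes in $H^d_c(S(\boldgl_2),\mathcal V^\vee_{\boldsymbol m-2,\C})[\pi_f^\vee]$. By Propositions \ref{prop: plectic Hodge types and class maps} and \ref{prop: comparison isomorphism for G'}, every class in the source and in this target is in the image of the realization maps $\CL^{\boldsymbol\epsilon}_S$ and $\CL_{\boldsymbol\epsilon'}'$, i.e.\ comes from a vector-valued automorphic form generating a minimal $\boldsymbol K_2$- (resp.\ $\boldsymbol K_1$-) type. Proposition \ref{crop: cohomology. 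Pairing special cycle variant}(1) then identifies, up to a nonzero scalar, the resulting number with $\mathcal P_S(\gamma,\beta)$ for $\gamma\in\Pi_S(\pi_1,\pi_2)$ and $\beta$ in the contragredient of $\pi$ --- the contragredient entering through Poincar\'e duality, which is precisely why the conclusion reads $\pi=\pi_2$ while Theorem \ref{theorem: new global. Pairing}(2) reads $\pi\cong\pi_2^\vee$.

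Granting this reduction, the vanishing assertions are immediate. If $S_f\neq\emptyset$ then $S\neq\emptyset$, so $\mathcal P_S\equiv 0$ by Theorem \ref{thm: non-generic vanishing} and $[\mathcal Z]_\ast$ annihilates $H^{3d}_!(\cdot)[\Pi_{S_f}]$. Even when $S_f=\emptyset$, the representation $\Pi_{S_f}$ also occurs in $H^{3d}_!$ via plectic Hodge types $(\boldsymbol p(\boldsymbol\epsilon,S_\infty),\boldsymbol q(\boldsymbol\epsilon,S_\infty))$ with $S_\infty\neq\emptyset$ (Proposition \ref{prop: plectic Hodge types and class maps}); by Proposition \ref{crop: cohomology. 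Pairing special cycle variant}(1) the projection $\sigma_{\boldsymbol\epsilon,\boldsymbol\epsilon'}$ is zero unless $S_\infty=\emptyset$ and $\boldsymbol\epsilon=\boldsymbol\epsilon'$, so $[\mathcal Z]_\ast$ kills those components for free, and Theorem \ref{theorem: new global. Pairing}(2) gives the vanishing unless $\pi\cong\pi_2$. This leaves the single case $S=\emptyset$, $\pi=\pi_2$.

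For nonvanishing in that case, the input is Theorem \ref{theorem: new global. Pairing}(3)--(4) together with the archimedean analysis of \S\ref{subsec: Archimedean calculations}. I would use the archimedean local theta correspondence to realize an automorphic form $\gamma\in\Pi_\emptyset(\pi_1,\pi_2)$ generating the minimal $\boldsymbol K_2$-type $\boldsymbol\tau^{\boldsymbol\epsilon}_{\boldsymbol m,\emptyset}$ as a theta lift $\theta_{\xphi_1\otimes\xphi_2}(\alpha)$ with $\alpha\in\pi_1\boxtimes\pi_2$, the correct archimedean components of $\xphi_1,\xphi_2$ being the Schwartz functions $\overline{\fee}_m^{\epsilon}=(m+1)\,\xphi_1\otimes\xphi_2-\xphi_1'\otimes\xphi_2'$ pinned down in Propositions \ref{prop: substitute for Harris}, \ref{prop: one-dimensional harmonic} and \ref{prop:Calculating theWheat Dr. Schwartz function}. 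Feeding this into the Euler product of Theorem \ref{theorem: new global. Pairing}(3), the archimedean zeta integrals are evaluated in Proposition \ref{prop: local Zeta archimedean}: the $\xphi_1'\otimes\xphi_2'$ term vanishes, the $\xphi_1\otimes\xphi_2$ term equals $\tfrac{\epsilon^{m}m!(m-1)!}{\pi^{2m+2}2^{3m+2}}\neq 0$; the finite local zeta integrals are nonzero for suitable test vectors and the $L$-values $L^S(1,\pi_1\times\pi_2^\vee)$, $L^S(1,\Ad\pi_2)$ are nonzero by Theorem \ref{theorem: new global. Pairing}(4). Hence $\mathcal P_\emptyset$ is not identically zero on the relevant spaces, so $[\mathcal Z]_\ast$ is nontrivial.

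Finally I would upgrade this to $[\mathcal Z]_\ast=\l\otimes s$ with $s$ surjective. As $[\mathcal Z]_\ast$ is equivariant for $H(\A_{F,f})\subset\GSp_4(\A_{F,f})$ on the source and, through $p\colon H\to\GL_2$, on the target, and $p$ is surjective on $\A_{F,f}$-points, the image is a $\GL_2(\A_{F,f})$-submodule of $H^d_!(S(\boldgl_2),\cdot)[\pi_{2,f}]\cong\pi_{2,f}\otimes H^d_!(S(\boldgl_2),\cdot)_{\pi_{2,f}}$; decomposing $\Hom_{H(\A_{F,f})}$ of the admissible isotypic modules and using multiplicity one for the local functionals $\Hom_{H(F_v)}(\Pi^+_v,\pi_{2,v}\circ p)$ (the nontempered Gan--Gross--Prasad situation of \cite{gan2020branching}, the local zeta integrals furnishing a basis) gives $[\mathcal Z]_\ast=\l\otimes s$ for $\l\in\Hom_{H(\A_{F,f})}(\Pi_{\emptyset,f},\pi_{2,f})$. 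By Proposition \ref{crop: cohomology. Pairing special cycle variant}(1) an image class of plectic type $(\boldsymbol\epsilon,\emptyset)$ pairs trivially with the $\boldsymbol\epsilon''$-components of $H^d(S(\boldgl_2))$ for $\boldsymbol\epsilon''\neq\boldsymbol\epsilon$, and by the $\boldsymbol\epsilon$-independence of Proposition \ref{crop: cohomology. Pairing special cycle variant}(2) the nonvanishing computation above holds for every $\boldsymbol\epsilon$, so $s$ maps the $(\boldsymbol\epsilon,\emptyset)$-line onto the $\boldsymbol\epsilon$-line of the target multiplicity space for each $\boldsymbol\epsilon$; hence $s$ is onto, $\l\neq 0$, and irreducibility of $\pi_{2,f}$ shows the image generates $H^d_!(S(\boldgl_2),\mathcal V_{\boldsymbol m-2})[\pi_{2,f}]$. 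I expect the main obstacle to be the archimedean matching in the nonvanishing step: one must produce a genuine cohomology class --- a $\gamma$ in exactly the right minimal $\boldsymbol K_2$-type --- as a theta lift for an explicitly chosen Schwartz function, and then verify that the associated archimedean zeta integral does not vanish for representation-theoretic reasons, the vanishing of the $\xphi_1'\otimes\xphi_2'$-contribution showing how delicate this bookkeeping is; a secondary difficulty is the local multiplicity-one input for the $\l\otimes s$ factorization, which lies just outside the tempered GGP framework because the $\SO_4$-form attached to $\beta$ is not tempered.
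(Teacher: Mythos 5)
Your proposal is correct and follows essentially the same route as the paper: reduce via Propositions \ref{Prop: comparison isomorphism's for G}, \ref{prop: comparison isomorphism for G'} and \ref{crop: cohomology. Pairing special cycle variant}(1) to the periods $\mathcal P_S$, kill the $S_f\neq\emptyset$, $\pi\neq\pi_2$ and $S_\infty\neq\emptyset$ contributions by Theorems \ref{thm: non-generic vanishing} and \ref{theorem: new global. Pairing}(2), and establish nonvanishing for $S=\emptyset$ via the archimedean Schwartz functions of Propositions \ref{prop: one-dimensional harmonic}--\ref{prop:Calculating theWheat Dr. Schwartz function} fed into Theorem \ref{theorem: new global. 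Pairing}(3)--(4) and Proposition \ref{prop: local Zeta archimedean}. The one point where you diverge is the $\l\otimes s$ factorization: you import local multiplicity one from the nontempered GGP setting, whereas the paper gets this more cheaply from Proposition \ref{crop: cohomology. Pairing special cycle variant}(2) (translation by $(g_{\boldsymbol\epsilon},g'_{\boldsymbol\epsilon})\in H(\A_F)$), or even just from the observation that the explicit formula (\ref{eq: formula}) is uniform in $\boldsymbol\epsilon$ — so the external multiplicity-one input you flag as a difficulty is in fact avoidable.
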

\begin{proof}
Without loss of generality, suppose $E = \C$.
By Proposition \ref{crop: cohomology. Pairing special cycle variant}, Theorem \ref{thm: non-generic vanishing}, and Theorem \ref{theorem: new global. Pairing}, we immediately reduce to the case $S_f = \emptyset$ and $\pi= \pi_2$. In this case, write $\Pi = \Pi_\emptyset(\pi_1,\pi_2)$.  Under the decomposition $$H ^ {3d}_! (S (\boldgsp_4),\mathcal V_{(\boldsymbol m -2, 0),\C}) [\Pi_f] =\bigoplus_{S_\infty}\bigoplus_{\boldsymbol\epsilon} H ^ {\boldsymbol p (\boldsymbol\epsilon, S_\infty),\boldsymbol q (\boldsymbol\epsilon, S_\infty)}_{(2)} (S (\boldgsp_4),\mathcal V_{(\boldsymbol m -2, 0),\C}) [\Pi_f]$$ provided by \ref{prop: plectic Hodge types and class maps} and \ref{Prop: comparison isomorphism's for G},
Proposition \ref{crop: cohomology. Pairing special cycle variant}(1) implies that $[\mathcal Z]_\ast $
is trivial on components with $S_\infty\neq\emptyset $, and maps $H ^ {\boldsymbol p (\boldsymbol\epsilon,\emptyset),\boldsymbol q (\boldsymbol\epsilon,\emptyset)}_{(2)}(S (\boldgsp_4),\mathcal V_{(\boldsymbol n -2, 0),\C}) (d) [\Pi_f] $
to $H ^ {\boldsymbol p (\boldsymbol\epsilon),\boldsymbol q (\boldsymbol\epsilon)}_{(2)}(S (\boldgl_2),\mathcal V_{\boldsymbol m -2,\C}) [\pi_{2, f}] $. Moreover, by Proposition \ref{crop: cohomology. Pairing special cycle variant}(2)
and Proposition 
 \ref{prop: comparison isomorphism for G'},
 $[\mathcal Z]_\ast$ is a pure tensor $\l\otimes s $, and $s $
 is surjective provided it is nontrivial.
 Thus, for any single choice of $\boldsymbol\epsilon $, it suffices to show that
\begin{equation}
\begin{split}
H^{\boldsymbol p(\boldsymbol\epsilon,\emptyset), \boldsymbol q(\boldsymbol\epsilon,\emptyset)}_{(2)} (S(\boldgsp_4), \mathcal V_{(\boldsymbol m-2,0),\C})[\Pi_f] \otimes H_{(2)}^{\boldsymbol 1-p(\boldsymbol\epsilon),\boldsymbol 1-q(\boldsymbol\epsilon)}(S(\boldgl_2), \mathcal V_{\boldsymbol m-2,\C})[\pi_{2,f}^\vee]  \xrightarrow{\;\;\langle [\mathcal Z]_\ast\cdot,\cdot\rangle\;\;
    }\C
    \end{split}
\end{equation}
is nontrivial.

Indeed, let $$\boldsymbol\phi^{\boldsymbol\epsilon}_\infty = \otimes_{v|\infty}\phi^{\epsilon_v}_{m_v}\in \mathcal S_{F\otimes \R}(\langle e_2,e_4\rangle\otimes V)\otimes \boldsymbol\tau_{\boldsymbol m,\emptyset}^{\boldsymbol \epsilon}\otimes \boldsymbol\chi_{-\boldsymbol\epsilon}^\vee,$$ where $\phi^{\epsilon_v}_{m_v}$ is the vector-valued Schwartz function of Proposition \ref{prop: one-dimensional harmonic}. Also let \begin{equation}
    \theta_{\boldsymbol\epsilon}: \mathcal S_{\A_{F,f}}(\langle e_2,e_4\rangle\otimes V) \twoheadrightarrow \left(\Pi\otimes \boldsymbol\tau^{\boldsymbol\epsilon}_{\boldsymbol m, \emptyset}\right)^{\boldsymbol K_2}
\end{equation}
be the $\C$-linear map $$\xphi_f \mapsto \theta_{\xphi_f\otimes \phi^{\boldsymbol\epsilon}_\infty}(f_1\otimes f_2),$$ where $f_1\in \pi_1$ and $f_2\in \pi_2$ are nonzero newforms of weights $-(\boldsymbol m + 2)$  and $\boldsymbol\epsilon\boldsymbol m,$ respectively. 

Now Proposition \ref{crop: cohomology. Pairing special cycle variant} and Proposition \ref{prop:Calculating theWheat Dr. Schwartz function} imply that the composite map
\begin{equation}
    \mathcal S_{\A_{F,f}}(\langle e_2,e_4\rangle \otimes V)\otimes (\pi_2^\vee\otimes \boldsymbol\chi^\vee_{-\boldsymbol\epsilon\boldsymbol m})^{\boldsymbol K_1}\xrightarrow{\theta_{\boldsymbol\epsilon}\otimes\operatorname{id}} \left(\Pi\otimes \boldsymbol \tau_{\boldsymbol m, \emptyset}^{\boldsymbol\epsilon}\right)^{\boldsymbol K_2} \otimes (\pi_2^\vee\otimes \boldsymbol\chi^\vee_{-\boldsymbol\epsilon\boldsymbol m})^{\boldsymbol K_1}\xrightarrow{\langle[\mathcal Z]_\ast \circ \cl_{\emptyset}^{\boldsymbol\epsilon}, \cl'_{\boldsymbol\epsilon} \rangle}\C
\end{equation}
is given by \begin{equation}
    \xphi_f \otimes \beta\mapsto \mathcal P _\emptyset (\theta_{\xphi_f\otimes \overline \phi_{\infty}^{\boldsymbol\epsilon}}(f_1\otimes f_2), \beta) 
\end{equation}
up to a nonzero scalar,
where $$\overline \phi_{\infty}^{\boldsymbol\epsilon} = \otimes_v \overline \phi_{m_v}^{\epsilon_v}$$ for $\overline \phi_{m_v}^{\epsilon_v} \in\mathcal S_{F_v} (\langle e_2,e_4\rangle\otimes V)$ as in Proposition \ref{prop:Calculating theWheat Dr. Schwartz function}.
Suppose given factorizable test data $$\xphi_{i,f} = \otimes_{v\nmid \infty}\xphi_{i,v}\in \mathcal S_{\A_{F,f}}(V), \;\; i = 1,2$$ and $$\beta = \otimes_v \beta_v\in (\pi_2^\vee\otimes \boldsymbol\chi^\vee_{-\boldsymbol\epsilon\boldsymbol m})^{\boldsymbol K_1};$$  we also fix decompositions of the global Whittaker functionals as in Theorem \ref{theorem: new global. Pairing}.
Then, from Theorem \ref{theorem: new global. Pairing} and Proposition \ref{prop: local Zeta archimedean}, we conclude the formula
\begin{equation}\label{eq: formula}
\langle [\mathcal Z]_\ast\circ \cl_\emptyset^{\boldsymbol\epsilon} (\theta_{\boldsymbol\epsilon}(\xphi_{1,f}\otimes \xphi_{2,f})),\cl_{\boldsymbol\epsilon}' (\beta)\rangle\doteq \frac{L^S(1,\pi_1\times\pi_2^\vee)L^S(1, \Ad\pi_2)}{\zeta_F^S(2)} \prod_{\substack{v\in S\\ v\nmid \infty}} \frac{\mathcal Z_v(\xphi_{1,v},\xphi_{2,v}, f_{1,v}\otimes f_{2,v}, \beta_v)}{1-q_v^{-1}} \cdot \prod_{v|\infty} \frac{W_{\beta_v}(1)}{e^{-2\pi}}, 
\end{equation}
where $\doteq$ denotes equality up to a nonzero constant. 
The nonvanishing of the right-hand side, which follows from Theorem \ref{theorem: new global. Pairing}, concludes the proof.
\end{proof}
\begin{rmk}
In fact, since the formula (\ref{eq: formula}) is uniform in $\boldsymbol\epsilon$, Proposition \ref{crop: cohomology. Pairing special cycle variant}(2) was not strictly necessary for the proof of the theorem. 
\end{rmk}
\section{Non-tempered theta lifts on $\GSp_6 $}\label{sec:nontempered}
\subsection{Arthur parameters}
\subsubsection{}
Let $G $ be a split reductive group over $F $, and recall that a local Arthur parameter for $G $ is a $\widehat G $-conjugacy  class of homomorphisms $$\psi_v:WD_{F_v}\times\SL_2 (\C)\to\widehat G, $$
such that the restriction to $WD_{F_v} $ is a bounded Langlands parameter. 
If $v$ is non-archimedean and $\psi_v$ is unramified, then it determines a local Langlands parameter $\phi_v$ by the rule
\begin{equation}\label{EQ: unramified Arthur to Langlands}
    \phi_v(w) = \psi_v\left(w, \begin {pmatrix}|w | ^ {1/2}_v & 0\\0 & | w | ^ {-1/2}_v\end {pmatrix}\right). \end{equation}
An unramified representation of $G (F_v) $
is said to have Arthur parameter $\psi_v $
if its Langlands parameter is determined by the rule (\ref{EQ: unramified Arthur to Langlands}).
\subsubsection{}
 In \cite{arthur2013endoscopic}, Arthur defines discrete global parameters for $\SP_{2n} $ to be formal (unordered) sums
 \begin{equation}\label{eq:global parameter}
     \oplus \pi_i [d_i],
 \end{equation}
 where:
 \begin{itemize}
     \item $\pi_i $ are cuspidal automorphic representations of $\PGL_{n_i} $;
     \item $d_i\geq 0 $ are integers such that $\sum n_id_i = 2n +1 $;
     \item $\pi_i$ is conjugate symplectic if $d_i$ is even, and conjugate orthogonal if $d_i$ is odd;
     \item the pairs $(\pi_i,d_i)$ are distinct.
 \end{itemize}
The integers $d_i$ are to be interpreted as the dimensions of irreducible representations of $\SL_2(\C)$. Moreover, a global Arthur parameter $\psi $ induces a local Arthur parameter $\psi_v $ for each place $v$ of $F$ via the local Langlands classification for $\GL_{m}$. The reason we must take this rather circuitous definition of global Arthur parameters is the lack of a global Langlands group.
\subsubsection{}
In \cite{arthur2013endoscopic}, Arthur classified discrete automorphic representations of $\SP_{2n}$ by constructing local and global packets for these parameters, denoted $\Pi_{\psi_v} $ and $\Pi_\psi $, 
respectively. An automorphic representation  $\pi\subset \mathcal A_{(2)} (\SP_{2n}(\A_F))$
belongs to $\Pi_\psi$ if and only if, for almost all places $v$, $\pi_v$ is the spherical representation determined by $\phi_v $.
Although a full endoscopic classification for $\GSp_{2n} $ is not available, one can obtain partial results using the pullback  along the natural map $$\iota: \SP_{2n}\to \GSp_{2n}. $$ Indeed, for our purposes the following rather weak result is sufficient.
\begin{prop}\label{Prop: very weak endoscopic classification}
Let $\pi_1 ,\pi_2 \subset \mathcal A_{(2)} (\GSp_{2n} (\A_F)) $ be nearly equivalent discrete automorphic representations of $\GSp_{2n} (\A_F) $, and suppose that some irreducible constituent of $\iota^\ast\pi_1 $ has parameter $\psi$. Then for all places $v $ of $F $, any irreducible constituent of the admissible $\SP_{2n} (F_v) $-module $\iota^\ast\pi_2 $ belongs to the Arthur packet $\Pi_{\psi_v} $.
\end{prop}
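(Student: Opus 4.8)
The plan is to descend to $\SP_{2n}$, where Arthur's classification is available, and to transport the parameter $\psi$ from $\pi_1$ to $\pi_2$ using near-equivalence together with Clifford theory. The basic inputs are standard facts about restriction from $\GSp_{2n}$ to $\SP_{2n}$: the restriction $\iota^\ast\pi$ of a discrete automorphic representation $\pi$ of $\GSp_{2n}(\A_F)$ decomposes into irreducible discrete automorphic representations of $\SP_{2n}(\A_F)$ and is stable under translation by $\GSp_{2n}(\A_F)$; locally, $\iota^\ast\pi_v$ is a finite direct sum whose constituents are permuted transitively by $\GSp_{2n}(F_v)$ by Clifford theory.

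First I would check that every irreducible discrete automorphic constituent $\sigma$ of $\iota^\ast\pi_i$ has, for almost all $v$, local component equal to the \emph{unique} unramified constituent of $\iota^\ast\pi_{i,v}$. Indeed $\sigma$ is unramified at almost all $v$; and since $\GSp_{2n} = \SP_{2n}\cdot B$ for a Borel $B$ (the similitude character being surjective on the maximal torus of $B$), the Iwasawa/Mackey decomposition realizes $\iota^\ast\pi_{i,v}$ inside an unramified principal series $\Ind_{B_{\SP}(F_v)}^{\SP_{2n}(F_v)}\chi'$, which has exactly one unramified constituent. Since $\pi_{1,v}\cong\pi_{2,v}$ for almost all $v$ by near-equivalence, it follows that $\iota^\ast\pi_{1,v}\cong\iota^\ast\pi_{2,v}$ for almost all $v$, hence any irreducible discrete automorphic $\sigma_1\subset\iota^\ast\pi_1$ and $\sigma_2\subset\iota^\ast\pi_2$ satisfy $\sigma_{1,v}\cong\sigma_{2,v}$ for almost all $v$ — that is, $\sigma_1$ and $\sigma_2$ are nearly equivalent. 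Now fix $\sigma_1$ with global Arthur parameter $\psi$, so $\sigma_1\in\Pi_\psi$.

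Next, fix a place $v_0$ and an irreducible constituent $\tau$ of the $\SP_{2n}(F_{v_0})$-module $\iota^\ast\pi_{2,v_0}$, and produce a discrete automorphic representation in $\iota^\ast\pi_2$ with $\tau$ as its $v_0$-component: choose any irreducible discrete automorphic $\sigma_2^{(0)}\subset\iota^\ast\pi_2$, and using transitivity of the $\GSp_{2n}(F_{v_0})$-action on constituents of $\iota^\ast\pi_{2,v_0}$, pick $g\in\GSp_{2n}(F_{v_0})\hookrightarrow\GSp_{2n}(\A_F)$ with $g\cdot\sigma_{2,v_0}^{(0)}\cong\tau$. Then $\sigma_2 := g\cdot\sigma_2^{(0)}$ is again an irreducible discrete automorphic subrepresentation of $\pi_2$ lying in $\iota^\ast\pi_2$ — translation preserves $\mathcal A_{(2)}$ and $g$ normalizes $\SP_{2n}(\A_F)$ — with $\sigma_{2,v_0}\cong\tau$ and $\sigma_{2,w}\cong\sigma_{2,w}^{(0)}$ for $w\neq v_0$. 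By the previous step $\sigma_2$ is nearly equivalent to $\sigma_1$, so by Arthur's classification $\sigma_2$ lies in some global packet $\Pi_{\psi'}$; since a discrete global parameter is determined by the unramified local Langlands parameters at almost all places, $\psi' = \psi$. As every member of $\Pi_\psi$ has, by construction, $v_0$-component in the local packet $\Pi_{\psi_{v_0}}$, we conclude $\tau\cong\sigma_{2,v_0}\in\Pi_{\psi_{v_0}}$, as asserted.

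The step requiring the most care — though none of it is deep — is the package of facts about restriction from $\GSp_{2n}$ to $\SP_{2n}$: that $\iota^\ast\pi_i$ remains in the discrete (in our applications, cuspidal) $L^2$-spectrum; that the $\GSp_{2n}(F_{v_0})$-translate used to prescribe the $v_0$-component is genuinely automorphic and unchanged at the remaining places; and that an unramified principal series of $\SP_{2n}(F_v)$ has a unique unramified constituent with the expected Satake parameter. Each of these must be cited or verified precisely, since we never invoke an endoscopic classification for $\GSp_{2n}$ directly.
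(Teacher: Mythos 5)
Your argument is correct and follows the same route as the paper: descend to $\SP_{2n}$, realize the given local constituent $\tau$ as the $v_0$-component of an irreducible discrete automorphic $\SP_{2n}(\A_F)$-submodule of $\iota^\ast\pi_2$, observe that this submodule is nearly equivalent to a member of $\Pi_\psi$, and invoke Arthur's classification. The only difference is that you justify in detail (via translation by $g\in\GSp_{2n}(F_{v_0})$ and Clifford theory) the existence step that the paper simply asserts, namely that one can find a vector of $\pi_2$ generating an irreducible automorphic $\SP_{2n}(\A_F)$-module with prescribed component at $v_0$.
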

\begin{proof}
Let $\pi_{0, v} $ be an irreducible constituent of $\iota^\ast\pi_{2, v} $. There exists a vector $f\in\pi_2$ generating an irreducible $\SP_{2n} (\A_F)) $-module $\pi_0 \subset\mathcal A_{(2)} (\A_F)$ whose component at $v $
is $\pi_{0, v} $.  By construction, $\pi_0$ has Arthur parameter $\psi $, and so the result follows from \cite{arthur2013endoscopic}.
\end{proof}
\subsection{Theta lift from $\GSO(V_B)$ to $\GSp_6$}
\subsubsection{}\label{Subsubsection: big theta lift defined}
For the remainder of this section, fix a \emph{non-split} quaternion algebra $B $ over $F$, and let $\pi$ be a tempered automorphic representation of $PB ^\times $.
We consider the representation $\pi\boxtimes\mathbbm {1} $ of $\GSO(V_B) \simeq B^\times \times B^\times /\mathbb G_m$ and its theta lift $\Theta(\pi\boxtimes\mathbbm 1)$ to $\GSp_6(\A_F)$; this is well-defined because $V_B $ is anisotropic, and descends to $\pigs_6(\A_F)$ because the similitude theta lift preserves central characters. Although this is not strictly necessary for our argument, it follows from \cite{gan2014regularized} that $\Theta (\pi\boxtimes\mathbbm 1) \neq 0$ for any such $\pi$.
However, $\Theta(\pi\boxtimes \mathbbm{1})$ need not be irreducible. 
\begin{prop}\label{prop: L2ness}
The theta lift $\Theta (\pi\boxtimes\mathbbm{1}) $ lies in the $L^2$ subspace
 $\mathcal A_{(2)} (\pigs_6 (\A_F))$. 
\end{prop}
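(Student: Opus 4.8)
The statement to prove is that the theta lift $\Theta(\pi \boxtimes \mathbbm 1)$ from $\GSO(V_B)$ to $\GSp_6$ lands in the $L^2$ subspace $\mathcal A_{(2)}(\pigs_6(\A_F))$, when $B$ is nonsplit and $\pi$ is a tempered automorphic representation of $PB^\times$. The plan is to show directly that each theta lift $\theta_\xphi(f)$, for $f \in \pi \boxtimes \mathbbm 1$ and $\xphi$ a Schwartz function, is square-integrable on $[\pigs_6]$ by estimating its growth along a Siegel set. First I would recall that, since $V_B$ is anisotropic, all the integrals defining the theta lift converge absolutely (as already noted in \S\ref{sec:nontempered}), so $\theta_\xphi(f)$ is a genuine automorphic form with moderate growth; the only issue is the rate of decay. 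The standard technique (going back to work of Rallis, and used systematically in e.g.\ the book of Gan--Qiu--Takeda and in \cite{gan2014regularized}) is to bound $|\theta_\xphi(f)(g)|$ in terms of the cuspidal form $f$ together with the majorant of the theta kernel coming from the Weil representation, and then to exploit the fact that $V_B$ has no isotropic vectors so that the theta kernel $\sum_{x \in W_2 \otimes V_B} \omega(h,g)\xphi(x)$ decays rapidly once $g$ moves out to infinity in the symplectic direction.

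The key steps, in order, would be: (1) Fix $f = f_1 \otimes \mathbbm 1 \in \pi \boxtimes \mathbbm 1$ and a factorizable $\xphi$, and write out $\theta_\xphi(f)(g) = \int_{[\GSO(V_B)_1]} \theta(h_1 h_0, g; \xphi) f(h_1 h_0)\,\d h_1$ as in (\ref{eq: glopbal theta lift def}). Since $PB^\times$ is anisotropic over $F$, the quotient $[\GSO(V_B)]$ is compact, so the $h_1$-integral is over a compact domain and $f$ is bounded there; the growth in $g$ is therefore controlled entirely by $\sup_{h_1} |\theta(h_1 h_0, g; \xphi)|$. (2) Estimate the theta kernel: reduce to a Siegel set for $\GSp_6$, write $g = n a k$ in an Iwasawa decomposition, and observe that $\omega(h_1 h_0, g)\xphi$ is, up to bounded factors, a Gaussian-type Schwartz function dilated by the torus part $a$; because the lattice $W_2(F) \otimes V_B(F)$ contains no vectors of zero length (anisotropy of $V_B$), every nonzero term $x$ in the sum has $\langle x, x\rangle \neq 0$, which forces the sum to decay faster than any power of $\|a\|$ as $a \to \infty$ in the dominant chamber --- this is the Gaussian-tail / rapid-decay estimate. (3) Combine (1) and (2): $|\theta_\xphi(f)(g)|$ is rapidly decreasing on a Siegel set of $\pigs_6$, hence a fortiori square-integrable; since $\pigs_6(F)\backslash \pigs_6(\A_F)$ is covered by finitely many translates of such a Siegel set, $\theta_\xphi(f) \in \mathcal A_{(2)}(\pigs_6(\A_F))$. (4) Conclude that $\Theta(\pi \boxtimes \mathbbm 1)$, being spanned by such $\theta_\xphi(f)$, lies in the $L^2$ subspace. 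One could alternatively invoke the general criterion of Gan--Qiu--Takeda (``the theta lift of a cuspidal representation from an anisotropic orthogonal group to a symplectic group is in the discrete spectrum / $L^2$ whenever it is nonzero'') and cite it directly, but giving the growth estimate makes the argument self-contained.

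The main obstacle will be step (2): making the rapid-decay estimate for the theta kernel precise and uniform in the compact $h_1$-variable. The subtlety is that while anisotropy of $V_B$ kills the ``constant term'' contributions that would normally cause only polynomial (not rapid) decay, one must still check carefully that the Weil representation action of the unipotent radical $N$ of a Siegel parabolic of $\GSp_6$, together with the torus, produces the expected Gaussian in all the relevant coordinates --- in particular one needs that for $g$ deep in the positive chamber, the relevant quadratic form $\langle n(x), x\rangle$ together with the Gaussian majorant $\exp(-\pi \|a \cdot x\|^2)$ in the archimedean model genuinely forces super-polynomial decay, and that no ``boundary'' term survives because there is no isotropic $x$. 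This is standard but somewhat technical; I would either carry it out explicitly by reducing to the $\SL_2$-computation (as the paper does for related estimates, cf. the argument in the proof of Theorem \ref{thm: non-generic vanishing}) or defer the detailed bound to \cite{gan2014regularized}, \cite{kudla1994regularized}, where exactly this type of convergence/decay estimate is established. Everything else --- compactness of $[\GSO(V_B)]$, moderate growth of $\theta_\xphi(f)$, descent to $\pigs_6$ via central character --- is routine given the setup already in place in \S\ref{section: Similitude theta lifting} and \S\ref{sec:nontempered}.
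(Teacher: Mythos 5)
The gap is in your step (2): the claimed rapid decay of $\theta_\xphi(f)$ on a Siegel set of $\GSp_6$ is false, and no amount of care with the Gaussian estimate will rescue it, because rapid decay of an automorphic form is equivalent to cuspidality and $\Theta(\pi\boxtimes\mathbbm 1)$ is not cuspidal. Anisotropy of $V_B$ does give you what you use in step (1) (absolute convergence, compactness of the domain of integration), and it does show that no \emph{nonzero} $x\in W_2(F)\otimes V_B(F)$ has identically vanishing Gram matrix; but the terms with $x$ of non-maximal rank --- the term $x=0$ and the rank $1$ and $2$ vectors, whose Gram matrices are nonzero but degenerate --- are not damped by the Gaussian in the torus directions transverse to their support, and they survive the integration against $f$. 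Concretely, by Rallis's tower property the constant term of $\Theta(\pi\boxtimes\mathbbm 1)$ along the maximal parabolic $P=MN$ with $M=\GSp_4\times\GL_1$ is $|\cdot|^2\,\Theta'(\pi\boxtimes\mathbbm 1)$, where $\Theta'$ denotes the theta lift one step down the Witt tower, to $\GSp_4$. Your argument would therefore force $\Theta'(\pi\boxtimes\mathbbm 1)=0$, which you neither prove nor is true here: this lower lift is the nonzero Saito--Kurokawa-type lift, consistent with the Arthur parameter $\operatorname{JL}(\pi)[2]\oplus\mathbbm 1[3]$ of Proposition \ref{prop: Arthur parrameter for the lift}. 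So the form decays only polynomially in the direction of $P$, and the whole question is whether that polynomial rate beats $\delta_P^{1/2}$.

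That quantitative question is exactly what the paper's proof answers. One invokes the Moeglin--Waldspurger criterion: square-integrability holds if and only if, for every standard parabolic $P=MN$, the exponents of $Z(M)$ on the cuspidal part of the normalized Jacquet module $\Theta(\pi\boxtimes\mathbbm 1)_N\otimes\delta_P^{-1/2}$ lie in the interior of the negative obtuse cone. Rallis's computation of the Jacquet modules (usable precisely because $\pi\boxtimes\mathbbm 1$ is not itself a theta lift from $\GL_2$) gives $|\cdot|^2\Theta'(\pi\boxtimes\mathbbm 1)$ for $M=\GSp_4\times\GL_1$ and $0$ for all other parabolics; since $\delta_P=|\cdot|^6$, the normalized exponent is $|\cdot|^{-1}$, which is strictly negative, and the criterion is met. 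In short: replace ``rapidly decreasing, hence $L^2$'' by ``constant terms computed exactly via the tower property, with exponent $2<3=\tfrac{1}{2}\cdot 6$.'' Your fallback of citing a blanket ``anisotropic $\Rightarrow L^2$'' principle needs the same caveat: square-integrability of such lifts depends on the inequality between $\tfrac{1}{2}\dim V_B$ and the relevant half-sum of roots, which is precisely this exponent comparison.
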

By the usual criterion for square-integrability \cite[Lemma I.4.11]{moeglin1995spectral}, we must check that, for each standard parabolic subgroup $P = MN $ of $\GSp_6, $ the characters of $Z (M) $ appearing in the cuspidal component of the normalized Jacquet module $$\Theta (\pi\boxtimes\mathbbm 1)_N\otimes \delta_P ^ {-1/2} $$ all lie in the interior of the cone spanned by the \emph{negatives} of the characters appearing in the action of $Z(M)$ on $N$.
Since $\pi\boxtimes\mathbbm 1 $
is not a theta lift from $\GSp_2 =\GL_2, $
\cite[Theorem I.1.1]{rallis1984howe}  implies that
the Jacquet modules are given by:
\begin{equation}
   \Theta (\pi\boxtimes\mathbbm 1)_N =
    \begin{cases}
|\cdot|^2\Theta'(\pi\boxtimes\mathbbm 1),& M =\GSp_4\times\GL_1 \\
0, &\text{ otherwise}.
    \end{cases}
\end{equation}
Here $\Theta'(\pi\boxtimes\mathbbm 1) $
denotes the theta lift to $\GSp_4, $ and $|\cdot| $
is the norm character of $\GL_1. $ On the other hand, the action of $Z(M)$ on $N$ is through positive powers of $|\cdot | $, and $\delta_P = |\cdot | ^ 6 $; thus the criterion for square-integrability is satisfied.
\begin{prop}\label{prop: Arthur parrameter for the lift}
Suppose $\Pi $ is an irreducible constituent of $\Theta (\pi\boxtimes\mathbbm 1)$. 
\begin{enumerate}
 \item For all non-archimedean $v $, if $\pi_v $ is unramified with local
Langlands parameter $\xphi_v$, then $\Pi_v $
is unramified  with an Arthur parameter
$\psi_v $ such that the composite
$$W_{F_v}\xrightarrow{\psi_v} \widehat{\pigs_6} =\Spin_7\xrightarrow{r_{\text{spin}}}\GL_8$$
is given by $$\xphi_v\otimes S_2\oplus S_3\oplus S_1,$$ 
where $S_i$ is the $i$-dimensional irreducible representation of $\SL_2$.
\item Any irreducible constituent of $\iota^\ast\Pi $ has global Arthur parameter
$$\operatorname{JL}(\pi)[2]\oplus \mathbbm{1}[3].$$
\end{enumerate}
\end{prop}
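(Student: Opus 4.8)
The plan is to deduce both parts from the explicit spherical theta correspondence of Proposition \ref{prop:spherical}, applied with $V$ the split four-dimensional orthogonal space (so $m=2$) and $W=W_6$ (so $n=3$); the hypothesis $m\le n$ holds. Fix a non-archimedean $v$ at which $\pi_v$ is unramified; then $B$ splits at $v$, $V_B\otimes F_v\cong V$, and $\pi_v\boxtimes\mathbbm 1$ is a spherical representation of $\GSO(V)(F_v)\cong(\GL_2\times\GL_2)(F_v)/F_v^\times$. Since $\pi$ is tempered, $\pi_v$ is not the trivial representation, so $\pi_v\boxtimes\mathbbm 1\not\cong\mathbbm 1\boxtimes\pi_v$, whence $\Ind$ of $\pi_v\boxtimes\mathbbm 1$ from $\GSO(V)(F_v)$ to the full similitude group $\operatorname{GO}(V)(F_v)$ is irreducible; Proposition \ref{prop:spherical} then applies and shows $\Theta_{V,W_6}(\pi_v\boxtimes\mathbbm 1)$ is a single irreducible unramified representation $\sigma_\mu$ of $\GSp_6(F_v)$ with an explicit Satake parameter $\mu$. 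Since $\Pi$ is discrete by Proposition \ref{prop: L2ness}, Proposition \ref{prop: local global compatibility} shows $\Pi_v$ is a constituent of $\Theta(\pi_v\boxtimes\mathbbm 1)$, so $\Pi_v=\sigma_\mu$; in particular $\Pi_v$ is unramified.

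The remaining content of (1) is a bookkeeping computation identifying $\mu$ with the asserted Arthur parameter. One expresses the Satake parameter of $\pi_v\boxtimes\mathbbm 1$ on $\GSO(V)(F_v)$ in terms of $\xphi_v$ and the Satake parameter $\{q_v^{1/2},q_v^{-1/2}\}$ of the trivial representation of $\GL_2(F_v)$, using $\boldsymbol p_Z$ to match the torus of $\GSO(V)$ with those of the two $\GL_2$-factors and tracking the similitude coordinate; substitutes into the formula of Proposition \ref{prop:spherical} with $(m,n)=(2,3)$; and reads off the resulting semisimple conjugacy class in $\widehat{\pigs_6}=\Spin_7(\C)$. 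To recognize it, observe that there is an Arthur parameter $\psi_v$ sending $W_{F_v}$ into one $\SL_2$-factor of $\Spin_4$ via $\xphi_v$ and the Arthur $\SL_2(\C)$ diagonally into the remaining $\SL_2$-factor of $\Spin_4$ and into the factor $\Spin_3$, inside $\Spin_4\times_{\mu_2}\Spin_3\subset\Spin_7$; by the branching $\Delta_7|_{\Spin_4\times_{\mu_2}\Spin_3}\cong(\Delta_4^+\oplus\Delta_4^-)\otimes\Delta_3$ of the eight-dimensional spin representation and the Clebsch--Gordan identity $S_2\otimes S_2\cong S_3\oplus S_1$, one gets $r_{\text{spin}}\circ\psi_v\cong\xphi_v\otimes S_2\oplus S_3\oplus S_1$, and evaluating at $(\Frob_v,\operatorname{diag}(q_v^{1/2},q_v^{-1/2}))$ matches this with the class just computed. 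This proves (1).

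For (2): $\Pi$ is discrete (Proposition \ref{prop: L2ness}), so $\iota^\ast\Pi$ decomposes as a finite direct sum of irreducible discrete automorphic representations of $\SP_6(\A_F)$, each carrying a global Arthur parameter by \cite{arthur2013endoscopic}. Composing $\psi_v$ with the isogeny $\Spin_7\to\SO_7=\widehat{\SP_6}$ — equivalently, using the seven-dimensional standard representation $\xphi_v\otimes S_2\oplus S_3$ in place of $r_{\text{spin}}$, the summand $S_1$ dropping out as $2\cdot 2+3=7$ — one sees that at each $v$ with $\pi_v$ unramified the parameter of every such constituent localizes to the local component of $\operatorname{JL}(\pi)[2]\oplus\mathbbm 1[3]$, where $\operatorname{JL}(\pi)$ is the Jacquet--Langlands transfer of $\pi$ to $\PGL_2(\A_F)$ (cuspidal, and symplectic with trivial central character since $\pi$ is); the pairs $(\operatorname{JL}(\pi),2),(\mathbbm 1,3)$ meet Arthur's constraints ($2\cdot 2+1\cdot 3=7$; $\operatorname{JL}(\pi)$ symplectic as $2$ is even; $\mathbbm 1$ orthogonal as $3$ is odd; the pairs distinct). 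Since a discrete global Arthur parameter is determined by its unramified localizations (strong multiplicity one for the cuspidal building blocks), each constituent has global parameter $\operatorname{JL}(\pi)[2]\oplus\mathbbm 1[3]$, which is (2); alternatively one may propagate from a single constituent to all of $\iota^\ast\Pi$ via Proposition \ref{Prop: very weak endoscopic classification}, applied with $\pi_1=\pi_2=\Pi$.

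The step I expect to be the main obstacle is the normalization bookkeeping of the second paragraph: Proposition \ref{prop:spherical} is written in torus coordinates adapted to the split orthogonal and symplectic groups and carries several half-integral powers of $q_v$ (notably the twist by $q^{-(m^2-m)/4-(n^2+n)/4+nm/2}$ in $\mu$), so one must reconcile these exactly with the two $\GL_2$ Satake parameters and with Arthur's unitary normalization, so that the output conjugacy class lies in the correct copy of $\Spin_7(\C)$ and agrees with $\psi_v(\Frob_v,\operatorname{diag}(q_v^{1/2},q_v^{-1/2}))$ on the nose. No step is conceptually difficult, but sign and square-root-of-$q_v$ errors are easy to commit here.
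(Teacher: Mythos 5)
Your proposal is correct and follows essentially the same route as the paper: Propositions \ref{prop: L2ness} and \ref{prop: local global compatibility} reduce to the local lift, Proposition \ref{prop:spherical} with $(m,n)=(2,3)$ (applicable because temperedness of $\pi_v$ makes the induction to $\operatorname{GO}(V)(F_v)$ irreducible) pins down $\Pi_v=\sigma_\mu$, and part (2) follows by pushing $\psi_v$ through $\Spin_7\to\SO_7$ and invoking Arthur's classification for $\SP_6$. The only cosmetic difference is that you identify the parameter via the branching of $r_{\text{spin}}$ to $\Spin_4\times_{\mu_2}\Spin_3$ and Clebsch--Gordan, whereas the paper simply lists the eight spin Frobenius eigenvalues $q^{\pm 1/2}\alpha_v^{\pm 1}$, $q$, $q^{-1}$, $1$, $1$ and matches them against (\ref{EQ: unramified Arthur to Langlands}).
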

\begin{proof}
For (1), Propositions \ref{prop: L2ness} and \ref{prop: local global compatibility} imply that $\Pi_v$ is an irreducible constituent of $\Theta_v(\pi_v\boxtimes \mathbbm 1)$ for all $v$. Since $\pi_v$ is tempered at all unramified $v$, $\Ind_{\GSO(V)(F_v)}^{\operatorname{GO}(V)(F_v)}\pi_v\boxtimes \mathbbm 1$ is irreducible. Adopting the notation of (\ref{subsubsection: notation for unramified principal series}) with $G = \GSO(V)$ and $H = \GSp_6$, $\pi_v\boxtimes \mathbbm 1$ is the spherical representation $\pi_\chi $
for the unramified character of $T_G (F_v)$
defined by $(-\log_q \alpha_v - 1/2, 1/2 -\log_q\alpha_v, \log_q\alpha_v)$, where $\set{\alpha_v, \alpha_v^{-1}}$ are the Sataka parameters of $\pi_v$. Then Proposition \ref{prop:spherical} implies that 
 $\Theta_v(\pi_v\boxtimes \mathbbm 1)$ is the irreducible representation $\sigma_\mu$ with $\mu(\alpha_v) = (-\log_q\alpha_v -1/2, 1/2 -\log_q\alpha_v, -1, 1/2+\log_q\alpha_v)$. Recall that any $\mu $
 determines an unramified  Langlands parameter for $H$: the characters $x_i, \lambda\in  \Hom_{F_v}(T_G, \mathbb G_m)$ correspond to cocharacters in $\Hom_{\C}(\mathbb G_m, \widehat T_G)$, and any unramified character $\mu = (\beta_1, \cdots, \beta_3, t)$ may be viewed as the element $$\lambda(q^{-t})\prod_i x_i(q^{-\beta_i}) \in \widehat T_H(\C).$$
 Then the Langlands parameter of $\sigma_\mu$ is the conjugacy class of the unramified map
 $$\xphi_{\mu}: W_{F_v} \to \widehat T_H(\C) \hookrightarrow \widehat H(\C)$$ such that $\xphi(\Frob_v)$ is the element corresponding to $\mu$. Now, the eigenvalues of $r_{\text{spin}} \circ \xphi_{\mu}$ on $\C^8$ are given by $q^{-t}\prod_{i \in S}q^{-\beta_i}$ as $S$ ranges over subsets of $\set{1,2,3}$. Hence in our case, the eight Frobenius eigenvalues are (with multiplicity) $q ^ {\pm 1/2} \alpha_v^{\pm 1}$, $q,$ $q^{-1}$, 1, and 1.
These Frobenius eigenvalues correspond to the unramified Arthur parameter $\xphi_v\otimes S_2\oplus S_3\oplus S_1$ according to (\ref{EQ: unramified Arthur to Langlands}).

 The second claim follows from the endoscopic classification for $\SP_6$, since the local Arthur parameter for  any irreducible constituent of $\iota^\ast\Pi_v$ is the composition of $\psi_v$ with the projection $\Spin_7\to \SO_7$ (cf. \cite{rallis1982langlands}).
\end{proof}
\subsection{Archimedean Arthur packet and cohomology}
\subsubsection{}
In order to construct Hodge classes from $\Theta  (\pi\boxtimes\mathbbm 1) $, it is essential to understand the structure of the cohomological local Arthur packet $\Pi_{\psi_v} $, where $\psi $ is a global parameter for $\SP_6$ of the kind  in Proposition \ref{prop: Arthur parrameter for the lift}(2) and $v $ is archimedean. We now recall the construction of $\Pi_{\psi_v} $ due to Adams and Johnson \cite{adams1987endoscopic}. (This construction agrees with \cite{arthur2013endoscopic} by \cite{adams2021equivalent,arancibia2019characteristic}.)
\subsubsection{}
We first fix an anisotropic maximal torus $T $ of $G =\SP_6 $. However, we depart from our usual coordinates for $G $, and choose a \emph{complex} basis of the $2n $-dimensional symplectic space with respect to which
\begin{equation}
    \mathfrak {g}_{\C} =\set{\begin{pmatrix}A & B\\C & - A ^ T\end {pmatrix}\bigg| A, B, C\in\mathfrak {gl}_{3,\C},\; B = B ^ T,\; C = C ^ T},
\end{equation}
 \begin{equation}
    T_{\C} =\set {\begin{pmatrix}\lambda_1 & & & & &\\&\lambda_2 & & & &\\& &\lambda_3 & & &\\& & &\lambda_1 ^ {-1} & &\\& & &&\lambda_{2} ^ {-1} &\\& & & & &\lambda_3 ^ {-1}\end {pmatrix}}\simeq (\C ^\times) ^ 3\subset G_{\C},
\end{equation}
and complex conjugation acts by
\begin{equation}
    \begin{pmatrix}A & B\\C & - A ^ T\end {pmatrix}\mapsto \begin{pmatrix}-\overline A^T & \overline C\\\overline B & \overline A.\end {pmatrix}
\end{equation}
We choose the Borel subgroup $B $ of $G_{\C} $
given by
\begin{equation}\label{EQ: Borel subgroup convention}
    B =\set{\begin {pmatrix}A & B\\0 & A ^ {- T}\end{pmatrix}\in G_{\C}\bigg| A\text{ is upper triangular}}.
\end{equation}
Let $\theta $ be the Cartan involution of $G $ that acts on $\mathfrak g_{\C} $ by
\begin{equation}
    \begin{pmatrix}
    A & B\\C & - A ^ T
    \end{pmatrix}\mapsto\begin{pmatrix}
    A & - B\\- C & - A ^ T
    \end{pmatrix}.
\end{equation}
The corresponding maximal compact subgroup of $G $
is $K = U (3) $.
The absolute Weyl group of $G $ is
\begin{equation}
    W (G, T) = \set{\pm 1} ^ 3\rtimes S_3,
\end{equation}
where $S_3 $ acts on $T $ by permuting $(\lambda_1,\lambda_2,\lambda_3) $
and $\set{\pm 1} ^ 3 $ acts by
$$(e_1, e_2, e_3)\cdot (\lambda_1,\lambda_2,\lambda_3) = (\lambda_1 ^ {e_1},\lambda_2 ^ {e_2},\lambda_3 ^ {e_3}). $$
The relative Weyl group $W_\R (G, T) $ of $G$ may be identified with $S_3. $
\subsubsection{}
Let $L\subset G$ be the unique  $\R$-subgroup isomorphic to $U(2)\times SU(1,1) $  such that
\begin{equation}
    L_\C =\set{\left(\begin{array}{ccc|ccc}
        &         &      & & &      \\
      \multicolumn{2}{c}{\smash{\raisebox{.5\normalbaselineskip}{$A$}}} &&& &\\
                 & & a & && b \\
      \hline \\[-\normalbaselineskip]
      & & &&& \\
      &&&  \multicolumn{2}{c}{\smash{\raisebox{.5\normalbaselineskip}{$A^{-T}$}}}&\\
      && c &&& d
    \end{array}\right)\Bigg| A\in\GL_2,\;\begin{pmatrix}
    a & b\\c & d
    \end{pmatrix}\in\SL_2}\subset G_{\C}.
\end{equation}
Then $L_\C $ is the Levi factor of a $\theta $-stable standard parabolic $Q\subset G_\C, $
which does not descend to $\R $. The absolute Weyl group $W (L, T) $
is given by
\begin{equation}
    W (L, T) =\set{(1, 1,\pm 1)}\times\set{(1), (1 2)}\subset W (G, T).
\end{equation}
Note that the local Arthur parameter factors as
\begin{equation}
    \psi_v: W_{F_v}\times\SL_2 (\C)\to {} ^ LL\to {} ^ LG.
\end{equation}
If $\operatorname{JL}(\pi_v )$
is the discrete series representation of $\GL_2 (\R) $
of weight $2k\geq 6, $
then, in the notation of \cite{adams1987endoscopic}, \begin{equation}\Pi_{\psi_v} =\set {A_{\mathfrak q_w} (w^{-1}\cdot (k-3,k-3,0))\,:\, w\in S}, \end {equation}
where $(k-3,k-3,0) $
is viewed as a character of $T $,\begin{equation}
    S = W (L, T)\backslash W (G, T)/W_\R (G, T),
\end{equation}
and, for $w\in S $, $Q_w $ is the $\theta $-stable parabolic subgroup $w ^ {-1} Qw $
of $G_\C $.

\subsubsection{}
Choose representatives
\begin{equation}
    w_0 = 1,\;\; w_1 = (-1, 1, 1),\;\; w_2 = (-1, -1, 1)
\end{equation}
of $S $ in $W (G, T) $, and label the elements of $\Pi_{\psi_v} $
as
\begin{equation}
    \Pi_{\psi_v, i} = A_{\mathfrak q_{w_i}} (w_i^{-1}\cdot (k-3,k-3,0)),\;\; 0\leq i\leq 2.
\end{equation}
  \begin{prop}
\label{prop: local cohomology calculation and Arthur pocket}
For any complex, irreducible algebraic representation $V $
of $G $,
$$H ^\ast (\mathfrak g, K;\Pi_{\psi_v, i}\otimes V) = 0 $$
unless $V$
is the highest weight representation $V_{(k-3,k-3,0)} .$ 
If $V = V_{(k-3,k-3,0)} $,
then the dimensions of the nonzero cohomology groups of the representations $\Pi_{\psi_v, i} $ are computed as follows.
$$
     \dim_\C H ^ {5,0} (\mathfrak g, K;\Pi_{\psi_v, 0}\otimes V) = \dim_\C H ^ {6,1} (\mathfrak g, K;\Pi_{\psi_v, 0}\otimes V) =1$$
     \begin{align*}
     \dim_\C H ^ {2,2} (\mathfrak g, K;\Pi_{\psi_v, 1}\otimes V)& = \dim_\C H ^ {4,4} (\mathfrak g, K;\Pi_{\psi_v, 1}\otimes V) =1\\
     \dim_\C H ^ {3,3} (\mathfrak g, K;\Pi_{\psi_v, 1}\otimes V)&= 2\\ \end{align*}
$$     \dim_\C H ^ {0,5} (\mathfrak g, K;\Pi_{\psi_v, 2}\otimes V)= \dim_\C H ^ {1,6} (\mathfrak g, K;\Pi_{\psi_v, 2}\otimes V) =1$$
\end{prop}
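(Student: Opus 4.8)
The plan is to compute these $(\mathfrak g, K)$-cohomology groups directly from the Vogan--Zuckerman description of the Adams--Johnson modules $\Pi_{\psi_v,i} = A_{\mathfrak q_{w_i}}(w_i^{-1}\cdot(k-3,k-3,0))$, in three steps: vanishing outside one coefficient system, the general cohomology formula for cohomologically induced modules, and a combinatorial evaluation of the resulting numerical invariants.

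\emph{Step 1 (vanishing).} For any irreducible $(\mathfrak g, K)$-module $\Pi$ and finite-dimensional $V$, the space $H^\ast(\mathfrak g, K; \Pi\otimes V)$ vanishes unless $\Pi$ and $V^\vee$ have the same infinitesimal character; this is the standard consequence of the behavior of the Casimir operator (Wigner's lemma). All three $\Pi_{\psi_v, i}$ lie in the same local Arthur packet and so share the infinitesimal character attached to $\psi_v$. Concretely, $A_{\mathfrak q}(\lambda)$ has infinitesimal character $\lambda + \rho$, and the $W(\SP_6)$-conjugate parameters $w_i^{-1}\cdot(k-3,k-3,0)$, taken with the matching $\rho$-shifts, all lie in the $W$-orbit of $(k-3,k-3,0)+\rho = (k,k-1,1)$, which is the infinitesimal character of $V_{(k-3,k-3,0)}$. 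Since $\SP_6$ is semisimple, every finite-dimensional irreducible representation is self-dual, so $H^\ast(\mathfrak g, K; \Pi_{\psi_v, i}\otimes V) = 0$ unless $V \cong V_{(k-3,k-3,0)}$.

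\emph{Step 2 (the cohomology formula).} For $V = V_{(k-3,k-3,0)}$, I invoke the Vogan--Zuckerman computation of the cohomology of $A_{\mathfrak q}(\lambda)$ (cf.\ \cite{vogan1984unitary}): writing $\mathfrak q_{w_i} = \mathfrak l_{w_i}\oplus \mathfrak u_{w_i}$ and $R_{i,\pm} = \dim_\C(\mathfrak u_{w_i}\cap \mathfrak p_{\pm})$,
\[
H^{p,q}(\mathfrak g, K;\Pi_{\psi_v, i}\otimes V) \cong \Hom_{L_{w_i}\cap K}\!\big(\wedge^{p - R_{i,+}}(\mathfrak l_{w_i}\cap\mathfrak p_+)\otimes\wedge^{q-R_{i,-}}(\mathfrak l_{w_i}\cap\mathfrak p_-),\, \C\big).
\]
Here one checks that the parameter $w_i^{-1}\cdot(k-3,k-3,0)$ lies in the weakly good range for $\mathfrak q_{w_i}$, which is automatic when $2k > 6$; when $2k = 6$ the parameter is $0$, so $A_{\mathfrak q}(0)$ is in the weakly fair range, where it remains unitary and nonzero and the displayed formula still holds. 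This reduces the problem to the combinatorics on the right-hand side.

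\emph{Step 3 (the combinatorics).} Using the explicit coordinates for $\mathfrak g_\C$, $T_\C$, $\theta$, $B$, and $\mathfrak l_\C$ in the text, one has $\Delta(\mathfrak l) = \{\pm(e_1 - e_2), \pm 2e_3\}$, with $\mathfrak k$ spanned by the $\mathfrak{gl}_3$-roots $\pm(e_i - e_j)$ and $\mathfrak p_+$ by the rootspaces of $\{e_i + e_j,\ 2e_i\}$. Choosing $\mathfrak u$ via a central cocharacter of $\mathfrak l$ and applying the reflections $w_0 = 1$, $w_1 = (-1,1,1)$, $w_2 = (-1,-1,1)\in W(\SP_6, T)$ to the roots identifies the Levi real forms (the reflections turn compact factors of $U(2)\times SU(1,1)$ non-compact, so $L_{w_1}\cap K$ is just the torus $T$ while $L_{w_2}(\mathbb R)\cong L(\mathbb R) = U(2)\times SU(1,1)$) and yields $(R_{0,+}, R_{0,-}) = (5,0)$, $(R_{1,+}, R_{1,-}) = (2,2)$, $(R_{2,+}, R_{2,-}) = (0,5)$. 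Evaluating the $\Hom$-space as the $U(2)$-invariant, $T$-weight-zero part of $\wedge^\bullet(\mathfrak l_{w_i}\cap\mathfrak p_+)\otimes\wedge^\bullet(\mathfrak l_{w_i}\cap\mathfrak p_-)$: for $i = 0, 2$ the spaces $\mathfrak l_{w_i}\cap\mathfrak p_\pm$ are the one-dimensional $SU(1,1)$-pieces of $U(1)$-weight $\pm 2$, so invariants occur only in bidegrees $(0,0)$ and $(1,1)$, giving $H^{5,0}, H^{6,1}$ and $H^{0,5}, H^{1,6}$ respectively; for $i = 1$ the spaces are two-dimensional, the invariants have dimensions $1, 2, 1$ in bidegrees $(0,0), (1,1), (2,2)$ (the $2$ coming from the two ways of pairing a $\mathfrak p_+$-line with its dual $\mathfrak p_-$-line), giving $H^{2,2}, H^{3,3}\text{ (dimension }2\text{)}, H^{4,4}$ after the shift by $(2,2)$. (As a consistency check, the packet is stable under Poincar\'e duality, which interchanges $\Pi_{\psi_v,0}$ with $\Pi_{\psi_v,2}$ and fixes $\Pi_{\psi_v,1}$.) The main obstacle is exactly this last step: tracking, under the Weyl-group conjugation, which real form of $L$ occurs, which roots of $\mathfrak u_{w_i}$ fall into $\mathfrak p_+$ versus $\mathfrak p_-$, and how $L_{w_i}\cap K$ acts on $\mathfrak l_{w_i}\cap\mathfrak p_\pm$ — this sign bookkeeping, together with confirming that the Vogan--Zuckerman formula applies verbatim in the weakly fair trivial-coefficient case $2k = 6$, is where the content lies; the rest is formal.
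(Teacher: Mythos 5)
Your proposal is correct and follows essentially the same route as the paper: the proof there likewise consists of citing \cite[Proposition 6.19]{vogan1984unitary} and computing the nilradicals $\mathfrak u_{w_i}$ and the real forms of the Levis $L_{w_i}$ explicitly, which yields exactly your values $(R_{i,+},R_{i,-})=(5,0),(2,2),(0,5)$ and the invariant-theory count on $\wedge^\bullet(\mathfrak l_{w_i}\cap\mathfrak p_\pm)$. Your extra care about the $k=3$ case is harmless but unnecessary, since $\lambda=(k-3,k-3,0)$ satisfies the Vogan--Zuckerman admissibility condition $\langle\lambda,\alpha\rangle\geq 0$ for $\alpha\in\Delta(\mathfrak u)$ for all $k\geq 3$.
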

\begin{proof}
Write $Q_{w_i} = L_{i}U_{i}. $
The proposition follows
 from \cite[Proposition 6.19]{vogan1984unitary}, together with the calculations:
 \begin{align*}
     \mathfrak u_0=\left(\begin{array}{ccc|ccc}
        &         &  \ast    & \ast&\ast &\ast      \\
      \multicolumn{2}{c}{\smash{\raisebox{.5\normalbaselineskip}{$I$}}} &\ast&\ast&\ast &\ast\\
                 0&0 & 1 &\ast &\ast & 0\\
      \hline \\[-\normalbaselineskip]
      & & &&& 0\\
      &&&  \multicolumn{2}{c}{\smash{\raisebox{.5\normalbaselineskip}{$I$}}}&0\\
      &&  &\ast&\ast& 1
    \end{array}\right),\;\;\; & W_\R (L_0, T) =\set{1, (12)},\\
    \mathfrak u_1  =\left (\begin {array} {ccc | ccc} 1 & & & & &\\\ast & 1 &\ast & &\ast &\ast\\
    \ast & & 1 & &\ast &\\
          \hline \\[-\normalbaselineskip]
\ast & &\ast & &\ast &\\& & & & 1 &\\\ast & && &\ast & 1\end {array}\right), \;\;\;&W_\R (L_1, T) =\set {1},\\
\mathfrak u_2 =\left (\begin {array} {ccc | ccc}
& &0 & & &\\
  \multicolumn{2}{c}{\smash{\raisebox{.5\normalbaselineskip}{$I$}}} & 0& & &\\
  \ast &\ast & 1& & &\\
  \hline \\[-\normalbaselineskip]
  \ast &\ast &\ast & & &\ast\\
  \ast &\ast &\ast &  \multicolumn{2}{c}{\smash{\raisebox{.5\normalbaselineskip}{$I$}}} &\ast\\
  \ast &\ast & 0 &0 &0 & 1\end{array}\right),\;\;\;&W_\R (L_2, T) =\set {1, (1 2)}.
 \end{align*}
\end{proof}
\subsubsection{}
If $V = V_{(k-3,k-3,0)} $, then $V $ has a $K $-stable Hodge decomposition of weight $m =2k -6 $ defined by\begin{equation}
\label{equation: decomposition of V}
    V^ {p, q} =\set{v\in V: z\cdot v = z ^ {n- 2p}v,\;\forall z\in Z_K\simeq U (1)},\; p + q =m.
\end{equation}
The decomposition (\ref{equation: decomposition of V})
induces
a refined decomposition of the Lie algebra
 cohomology, cf. \cite{zucker1981locally}: $$H ^ {p, q} (\mathfrak g, K;\Pi_{\psi_v, i}\otimes V) =\bigoplus_{\substack{r +s = m\\r, s\geq 0}} H ^ {(p, q); (r, s)} (\mathfrak g, K;\Pi_{\psi_v, i}\otimes V). $$
 For later use, we now calculate this decomposition.
 \begin {prop}\label{crop: Hodge structure on why algebra cohomology}
 
 Let $V = V_{(k -3, k -3, 0)} $. The dimensions of the nonzero refined components of the cohomology groups of the representations $\Pi_{\psi_v, i} $ are computed as follows.
 \begin{align*}
     \dim_\C H ^ {(5, 0); (2k -6, 0)} (\mathfrak g, K;\Pi_{\psi_v, 0}\otimes V) &=\dim_\C H ^ {(6, 1); (2k -6, 0)} (\mathfrak g, K;\Pi_{\psi_v, 0}\otimes V=1
 \end{align*}
 \begin {align*}
 \dim_\C H ^ {(2, 2); (k -3, k -3)} (\mathfrak g, K;\Pi_{\psi_v, 1}\otimes V) &=\dim_\C H ^ {(4, 4); (k -3, k -3)} (\mathfrak g, K;\Pi_{\psi_v, 1}\otimes V) = 1\\
 \dim_\C H ^ {(3, 3); (k -3, k -3)} (\mathfrak g, K;\Pi_{\psi_v, 1}\otimes V) & = 2
 \end {align*}
 \begin {align*}
 \dim_\C H ^ {(0, 5); (0, 2k -6)} (\mathfrak g, K;\Pi_{\psi_v, 2}\otimes V) & =\dim_\C H^ {(1, 6); (0, 2k -6)} (\mathfrak g, K;\Pi_{\psi_v, 2}\otimes V) = 1
 \end {align*}
 \end{prop}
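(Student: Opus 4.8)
The plan is to refine the computation of Proposition \ref{prop: local cohomology calculation and Arthur pocket} by tracking the action of the center $Z_K\simeq U(1)$ of $K=U(3)$, using the Adams--Johnson realization $\Pi_{\psi_v,i}=A_{\mathfrak q_{w_i}}(w_i^{-1}\cdot(k-3,k-3,0))$ of \cite{adams1987endoscopic} together with the Hodge-theoretic interpretation of $(\mathfrak g,K)$-cohomology of \cite{zucker1981locally}. First I would recall, following \cite{vogan1984unitary}, that since $\Pi_{\psi_v,i}$ and $V=V_{(k-3,k-3,0)}$ share an infinitesimal character the Lie algebra cohomology differential vanishes, so that $H^{p,q}(\mathfrak g,K;\Pi_{\psi_v,i}\otimes V)$ is computed by the $K$-invariants in $\wedge^p\mathfrak p_+^\ast\otimes\wedge^q\mathfrak p_-^\ast\otimes\Pi_{\psi_v,i}^{\mathrm{sm}}\otimes V$; moreover, in the Vogan--Zuckerman description these classes all have $\Pi_{\psi_v,i}$-component in the \emph{same} bottom-layer $K$-type $\mu(\mathfrak q_{w_i},\lambda)$, which occurs with multiplicity one, and have $\wedge^\bullet\mathfrak p^\ast$-component of the shape $\xi_i^+\wedge\xi_i^-\wedge\eta$ with $\xi_i^\pm$ spanning $\wedge^{\mathrm{top}}(\mathfrak u_i\cap\mathfrak p_\pm)^\ast$ and $\eta$ running over a homogeneous basis of $H^\bullet(\mathfrak l_i,L_i\cap K;\mathbb C)$. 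The refined grading of \cite{zucker1981locally} records which summand $V^{r,s}$ of the decomposition \eqref{equation: decomposition of V} contains the $V$-component of such a class.

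The key point is that $Z_K$ acts on $\mathfrak p_+$ (resp.\ $\mathfrak p_-$) through the \emph{single} character $z\mapsto z^2$ (resp.\ $z\mapsto z^{-2}$), and on $V^{r,s}$ through $z\mapsto z^{s-r}$. Since a cohomology class is $K$-invariant, hence $Z_K$-invariant, the $Z_K$-weights of its three tensor factors cancel. The $\wedge^\bullet\mathfrak p^\ast$-factor in bidegree $(p,q)$ has $Z_K$-weight $z^{2(q-p)}$, and one reads off from Proposition \ref{prop: local cohomology calculation and Arthur pocket} that all bidegrees occurring in the cohomology of a given $\Pi_{\psi_v,i}$ have the same value of $q-p$ (namely $-5$, $0$, $5$ for $i=0,1,2$); since the $\Pi_{\psi_v,i}$-factor moreover stays in a fixed $K$-type, the $V$-factor $Z_K$-weight, and hence $(r,s)$, is constant for each $i$. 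It therefore suffices to pin down $(r,s)$ once per packet member. For $i=0$ the explicit $\mathfrak u_0$ in the proof of Proposition \ref{prop: local cohomology calculation and Arthur pocket} has $\mathfrak u_0\cap\mathfrak p\subseteq\mathfrak p_+$ of dimension $5$, so $\xi_0^-=1$ and the bottom class lies in $\wedge^5\mathfrak p_+^\ast$; feeding the corresponding $Z_K$-weight and the central character of $\mu(\mathfrak q_{w_0},\lambda)=(k-3,k-3,0)+2\rho(\mathfrak u_0\cap\mathfrak p)$ into the cancellation identity forces $(r,s)=(2k-6,0)$. Applying the real structure on $\mathfrak g$, which interchanges $\mathfrak p_+$ with $\mathfrak p_-$ and, on packets, $\Pi_{\psi_v,0}$ with $\Pi_{\psi_v,2}$ (as already visible from the conjugate bidegrees), gives $(r,s)=(0,2k-6)$ for $i=2$; and since $\Pi_{\psi_v,1}$ has cohomology in the self-conjugate bidegrees $(2,2),(3,3),(4,4)$, its refined type satisfies $r=s$, i.e.\ $(r,s)=(k-3,k-3)$ (equivalently, $\mathfrak u_1$ meets $\mathfrak p_+$ and $\mathfrak p_-$ in equal dimensions, which yields the same conclusion directly).

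Finally, combining the fact that for each $i$ the refined cohomology is concentrated in the single bidegree $(r,s)$ just found with the total dimensions already recorded in Proposition \ref{prop: local cohomology calculation and Arthur pocket} gives every asserted refined dimension. The main obstacle is entirely one of bookkeeping: one must fix the normalization of the $Z_K$-action in \eqref{equation: decomposition of V} (equivalently, compute the central character of the bottom-layer $K$-type of each $A_{\mathfrak q_{w_i}}(\lambda)$) and correctly extract the pieces $\mathfrak u_i\cap\mathfrak p_\pm$ from the explicit matrices appearing in the proof of Proposition \ref{prop: local cohomology calculation and Arthur pocket}; once these conventions are fixed, the argument is the same computation that underlies that proposition.
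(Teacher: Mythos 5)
Your proposal is correct and follows essentially the same route as the paper: both arguments reduce the refined Hodge type to bookkeeping of $Z_K$-central characters of the contributing $K$-types and then import the total dimensions from Proposition \ref{prop: local cohomology calculation and Arthur pocket}, the paper packaging this via Zucker's parametrization by the coset representatives $W_0$ and the table of $w(\Lambda+\rho)-\rho$, while you extract the same central characters from the Adams--Johnson data $\dim(\mathfrak u_i\cap\mathfrak p_\pm)$ and $\mu(\mathfrak q_{w_i},\lambda)$. The only imprecision --- asserting that the $\Pi_{\psi_v,i}$-component of every class lies in the single $K$-type $\mu(\mathfrak q_{w_i},\lambda)$ rather than in the bottom layer $\mu(\mathfrak q_{w_i},\lambda)+\delta$ --- is harmless, since all bottom-layer $K$-types contributing to a fixed value of $q-p$ share the same $Z_K$-character, which is all your cancellation argument uses.
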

 \begin{proof}
 The proof is similar to \cite[Proposition 11.4]{ichino2018hodge}. Consider the set of coset representatives for $W_\R (G, T)\backslash W (G, T) $
 given by
 \begin{equation}
     W_0\coloneqq\set {w\in W (G, T): w ^ {-1} (\Delta_c ^ +)\subset\Delta ^ +},
 \end{equation}
 where $\Delta ^ + $ is the set of positive roots with respect to the Borel subgroup (\ref{EQ: Borel subgroup convention}) and $\Delta ^ +_c\subset\Delta ^ + $
 is the subset of compact roots (i.e. the positive roots of $K $). In the notation of \cite{zucker1981locally}, let $\mu $ and $\lambda $ be the highest characters of $Z_K $ appearing in the action of $K $ on $\mathfrak g_\C $ and $V $, respectively; identifying characters of $Z_K $ with $\Z $, $\mu = 2 $ and $\lambda = 2k -6 = m $
 (the weight of $V $). Also let $\Lambda = (k -3, k -3, 0) $, considered as a character of $T $.
 By \cite[\S 5]{zucker1981locally}, if  $$H ^ {(p, q); (n - p, m +p -n) }(\mathfrak g, K;\Pi_{\psi_v, i}\otimes V)\neq 0 $$
 for any $i $, then there exists $w\in W_0 $ with $\l (w) = p, $ such that the $K $-representation
 of highest weight $w(\Lambda +\rho) -\rho $
 has central character $\lambda - n\mu = 2k - 2n -6. $
 Here $\rho = (3, 2, 1) $ is the half sum of positive roots for $\mathfrak g_\C $. Thus each $w\in W_0 $
 defines exactly one choice of $p $ and $n $
 such that a nonzero contribution of bidegree $(p, q); (n - p, m + p - n) $ is possible. These choices are summarized in the following table.
 $$\begin{array}{c|c |c | c | c}
      w\cdot (a, b, c) & p =\l (w)& w (\Lambda +\rho) -\rho & n &\text{possible types}\\
      \hline
      (a, b, c) & 0 & (k -3, k -3, 0) & 0 & (0, q); (0, 2k -6)\\
     (a, b, - c) & 1 & (k -3, k -3, -2) & 1 & (1, q); (0, 2k -6)\\
     (a, c, - b) & 2 & (k -3, -1, - k) & k -1 & (2, q); (k -3, k -3)\\
     (b, c, - a) & 3 & (k -4, -1, - k -1) & k & (3, q); (k -3, k -3)\\
     (a, - c, - b) & 3 & (k -3, -3, - k) & k & (3, q); (k -3, k -3)\\
     (b, - c, - a) & 4 & (k -4, -3, - k -1) & k +1 & (4, q); (k -3, k -3)\\
     (c, - b, - a) & 5 & (-2, - k -1, - k -1) & 2k-1 & (5, q); (2k -6, 0)\\
     (- c, - b, - a) & 6 & (-4, - k -1, - k -1) & 2k & (6, q); (2k -6, 0)
 \end{array}$$
 Comparing with Proposition \ref{prop: local cohomology calculation and Arthur pocket}
 completes the proof.
 \end{proof}
 \subsection{Contributions to the cohomology of Shimura varieties}
\subsubsection{}\label{Subsubsection: non-tempered cohomology notation}
Consider the Shimura variety for $\boldgsp_6$ as in \S\ref{subsection: symplectic Shimura}.  Following the notation of (\ref{subsubsec:Plectic notation}), we obtain a local system $\mathcal V_{(\boldsymbol m -2, \boldsymbol m -2, 0)}$ of $\Q(\boldsymbol m)$-vector spaces.
Let $\sigma_{m_v} $ be the unique irreducible representation of $ K_{3,v} $ with trivial central character and whose restriction to $U (3) $ has highest weight $(m_v +1, 0, - m_v -1) $, and let $\boldsymbol\sigma_{\boldsymbol m} $ be the representation $\otimes_{v |\infty}\sigma_{m_v} $ of $\boldsymbol K_3. $
One calculates that
\begin{equation}
     \dim\Hom_{\boldsymbol K_3}\left (\boldsymbol\sigma_{\boldsymbol m}, V_{(\boldsymbol m-2,\boldsymbol m - 2,0),\C}\otimes\wedge ^ {\boldsymbol 2,\boldsymbol 2}\mathfrak p ^\ast_{\GSp_6}\right) = 1,
\end{equation}
where $(\boldsymbol 2,\boldsymbol 2) $ is the constant plectic Hodge type.
Thus we have, from (\ref{eqn:Realization cohomology definition}),
a class map
\begin{equation}
    \left (\mathcal A_{(2)} (\GSp_6 (\A_F))\otimes\boldsymbol\sigma_{\boldsymbol m}\right) ^ {\boldsymbol K_3}\to H ^ {\boldsymbol 2,\boldsymbol 2}_{(2)} (S (\boldgsp_6),{\mathcal V}_{(\boldsymbol m-2, \boldsymbol m - 2,0),\C}).
\end{equation}
\subsubsection{}
We now choose a totally indefinite, non-split quaternion algebra $B $ over $F $.
Let $\pi $ be an auxiliary  automorphic representation
of $PB^\times (\A_F) $
of weight $2\boldsymbol m +2 = (2m_v +2)_{v |\infty} $.
\begin{lemma}\label{Lemma: a source of the hajj teach classes}
If $\widetilde\Pi $ is a discrete automorphic representation of $\GSp_6 (\A_F) $ which is nearly equivalent to a constituent of the theta lift $\iota^\ast\Theta (\pi\boxtimes\mathbbm 1) $, then we have:
\begin{enumerate}
\item View $\mathcal V_{(\boldsymbol m -2,\boldsymbol m -2, 0),\C} $
as a variation of structures of weight 0. Then the $L ^ 2 $-cohomology
$$H_{(2)} ^ {4d} (S (\boldgsp_6),{\mathcal V}_{(\boldsymbol m-2, \boldsymbol m - 2,0),\C}) [\widetilde\Pi_f] $$
is purely of Hodge type $(2d, 2d) $ for the (non-plectic) Hodge structure on $L ^ 2 $ cohomology defined in \cite{zucker1981locally}.
\item The Galois group $\Gal (\overline\Q/F^c) $ acts on the intersection cohomology
$$IH ^ {4d} (S (\boldgsp_6)_{\overline\Q},{\mathcal V}_{(\boldsymbol m-2, \boldsymbol m - 2,0),\overline\Q_\l}) [\widetilde\Pi_f] $$
via $\chi ^ {- 2d} $, where $\chi $
is the $\l $-adic cyclotomic character.
\end {enumerate}
\end{lemma}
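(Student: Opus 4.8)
The plan is to prove the two assertions separately; (1) is essentially a bookkeeping exercise on top of the archimedean Arthur--packet computations already carried out, while (2) carries the essential arithmetic input. For (1) I would begin by applying the plectic Matsushima formula (\ref{eqn:matsushima plectic}) to
$$H^{4d}_{(2)}(S(\boldgsp_6),\mathcal V_{(\boldsymbol m-2,\boldsymbol m-2,0),\C})[\widetilde\Pi_f],$$
which expresses it as a sum over archimedean components $\widetilde\Pi_\infty=\otimes_{v\mid\infty}\widetilde\Pi_v$ with $\widetilde\Pi_f\otimes\widetilde\Pi_\infty$ discrete automorphic, of $m_{\mathrm{disc}}(\widetilde\Pi_f\otimes\widetilde\Pi_\infty)$ copies of $\bigotimes_{v\mid\infty}H^{p_v,q_v}(\mathfrak g_v,K_{3,v};\widetilde\Pi_v^{\mathrm{sm}}\otimes V_{(m_v-2,m_v-2,0),\C})$, the sum ranging over plectic Hodge types with $\sum_v(p_v+q_v)=4d$. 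Since $\widetilde\Pi$ is nearly equivalent to a constituent $\widetilde\Pi^\circ$ of $\Theta(\pi\boxtimes\mathbbm 1)$, which lies in $\mathcal A_{(2)}(\pigs_6(\A_F))$ by Proposition \ref{prop: L2ness}, and since $\iota^\ast\widetilde\Pi^\circ$ has a constituent with global Arthur parameter $\psi=\mathrm{JL}(\pi)[2]\oplus\mathbbm 1[3]$ by Proposition \ref{prop: Arthur parrameter for the lift}(2), Proposition \ref{Prop: very weak endoscopic classification} shows that every irreducible constituent of the $\SP_6(F_v)$-module $\iota^\ast\widetilde\Pi_v$ lies in the local Arthur packet $\Pi_{\psi_v}$, for all $v$. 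Because the center of $\GSp_6$ acts on $V_{(m_v-2,m_v-2,0)}$ through a fixed character, these $(\mathfrak g_v,K_{3,v})$-cohomology groups are computed by restriction to $\SP_6$ and Proposition \ref{prop: local cohomology calculation and Arthur pocket}.

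That proposition shows the cohomology vanishes unless the coefficient representation at $v$ is $V_{(m_v-2,m_v-2,0)}$ (consistent with our hypotheses, since $\pi$ has weight $2\boldsymbol m+2$ so $k=m_v+1$), and that the minimal total degree of a nonzero cohomology group of a member of $\Pi_{\psi_v}$ is $4$, achieved only by $\Pi_{\psi_v,1}$ and only in plectic bidegree $(2,2)$. As $F$ has $d$ archimedean places, $\sum_v(p_v+q_v)\geq 4d$ with equality only when $(p_v,q_v)=(2,2)$ for all $v$; hence $H^{4d}_{(2)}[\widetilde\Pi_f]$ is concentrated in the plectic Hodge type $(\boldsymbol 2,\boldsymbol 2)$. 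To pass to the non-plectic Hodge structure of Zucker \cite{zucker1981locally} on $L^2$-cohomology, I would invoke Proposition \ref{crop: Hodge structure on why algebra cohomology}: the $(2,2)$-component of the cohomology of $\Pi_{\psi_v,1}\otimes V_{(m_v-2,m_v-2,0)}$ is purely of refined Hodge type $(m_v-2,m_v-2)$, so, following \cite{zucker1981locally}, the corresponding contribution at $v$ has local Hodge bidegree $(2,2)+(m_v-2,m_v-2)=(m_v,m_v)$. Tate--twisting $\mathcal V_{(\boldsymbol m-2,\boldsymbol m-2,0)}$ at $v$ by $m_v-2$, which is exactly the normalization making it a variation of weight $0$, turns this into $(2,2)$; summing over the $d$ archimedean places gives total Hodge bidegree $(2d,2d)$, which is (1).

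For (2): by Zucker's conjecture (\S\ref{subsubsec: diagram section}), $IH^{4d}(S(\boldgsp_6),\mathcal V_{(\boldsymbol m-2,\boldsymbol m-2,0)})[\widetilde\Pi_f]$ is identified with $H^{4d}_{(2)}[\widetilde\Pi_f]$, and with the weight-$0$ normalization of the coefficients it is pure of weight $4d$ by purity of intersection cohomology of the projective minimal compactification. I would then fix a prime $p\neq\l$ that splits completely in $F$ (equivalently in $F^c$) and for which $\widetilde\Pi_v$ is unramified at all $v\mid p$; such primes are Chebotarev--dense in $\Gal(\overline\Q/F^c)$. By the generalized Eichler--Shimura relation of Lee \cite{lee2020eichler,lee2022semisimplicity}, in the version with coefficients in the automorphic local system, there is a polynomial $P(X)\in\overline\Q_\l[X]$, with coefficients explicit in the Satake parameters of the $\widetilde\Pi_v$ for $v\mid p$, such that $P(\Frob_p)=0$ on $IH^\ast(S(\boldgsp_6)_{\overline\Q},\mathcal V_{(\boldsymbol m-2,\boldsymbol m-2,0),\overline\Q_\l})[\widetilde\Pi_f]$. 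Those Satake parameters are determined by the ones of $\pi_v$ through the spherical theta correspondence for orthogonal--symplectic similitude pairs (Proposition \ref{prop:spherical}); concretely, the Satake parameter of $\widetilde\Pi_v$ in the spin representation $\widehat{\pigs_6}=\Spin_7\to\GL_8$ is $q_v^{\pm1/2}\alpha_v^{\pm1},q_v,q_v^{-1},1,1$, as computed in the proof of Proposition \ref{prop: Arthur parrameter for the lift}(1), where $\set{\alpha_v,\alpha_v^{-1}}$ are the Satake parameters of $\pi_v$, which are unitary since $\pi$ is tempered. Substituting this into $P(X)$ and sorting the roots by weight — they correspond to the cohomological degrees $4d,\dots$ in which $\widetilde\Pi_f$ appears, as dictated by Proposition \ref{prop: local cohomology calculation and Arthur pocket} — one checks that $P(X)$ has a unique root of weight $4d$, namely $p^{-2d}$. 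By purity, every eigenvalue of $\Frob_p$ on $IH^{4d}[\widetilde\Pi_f]$ is a weight-$4d$ root of $P$, hence equals $p^{-2d}$, and by the semisimplicity of $\Frob_p$ (\cite{lee2022semisimplicity}) it follows that $\Frob_p$ acts by the scalar $\chi^{-2d}(\Frob_p)$. The Chebotarev density theorem then upgrades this to the assertion that $\Gal(\overline\Q/F^c)$ acts on $IH^{4d}[\widetilde\Pi_f]$ via $\chi^{-2d}$.

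The main obstacle is part (2), and within it the explicit determination of the Hecke polynomial $P(X)$ for $\GSp_6$: one must have the generalized Eichler--Shimura relation available with nontrivial coefficients, compute its roots from the theta--transferred Satake parameters, and verify that exactly one of them has weight $4d$ — necessarily $p^{-2d}$ — while all the others, which reflect the appearances of $\widetilde\Pi_f$ in cohomological degrees $5,6,7,8$ at the various archimedean places via Proposition \ref{prop: local cohomology calculation and Arthur pocket}, have strictly larger weight. Purity of $IH^{4d}$ with the weight-$0$ normalized coefficients is precisely what allows this single root to be isolated, and the hypothesis that $\pi$ is tempered is what guarantees $|\alpha_v|=1$ so that the weights can be read off unambiguously.
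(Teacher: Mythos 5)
Your proposal is correct and follows essentially the same route as the paper: part (1) via the plectic Matsushima formula, Propositions \ref{Prop: very weak endoscopic classification}, \ref{prop: Arthur parrameter for the lift}, \ref{prop: local cohomology calculation and Arthur pocket} and the refined Hodge types of Proposition \ref{crop: Hodge structure on why algebra cohomology}, with the same weight-normalization twist at the end; part (2) via Lee's generalized Eichler--Shimura relation, the theta-transferred Satake parameters, purity of $IH^{4d}$ to isolate the unique weight-$4d$ root $p^{-2d}$, and Chebotarev. The only cosmetic difference is that you phrase the Frobenius constraint through a Hecke polynomial in the Satake parameters while the paper writes down the explicit $8d$-dimensional Galois representation built from $\rho_\pi$; the appeal to semisimplicity is not even needed since the weight-$4d$ root is simple.
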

\begin{proof}
For
(1), it follows from Propositions \ref{Prop: very weak endoscopic classification}, \ref{prop: Arthur parrameter for the lift}, and \ref{prop: local cohomology calculation and Arthur pocket} that $$H_{(2)}^{4d} (S (\boldgsp_6),{\mathcal V}_{(\boldsymbol m-2, \boldsymbol m - 2,0),\C}) [\widetilde\Pi_f]\simeq \widetilde \Pi_f\otimes\bigotimes_{v|\infty} H ^ {2, 2} ( {\mathfrak \GSp_{6,\R}}, K_3;\pi_v ^ {\text {sm}}\otimes V_{(m_v-2,m_v - 2,0),\C}). $$ Thus it follows from Proposition \ref{crop: Hodge structure on why algebra cohomology} and from \cite{zucker1981locally} that the cohomology is pure of Hodge type $(m, m) $ in Zucker's normalization,
where $m =\sum m_v $.
However, since $ V_{\boldsymbol m,\C} $ has trivial central character and hence total weight 0 in the algebraic normalization, we must twist by $(2d-m,2d-m)$, which shows the claim.
For (2), choose a compact open subgroup $K =\prod K_v\subset\GSp_6(\A_{F, f}) $.
It suffices to show that $\Frob_p $ acts trivially on $$H\coloneqq IH ^ {4d} (S _K(\boldgsp_6)_{\overline\Q}, {\mathcal V}_{(\boldsymbol m-2,\boldsymbol m - 2,0),\overline\Q_\l}) [\widetilde\Pi_f]$$
for almost all $p $ such that $p $ splits completely in $F^c $ and $K_p $ is hyperspecial. Assume without loss of generality that $\widetilde\Pi_p $
is unramified with local Langlands parameter $$\xphi_p: W_{\Q_p}\to {} ^ L\boldgsp_6 =\operatorname{GSpin}_7(\C) ^ d\times W_{\Q_p}, $$ and consider the $8d $-dimensional representation  
defined by the composite:
\begin{equation}\label{Equation: complex composite}W_{\Q_p}\xrightarrow{\xphi_p}\operatorname{GSpin}_7(\C)^d\xrightarrow{r_{\text {spin}} ^ {\otimes d}}\GL_{8d}(\C). \end{equation}
After picking an isomorphism $\C\cong \overline \Q_\l$, (\ref{Equation: complex composite})
defines an $8d $-dimensional $\l $-adic unramified local Galois representation $V_p $.
By \cite[\S2]{lee2022semisimplicity}, since $p $ splits in $F (\boldsymbol m) $, the action of the geometric Frobenius $\Frob_p ^ {-1} $ on $H $
satisfies the characteristic polynomial of $p ^ 3\Frob_p ^ {-1} $ on $V_p $. Now by Proposition \ref{prop: Arthur parrameter for the lift}, for almost every such $p $
the representation $V_p $ is given by
$$\bigotimes_{v | p}\left(\overline\Q_\l (-1)\oplus\overline\Q_\l ^ 2\oplus\overline\Q_\l (1)\oplus\rho_\pi |_{F_v}\oplus\rho_\pi |_{F_v} (1)\right), $$
where $\rho_\pi $ is the $2 $-dimensional $\l $-adic Galois representation associated to $\pi $, which we normalize to have weight one. (Recall that $\rho_\pi $ is pure since $\pi $
is discrete series at a finite place of $F $ \cite{ohta1983zeta}.)
On the other hand, it is known that $H $ is pure of weight $4d $; comparing with the weights in $V_p $, it follows that $\Frob_p $ acts as $p ^ {- 2d} $
on $H $.
\end{proof}

\section{Triple product periods}\label{sec: triple product periods}
\subsection{The vector-valued period problem}
\subsubsection{}
Let $\boldsymbol m $, $\pi_1, $ $\pi_2, $
$\boldsymbol\epsilon $,  $\boldsymbol\tau ^ {\boldsymbol\epsilon}_{\boldsymbol m} $, and $\boldsymbol\sigma_{\boldsymbol m} $
be as in (\ref{subsubsec: notation for endoscopic lifts in coh}) and (\ref{Subsubsection: non-tempered cohomology notation}), and let $B $
be a \emph{non-split} totally indefinite quaternion algebra over $F $, ramified
at a set $S $ of places of $F $
at which $\pi_i $
are both discrete series.
\subsubsection{}
For auxiliary automorphic representations $\pi $ of $PB (\A_F) ^\times $
of weight  $2\boldsymbol m +2, $
we will consider triple product period integrals of $\Theta (\pi\boxtimes\mathbbm 1)$ along the subgroup
\begin{equation}
    \widetilde H\coloneqq (\GSp_4\times_{\mathbb G_m}\GL_2)\subset\GSp_6.
\end{equation}
The maximal compact-modulo-center subgroup of $\widetilde H (F\otimes\R)$
is
\begin{equation}
    (\boldsymbol K_2\times\boldsymbol K_1)_0 \coloneqq(\boldsymbol K_2\times\boldsymbol K_1)\intersection \widetilde H (F\otimes\R).
\end{equation}
 To define the vector-valued period integral, note that (by the classical branching law for unitary groups), the space
 \begin{equation}
     \Hom_{(\boldsymbol K_2\times\boldsymbol K_1)_0} (\boldsymbol\sigma_{\boldsymbol m}\otimes\boldsymbol\tau ^ {\boldsymbol\epsilon}_{\boldsymbol m}\otimes\boldsymbol\chi ^\check_{-\boldsymbol\epsilon\boldsymbol m},\C)
 \end{equation}
 is one-dimensional, say with generator $\l $.
 
We then define, for any auxiliary representation $\pi$ of $PB (\A_F) ^\times $
of weight $2\boldsymbol m +2, $
the triple product period:
\begin{equation}\label{Equation: definition of period integral triple product}
\begin{split}
    \widetilde {\mathcal P}_{S,\pi_1,\pi_2,\pi}^{\boldsymbol\epsilon} (\alpha, \beta,\gamma) &= \integral_{[\widetilde H]}\l (\alpha (g, g')\otimes\beta (g)\otimes\gamma (g'))\d (g, g')\neq 0,\\
    &\alpha\in (\Theta (\pi\boxtimes\mathbbm 1)\otimes\boldsymbol\sigma_{\boldsymbol m}) ^ {\boldsymbol K_3},\;\;\beta\in (\Pi_S (\pi_1\otimes\pi_2)\otimes\boldsymbol\tau ^ {\boldsymbol\epsilon}_{\boldsymbol m}) ^ {\boldsymbol K_2},\;\;\gamma\in (\pi_2 ^\check\otimes\boldsymbol \chi ^\check_{-\boldsymbol\epsilon\boldsymbol m}) ^ {\boldsymbol K_1}.
    \end{split}
\end{equation}
 Since we will not give a precise formula for $\widetilde {\mathcal P} ^ {\boldsymbol\epsilon}_{S,\pi_1,\pi_2,\pi} $
 and are only interested in its non-vanishing, we ignore the problem of normalization. The non-vanishing of (\ref{Equation: definition of period integral triple product}), for a good choice of $\pi $, is the key input to the non-vanishing of the Hodge classes we construct in the next section.
 \subsubsection{}
 The strategy for calculating (\ref{Equation: definition of period integral triple product}) is to use the seesaw diagram:
 \begin{center}
    \begin{tikzcd}
    \GSO(V_B)\times_{\mathbb G_m} \GSO(V_B) \arrow[d,dash]\arrow[dr,dash] &  \GSp_6 \arrow[d,dash]\arrow[dl,dash] \\\GSO (V_B )&\widetilde H 
    \end{tikzcd}
\end{center}
There are two main inputs to the non-vanishing of our period integral (for a good choice of $\pi$): the first is  a vector-valued version of the usual global seesaw identity, and the second is a non-vanishing result for the vector-valued theta lifts along the ``other'' diagonal in the seesaw diagram, i.e. from $\GSp_4 $ and $\GL_2 $
to $\GSO (V_B) $.
\subsection{Vector-valued seesaw identity}\label{subsection: vector seesaw}
\subsubsection{}
Continuing with the notation from (\ref{subsubsec: arch Conventions part one}), let $m \geq 2$ be an integer and let $\sigma_m $
be the unique representation of $K_3 $
of trivial central character and whose restriction to $U(3)$ has highest weight $(m + 1,0,-m-1)$. Let $$\widetilde\fee_m\in (S_{\leq 2m+2}^0(3)\otimes \sigma_m\otimes \chi_{-(2m+2),0})^{(K_3\times L)_0}$$ be a generator, which makes sense by Proposition \ref{prop: substitute for Harris}. If $(K_2\times K_1)_0$ is the intersection of $K_3$ with $\GSp_{4,\R}\times_{\mathbb G_m} \GL_{2,\R}$ inside $\GSp_{6,\R}$, then we have, for any $\epsilon = \pm 1,$
\begin{equation}\label{eq: branching law}\dim \Hom_{(K_2\times K_1)_0} (\sigma_m, \tau_m^{\epsilon}\otimes \chi_{-\epsilon m}^\vee) = 1;\end{equation}
let $\l_\epsilon$ denote a generator. Also let $(K_1\times K_2\times L)_0 = (K_1\times K_2)_0\times L \cap (K_3\times L)_0.$
\begin{prop}\label{Prop: local seesaw}
The Schwartz function $$\l_\epsilon(\widetilde\fee_m) \in \left(\mathcal S_{\R}(\langle e_2, e_4, e_6\rangle \otimes V) \otimes \tau_m^\epsilon\otimes \chi_{-\epsilon m}^\vee\otimes \chi_{2m+2,0}\right)^{(K_1\times K_2\times L)_0}$$ is a nonzero scalar multiple of the tensor product $$\phi_m^\epsilon \otimes \xphi_m^{\epsilon}\in \left(\left(\mathcal S_{\R}(\langle e_2,e_4\rangle \otimes V)\otimes \tau_m^\epsilon\otimes \chi^\vee_{m+2,-\epsilon m}\right)^{(K_2\times L)_0}\otimes \left(\mathcal S_{\R}(\langle e_6\rangle \otimes V)\otimes \chi^\vee_{-\epsilon m}\otimes \chi_{-m,\epsilon m}\right)^{(K_1\times L)_0}\right)\bigg|_{(K_1\times K_2\times L)_0}.$$
\end{prop}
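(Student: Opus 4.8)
The plan is to follow the pattern of the proof of Proposition~\ref{prop: one-dimensional harmonic}. Both $\l_\epsilon(\widetilde\fee_m)$ and $\fee_m^\epsilon\otimes\xphi_m^\epsilon$ are $(K_1\times K_2\times L)_0$-covariant elements of $\mathcal S_\R(\langle e_2,e_4,e_6\rangle\otimes V)$ valued in the same irreducible $\boldsymbol p_Z(\SO(2)\times\SO(2))$-isotypic type $\tau_m^\epsilon$ (twisted by one-dimensional characters), and such a covariant element is determined, up to scalar, by its image under any nonzero functional $\mu\in\tau_m^{\epsilon,\vee}$. So I would fix $\mu$ to be a highest weight vector for $\epsilon=+$ and a lowest weight vector for $\epsilon=-$ (exactly as in Proposition~\ref{prop: one-dimensional harmonic}), and reduce the claim to comparing $\mu\bigl(\l_\epsilon(\widetilde\fee_m)\bigr)$ with $\mu\bigl(\fee_m^\epsilon\otimes\xphi_m^\epsilon\bigr)=\mu(\fee_m^\epsilon)\otimes\xphi_m^\epsilon$ inside $\mathcal S_\R(\langle e_2,e_4,e_6\rangle\otimes V)$.

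By Propositions~\ref{prop: one-dimensional harmonic} and \ref{prop: S02 covariance}, the second of these is proportional to $\xphi_{m+1}^{-\epsilon}(\underline x_2)\cdot\xphi_1^{\epsilon}(\underline x_4)\cdot\xphi_m^{\epsilon}(\underline x_6)$, which lies in $S^0_{\leq 2m+2}(3)$ and is a weight vector for the product of the maximal torus of $U(3)$ and of $\boldsymbol p_Z(\SO(2)\times\SO(2))$, of the extremal $U(3)$-weight $\epsilon(m+1,-1,-m)$ together with a definite $\boldsymbol p_Z(\SO(2)\times\SO(2))$-character. Since $(m+1)+1+m=2m+2$, this weight is extremal among those occurring by Proposition~\ref{prop: substitute for Harris}(1), and I would check -- using \cite[Proposition 4.2.1]{harris1992arithmetic} as in the proof of Proposition~\ref{prop: substitute for Harris}(2), now combined with the branching rule $U(3)\downarrow U(2)\times U(1)$ -- that once the $\boldsymbol p_Z(\SO(2)\times\SO(2))$-character is also prescribed, the corresponding weight space in $S^0_{\leq 2m+2}(3)$ is one-dimensional, with $\sigma_m$ the only $U(3)$-type that contributes. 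For the first vector, $\mu\circ\l_\epsilon$ is a nonzero element of $\sigma_m^\vee$ (twisted by a character): indeed $\l_\epsilon$ is a surjection onto the irreducible $\tau_m^\epsilon$, as the $\Hom$-space it generates is one-dimensional, and $\mu\neq 0$. Because $\widetilde\fee_m$ is a nonzero $(K_3\times L)_0$-covariant vector and $\sigma_m$ is $K_3$-irreducible, the associated $K_3$-equivariant map $\sigma_m^\vee\otimes(\text{char})\to S^0_{\leq 2m+2}(3)$ is injective; evaluating it on $\mu\circ\l_\epsilon$ shows $\mu\bigl(\l_\epsilon(\widetilde\fee_m)\bigr)\neq 0$. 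Since this vector also lies in the one-dimensional weight space just described, it is a nonzero multiple of $\mu(\fee_m^\epsilon)\otimes\xphi_m^\epsilon$, and the covariance argument of the first paragraph upgrades this to $\l_\epsilon(\widetilde\fee_m)\doteq\fee_m^\epsilon\otimes\xphi_m^\epsilon$.

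The main obstacle is the one-dimensionality assertion and the character bookkeeping around it. The degree bound of Proposition~\ref{prop: substitute for Harris}(1) alone does not isolate $\sigma_m$ -- there are other dominant $U(3)$-weights of $|\cdot|$-norm $2m+2$ whose irreducibles branch compatibly to $U(2)\times U(1)$ -- so one genuinely needs the refined description of the joint $U(3)\times\boldsymbol p_Z(\SO(2)\times\SO(2))$-content of the Fock model from \cite[Proposition 4.2.1]{harris1992arithmetic}, and one must carefully track the $\boldsymbol p_Z(\SO(2)\times\SO(2))$- and central-character covariances of $\fee_m^\epsilon$ and $\xphi_m^\epsilon$ (which enter through the partial Fourier transform of (\ref{subsubsec: Fourier})) to confirm that they recombine to the type appearing in the proposition. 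A more computational alternative, which sidesteps the multiplicity arguments, is to write down a convenient explicit representative of $\widetilde\fee_m$ via raising and lowering operators as in Proposition~\ref{prop:Calculating theWheat Dr. Schwartz function}, realize the branching $\l_\epsilon$ through $U(3)$-lowering (resp.\ raising) operators restricted to $U(2)\times U(1)$, and match terms with $\fee_m^\epsilon\otimes\xphi_m^\epsilon$ directly; this also determines the proportionality constant, which is irrelevant for the applications.
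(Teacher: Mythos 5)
Your proposal is correct and follows essentially the same route as the paper: both arguments contract $\fee_m^\epsilon\otimes\xphi_m^\epsilon$ against an extremal vector of $\tau_m^{\epsilon,\vee}$, recognize the result as (a multiple of) $\xphi_{m+1}^{-\epsilon}(\underline x_2)\xphi_1^{\epsilon}(\underline x_4)\xphi_m^{\epsilon}(\underline x_6)$, and then use the degree bound together with the $U(3)\downarrow U(2)\times U(1)$ branching law and the Fock-model $K$-type structure of Proposition \ref{prop: substitute for Harris} to force the identification with the corresponding contraction of $\l_\epsilon(\widetilde\fee_m)$. The only difference is cosmetic --- the paper phrases the final step as showing that the $U(3)$-span of the contraction is the irreducible of highest weight $(m+1,0,-m-1)$, while you phrase it as one-dimensionality of the extremal joint weight space --- and your observation that the norm bound alone does not isolate $\sigma_m$ (so that the $L$-covariance, i.e.\ the finer content of \cite[Proposition 4.2.1]{harris1992arithmetic}, must genuinely be invoked) correctly identifies the one point where the paper's own write-up is terse.
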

\begin{proof}
Assume $\epsilon = + $; the other case is similar.
Let $\xphi\in S_{\leq 2m + 2}^0(3)$
be the contraction of $\phi_m^\epsilon \otimes \xphi_m^{\epsilon}$ with any nonzero vector. Then $\xphi$ generates the irreducible $U (2)\times U(1) $-representation of highest weight $(1,-m-1, m) $, and it suffices to show that $U(3)\cdot\xphi$  is the irreducible representation of highest weight $(m +1, 0, - m -1) $. Without loss of generality, assume $U (3)\cdot\xphi$ 
is irreducible, say with highest weight $(a,b,c)$. 
It follows (using the branching law for unitary groups)
that
\begin{equation}
    \label{equation: branching law inequality}
    \begin{split}
    a\geq 1\geq b\geq - m -1\geq c,\\
    a+b + c  = 0.
    \end{split}
\end{equation}
On the other hand, considering  
 Proposition \ref{prop: substitute for Harris}(2), we have \begin{equation}\label{equation: Degree any quality}
     |a|+ |b| + |c| \leq 2m + 2.
 \end{equation} 
 The combination of (\ref{equation: branching law inequality}) and (\ref{equation: Degree any quality})
 force $( a, b, c) = (m +1, 0, - m -1) $.
\end{proof}
\subsubsection{}
We now return to the global situation. 
Choose an isomorphism $V_B\otimes_F\R\simeq V\otimes_F\R $, which induces an isomorphism $\GSO(V_B)(F\otimes \R) \simeq \GSO(V) (F\otimes\R) $. Then let $\boldsymbol L = \prod_{v|\infty} L\subset  \GSO(V_B) (F\otimes \R)$, and similarly for $(\boldsymbol K_n\times\boldsymbol L)_0$, etc. We
fix vector-valued Schwartz functions as follows:
\begin{equation}
     \begin{split}
         \boldsymbol\phi ^ {-    \boldsymbol\epsilon}_{\boldsymbol m}= \otimes_{v |\infty}\fee ^ {-\epsilon_v}_{m_v}&\in\left (\mathcal S_{F\otimes\R} (\langle e_2, e_4\rangle\otimes V) \otimes\boldsymbol\tau_{\boldsymbol m} ^ {-\boldsymbol\epsilon}\otimes\boldsymbol\chi ^\check_{(\boldsymbol m +2),\boldsymbol\epsilon\boldsymbol m}\right) ^ {(\boldsymbol K_2\times\boldsymbol L)_0}\text{    \;\;     (Proposition \ref{prop: one-dimensional harmonic})}\\
         &\simeq \left (\mathcal S_{F\otimes\R} (\langle e_2, e_4\rangle\otimes V_B) \otimes\boldsymbol(\tau_{\boldsymbol m} ^ {\boldsymbol\epsilon})^\check\otimes\boldsymbol\chi_ {-(\boldsymbol m +2),-\boldsymbol\epsilon\boldsymbol m}\right) ^ {(\boldsymbol K_2\times\boldsymbol L)_0}\\
         \boldsymbol\xphi^ {-\boldsymbol\epsilon}_{\boldsymbol m}=\otimes_{v |\infty}\xphi_{m_v} ^ {-\epsilon_v}&\in\left (\mathcal S_{F\otimes\R} (\langle e_6\rangle\otimes V)\otimes\boldsymbol\chi_{-\boldsymbol\epsilon\boldsymbol m}\otimes\boldsymbol\chi ^\check_{\boldsymbol m, -\boldsymbol\epsilon\boldsymbol m}\right) ^ {(\boldsymbol K_1\times\boldsymbol L)_0}\text {     \;\;    (Proposition \ref{prop: S02 covariance})} \\
         &\simeq \left(\mathcal S_{F\otimes \R}(\langle e_6\rangle\otimes V_B)\otimes \boldsymbol\chi_{-\boldsymbol\epsilon\boldsymbol m}\otimes\boldsymbol\chi ^\check_{\boldsymbol m, -\boldsymbol\epsilon\boldsymbol m}\right) ^ {(\boldsymbol K_1\times\boldsymbol L)_0}\\
         \widetilde{\boldsymbol\fee}_{\boldsymbol m}= \otimes_{v|\infty}\widetilde\fee_{m_v}&\in \left(\mathcal S_{\R}(\langle e_2,e_4,e_6\rangle V)\otimes \boldsymbol\sigma_{\boldsymbol m}\otimes\boldsymbol\chi_{-(2\boldsymbol m + 2),0}\right)^{(\boldsymbol K_2\times\boldsymbol L)_0}\\
         &\simeq \left(\mathcal S_{\R}(\langle e_2,e_4,e_6\rangle\otimes V_B)\otimes \boldsymbol\sigma_{\boldsymbol m}\otimes\boldsymbol\chi_{-(2\boldsymbol m + 2),0}\right)^{(\boldsymbol K_2\times\boldsymbol L)_0}
     \end{split}
 \end{equation}
\begin{prop}\label{prop: global seesaw}
Let $\l$ be as above.
For all $$\alpha \in\left (\mathcal A_0 (\PGSO (V_B) (\A_F))\otimes\boldsymbol\chi_{2\boldsymbol m +2,0}\right) ^ {\boldsymbol L},\;\;\beta\in
\left (\Pi_S (\pi_1,\pi_2)\otimes\boldsymbol\tau_{\boldsymbol m} ^ {\boldsymbol\epsilon}\right) ^ {\boldsymbol K_2},\;\;\gamma\in\left (\pi_2 ^\check\otimes\boldsymbol \chi_{-\boldsymbol\epsilon\boldsymbol m}^\vee\right) ^ {\boldsymbol K_1},$$
$$\xphi_{1,f}\in\mathcal S_{\A_{F, f}} (\langle e_2, e_4\rangle\otimes V_B),\;\;\xphi_{2, f}\in\mathcal S_{\A_{F, f}} (\langle e_6\rangle\otimes V_B),$$
and up to a nonzero scalar depending on the normalizations, we have the identity:
\begin{equation}
\begin {split}\int_{[Z_{\widetilde H}\backslash\widetilde H]}\l\left (\theta_{\xphi_{1, f}\otimes\xphi_{2, f}\otimes\widetilde {\boldsymbol\fee}_{\boldsymbol m}} (\alpha)(g, g')\otimes\beta (g)\otimes\gamma (g')\right) \d (g,g')=\\\integral_{[\PGSO(V_B)]}\alpha (g)\theta_{\xphi_{1, f}\otimes\boldsymbol\fee_{\boldsymbol m} ^ {-\boldsymbol\epsilon}} (\beta) (g)\theta_{\xphi_{2, f}\otimes\boldsymbol\xphi_{\boldsymbol m} ^ {-\boldsymbol\epsilon}} (\gamma) (g)\d g.\end{split}\end {equation}
\end{prop}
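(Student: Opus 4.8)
The plan is to establish the seesaw identity attached to the diagram displayed just before the statement, in which the two diagonals are the dual pairs associated with the decomposition of $W_6 \otimes V_B$ coming from $W_6 = \langle e_1,\dots,e_4\rangle \oplus \langle e_5,e_6\rangle$, with $\GSp_4$ acting on the first summand and $\GL_2 = \GSp_2$ on the second. First I would record the analytic input: since $B$ is non-split, $V_B$ is anisotropic, so $[\SO(V_B)]$ and $[\PGSO(V_B)]$ are compact, the theta kernels for the pairs $(\GSp_6,\GSO(V_B))$, $(\GSp_4,\GSO(V_B))$ and $(\GL_2,\GSO(V_B))$ decay rapidly in the symplectic variable, and $\theta_{\xphi_{1,f}\otimes\xphi_{2,f}\otimes\widetilde{\boldsymbol\fee}_{\boldsymbol m}}(\alpha)$ is a rapidly decreasing automorphic form on $\pigs_6(\A_F)$ by Proposition \ref{prop: L2ness}. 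Hence every integral appearing below converges absolutely and Fubini's theorem can be applied freely. One should also check that the integrand of the left-hand side is $Z_{\widetilde H}(\A_F)$-invariant --- using that $\Pi_S(\pi_1,\pi_2)$ and $\pi_2^\vee$ have mutually inverse central characters on $Z_{\widetilde H}\cong\mathbb G_m$ and that the theta lift of $\alpha$ has trivial central character --- so that the left-hand side makes sense.

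The argument then has two inputs. The first is a factorization of the theta kernel: the polarization $W_6 = \langle e_1,e_3,e_5\rangle \oplus \langle e_2,e_4,e_6\rangle$ is compatible with $W_6 = \langle e_1,\dots,e_4\rangle \oplus \langle e_5,e_6\rangle$, so under the identification $\mathcal S(\langle e_2,e_4,e_6\rangle \otimes V_B) = \mathcal S(\langle e_2,e_4\rangle \otimes V_B)\otimes\mathcal S(\langle e_6\rangle \otimes V_B)$ the theta kernel for $(\GSp_6,\GSO(V_B))$ satisfies $\theta((g,g'),h;\Phi_1\otimes\Phi_2) = \theta(g,h;\Phi_1)\,\theta(g',h;\Phi_2)$ for $(g,g')\in\widetilde H(\A_F)$ and $h\in\GSO(V_B)(\A_F)$ of equal similitude, the kernels on the right being those for $(\GSp_4,\GSO(V_B))$ and $(\GL_2,\GSO(V_B))$; at the finite places $\xphi_{1,f}\otimes\xphi_{2,f}$ is already of this product form. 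The second input handles the archimedean places, where $\widetilde{\boldsymbol\fee}_{\boldsymbol m}$ is only a sum of such tensors: the functional $\l$ factors through the multiplicity-one branching $\boldsymbol\sigma_{\boldsymbol m}\twoheadrightarrow\boldsymbol\tau^{-\boldsymbol\epsilon}_{\boldsymbol m}\otimes\boldsymbol\chi_{-\boldsymbol\epsilon\boldsymbol m}$ for $(\boldsymbol K_2\times\boldsymbol K_1)_0\subset\boldsymbol K_3$, and Proposition \ref{Prop: local seesaw} --- applied in the $V_B$-model through the fixed real isometry $V_B\otimes\R\simeq V\otimes\R$, with its parameter set to $-\boldsymbol\epsilon$ --- identifies the contraction of $\widetilde{\boldsymbol\fee}_{\boldsymbol m}$ by $\l$ with a nonzero multiple of $\boldsymbol\phi^{-\boldsymbol\epsilon}_{\boldsymbol m}\otimes\boldsymbol\xphi^{-\boldsymbol\epsilon}_{\boldsymbol m}$. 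Since $\l$ is $\boldsymbol K_\infty$-equivariant and the theta lift is $(\boldsymbol K_\infty\times\boldsymbol L)_0$-equivariant, this contraction may be carried out before the theta integral over $\alpha$; combined with the kernel factorization it turns $\l\bigl(\theta_{\xphi_{1,f}\otimes\xphi_{2,f}\otimes\widetilde{\boldsymbol\fee}_{\boldsymbol m}}(\alpha)(g,g')\otimes\beta(g)\otimes\gamma(g')\bigr)$ into a nonzero constant times $\int_{[\SO(V_B)]}\langle\theta(g,h;\xphi_{1,f}\otimes\boldsymbol\phi^{-\boldsymbol\epsilon}_{\boldsymbol m}),\beta(g)\rangle\,\langle\theta(g',h;\xphi_{2,f}\otimes\boldsymbol\xphi^{-\boldsymbol\epsilon}_{\boldsymbol m}),\gamma(g')\rangle\,\alpha(h)\,\d h$, the brackets denoting the archimedean $\boldsymbol K$-type pairings.

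It then remains to carry out the standard seesaw unfolding. Inserting the last formula into the left-hand side and moving the $[\SO(V_B)]$-integral outside the integral over $[Z_{\widetilde H}\backslash\widetilde H]$, one gets, for each fixed $h$, an integral over $[Z_{\widetilde H}\backslash\widetilde H]$ which, via the covering $[\SP_4]\times[\SL_2]\times\mathcal C\twoheadrightarrow[Z_{\widetilde H}\backslash\widetilde H]$, separates into an integral in $g\in[\SP_4]$, an integral in $g'\in[\SL_2]$, and an integral over the similitude factor $\mathcal C$; matching the similitude with $\nu(h)$, the first two are by the very definition of the theta lift equal to $\theta_{\xphi_{1,f}\otimes\boldsymbol\phi^{-\boldsymbol\epsilon}_{\boldsymbol m}}(\beta)(h)$ and $\theta_{\xphi_{2,f}\otimes\boldsymbol\xphi^{-\boldsymbol\epsilon}_{\boldsymbol m}}(\gamma)(h)$ (up to the chosen Haar measures), while the leftover $\mathcal C$-integral recombines with $[\SO(V_B)]$ into $[\PGSO(V_B)]$ with its product Haar measure. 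Collecting terms produces the right-hand side, up to a nonzero constant absorbing the choices of $\l$, $\widetilde{\boldsymbol\fee}_{\boldsymbol m}$, $\boldsymbol\phi^{-\boldsymbol\epsilon}_{\boldsymbol m}$, $\boldsymbol\xphi^{-\boldsymbol\epsilon}_{\boldsymbol m}$ and of the Haar measures. The one genuinely non-formal ingredient is the archimedean matching of Schwartz functions, Proposition \ref{Prop: local seesaw} (itself reduced via Proposition \ref{prop: substitute for Harris} to multiplicity-one branching for unitary groups); once that is granted the rest is the routine global seesaw manipulation, and the only point needing care --- absolute convergence and the legitimacy of the interchanges of integration --- is supplied by the anisotropy of $V_B$. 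I expect the bookkeeping of similitude variables in the unfolding to be the most delicate of these routine steps.
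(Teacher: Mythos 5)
Your proof is correct and is essentially the paper's own argument: the paper disposes of this proposition in one sentence, declaring it ``formal from Proposition \ref{Prop: local seesaw} and the usual seesaw identity, i.e.\ exchanging the order of integration.'' Your write-up simply supplies the details that sentence leaves implicit --- the factorization of the $\GSp_6$ theta kernel along $W_6=W_4\oplus W_2$, the contraction of $\widetilde{\boldsymbol\fee}_{\boldsymbol m}$ by $\l$ via Proposition \ref{Prop: local seesaw} (with the parameter correctly taken to be $-\boldsymbol\epsilon$), the absolute convergence coming from the anisotropy of $V_B$, and the recombination of the similitude integral over $\mathcal C$ with $[\SO(V_B)]$ into $[\PGSO(V_B)]$ --- all of which is consistent with how the analogous unfolding is carried out elsewhere in the paper.
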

\begin{proof}
This is formal from Proposition \ref{Prop: local seesaw}
and the usual seesaw identity, i.e. exchanging the order of integration.
\end{proof}
\subsection{Proof of the non-vanishing result}
\begin{prop}
\label{prop: other seesaw nonzero}
Let $\pi_i ^ B $
be the Jacquet-Langlands transfers of $\pi_i $
to $B (\A_F) ^\times $.
\begin {enumerate}
\item The map $$\theta_{\boldsymbol\fee ^ {-\boldsymbol\epsilon}_{\boldsymbol m}}:\mathcal S_{\A_{F, f}} (\langle e_2, e_4\rangle\otimes V_B)\otimes (\Pi_S (\pi_1,\pi_2)\otimes\boldsymbol\tau ^ {\boldsymbol\epsilon}) ^ {\boldsymbol K_2}\to\left (\mathcal A (\GSO (V_B)) (\A_F)\otimes\boldsymbol\chi_{- (\boldsymbol m +2), -\boldsymbol\epsilon\boldsymbol m}\right) ^ {\boldsymbol L}, $$
defined by $$(\XP,\alpha)\mapsto\theta_{\XP\otimes\boldsymbol\fee ^ {-\boldsymbol\epsilon}_{\boldsymbol m}} (\alpha), $$
has image containing $\left((\pi_1 ^ B\boxtimes\pi_2 ^ B)\otimes\boldsymbol\chi_{- (\boldsymbol m +2), -\boldsymbol\epsilon\boldsymbol m}\right) ^ {\boldsymbol L}.$
\item The map $$\theta_{\boldsymbol\XP ^ {-\boldsymbol\epsilon}_{\boldsymbol m}}:\mathcal S_{\A_{F, f}} (\langle e_6\rangle\otimes V_B)\otimes\left (\pi_2 ^\check\otimes\boldsymbol\chi_{-\boldsymbol\epsilon\boldsymbol m} ^\check\right) ^ {\boldsymbol K_1}\to\left (\mathcal A (\GSO (V_B)) (\A_F)\otimes \boldsymbol\chi ^\check_{\boldsymbol m, -\boldsymbol\epsilon\boldsymbol m}\right) ^ {\boldsymbol L}, $$
defined by $$(\XP,\alpha)\mapsto\theta_{\XP\otimes\boldsymbol\XP_{\boldsymbol m} ^ {-\boldsymbol\epsilon}} (\alpha), $$
has image containing $\left (((\pi_2 ^ B) ^\check\boxtimes (\pi_2  ^ B) ^\check)\otimes\boldsymbol\chi ^ {\check}_{\boldsymbol m, -\boldsymbol\epsilon\boldsymbol m}\right) ^ {\boldsymbol L}. $
\end{enumerate}
\end{prop}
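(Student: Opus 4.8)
The plan is to prove (1) and (2) in parallel. In each case one first identifies, ignoring the vector-valued structure, the global theta lift out of $\Pi_S(\pi_1,\pi_2)$, resp.\ $\pi_2^\vee$, to $\GSO(V_B)$, and then checks that the prescribed archimedean Schwartz functions $\boldsymbol\fee_{\boldsymbol m}^{-\boldsymbol\epsilon}$, resp.\ $\boldsymbol\xphi_{\boldsymbol m}^{-\boldsymbol\epsilon}$, still produce a nonzero lift, landing exactly on the prescribed $\boldsymbol L$-isotypic subspace. For the first step of (1): by Theorem~\ref{thm: global endoscopy statement}, $\Pi_S(\pi_1,\pi_2)=\Theta_{V_B,W_4}(\pi_1^B\boxtimes\pi_2^B)$ is cuspidal (note $\pi_1^B\not\cong\pi_2^B$, since $\pi_1$ and $\pi_2$ have distinct weights $\boldsymbol m+2$ and $\boldsymbol m$) and nonzero; as $V_B$ is anisotropic, $\GSO(V_B)$ is anisotropic modulo center, so the reverse lift $\Theta_{W_4,V_B}(\Pi_S(\pi_1,\pi_2))$ is automatically cuspidal, and by the standard reciprocity for cuspidal theta lifts (go up, then down) it is nonzero and contains $\pi_1^B\boxtimes\pi_2^B$. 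By Proposition~\ref{prop: local global compatibility}, every irreducible constituent $\sigma=\otimes_v'\sigma_v$ of this reverse lift has $\sigma_v$ a constituent of the local lift $\Theta_{W_4,V_B}(\Pi_S(\pi_1,\pi_2)_v)$; invoking local Howe duality — applicable because $\Ind_{\GSO(V_B)(F_v)}^{\operatorname{GO}(V_B)(F_v)}(\pi^B_{1,v}\boxtimes\pi^B_{2,v})$ is irreducible, $\pi_{1,v}$ and $\pi_{2,v}$ being inequivalent — together with the definition of $\Pi^\pm_v$ in \S\ref{subsubsection: notation for endoscopic lifts}, one gets $\sigma_v=\pi^B_{1,v}\boxtimes\pi^B_{2,v}$ at every place $v$ (using $B_v=M_2(F_v)$ when $v\notin S$), and so by multiplicity one for $(B^\times\times B^\times/\mathbb G_m)(\A_F)$ one concludes $\Theta_{W_4,V_B}(\Pi_S(\pi_1,\pi_2))=\pi_1^B\boxtimes\pi_2^B$. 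Part (2) is the Shimizu lift: $\Theta_{W_2,V_B}(\pi_2^\vee)$ is nonzero (because $\pi_2$ is a discrete series at every place of $S$), cuspidal, and equal to $(\pi_2^B)^\vee\boxtimes(\pi_2^B)^\vee$ — classical, but also deducible from Proposition~\ref{prop: local global compatibility} exactly as above, starting from the fact recalled in \S\ref{Section:. See Yoshida} that the theta lift of a cuspidal representation of $\GL_2$ to $\GSO$ of a split four-dimensional space is the diagonal $\pi\boxtimes\pi$.

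For the archimedean refinement: since the global theta kernel is a restricted tensor product and the test data away from $\infty$ are unconstrained, it suffices to show at each $v\mid\infty$ that the $R_0(F_v)$-equivariant surjection $\omega\twoheadrightarrow\Pi^+_v(\pi_{1,v},\pi_{2,v})^\vee\boxtimes(\pi^B_{1,v}\boxtimes\pi^B_{2,v})$ (resp.\ $\omega\twoheadrightarrow\pi_{2,v}\boxtimes((\pi^B_{2,v})^\vee\boxtimes(\pi^B_{2,v})^\vee)$), restricted to the line spanned by $\fee_{m_v}^{-\epsilon_v}$ (resp.\ $\xphi_{m_v}^{-\epsilon_v}$) and followed by projection onto the minimal $K$-type of the first tensor factor, is nonzero. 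The covariance computations of Propositions~\ref{prop: substitute for Harris}, \ref{prop: one-dimensional harmonic}, and \ref{prop:Calculating theWheat Dr. Schwartz function} (resp.\ Proposition~\ref{prop: S02 covariance}) show that this line is the unique covariant line of the Schwartz space matching the prescribed $(\boldsymbol K,\boldsymbol L)$-types, so its image is either zero or the whole prescribed isotypic line; and it is nonzero since the minimal $K$-type of the first factor lies in the image of the surjective theta map and is reached only from this line — equivalently, this is the archimedean nonvanishing already established in the proof of Proposition~\ref{prop: local Zeta archimedean} (the scalars $\lambda=(2\epsilon)^m\neq 0$ and the nonvanishing of $\int_0^\infty t^{m-1}e^{-4\pi t}\,\d t$ and $\int_0^\infty t^m e^{-4\pi t}\,\d t$), carried over to the reverse direction of the correspondence by the adjunction used above. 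Combining with the first step, the image of $\theta_{\boldsymbol\fee_{\boldsymbol m}^{-\boldsymbol\epsilon}}$ (resp.\ $\theta_{\boldsymbol\xphi_{\boldsymbol m}^{-\boldsymbol\epsilon}}$) is exactly $\left((\pi_1^B\boxtimes\pi_2^B)\otimes\boldsymbol\chi_{-(\boldsymbol m+2),-\boldsymbol\epsilon\boldsymbol m}\right)^{\boldsymbol L}$ (resp.\ $\left(((\pi_2^B)^\vee\boxtimes(\pi_2^B)^\vee)\otimes\boldsymbol\chi^\vee_{\boldsymbol m,-\boldsymbol\epsilon\boldsymbol m}\right)^{\boldsymbol L}$), in particular contains it.

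The representation-theoretic first step is routine precisely because $V_B$ is anisotropic, so — in contrast to \S\ref{Section:. See Yoshida} — no regularization or residual Eisenstein contributions intervene. The main obstacle is the archimedean bookkeeping: one must make sure that the vector-valued Schwartz functions $\boldsymbol\fee_{\boldsymbol m}^{-\boldsymbol\epsilon}$ and $\boldsymbol\xphi_{\boldsymbol m}^{-\boldsymbol\epsilon}$, which were chosen in \S\ref{subsection: vector seesaw} so that the contraction $\l_\epsilon(\widetilde{\boldsymbol\fee}_{\boldsymbol m})$ factors correctly in the $\GSp_6$ seesaw, are simultaneously the ones whose \emph{reverse} archimedean theta lifts are nonzero and land on the $K$-types contributing to coherent cohomology. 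Verifying this amounts to rereading the Fock-model calculations of \S\ref{subsec: Archimedean calculations}, with each Schwartz function reinterpreted via the partial Fourier transform of \S\ref{subsubsec: Fourier} and the fixed isomorphism $V_B\otimes\R\simeq V\otimes\R$ as a vector in the model $\mathcal S(\langle e_2,e_4\rangle\otimes V_B)$, resp.\ $\mathcal S(\langle e_6\rangle\otimes V_B)$, adapted to the lifts $\GSp_4\to\GSO(V_B)$ and $\GL_2\to\GSO(V_B)$; those propositions are stated in a form that transfers essentially verbatim.
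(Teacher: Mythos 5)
Your global step (reciprocity for cuspidal lifts plus Proposition \ref{prop: local global compatibility}) is the same as the paper's, and your treatment of part (2) is essentially sound: the archimedean nonvanishing there is exactly the scalar $\lambda=(2\epsilon)^m\neq 0$ computed in Proposition \ref{prop: local Zeta archimedean} for the lift $\GL_2\to\GSO(V)$ with the datum $\xphi_{m}^{-\epsilon}$, and $(\pi_2^B)^\vee\boxtimes(\pi_2^B)^\vee$ is swap-symmetric. The archimedean step of part (1), however, has a genuine gap in two places. First, the line spanned by $\l(\fee_{m_v}^{-\epsilon_v})$ is \emph{not} the unique $(K_2\times L)_0$-covariant line of the prescribed type in the full Schwartz space $\mathcal S_{F_v}(\langle e_2,e_4\rangle\otimes V_B)$: Proposition \ref{prop: substitute for Harris}(2) gives uniqueness only inside the degree-truncated space $S^0_{\leq m+2}(2)$, while every $K$-type recurs in all higher degrees of the Fock model. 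So ``the theta map is surjective and equivariant, hence this line must hit the target isotypic line'' does not follow; the missing ingredient is Howe's theory of joint harmonics from \cite{howe1989transcending} (the lowest-degree occurrence of a $K$-type already maps onto the corresponding isotypic component of the local lift), which is precisely what the paper invokes. Your fallback --- Proposition \ref{prop: local Zeta archimedean} ``carried over by adjunction'' --- does not substitute for it in part (1): that proposition computes the lift in the direction $\GL_2\to\GSO(V)$ with the datum $\xphi_m^{-\epsilon}$, not $\GSp_4\to\GSO(V)$ with the datum $\fee_m^{-\epsilon}$, and no formal adjunction transfers nonvanishing of one \emph{specific} Schwartz vector's lift from one direction of the correspondence to the other.

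Second, Howe duality fails for the connected similitude group (cf.\ (\ref{subsubsec: full sim gp})), so the local lift $\Theta_{W_4,V_B}(\Pi^\pm_v)$ to $\GSO(V_B)(F_v)$ is the \emph{reducible} representation $\pi_{1,v}^B\boxtimes\pi_{2,v}^B\oplus\pi_{2,v}^B\boxtimes\pi_{1,v}^B$. Your assertion that each local constituent $\sigma_v$ equals $\pi_{1,v}^B\boxtimes\pi_{2,v}^B$ is therefore not forced, and, more importantly, you never rule out that $\l(\fee_{m_v}^{-\epsilon_v})$ maps entirely into the swapped summand $\pi_{2,v}^B\boxtimes\pi_{1,v}^B$, in which case its projection to the summand you need would vanish. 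The paper closes this by comparing $L$-types: the contraction generates the $L$-type $\chi^\vee_{-(m_v+2),\epsilon_v m_v}$, which occurs in $\pi_{1,v}^B\boxtimes\pi_{2,v}^B$ but not in $\pi_{2,v}^B\boxtimes\pi_{1,v}^B$, since the weights $m_v+2$ and $m_v$ are distinct. Without the joint-harmonics input and this separation argument, the proof of (1) does not close.
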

\begin{proof}
In the general setup of \S\ref{section: Similitude theta lifting}, suppose $\Theta_{V,W}(\pi) = \Pi$ for cuspidal  automorphic representations $\pi$ of $G(V)(\A_F)$ and $\Pi$ of $H(W)(\A_F)$.
Then by definition we have a surjective composite\begin {equation}
\mathcal S_{\A_F} (W_2\otimes V)\xrightarrow{\theta}\mathcal A (R_0 (\A_F))\twoheadrightarrow \Pi\otimes \pi^\vee.
\end {equation}
Now, the theta kernel satisfies
 $$\theta (\XP) (g, h) = \overline{\theta (\overline\XP)}\left (\begin {pmatrix} 1 & 0\\0 & -1\end {pmatrix} g\begin {pmatrix} 1 & 0\\0 & -1\end {pmatrix}, h\right)$$ (cf. \cite{roberts1996theta}),
 so we deduce that $\overline{\Pi\otimes\pi ^\vee}=\Pi ^\check\otimes\pi $
 also appears in the spectrum of the theta kernel. (Recall that the central characters of $\Pi $
 and $\pi $
 must agree since the central character of the Weil representation is trivial.) In particular $\Theta_{W, V} (\Pi)$
  contains the nonzero irreducible constituent $\pi $.
 For (1), take $W = W_4, $
 $V = V_B, $
 and $\Pi =\Pi_S (\pi_1,\pi_2) $.   As in the proof of Proposition \ref{prop: local global compatibility}, the global theta lift gives rise to a nontrivial map:
 \begin {equation}\label {Equation: map from global lift}\mathcal S_{\A_{F}} (\langle e_2, e_4\rangle\otimes V_B) \twoheadrightarrow\Pi_S (\pi_1,\pi_2)^\vee \otimes \Theta_{W_4, V_B} (\Pi_S (\pi_1,\pi_2)).\end{equation}
(Since $\GSO(V_B)$ is anisotropic, all theta lifts are square-integrable.) The map (\ref{Equation: map from global lift}) is a restricted tensor product of local maps. To prove the proposition, it suffices to show that, for all $v |\infty $ and for some vector $0\neq\l\in\tau_{m_v} ^ {\epsilon_v} $, the contraction $\l(\fee_{m_v} ^ {-\epsilon_v} )$
has nontrivial image under the local component
 \begin{equation}\label{eq:Local component of theta lift}
     \mathcal S_{F_v} (\langle e_2, e_4\rangle\otimes V_B)\twoheadrightarrow \Pi ^ + (\pi_{1, v},\pi_{2, v})^\vee\otimes \left (\pi_{1, v} ^ B\boxtimes\pi_{2, v} ^ B\right)
 \end{equation} 
 of (\ref{Equation: map from global lift}).
 Now, by local Howe duality  for the disconnected similitude group $\operatorname{GO}(V_B) $, the local theta lift $\Theta_{W_4, V_B} (\Pi ^ + (\pi_{1, v},\pi_{2, v}) $
 is irreducible when viewed as a representation of $\operatorname{GO}(V_B)  (F_v)$ \cite{howe1989transcending,roberts1996theta}. Hence $$\Theta_{W_4, V_B} (\Pi ^ + (\pi_{1, v},\pi_{2, v}) =  \pi_{1, v} ^ B\boxtimes\pi_{2, v} ^ B\oplus \pi_{2,v}^B\boxtimes \pi_{1,v}^B,$$
 and the map (\ref{eq:Local component of theta lift}) factors as 
 $$\mathcal S_{F_v} (\langle e_2, e_4\rangle\otimes V_B)\twoheadrightarrow \Pi ^ + (\pi_{1, v},\pi_{2, v})^\vee\otimes \left (\pi_{1, v} ^ B\boxtimes\pi_{2, v} ^ B\oplus \pi_{2,v}^B\boxtimes \pi_{1,v}^B\right) \twoheadrightarrow \Pi ^ + (\pi_{1, v},\pi_{2, v})^\vee\otimes \left (\pi_{1, v} ^ B\boxtimes\pi_{2, v} ^ B\right).$$
We now note that $\l(\fee_{m_v}^{\epsilon_v})$ is a harmonic in the sense of \cite{howe1989transcending} by Proposition \ref{prop: substitute for Harris},
 and generates a $U(2)$-type that appears in $\Pi ^ + (\pi_{1, v},\pi_{2, v})^\vee.$ It follows from \cite{howe1989transcending}  that its image under 
 $$\mathcal S_{F_v} (\langle e_2, e_4\rangle\otimes V_B)\twoheadrightarrow \Pi ^ + (\pi_{1, v},\pi_{2, v})^\vee\otimes \left (\pi_{1, v} ^ B\boxtimes\pi_{2, v} ^ B\oplus \pi_{2,v}^B\boxtimes \pi_{1,v}^B\right)$$
 is nontrivial. However,  $\l(\fee_{m_v}^{\epsilon_v})$ generates the $L$-type $\chi_{-(m_v+2),\epsilon_vm_v}^\vee$, which does not appear in $\pi_{2,v}^B\boxtimes \pi_{1,v}^B$. It follows that $\l(\fee_{m_v}^{\epsilon_v})$ has nonzero image under (\ref{eq:Local component of theta lift}). This proves (1).
 The proof of (2)
 is analogous.
\end{proof}
Finally we come to the main result of this section:
 \begin{lemma}\label{llama: non-vanishing input}
 There exists an automorphic representation $\pi $
 of $PB (\A_F) ^\times $
 of weight $2\boldsymbol m +2 $,
 along with vectors
 $$\alpha\in (\Theta (\pi\boxtimes\mathbbm 1)\otimes\boldsymbol\sigma_{\boldsymbol m}) ^ {\boldsymbol K_3},\;\;\beta\in (\Pi_S (\pi_1\otimes\pi_2)\otimes\boldsymbol\tau ^ {\boldsymbol\epsilon}_{\boldsymbol m}) ^ {\boldsymbol K_2},\;\;\gamma\in (\pi_2 ^\check\otimes\boldsymbol \chi ^\check_{-\boldsymbol\epsilon\boldsymbol m}) ^ {\boldsymbol K_1}, $$
 such that:
 $$  \widetilde {\mathcal P}_{S,\pi_1,\pi_2,\pi}^{\boldsymbol\epsilon} (\alpha, \beta,\gamma) = \integral_{[\widetilde H]}\l (\alpha (g, g')\otimes\beta (g)\otimes\gamma (g'))\d (g, g')\neq 0. $$
 \end{lemma}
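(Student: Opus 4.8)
The plan is to combine the vector-valued seesaw identity (Proposition \ref{prop: global seesaw}) with the non-vanishing of the two ``other-diagonal'' theta lifts (Proposition \ref{prop: other seesaw nonzero}) and a judicious choice of the auxiliary representation $\pi$ on $PB^\times$. By Proposition \ref{prop: global seesaw}, for suitable finite Schwartz data $\xphi_{1,f}, \xphi_{2,f}$ the period $\widetilde{\mathcal P}^{\boldsymbol\epsilon}_{S,\pi_1,\pi_2,\pi}(\theta_{\xphi_{1,f}\otimes\xphi_{2,f}\otimes\widetilde{\boldsymbol\fee}_{\boldsymbol m}}(\alpha),\beta,\gamma)$ equals, up to a nonzero constant, the integral
\begin{equation*}
\int_{[\PGSO(V_B)]} \alpha(g)\,\theta_{\xphi_{1,f}\otimes\boldsymbol\fee^{-\boldsymbol\epsilon}_{\boldsymbol m}}(\beta)(g)\,\theta_{\xphi_{2,f}\otimes\boldsymbol\xphi^{-\boldsymbol\epsilon}_{\boldsymbol m}}(\gamma)(g)\,\d g.
\end{equation*}
By Proposition \ref{prop: other seesaw nonzero}, as $\xphi_{1,f},\beta$ vary the first theta lift $\theta(\beta)$ realizes every vector in $(\pi_1^B\boxtimes\pi_2^B)\otimes\boldsymbol\chi^\vee_{\dots}$, and as $\xphi_{2,f},\gamma$ vary the second realizes every vector in $((\pi_2^B)^\vee\boxtimes(\pi_2^B)^\vee)\otimes\boldsymbol\chi^\vee_{\dots}$; after restricting to the diagonal $\GSO(V_B)\subset\GSO(V_B)\times_{\mathbb G_m}\GSO(V_B)$, the product $\theta(\beta)\cdot\theta(\gamma)$ therefore runs over (a spanning set of) vectors of the form $f_1^B\cdot f_2^B$ on $PB^\times(\A_F)$, where $f_1^B\in\pi_1^B$ and $f_2^B\in(\pi_2^B)^\vee$, with the correct weight $2\boldsymbol m+2$ at infinity. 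So it suffices to exhibit an automorphic representation $\pi$ of $PB^\times(\A_F)$ of weight $2\boldsymbol m+2$ such that the Petersson pairing $\langle f_1^B\cdot f_2^B,\,\alpha\rangle_{\mathrm{Pet}}$ is nonzero for some $\alpha\in\pi$ and some choice of newforms $f_i^B$.

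The existence of such a $\pi$ is the heart of the matter. First I would observe that $f_1^B\cdot f_2^B$ is a nonzero automorphic form on $PB^\times(\A_F)$ of parallel weight $2\boldsymbol m+2$ at infinity (the product of a holomorphic form of weight $\boldsymbol m+2$ and an antiholomorphic form of weight $-\boldsymbol m$, both of which one can arrange to be holomorphic discrete-series vectors of the appropriate type at each $v\mid\infty$); it is cuspidal because $PB^\times$ is anisotropic over $F$, so $\mathcal A(PB^\times(\A_F))$ decomposes discretely and every automorphic form is a finite sum of cusp forms. Hence $f_1^B\cdot f_2^B$ has nonzero projection onto some irreducible cuspidal summand $\pi\subset\mathcal A(PB^\times(\A_F))$, necessarily of weight $2\boldsymbol m+2$. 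Taking $\alpha$ to be (a suitable $\boldsymbol K_3$-isotypic vector in) $\pi$ inside the space spanned by the translates of that projection, one gets $\langle f_1^B\cdot f_2^B,\alpha\rangle_{\mathrm{Pet}}\neq 0$ by construction. One has to be slightly careful that $\alpha$ can be taken in $(\Theta(\pi\boxtimes\mathbbm 1)\otimes\boldsymbol\sigma_{\boldsymbol m})^{\boldsymbol K_3}$ — but that is automatic: $\Theta(\pi\boxtimes\mathbbm 1)$ contains a constituent $\widetilde\Pi$, and by the vector-valued seesaw we only need $\alpha$ to pair nontrivially with $f_1^B\cdot f_2^B$, which is a property of the underlying automorphic form on $PB^\times$, unaffected by the passage to $\GSp_6$.

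The only subtlety — and the step I expect to be the main obstacle — is matching the archimedean data precisely: one must check that the archimedean components of the Schwartz functions $\boldsymbol\fee^{-\boldsymbol\epsilon}_{\boldsymbol m}$, $\boldsymbol\xphi^{-\boldsymbol\epsilon}_{\boldsymbol m}$, $\widetilde{\boldsymbol\fee}_{\boldsymbol m}$ and the weight vectors $\beta,\gamma$ are chosen so that the images $\theta(\beta)_v\cdot\theta(\gamma)_v$ at each $v\mid\infty$ land in the $\chi_{2m_v+2,0}$-isotypic line of the weight-$(2m_v+2)$ holomorphic discrete series on $PB^\times(F_v)$, matching the minimal $K$-type constraint built into $\alpha$ via $\boldsymbol\sigma_{\boldsymbol m}$ and the branching generator $\l$. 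Concretely this amounts to Proposition \ref{Prop: local seesaw} combined with the covariance properties of $\phi^{\epsilon}_m$, $\xphi^{\epsilon}_m$ from Propositions \ref{prop: one-dimensional harmonic} and \ref{prop: S02 covariance}: since $\l_\epsilon(\widetilde\fee_m)$ is, up to a nonzero scalar, the tensor product $\phi_m^\epsilon\otimes\xphi_m^\epsilon$, the archimedean components are automatically compatible, and the product of the two archimedean theta lifts has exactly the prescribed $L$-type $\boldsymbol\chi_{2\boldsymbol m+2,0}$, which is the one occurring in $\alpha$. With this local compatibility in hand, the global argument above shows that $\widetilde{\mathcal P}^{\boldsymbol\epsilon}_{S,\pi_1,\pi_2,\pi}$ does not vanish identically, completing the proof. (Note $\pi$ may be defined over a strictly larger number field than $\pi_1^B,\pi_2^B$; this is harmless and in fact anticipated in the introduction's overview.)
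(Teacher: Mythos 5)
Your proposal is correct and follows essentially the same route as the paper: apply the vector-valued seesaw identity (Proposition \ref{prop: global seesaw}) to reduce to the triple integral on $[\PGSO(V_B)]$, use Proposition \ref{prop: other seesaw nonzero} to realize $\theta(\beta)$ and $\theta(\gamma)$ as explicit newform tensors in $\pi_1^B\boxtimes\pi_2^B$ and $(\pi_2^B)^\vee\boxtimes(\pi_2^B)^\vee$, and then pick $\pi$ as an irreducible constituent of the discrete (because $PB^\times$ is anisotropic) spectrum onto which $f_1^B\cdot f_2^\vee$ projects nontrivially, with the archimedean compatibility supplied by Proposition \ref{Prop: local seesaw}. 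The only point stated more carefully in the paper is that $\alpha$ is taken in $\pi\boxtimes\mathbbm 1$, depending only on the first $B^\times$-factor, so that the $\PGSO(V_B)$-integral factors into the nonzero projection $\int\alpha_0 f_1 f_2^\vee$ times the nonzero Petersson pairing $\int f_2^{\boldsymbol\epsilon}(f_2^{\boldsymbol\epsilon})^\vee$ over the second factor.
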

 \begin{proof}
 First, fix newforms
 $$f_1\in \pi_1^B,\;\; f^ {\boldsymbol\epsilon}_2\in\pi_2 ^ B,\;\; f_2 ^\check\in (\pi_2 ^ B) ^\check,\;\; (f_2^{\boldsymbol\epsilon})^\check\in (\pi_2 ^ B) ^\check $$
 of weights $\boldsymbol m +2, $
 $\boldsymbol\epsilon\boldsymbol m $,
 $\boldsymbol m$,
 and $-\boldsymbol\epsilon\boldsymbol m $, respectively.
 Then Proposition \ref{prop: other seesaw nonzero}
 implies that we may
 choose  vectors $$\beta\in\left (\Pi_S (\pi_1,\pi_2)\otimes\boldsymbol\tau ^ {\boldsymbol\epsilon}_{\boldsymbol m}\right) ^ {\boldsymbol K_2},\;\;\gamma\in (\pi_2 ^\check\otimes\boldsymbol \chi ^\check_{-\boldsymbol\epsilon\boldsymbol m}) ^ {\boldsymbol K_1}$$
 and  Schwartz functions
 $$
\XP_{1, f}\in\mathcal S_{\A_{F, f}} (\langle e_2, e_4\rangle\otimes V_B),\;\;\XP_{2, f}\in\mathcal S_{\A_{F, f}} (\langle e_6\rangle\otimes V_B) $$
 such that:
 \begin{equation}
     \theta_{\XP_{1, f}\otimes\boldsymbol\fee ^ {-\boldsymbol\epsilon}_{\boldsymbol m}} = f_1\otimes f_2  ^ {\boldsymbol\epsilon};\;\;\;\theta_{\XP_{2, f}\otimes\boldsymbol\fee ^ {-\boldsymbol\epsilon}_{\boldsymbol m}} (\gamma) = f_2 ^\check\otimes (f_2 ^ {\boldsymbol\epsilon}) ^\check.
 \end{equation}
 Now, the automorphic function $g\mapsto f_1 (g)\cdot f_2 ^\check (g) $
 corresponds to a Hilbert modular form on $B ^\times $
 of weight $2\boldsymbol m+2 $
 and trivial central character. We may therefore choose some automorphic representation $\pi $
 of $PB (\A_F) ^\times $ of weight $2\boldsymbol m +2, $
 with a vector $\alpha_0$ of weight $-(2\boldsymbol m +2) $,
 such that $$\integral_{[PB ^\times]} \alpha_0(g) f_1 (g)f_2^\check(g)\d g \neq 0.$$
 Now, we turn $\alpha_0 $
 into an automorphic form $\alpha$ on $\PGSO(V_B)(\A_F)$ by setting $\alpha(\boldsymbol p_Z(g_1,g_2)) = \alpha_0(g_1)$. 
 It is clear that $\alpha $
 is a vector in $\left ((\pi\boxtimes\mathbbm 1)\otimes\boldsymbol\chi_{2\boldsymbol m +2, 0}\right) ^ {\boldsymbol L}$.
 Then Proposition \ref{prop: global seesaw} allows us to compute:
 $$\widetilde {\mathcal P}_{S,\pi_1,\pi_2,\pi} (\theta_{\XP_{1, f}\otimes\XP_{2, f}\otimes\widetilde {\boldsymbol\fee}_{\boldsymbol m}} (\alpha),\beta,\gamma) =\left (\integral_{[PB ^\times]}\alpha_0 (g) f_1 (g) f_2^\check (g)\d g\right)\cdot\left (\int_{[PB ^\times]} f_2^ {\boldsymbol\epsilon} (g) (f_2 ^ {\boldsymbol\epsilon}) ^\check (g)\d g\right)\neq 0. $$
 \end{proof}
\section{Proof of main result: Hodge classes in the non-generic case}\label{sec: proof Hodge}
\subsection{Construction}
\subsubsection{}\label{subsection: non-generic main result}
Consider the inclusions of Shimura varieties:
\begin{equation}\label{EQ: the three Shimura varieties for Hodge class}
    S (\boldgsp_6)\xleftarrow{\iota_1}S ( \widetilde {\boldsymbol H})\xrightarrow{\iota_2}S 
    (\boldgsp_4)\times S (\boldgl_2),
\end{equation}
where $$\widetilde H\coloneqq\GSp_{4}\times_{\mathbb G_m}\GL_{2}\subset\GSp_{6} . $$
Note that $\iota_1 ^\ast\mathcal V_{(\boldsymbol m -2,\boldsymbol m - 2,0)} $ contains $\iota_2 ^\ast\mathcal W_{\boldsymbol m} $ as a direct factor with multiplicity one. Since $\iota_2 $ is an open and closed embedding at sufficiently small level, one obtains from (\ref{EQ: the three Shimura varieties for Hodge class}) a map
\begin{equation}
    \label{EQ: back from big Shimura righty cohomology}
    H ^ i (S (\boldgsp_6), {\mathcal V}_{(\boldsymbol m-2,\boldsymbol m -2, 0)})\to H ^ i (S (\boldgsp_4)\times S (\boldgl_2),\mathcal W_{\boldsymbol m}).
\end{equation}
The Hodge classes we construct will be the images of  classes on $S (\boldgsp_6) $
under the map (\ref{EQ: back from big Shimura righty cohomology}).
\subsubsection{}
Let $\pi_1,\pi_2, $ and $\Pi_{S_f} $ be as in (\ref{subsubsec: notation for endoscopic lifts in coh}), where $| S_f |\geq 2 $ is even. We let $B $
be the unique quaternion algebra over $F $ which is ramified at $S_f $ and split at all archimedean places.
For any finite set $\Sigma\supset S_f $
of places of $F $, including all infinite ones, we consider the unramified Hecke algebra with $\Q $-coefficients:\begin {equation}\widetilde {\mathbb T} ^\Sigma =\otimes_{v\not\in\Sigma}\mathcal H(\GSp_6 (F_v),\GSp_6 (\mathcal O_v)).\end{equation}
For an auxiliary automorphic representation $\pi $ of $PB ^\times $ which is tempered, unramified outside of $\Sigma $, and of weight $2\boldsymbol m +2, $
the Hecke action on $\Theta (\pi\boxtimes\mathbbm 1) $
defines a maximal ideal $I ^\Sigma\subset\widetilde {\mathbb T} ^\Sigma $.
\begin{definition}\label{definition: Hodge Tate subspacce}
Fix $\pi $ and $\Sigma $ as above, a sufficiently small compact open subgroup $K=\prod K_v\subset\GSp_6 (\A_{F_f}) $ such that $K_v =\GSp_6 (\O_v) $ for $v\not\in\Sigma $, and a coefficient field $E \supset\Q (\boldsymbol m)$
over which $\Pi_{S_f} $ is defined. Then we define $$\operatorname{Hdg}_E (\pi, K,\Sigma) \subset H ^ {4d} (S (\boldgsp_4)\times S (\boldgl_2),\mathcal W_{\boldsymbol m, E})(2d)\left[\Pi^\vee_{S_f}\boxtimes\pi_2 \right]$$
to be the image of the Tate twist of the composite map
\begin{equation*}
\begin{split}
IH ^ {4d} (S _K(\boldgsp_6) ^\ast,\mathcal V_{(\boldsymbol m-2,\boldsymbol m -2,0), E}) [I ^\Sigma]\to H ^ {4d} (S (\boldgsp_6),\mathcal V_{(\boldsymbol m-2,\boldsymbol m -2,0), E})\xrightarrow{(\ref{EQ: back from big Shimura righty cohomology})}H ^ {4d} (S (\boldgsp_4)\times S (\boldgl_2),\mathcal W_{\boldsymbol m, E})\\\twoheadrightarrow H_! ^ {4d} (S (\boldgsp_4)\times S (\boldgl_2),\mathcal W_{\boldsymbol m, E})\left [\Pi_{S_f}^\vee\boxtimes\pi_2 \right]. 
\end {split}
\end {equation*}
\end {definition}
\begin{lemma}\label{Lemma: get Hodge take classes}
Any $\xi\in\Hodge_E (\pi, K,\Sigma) $
is a Hodge class of weight $(0, 0) $. Moreover, the image of $\xi $ in
$$H_{\et,!} ^ {4d}\left ((S (\boldgsp_4)\times S (\boldgl_2)_{\overline\Q},\mathcal W_{\boldsymbol m, E_\lambda}\right) (2d)\left [\Pi_{S_f}^\vee\boxtimes\pi_{2, f} \right] $$
is $\Gal(\overline\Q/F^c ) $-invariant, where $\lambda $ is any finite prime of $E $.
\end{lemma}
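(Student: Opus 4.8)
The strategy is to reduce the statement about $\xi$ to the three key facts already established about the nontempered representations $\widetilde\Pi$ appearing in $\Theta(\pi\boxtimes\mathbbm 1)$, namely Lemma \ref{Lemma: a source of the hajj teach classes}. First I would observe that, by Proposition \ref{Prop: very weak endoscopic classification} and Proposition \ref{prop: Arthur parrameter for the lift}, every discrete automorphic representation $\widetilde\Pi$ of $\GSp_6(\A_F)$ whose Hecke eigensystem outside $\Sigma$ matches that of $\Theta(\pi\boxtimes\mathbbm 1)$ — i.e. every $\widetilde\Pi$ cut out by the maximal ideal $I^\Sigma$ — is nearly equivalent to a constituent of $\iota^\ast\Theta(\pi\boxtimes\mathbbm 1)$. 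Hence the subspace $IH^{4d}(S_K(\boldgsp_6)^\ast, \mathcal V_{(\boldsymbol m-2,\boldsymbol m-2,0),E})[I^\Sigma]$ decomposes (after $\otimes_E\C$) as a sum of $\widetilde\Pi_f$-isotypic pieces to which Lemma \ref{Lemma: a source of the hajj teach classes} applies, using the comparison between $IH^\ast$ of the minimal compactification and $H^\ast_{(2)}$ recalled in (\ref{subsubsec: diagram section}).

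Next I would handle the Hodge-theoretic claim. By Lemma \ref{Lemma: a source of the hajj teach classes}(1), $H^{4d}_{(2)}(S(\boldgsp_6),\mathcal V_{(\boldsymbol m-2,\boldsymbol m-2,0),\C})[\widetilde\Pi_f]$ is purely of Hodge type $(2d,2d)$ for Zucker's Hodge structure on $L^2$-cohomology, which agrees with the Hodge structure on $IH^\ast$ of the minimal compactification by the proof of Zucker's conjecture. The map (\ref{EQ: back from big Shimura righty cohomology}) is induced by $\iota_{2,\ast}\circ\iota_1^\ast$ for the correspondence $S(\widetilde{\boldsymbol H})$; since $\iota_1$ is a map of Shimura varieties and $\iota_2$ is open and closed, both $\iota_1^\ast$ and $\iota_{2,\ast}$ are morphisms of Hodge structures (the latter being, up to the twist already accounted for in $\mathcal W_{\boldsymbol m}$ versus $\iota_1^\ast\mathcal V_{(\boldsymbol m-2,\boldsymbol m-2,0)}$, a finite pushforward of degree zero, hence preserving weights). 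Here I should be slightly careful: the map passes through the full cohomology $H^{4d}(S(\boldgsp_4)\times S(\boldgl_2),\mathcal W_{\boldsymbol m,E})$ before projecting to inner cohomology, and the projection to the $\Pi_{S_f}^\vee\boxtimes\pi_2$-isotypic part of inner cohomology is defined using the $E$-structure and is $\GSp_4(\A_{F,f})\times\GL_2(\A_{F,f})$-equivariant, so it is a morphism of $E$-Hodge structures once we know the relevant space carries one. Thus $\xi$, as the image of a class of type $(2d,2d)$ under $-(2d)$, lands in $H^{0,0}$, i.e. is a Hodge class of weight $(0,0)$, with coefficients in $E$.

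For the Galois-invariance statement, I would argue in parallel using Lemma \ref{Lemma: a source of the hajj teach classes}(2): for any finite prime $\lambda$ of $E$, $\Gal(\overline\Q/F^c)$ acts on $IH^{4d}(S_K(\boldgsp_6)_{\overline\Q},\mathcal V_{(\boldsymbol m-2,\boldsymbol m-2,0),E_\lambda})[I^\Sigma]$ through $\chi^{-2d}$, so after the Tate twist by $2d$ the action becomes trivial. The étale realization of the correspondence $S(\widetilde{\boldsymbol H})$ is $\Gal(\overline\Q/\Q)$-equivariant (all the Shimura varieties and the embeddings $\iota_1,\iota_2$ are defined over $\Q$, or at worst over $F^c$, which suffices), and the projection onto the $\Pi_{S_f}^\vee\boxtimes\pi_{2,f}$-isotypic component of $H^{4d}_{\et,!}$ commutes with the Galois action since it is defined over the coefficient field $E$. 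Therefore the image of $\xi$ in $H^{4d}_{\et,!}((S(\boldgsp_4)\times S(\boldgl_2))_{\overline\Q},\mathcal W_{\boldsymbol m,E_\lambda})(2d)[\Pi_{S_f}^\vee\boxtimes\pi_{2,f}]$ is fixed by $\Gal(\overline\Q/F^c)$, as claimed.

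\textbf{Main obstacle.} The substantive content has all been packaged into Lemma \ref{Lemma: a source of the hajj teach classes}, so the remaining work is largely bookkeeping; the one point that requires genuine care is checking that the various isotypic projections and the passage between $IH^\ast$ of the minimal compactification, $H^\ast_{(2)}$, and inner cohomology $H^\ast_!$ are simultaneously compatible with the Hodge structures, the Galois action, and the $E$-rational structures — in particular that Zucker's Hodge structure on $H^\ast_{(2)}$ matches the one on $IH^\ast$ of the Baily–Borel compactification on the relevant isotypic piece, and that this identification is preserved by the pushforward along the finite open-closed map $\iota_2$ and by the Hecke correspondences defining the projection to $[\Pi_{S_f}^\vee\boxtimes\pi_2]$. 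I expect no real difficulty here beyond assembling the standard comparison results, but it is the place where a careless argument could go wrong.
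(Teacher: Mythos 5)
Your overall strategy coincides with the paper's: reduce everything to Lemma \ref{Lemma: a source of the hajj teach classes} via the compatibility of $IH^\ast$ of the minimal compactification, $H^\ast_{(2)}$, and $H^\ast_!$, and then observe that the correspondence $\iota_{2,\ast}\circ\iota_1^\ast$ and the isotypic projections respect Hodge structures and the Galois action. The bookkeeping in your second and third paragraphs is essentially what the paper does (more tersely, via a single commutative diagram).

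There is, however, one concrete gap in your first step. You assert that every discrete $\widetilde\Pi$ annihilated by the maximal ideal $I^\Sigma$ has Hecke eigensystem \emph{equal} to that of $\Theta(\pi\boxtimes\mathbbm 1)$ and hence is nearly equivalent to a constituent of it. This is false in general: $\widetilde{\mathbb T}^\Sigma$ is a Hecke algebra with $\Q$-coefficients, so two characters $\widetilde{\mathbb T}^\Sigma\to\C$ with the same (maximal) kernel $I^\Sigma$ need only be conjugate under $\Aut(\C/\Q)$. Consequently $[I^\Sigma]$ cuts out the isotypic pieces of all $\widetilde\Pi$ nearly equivalent to constituents of $\Theta(\pi\boxtimes\mathbbm 1)^\sigma$ for \emph{any} $\sigma\in\Aut(\C/\Q)$ — this is precisely why the introduction insists on summing over $\Aut(\C/\Q)$-conjugates of $\pi$ before descending to $E$. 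Propositions \ref{Prop: very weak endoscopic classification} and \ref{prop: Arthur parrameter for the lift}, which you cite for this step, do not close this gap: they concern near equivalence classes, not the rationality of $I^\Sigma$. The repair is short and is exactly what the paper does: since the spherical theta correspondence is defined over $\Q$, one has $\Theta(\pi\boxtimes\mathbbm 1)^\sigma$ nearly equivalent to a constituent of $\Theta(\pi^\sigma\boxtimes\mathbbm 1)$, and Lemma \ref{Lemma: a source of the hajj teach classes} applies verbatim to the auxiliary representation $\pi^\sigma$ (which is again tempered of weight $2\boldsymbol m+2$ on $PB^\times$). With that sentence inserted, your argument goes through.
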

\begin{proof}
Consider the following commutative diagram:
\begin{center}
    \begin{tikzcd}
       H_{(2)} ^ {4d} (S_K (\boldgsp_6),\mathcal V_{(\boldsymbol m-2,\boldsymbol m -2,0),\C}) (2d)[I ^\Sigma]\arrow [r]\arrow [d] & H ^ {4d} (S_K (\boldgsp_6),\mathcal V_{(\boldsymbol m-2,\boldsymbol m -2,0),\C})(2d)\arrow [d]\\
        H_{(2)} ^ {4d} (S (\boldgsp_4)\times S (\boldgl_2),\mathcal W_{\boldsymbol m, \C})(2d)\left [\Pi_{S_f}^\vee\boxtimes\pi_2 \right]\arrow[r, "\sim"] & H_! ^ {4d} (S (\boldgsp_4)\times S (\boldgl_2),\mathcal W_{\boldsymbol m, \C})(2d)\left [\Pi^\vee_{S_f}\boxtimes\pi_2 \right].
    \end{tikzcd}
\end{center}
The left and bottom arrows in this diagram are maps of pure Hodge structures, and by construction $\Hodge_E (\pi, K,\Sigma)\otimes_E\C $ is the image of the composite from the top left to the bottom right. We wish to show that every element of $ H_{(2)} ^ {4d} (S_K (\boldgsp_6),\mathcal V_{(\boldsymbol m-2,\boldsymbol m -2,0),\C}) (2d)[I ^\Sigma]$ has weight $(0, 0) $. If $\widetilde\Pi $ is an automorphic representation of $\GSp_6 (\A_F) $
such that $\widetilde\Pi ^ K_f\neq 0 $
is annihilated by $I ^\Sigma$, then there exists
a Galois element $\sigma\in\Gal (\overline\Q/\Q) $
such that $\widetilde\Pi $ is nearly equivalent to a constituent of $\Theta (\pi\boxtimes\mathbbm 1) ^\sigma $. Since the spherical theta correspondence is defined over $\Q $, it follows that $\widetilde\Pi $ is nearly equivalent to a constituent of $\Theta (\pi ^\sigma\boxtimes\mathbbm 1) $.
Thus, by  Lemma \ref{Lemma: a source of the hajj teach classes}, $\Hodge_E (\pi, K,\Sigma) $ consists of hodge classes of weight $(0, 0) $.

The Galois invariance is similar, replacing $\C$ with $\overline\Q_\l $ and  $L ^ 2 $ cohomology with absolute intersection cohomology in the diagram above.
\end{proof}
\subsection{Nonvanishing}
\subsubsection{}
To test the non-degeneracy of the subspace $\Hodge_(\pi, K,\Sigma) $, we will use the following proposition.
\begin{prop}\label{prop:. Identity for college classes}
Let $\Pi=\Pi_S, $ where $S = S_f\sqcup S_\infty$, and choose an auxiliary $\pi$ as above. Suppose given $$\alpha\in\left (\Theta(\pi\boxtimes\mathbbm 1)\otimes\boldsymbol\sigma_{\boldsymbol m}\right) ^ { {\boldsymbol K}_3}. $$
\begin {enumerate}
\item
Fix choices of signs $\boldsymbol\epsilon =\set {\epsilon_v}_{v |\infty} $ and $\boldsymbol\epsilon' =\set {\epsilon_v}_{v |\infty}$. Then: $$\langle \iota_{2,\ast}\circ\iota^\ast_1(\cl(\alpha)), \cl_{S_\infty}^{\boldsymbol\epsilon}(\beta)\boxtimes\cl'_{\boldsymbol\epsilon'}(\gamma) = \begin{cases}
\widetilde{\mathcal{ P}}_{S,\pi_1,\pi_2,\pi}(\alpha,\beta,\gamma), & \text{if } S_\infty = \emptyset \text{ and }\boldsymbol\epsilon=\boldsymbol\epsilon';\\
0, & \text{otherwise.}
\end{cases}
$$
\item
After choosing isomorphisms
$$\Pi_{S_f}\simeq\left (\Pi\otimes\boldsymbol\tau_{\boldsymbol m} ^ {\boldsymbol\epsilon}\right) ^ {\boldsymbol K_3},\;\;\pi_{2, f} ^\vee\simeq (\pi_2 ^\vee\otimes\boldsymbol\chi_{-\boldsymbol\epsilon\boldsymbol m}) ^ {\boldsymbol K_1}, $$
the composite maps
$$\Pi_{S_f}\otimes\pi ^\vee_{2, f}\xrightarrow{\CL ^ {\boldsymbol\epsilon}\otimes\CL ^ {-\boldsymbol\epsilon}}H ^ {4d}_{(2)} (S (\boldgsp_4)\times S (\boldgl_2),\mathcal W_{\boldsymbol m,\C})\xrightarrow{\langle\iota_{2,\ast}\iota_1 ^\ast\zeta,\cdot\rangle}\C $$
are independent of $\boldsymbol\epsilon $ up to a scalar.
\end {enumerate}
\end{prop}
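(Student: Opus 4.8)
I plan to prove this by following the template of Proposition~\ref{crop: cohomology. Pairing special cycle variant}, with the Shimura subvariety $S(\widetilde{\boldsymbol H})\subset S(\boldgsp_6)$ playing the role of the special cycle $S(\boldsymbol H)$ and the triple product period $\widetilde{\mathcal P}$ playing the role of $\mathcal P$. The first move is to rewrite the pairing on the left-hand side of (1) as a period integral. Since $\iota_2$ is open and closed at sufficiently small level and $\iota_1^\ast\mathcal V_{(\boldsymbol m-2,\boldsymbol m-2,0)}$ contains $\iota_2^\ast\mathcal W_{\boldsymbol m}$ as a direct factor with multiplicity one, the operator $\langle\iota_{2,\ast}\iota_1^\ast(-),-\rangle$ is computed — after fixing a top-degree generator $\mathbbm1_{\widetilde{\boldsymbol H}}$ of $\wedge^{\mathrm{top}}\mathfrak p_{\widetilde H}$ — by restricting differential forms to $S(\widetilde{\boldsymbol H})$ and integrating over $[Z_{\widetilde H}\backslash\widetilde H]$, exactly as in the first display of the proof of Proposition~\ref{crop: cohomology. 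Pairing special cycle variant}. Combined with the realization (\ref{eqn:Realization cohomology definition}) of $\cl(\alpha)$, $\cl_{S_\infty}^{\boldsymbol\epsilon}(\beta)$ and $\cl'_{\boldsymbol\epsilon'}(\gamma)$ by vector-valued automorphic forms, this expresses the left side of (1), up to a nonzero scalar independent of $(\alpha,\beta,\gamma)$, as $\int_{[Z_{\widetilde H}\backslash\widetilde H]}\Lambda\big(\alpha(g,g')\otimes\beta(g)\otimes\gamma(g')\big)\,\d(g,g')$, where $\Lambda$ is the $\boldsymbol K_\infty$-equivariant contraction obtained by composing the wedge-product map
\[
\wedge^{\boldsymbol 2,\boldsymbol 2}\mathfrak p_{\GSp_6}^\ast\otimes\wedge^{\boldsymbol p(\boldsymbol\epsilon,S_\infty),\boldsymbol q(\boldsymbol\epsilon,S_\infty)}\mathfrak p_{\GSp_4}^\ast\otimes\wedge^{1-\boldsymbol p(\boldsymbol\epsilon'),1-\boldsymbol q(\boldsymbol\epsilon')}\mathfrak p_{\GL_2}^\ast\longrightarrow\wedge^{\mathrm{top}}\mathfrak p_{\widetilde H}^\ast
\]
with the $\widetilde H(F)$-invariant coefficient projection $V_{(\boldsymbol m-2,\boldsymbol m-2,0)}\otimes V_{(\boldsymbol m-2,0)}\otimes V_{\boldsymbol m-2}^\vee\to\Q(\boldsymbol m)$ coming from $\iota_1^\ast$, $\iota_2^\ast$ and Poincaré duality, entirely parallel to the composite (\ref{eq: composite}).

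Next I would analyze $\Lambda$ place by place. It factors as $\bigotimes_{v\mid\infty}$ of a $K_{3,v}\times(K_{2,v}\times K_{1,v})_0$-equivariant map out of $\sigma_{m_v}\otimes\tau_{m_v}^{\epsilon_v}\otimes\chi_{-\epsilon_v' m_v}^\vee$. Tracking the action of $Z_{K_v}$ — equivalently, the refined Hodge bidegrees of Proposition~\ref{crop: Hodge structure on why algebra cohomology}, which forces the $S(\boldgsp_6)$-class to contribute to $IH^{4d}$ only through the $(2d,2d)$-component attached to the middle Adams--Johnson representation $\Pi_{\psi_v,1}$ — shows that $\Lambda$ vanishes unless $S_\infty=\emptyset$ and $\boldsymbol\epsilon=\boldsymbol\epsilon'$. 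In the surviving case I would identify $\Lambda$ with a nonzero multiple of the projection $\l$ of \S\ref{subsection: vector seesaw}: this is precisely the vector-valued local seesaw identity, Proposition~\ref{Prop: local seesaw}, together with Propositions~\ref{prop: one-dimensional harmonic} and~\ref{prop:Calculating theWheat Dr. Schwartz function}, which pin down the relevant one-dimensional $\Hom$-space and its generator. Plugging this back into the formula of the first paragraph and comparing with the definition (\ref{Equation: definition of period integral triple product}) yields the identity of (1) up to a nonzero scalar; alternatively, one can invoke Proposition~\ref{prop: global seesaw} directly once $\Lambda$ is identified. Part (2) then follows by the translation argument of Proposition~\ref{crop: cohomology. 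Pairing special cycle variant}(2): the element $(g_{\boldsymbol\epsilon},g_{\boldsymbol\epsilon}')\in\widetilde H(F\otimes\R)\subset\widetilde H(\A_F)$ that carries $\boldsymbol\tau_{\boldsymbol m}^{\boldsymbol\epsilon}$ to $\boldsymbol\tau_{\boldsymbol m}^{\boldsymbol+}$ and $\boldsymbol\chi_{-\boldsymbol\epsilon\boldsymbol m}^\vee$ to $\boldsymbol\chi_{-\boldsymbol m}^\vee$ intertwines the composite maps for varying $\boldsymbol\epsilon$, and, since it lies inside $\widetilde H$, leaves the triple product period unchanged.

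The main obstacle will be the archimedean bookkeeping in the second paragraph: showing that the cup-product-then-project composite $\Lambda$ is genuinely \emph{nonzero} — not just of the expected one-dimensional $\Hom$-space — in the case $S_\infty=\emptyset$, $\boldsymbol\epsilon=\boldsymbol\epsilon'$, and that it then literally agrees with $\l$. This seems to require the explicit harmonic Schwartz functions and raising/lowering-operator computations of \S\ref{subsec: Archimedean calculations} and \S\ref{subsection: vector seesaw}, plus careful matching of the plectic Hodge bidegrees on all three Shimura varieties so that the total degrees and the $Z_{K_v}$-weights line up. Everything else needed to make the class maps and the pairing well-defined — the comparison of $L^2$ with inner/intersection cohomology for the $\Pi_{S_f}$-, $\pi_{2,f}$-, and $\Theta(\pi\boxtimes\mathbbm1)$-isotypic parts — is already available from Propositions~\ref{prop: plectic Hodge types and class maps}, \ref{Prop: comparison isomorphism's for G}, \ref{prop: comparison isomorphism for G'}, and \S\ref{Subsubsection: non-tempered cohomology notation}, so I expect the rest of the argument to be essentially formal.
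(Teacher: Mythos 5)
Your proposal is correct and follows essentially the same route as the paper: the paper's proof simply transports the argument of Proposition \ref{crop: cohomology. Pairing special cycle variant} to the pair $S(\widetilde{\boldsymbol H})\subset S(\boldgsp_6)$ versus $S(\boldgsp_4)\times S(\boldgl_2)$, with the only new ingredient being the $(\boldsymbol K_2\times\boldsymbol K_1)_0$-equivariant contraction (your $\Lambda$, the paper's (\ref{eq:composite 2})), which vanishes by weight considerations unless $S_\infty=\emptyset$ and $\boldsymbol\epsilon=\boldsymbol\epsilon'$ and is nonzero in that case. Your only excess is worrying about literally matching $\Lambda$ with $\l$ via the Schwartz-function computations: since $\l$ is by definition any generator of a one-dimensional $\Hom$-space and the identity is only asserted up to normalization, nonvanishing of the finite-dimensional contraction suffices.
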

\begin{proof}
The proof is essentially identical to 
Proposition \ref{crop: cohomology. Pairing special cycle variant}. The only new ingredient is the calculation of the $(\boldsymbol K_2\times \boldsymbol K_1)_0$-equivariant composite:
\begin{equation}\label{eq:composite 2}
\begin{split}
    \boldsymbol\sigma_{\boldsymbol m}\otimes \boldsymbol\tau_{\boldsymbol m, S_\infty}^{\boldsymbol\epsilon}\otimes \boldsymbol\chi_{-\boldsymbol\epsilon'\boldsymbol m}^\vee \to \wedge ^ {\boldsymbol 2,\boldsymbol 2}\mathfrak p ^\ast_{\GSp_6}\otimes V_{(\boldsymbol m -2,\boldsymbol m -2, 0),\C}\otimes\wedge ^ {\boldsymbol p (\boldsymbol\epsilon, S_\infty),\boldsymbol q (\boldsymbol\epsilon, S_\infty)}\mathfrak p ^\ast_{\GSp_4}\otimes V_{(\boldsymbol m -2, 0),\C}\\\otimes\wedge^{1-\boldsymbol p (\boldsymbol\epsilon'), 1 -\boldsymbol q (\boldsymbol\epsilon')}\mathfrak p ^\ast_{\GL_2}\otimes V_{\boldsymbol m -2,\C} ^\check
    \to\wedge ^ {3 + \boldsymbol p (\boldsymbol\epsilon, S_\infty) -\boldsymbol p (\boldsymbol\epsilon'), 3+\boldsymbol q (\boldsymbol\epsilon, S_\infty) -\boldsymbol q (\boldsymbol\epsilon')}\mathfrak p ^\ast_{\widetilde H}\xrightarrow{\mathbbm 1_{\widetilde H} }\C,
    \end{split}
\end{equation}
which is automatically trivial unless $S_\infty =\emptyset $ and $\boldsymbol\epsilon =\boldsymbol\epsilon'$, in which case one can check that it is not trivial.
\end{proof}
\begin {thm}\label{main theorem  Hodge classes}
    Let $\pi_1 $ and $\pi_2 $ be cuspidal automorphic representations of $\GL_2 (\A) $
of weights $\boldsymbol m +2 $ and $\boldsymbol m $, respectively, where $\boldsymbol m = (m_v)_{v |\infty} $
for positive  integers $m_v $. Assume that the central characters of $\pi_1 $ and $\pi_2$ agree and have  infinity type $\omega_{\boldsymbol m}$.
Let $\Pi_{S_f}$ be as  in (\ref{subsubsec: notation for endoscopic lifts in coh}) for a set $S_f$ of finite places of $F$ such that $| S_f |\geq 2 $ is even, and choose a coefficient field $E\supset\Q (\boldsymbol m) $
over which $\pi_i $ and $\Pi_{S_f} $ are defined.
Then there exists a triple $(\pi, K,\Sigma) $ as in Definition \ref{definition: Hodge Tate subspacce}  and a Hodge class $$\xi\in\Hodge_E(\pi, K,\Sigma)\subset H ^ {4d} _!(S (\boldgsp_4)\times S (\boldgl_2),\mathcal W_{\boldsymbol m, E}) (2d)\left [\Pi_{S_f}\boxtimes\pi_2 ^\vee\right] $$
such that the induced map
$$\xi_\ast: H_! ^ {3d}(S (\boldgsp_4),\mathcal V_{\boldsymbol m, E}) (d) [\Pi_{S_f}]\to H_! ^ d (S (\boldgl_2),\mathcal V'_{\boldsymbol m, E}) [\pi_{2, f}]$$
is of the form
$$\Pi_{S_f} ^ E\otimes H_! ^ {3d} (S (\boldgsp_4),\mathcal V_{\boldsymbol m, E})_{\Pi_{S_f}} (d)\xrightarrow{\l\otimes s} \pi_{2, f} ^ E\otimes H_! ^ d (S (\boldgl_2),\mathcal V'_{\boldsymbol m, E})_{\pi_{2, f}}, $$
where $s $ is a surjection and $\l $ is a nontrivial $E $-linear map. Moreover, the image of $\xi$ in $\l$-adic \'etale cohomology is $\Gal(\overline{ \Q}/F^c)$-equivariant for all $\l$.
\end {thm}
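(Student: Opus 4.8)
The strategy is to deduce the theorem by combining three ingredients already in place: Lemma~\ref{Lemma: get Hodge take classes}, which guarantees that every element of $\Hodge_E(\pi,K,\Sigma)$ is a weight-$(0,0)$ Hodge class whose $\l$-adic realization is $\Gal(\overline\Q/F^c)$-invariant; Proposition~\ref{prop:. Identity for college classes}, which reduces the cup-product pairing against a class of the form $\iota_{2,\ast}\iota_1^\ast(\cl(\alpha))$ to the triple product period $\widetilde{\mathcal P}^{\boldsymbol\epsilon}_{S_f,\pi_1,\pi_2,\pi}$; and Lemma~\ref{llama: non-vanishing input}, which supplies auxiliary data making that period nonzero. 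As at the start of the proof of Theorem~\ref{biggie thm}, I would first reduce to $E=\C$.

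Concretely, let $B$ be the totally indefinite quaternion algebra over $F$ ramified exactly at $S_f$, and invoke Lemma~\ref{llama: non-vanishing input} to produce an automorphic representation $\pi$ of $PB^\times(\A_F)$ of weight $2\boldsymbol m+2$, a sign vector $\boldsymbol\epsilon$, and vectors $\alpha\in(\Theta(\pi\boxtimes\mathbbm 1)\otimes\boldsymbol\sigma_{\boldsymbol m})^{\boldsymbol K_3}$, $\beta\in(\Pi_{S_f}(\pi_1,\pi_2)\otimes\boldsymbol\tau_{\boldsymbol m}^{\boldsymbol\epsilon})^{\boldsymbol K_2}$, $\gamma\in(\pi_2^\vee\otimes\boldsymbol\chi^\vee_{-\boldsymbol\epsilon\boldsymbol m})^{\boldsymbol K_1}$ with $\widetilde{\mathcal P}^{\boldsymbol\epsilon}_{S_f,\pi_1,\pi_2,\pi}(\alpha,\beta,\gamma)\neq 0$. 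I would then take a finite set $\Sigma$ of places of $F$ containing $S_f$, all archimedean places, and all places where $\pi$ or $\alpha$ is ramified, together with a sufficiently small compact open $K\subset\GSp_6(\A_{F,f})$ that is hyperspecial outside $\Sigma$ and with $\alpha^K\neq 0$; this is a triple $(\pi,K,\Sigma)$ as in Definition~\ref{definition: Hodge Tate subspacce}. The class $\cl(\alpha)\in H^{4d}_{(2)}(S(\boldgsp_6),\mathcal V_{(\boldsymbol m-2,\boldsymbol m-2,0),\C})$ produced by the realization map of \S\ref{Subsubsection: non-tempered cohomology notation} is a Hecke eigenvector for the eigensystem cut out by $\Theta(\pi\boxtimes\mathbbm 1)$, hence, via Zucker's isomorphism, lies in $IH^{4d}(S_K(\boldgsp_6)^\ast,\mathcal V_{(\boldsymbol m-2,\boldsymbol m-2,0),\C})[I^\Sigma]$; its image $\xi$ under the composite of Definition~\ref{definition: Hodge Tate subspacce} is the candidate Hodge class, and the Hodge-type and Galois-invariance assertions of the theorem are then immediate from Lemma~\ref{Lemma: get Hodge take classes}.

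For the non-triviality and the shape of $\xi_\ast$, I would use that $S=S_f$ has empty archimedean part, so Proposition~\ref{prop:. Identity for college classes}(1) applied with $\boldsymbol\epsilon'=\boldsymbol\epsilon$ gives, up to a nonzero scalar,
\[
\big\langle\, \xi_\ast\big(\cl_\emptyset^{\boldsymbol\epsilon}(\beta)\big),\ \cl'_{\boldsymbol\epsilon}(\gamma)\,\big\rangle \;=\; \big\langle\, \xi,\ \cl_\emptyset^{\boldsymbol\epsilon}(\beta)\boxtimes\cl'_{\boldsymbol\epsilon}(\gamma)\,\big\rangle \;=\; \widetilde{\mathcal P}^{\boldsymbol\epsilon}_{S_f,\pi_1,\pi_2,\pi}(\alpha,\beta,\gamma)\;\neq\;0.
\]
Since $\cl_\emptyset^{\boldsymbol\epsilon}(\beta)$ generates the $\Pi_{S_f}$-part of $H^{3d}_{(2)}(S(\boldgsp_4),\mathcal V_{(\boldsymbol m-2,0),\C})$ and $\cl'_{\boldsymbol\epsilon}(\gamma)$ the $\pi_{2,f}^\vee$-part of $H^{d}_{(2)}(S(\boldgl_2),\mathcal V^\vee_{\boldsymbol m-2,\C})$, this shows that $\xi$ has nonzero image in $H^{4d}_!(S(\boldgsp_4)\times S(\boldgl_2),\mathcal W_{\boldsymbol m,\C})(2d)[\Pi_{S_f}\boxtimes\pi_2^\vee]$ and that $\xi_\ast$ is nontrivial. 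To finish I would run the endgame of the proof of Theorem~\ref{biggie thm} essentially verbatim: using the plectic Hodge decompositions of Propositions~\ref{prop: plectic Hodge types and class maps}, \ref{Prop: comparison isomorphism's for G} and \ref{prop: comparison isomorphism for G'}, part~(1) of Proposition~\ref{prop:. Identity for college classes} forces $\xi_\ast$ to annihilate the plectic components with nonempty archimedean part and to respect the $(\boldsymbol\epsilon,\emptyset)$-grading, while part~(2) shows $\xi_\ast$ is independent of $\boldsymbol\epsilon$ up to scalars; together these identify $\xi_\ast$ with a pure tensor $\l\otimes s$, in which $s$ is surjective because it is nonzero and $H^d_!(S(\boldgl_2),\mathcal V'_{\boldsymbol m,\C})[\pi_{2,f}]$ is generated over $\GL_2(\A_{F,f})$ by $\pi_{2,f}$, and $\l$ is a nontrivial $E$-linear map by the displayed non-vanishing. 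Rationality over $E$ is automatic, since $\Hodge_E(\pi,K,\Sigma)$ is by construction an $\Aut(\C/E)$-stable $E$-subspace of $H^{4d}_!(S(\boldgsp_4)\times S(\boldgl_2),\mathcal W_{\boldsymbol m,E})(2d)[\Pi_{S_f}\boxtimes\pi_2^\vee]$.

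I expect the main obstacle here to be not any deep new input — those are supplied by Lemmas~\ref{llama: non-vanishing input} and~\ref{Lemma: get Hodge take classes} — but rather the bookkeeping needed to transport $\cl(\alpha)$ coherently through the chain of comparison isomorphisms (the realization map, Zucker's isomorphism $H^\ast_{(2)}\cong IH^\ast$, the identification with inner cohomology, and Poincar\'e duality on $S(\boldgsp_4)\times S(\boldgl_2)$), and to match the vector-valued archimedean Schwartz data used in Lemma~\ref{llama: non-vanishing input} against the normalizations implicit in the class maps $\cl$, $\cl_\emptyset^{\boldsymbol\epsilon}$ and $\cl'_{\boldsymbol\epsilon}$, so that Proposition~\ref{prop:. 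Identity for college classes} applies on the nose.
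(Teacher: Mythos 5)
Your proposal is correct and follows essentially the same route as the paper: the paper's proof consists precisely of citing the argument of Theorem \ref{biggie thm} for the pure-tensor form of $\xi_\ast$, reducing to the existence of a complex class with $\xi_\ast\neq 0$, and deducing that from Lemma \ref{llama: non-vanishing input} together with Proposition \ref{prop:. Identity for college classes}, with Lemma \ref{Lemma: get Hodge take classes} supplying the Hodge-type and Galois-invariance assertions. Your write-up simply makes explicit the bookkeeping (choice of $(\pi,K,\Sigma)$, passage through Zucker's isomorphism, descent from $\C$ to $E$) that the paper leaves implicit.
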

\begin{proof}
The same argument as for Theorem \ref{biggie thm} implies $\xi_\ast $
is always a pure tensor $\l\otimes s $, and the map $s $  is always either trivial or a surjection.
Thus it suffices to show that there exists a \emph{complex} Hodge class $\xi\in\Hodge_\C (\pi, K,\Sigma) $ with $\xi_\ast\neq 0. $
However, this is guaranteed by Lemma \ref{llama: non-vanishing input} and Proposition \ref{prop:. Identity for college classes}.
\end{proof}
\bibliographystyle{plain} 
\bibliography{mybib}
\end{document}